\newtheorem{theorem}{Theorem}
\newtheorem{definition}[theorem]{Definition}
\newtheorem{proposition}[theorem]{Proposition}
\newtheorem{lemma}[theorem]{Lemma}
\newtheorem{hyp}[theorem]{Hypothesis}
\newtheorem{cor}[theorem]{Corollary}
\DeclareMathOperator{\Tr}{Tr}
\newtheorem{remark}[theorem]{Remark}
\def\my_c{c_\infty}
\newcommand{\mynewtheorem}[2]{
        \newaliascnt{#1}{dummy}
        \newtheorem{#1}[#1]{#2}
        \aliascntresetthe{#1}
        \expandafter\def\csname #1autorefname\endcsname{#2}
}
\newcommand{\be}{\begin{equation}}
        \newcommand{\ee}{\end{equation}}
\newcommand{\bde}{\begin{displaymath}}
        \newcommand{\ede}{\end{displaymath}}
\newcommand{\beq}{\begin{eqnarray*}}
        \newcommand{\eeq}{\end{eqnarray*}}
\newcommand{\beqa}{\begin{eqnarray}}
        \newcommand{\eeqa}{\end{eqnarray}}
\newcommand{\bel }{\left\{\begin{array}{ll}}
        \newcommand{\eel}{\cr \end{array} \right.}
\newcommand{\dcb}{\begin{array}{lll}}
        \newcommand{\dce}{\end{array}}
\newcommand{\ebe}{\begin{enumerate}\setlength{\baselineskip}{13pt}\setlength{\parskip}{0pt}}
        \newcommand{\dbe}{\end{enumerate}}
\newcommand{\leftrightharpoonup}{%
        \mathrel{\mathpalette\lrhup\relax}%
}
\newcommand{\lrhup}[2]{%
        \ooalign{$#1\leftharpoonup$\cr$#1\rightharpoonup$\cr}%
}
\newcommand\rharp[1]{\mathstrut\mkern2.5mu#1\mkern-11mu\raise1.2ex%
        \hbox{$\scriptscriptstyle\rightharpoonup$}}
\newcommand\lharp[1]{\mathstrut\mkern2.5mu#1\mkern-11mu\raise1.2ex%
        \hbox{$\scriptscriptstyle\leftharpoonup$}}
\newcommand\lrharp[1]{\mathstrut\mkern2.5mu#1\mkern-11mu\raise1.2ex%
        \hbox{$\scriptscriptstyle\leftrightharpoonup$}}
\def\Tr{{{\rm Tr}}}
\def\0{{\mathbf{0}}}
\begin{document}

        \title[Derivative of killed diffusion: Half space]{Probabilistic representation of the gradient of a killed diffusion semigroup: The half-space case}
                \author{Dan Crisan and Arturo Kohatsu-Higa }
        \address{\noindent Dan Crisan:
                Department of Mathematics, Imperial College London, 180 Queen's Gate, London
                SW7 2AZ, UK\newline Arturo Kohatsu-Higa:
                Department of Mathematical Sciences, Ritsumeikan University 1-1-1
                Nojihigashi, Kusatsu, Shiga, 525-8577, Japan.}
        \begin{abstract} 
                We introduce a probabilistic representation of the derivative
                of the semigroup associated to a multidimensional killed diffusion process defined on the half-space. The semigroup derivative is expressed as a functional of a process that is normally reflected when it hits the hyperplane. The representation of the derivative also involves a matrix-valued process which replaces the Jacobian of the underlying process that appears in the traditional pathwise derivative of a classical diffusion. The components of this matrix-valued process jump to zero except for those on the first row every time the reflected process touches the boundary. The results in this paper extend those in \cite{CK}, where the one-dimensional case was studied but the jump behavior did not appear.\\

            \noindent   \textbf{Keywords and phrases}: multi-dimensional killed diffusion semigroup, derivatives, reflected stochastic differential equations, jumps.\\
            \textbf{2020 MSC}: 60H10, 60J50, 60J55, 60J60, 60J76
        \end{abstract}
        
\maketitle

\section{Introduction}

%
%
%
%
In this paper we study a uniformly elliptic multi-dimensional diffusion process of the type \begin{align}
        \label{eq:2a}
        X_t=&x+\int_0^tb(X_s)ds+\int_0^t\sigma^{k} (X_s)dW^{k} _s,\ \ t<\tau ,\ \ \ x\in H^d_L =(L,\infty)\times \mathbb{R}^{d-1},
\end{align}
where $ W=(W^1,...,W^d)^{\top}  $ is a $ d $-dimensional Wiener process and $\tau\equiv \tau(X):=\inf\{t>0,X^1_t(x)=L\}$. We will assume that the coefficients $ b,\sigma^{k} :\mathbb{R}^d\to\mathbb{R}^d $, $ k=1,...,d $ are twice differentiable with bounded derivatives.\footnote{We assume Einstein summation over repeated indexes.} In this article we seek a probabilistic representation for  $ {\mathbf{D}} _x\mathbb{E}\left[f(X_{T\wedge\tau} )\right ]$, where $ f(y)=0 $ for $ y\in\partial H^d_L $.

The case $ d=1 $
has been studied in \cite{CK}. As in that situation the probabilistic representation for $ {\mathbf{D}} _x\mathbb{E}\left[f(X_{T\wedge\tau} )\right ]$ involves the associated solution of a reflected stochastic differential equation (sde) with coefficients $ b $ and $ \sigma $. To build the reader's   understanding as to why we separated the cases $ d=1 $ and $ d>1 $, let us start with the following simple 2D example.
Consider the domain\footnote{See Section \ref{sec:not} for details of the notation on vector calculus.}  $ H_0^2=(0,\infty)\times \mathbb{R} $ and for $ a,b,c,\varrho\in\mathbb{R} $ (in other words $ L=0,d=2 $)
$$ X_t:=(X^1_t,X^2_t)=(x_1+aW^1_t,x_2+a^{-1}\varrho \int_0^tX_s^1dX^1_s+cW^2_t). 
$$
Then, by using a slight extension of the results in one dimension 
(see \cite{CK}), one can obtain that 
\begin{align*}
        {\mathbf{D}}_x\mathbb{E}
        \left[f(X_{T\wedge\tau})\right]=&
        \mathbb{E}\left[{\mathbf{D}} f(Y_{T})\mathcal{E}_T\right]\\
        \mathcal{E}_t:=&\left(\begin{matrix}
                        1&0\\
                        a^{-1}\varrho (Y^1_t-x_11_{\tau(Y)>  t})&1_{\tau(Y)> t}
                \end{matrix}
                \right),\ t\in [0,T].
\end{align*}
Here $ Y =(Y^1,Y^2)$ is defined as 
\begin{align*}
        (Y^1_t,Y^2_t):=&(x_1+aW^1_t+B_t, x_2+a^{-1}\varrho \int_0^tY^1_sdY^1_s+cW^2_t)\\
        B_t=&\int_0^t1_{(Y^1_s=L)}d|B|_s.
\end{align*}
That is, the first component $Y^1 $ is indeed a one dimensional reflected Brownian motion and $ B $ represents the corresponding local time process\footnote{Here and elsewhere, $ |B|_t $ denotes the total variation of the process $ B $ in the interval $ [0,t] $.} and $ \tau(Y):=\inf\{t>0;Y_t\in\partial H^d_L\} = \inf\{t>0;Y^1_t=0\} $. As advertised in the abstract, the process $\mathcal{E}$ replaces the Jacobian $\mathbf{D}X_T$ in the classical representation for the diffusion derivative:
\begin{align*}
        {\mathbf{D}}_x\mathbb{E}
        \left[f(X_{T})\right]=&
        \mathbb{E}\left[{\mathbf{D}} f(X_T)\mathbf{D}X_T\right]\\
      \mathbf{D}X_T:=&\left(\begin{matrix}
                        1&0\\
                        \varrho W_T^1&1
                \end{matrix}
                \right).
\end{align*}
Note that both derivatives $ \mathbf{D}X_T $ and $ \mathcal{E}_T $ coincide if $ \tau(Y)>T $. 
Inspired by this classical result, we call $ \mathcal{E} $, the derivative process. If $ \tau(Y)\leq T $, that is if $Y$ hits the boundary in the interval $ [0,T] $, 
the second row of  $\mathcal{E}$   jumps to 0 at that time. 

A slightly different example is the case when $ \sigma  $ is a non-constant diagonal matrix (i.e. $ \varrho=0 $ in the previous example). That is, we assume that  $ \sigma^k:\mathbb{R}^2\to\mathbb{R}^2$, $ k=1,2 $ is a sufficiently smooth function satisfying $ \sigma^k\cdot \mathbf{e}^j=0 $ for $ j\neq k $. Here, $ \{\mathbf{e}^j,j=1,2\} $ denotes the standard basis of $ \mathbb{R}^2 $ (vectors are considered as column vectors). We consider the equation:
\begin{align*}
         X_t=&x+\int_0^t\sigma^{k} (X_s)dW^{k} _s.
\end{align*}
In this case, our results imply that for $ f\in C^1_b(\mathbb{R}^2,\mathbb{R} )$ with $ f(0,x_2)=0 $ for all $ x_2\in\mathbb{R} $,
\begin{align}
        \label{eq:1}
        {\mathbf{D}}_x\mathbb{E}\left[f(X_{T\wedge \tau})\right] =\mathbb{E}\left[{\mathbf{D}} f(Y_T)\mathcal{E}_T\right.],
\end{align}
where the process $(Y,\mathcal{E})$ is the unique solution of the sde 
    \begin{align*}
        Y_t=&x+\int_0^t\sigma^{k} (Y_s)dW^{k} _s+\mathbf{B}_t\in H^d_L,\\
        \mathbf{B}_t=&\int_0^t\mathbf{e}^1d|\mathbf{B}|_s\notag\\
        (\mathbf{e}^1)^{\top}\mathcal{E}_t=&(\mathbf{e}^1)^{\top}+\int_0^t(\mathbf{e}^1)^{\top} \mathbf{D}\sigma ^k(Y_s)\mathcal{E}_sdW^{k} _s.\\
        (\mathbf{e}^2)^{\top}\mathcal{E}_t=&
        \begin{cases}
                (\mathbf{e}^2)^{\top}+\int_0^t (\mathbf{e}^2)^{\top} \mathbf{D} \sigma^k(Y_s)\mathcal{E}_sdW^{k} _s, & t< \tau(Y),\\
               \int_{\rho_t}^t (\mathbf{e}^2)^{\top} \mathbf{D} \sigma^k(Y_s)\mathcal{E}_sdW^{k} _s ,&  t\geq \tau(Y).
        \end{cases}
\end{align*}
Here $ \mathbf{D}\sigma^k $ stands for the Jacobian of $ \sigma^k $ and $ (\mathbf{e}^j)^{\top} $, $j=1,2$, denotes the transpose of the vector $ \mathbf{e}^j $, $j=1,2$, respectively. Finally $ \rho_t:=\sup\{s<t,Y_s^1=L\}\vee 0 $ and the  above stochastic integral is interpreted as 
\begin{align*}
        \int_{\rho_t}^t\mathbf{D} \sigma^k (Y_s)\mathcal{E}_sdW^{k} _s:= \int_{u}^t\mathbf{D} \sigma^k (Y_s)\mathcal{E}_sdW^{k} _s\Big|_{u=\rho_t}.
\end{align*} 

As in the first example, the process $ Y $ is normally reflected when it hits $\partial H^d_L$ and   $ \mathbf{B} $ is parallel to $ \mathbf{e}^1 $ and it represents the reflecting local time for $ Y^1 $.   In fact, $ \mathbf{B}_t=B_t\mathbf{e}^1 $ where $ B $ is a real continuous increasing adapted process that only increases when $ Y $ is at the boundary (see formula (\ref{eq:B}) below).
Just as in the previous example, $ (\mathbf{e}^2)^{\top}\mathcal{E}_t $ becomes zero every time the process $ Y $ touches the boundary. 


The general case when the drift is non-zero and the diffusion coefficient is non-diagonal exhibits additional  characteristics at the boundary which do not appear in the one dimensional case. We will assume that the process $X$ satisfies \eqref{eq:2a}
and the following holds:

\begin{hyp}
        \label{hyp:12}
        The functions $ b,\sigma^k \in C^2_b(\mathbb{R}^d)$ are bounded\footnote{In other words, the functions are well defined as extensions over $ H^d_L $.}, $ a=(a^{ij})=\sigma\sigma^{\top}  $ is uniformly elliptic and it satisfies $   \sigma^{1\ell }(y)=\sigma^{\ell 1}(y)=0$ for $ y\in \partial H^d_L $ and $ \ell\neq 1 $. This hypothesis implies the following condition for $ \ell\neq 1 $, $ y\in\mathbb{R}^d $
        \begin{align*}
                { a^{\ell 1}(y) } =\hat{a}^\ell (y)(y^1-L),
        \end{align*}
        where $  \hat{a}^\ell\in C_b^1(\mathbb{R})$ and is bounded. With this notation, we define $ \pi(y):=\sum_{\ell=2}^d \hat{a}^\ell (y)\mathbf{e}^\ell(\mathbf{e}^1)^{\top}$.
        
        Without loss of generality we also assume that $ \sigma(x) $ is invertible for all $ x\in\mathbb{R}^d $. 
\end{hyp}

 The main result of this article is the following probabilistic representation for $ x\in H^d_L $
\begin{align}
        \label{eq:main}
        {\mathbf{D}}_x\mathbb{E}[f(X_T)1_{(\tau>T)}]=&
        \mathbb{E}[{\mathbf{D}} f(Y_T){\Psi}_T].
\end{align}
Here, $ (Y,\Psi)  $ solves\footnote{We remark that in the one dimensional case studied in \cite{CK}, the notation $ B $ corresponds here to $ \mathbf{B} $.}
\begin{align}
          Y_t=&x+\int_0^tb(Y_s)ds+\int_0^t\sigma^k(Y_s)dW^{k}_s
        +\mathbf{B}_t\in H^d_L, \label{eq:Y}\\
        \mathbf{B}_t=&\mathbf{e}^1\int_0^t1_{(Y^1_s=L)}d|\mathbf{B}|_s,\label{eq:B}\\
        \Psi_t=&I1_{(\rho_t=-\infty)}+ 1_{(\rho_t\in [0,t])}\mathbf{e}^1(\mathbf{e}^1)^{\top} 
        \Psi_{\rho_t-}+\int_{\rho_t\vee 0}^t\left(\mathbf{D} b(Y_s)ds+\mathbf{D}\sigma ^k(Y_s)dW^{k} _s\right)
        \Psi_s,\label{eq:Psi}  
        \\
        (\mathbf{e}^1)^\top\Psi_t=&(\mathbf{e}^1)^\top+\int_0^t(\mathbf{e}^1)^\top d\alpha_s
        \Psi_s,\label{eq:sc}
        \\
         d\alpha_s:=&\mathbf{D} b(Y_s)ds+\mathbf{D}\sigma ^k(Y_s)dW^{k} _s+2{b^1}(Y_s)IdB_s,\label{eq:alpha}\\
        dB_s:=&a^{11}(Y_s)^{-1}\mathbf{e}^1\cdot d\mathbf{B}_s, \label{bb}\\
                {\rho}_t:=&\sup\{s\leq t;Y_s\in\partial H_L\}.\label{rho_t}
\end{align}

In order to prove this result, we will first show the following intermediate result:
\begin{align}
               {\mathbf{D}}_x\mathbb{E}[f(X_T)1_{(\tau>T)}]=&
        \mathbb{E}[{\mathbf{D}} f(Y_T){\xi}_T]+
        \mathbb{E}\left[f(Y_T)
      {  \left(\frac{b^1}{{a^{11}}}\right)(Y_{\rho_T\vee 0})(\mathbf{e}^1)^{\top}  } \xi_{\rho_T\vee 0}     1_{(\tau\leq T)}
        \right].
        \label{eq:diff}
\end{align}
 Here,
\begin{align}
                \xi_t=&I1_{(\rho_t=-\infty)}+1_{(\rho_t\in [0,t])}\mathbf{e}^1(\mathbf{e}^1)^{\top} 
        \xi_{\rho_t-}+\int_{\rho_t\vee 0}^t\left(d\alpha_s-{b^1}({Y}_s)dB_s\right)
        \xi_s.
        \label{eq:E}   \\
      (\mathbf{e}^1)^\top\xi_t=&(\mathbf{e}^1)^\top+\int_0^t(\mathbf{e}^1)^\top \left(d\alpha_s-{b^1}({Y}_s)dB_s\right)
      \Psi_s,\nonumber
        \end{align}

Let us explain the various terms appearing in the above formulas:  

\begin{itemize}
\item Following equations \eqref{eq:Y} and \eqref{eq:B}, $ Y $ is a process with normal reflection at the boundary of the domain  $H^d_L$. This is a consequence of Hypothesis \ref{hyp:12}.
\item  The  first  term in \eqref{eq:Psi} ensures that $\Psi_t$ is initialized from the identity matrix $I$ at time $t=0$. Let us denote by $\Psi_t^\ell=(\mathbf{e}^\ell)^{\top}\Psi_t$, $\ell=1,...,d$ the row vectors of the matrix $\Psi_t$. That is \[
\Psi _{t}=\left( 
\begin{array}{l}
\Psi _{t}^{1} \\ 
\Psi _{t}^{2} \\ 
. \\ 
. \\ 
\Psi _{t}^{d}%
\end{array}%
\right) .
\]
The second term in \eqref{eq:Psi} tells us that all but the first row vectors $\Psi_t^\ell$, $\ell\neq 1 $ jump to $0$ every time $Y$ hits the boundary of the domain.  Afterwards,  $\Psi_t$ evolves according to equation (\ref{eq:Psi}) until the next time $Y$ hits the boundary when, again, all but the first  but the first row vectors $\Psi_t^\ell$, $\ell=1,...,d$ become $0$ and so on.  The fact that the row vectors $\Psi_t^l$, $l\neq 1 $ jump to $0$ every time $Y$ hits the boundary of the domain can be interpreted as a loss of memory of the past in those respective dimensions whenever the process $Y$ hits the boundary. The same behavior holds true for $ \mathcal{E}_t $ as defined in (\ref{eq:E}).  Again this is a consequence of Hypothesis \ref{hyp:12}.
\item The dynamics of the ``derivative'' processes $ \Psi_t $ in the particular case when  $ b=0 $ and $a$ is a constant  matrix are as follows: $ \Psi_t $ is the identity matrix $ I $ as long as $ \tau(Y)\leq t $ after that the derivative becomes $ \mathbf{e}^{\, 1}(\mathbf{e}^{\, 1})^{\top}  $.  
\item Note that due to Hypothesis \ref{hyp:12}, we have that $ (a^{11}b^\top a^{-1}\mathbf{e}^1+{b^1}) (Y_s)dB_s =2b^1(Y_s)dB_s$ which appears in \eqref{eq:alpha}. This term appears due to the interaction between the drift and the boundary. Heuristically speaking, the first term is a by-product of the application of Girsanov's change of measure and the second is due to the differentiation of the killing measure in one dimension. 
\item The condition \eqref{eq:sc} is a property satisfied by $ \Psi $ which expresses how local time is accumulated by this process only when multiplied by the vector $ (\mathbf{e}^1)^\top $.


\end{itemize}

In related research, some of the above effects have been identified in the case of reflection with constant diffusion coefficient albeit with different definitions, reasonings and proofs (see \cite{Andres}, \cite{LK}, \cite{DZ}, \cite{P1},\cite{P2}). It is noteworthy that, for reflected processes,
the above rich behaviour of the derivative process is not yet known. 

The proof of our result follows from the iteration of the push forward derivative formula in Lemma \ref{lem:2.1H} for an approximation of the killed process and Theorem \ref{th:5} which states that limits of the iterated formula can be obtained.

Our arguments are based on the derivative of approximant semigroups because the laws of the killed diffusion and the reflected diffusion are not  absolutely continuous with respect to each other. Instead, we will use the theory of weak convergence for stochastic integrals as in \cite{KP}, and \cite{EthierKurtz}. Two new characteristics arise in comparison with the one dimensional case result.

First, we note that the characterization of the limit process requires arguments that are new in the multi-dimensional case as not only that we need to study the behavior of the derivative process $ (\mathbf{e}^1)^{\top}\Psi $ (which is somewhat equivalent to the one-dimensional case, except that in general the directions of reflection are covariance generated), but also the derivative  in other directions $ (\mathbf{e}^\ell)^{\top}\Psi $, $ \ell\neq 1 $ (which do not appear in the one-dimensional case) which jump to zero every time the underlying reflected process touches the boundary.

Second, this study is also a stepping stone towards the more general case of a smooth domain which contains further properties which do not appear in the present case. In particular note that the hypothesis $ a^{\ell 1}(y)=0 $ for  $ y\in\partial H^d_L $ and for $\ell >1 $ is crucial in the present representation.  In the case this is not assumed we believe that in general $ Y $ is the solution of an obliquely reflected equation and the directions  of the projection of $ \mathcal{E} $  when $ Y $ touches the boundary changes due to the general covariance structure at the boundary. This will be discussed in future research.

{\it Technical comparison with the one dimensional case:}
In several instances, the proofs in the multi-dimensional case are build along the same lines as in the one-dimensional case. If this is the case, the corresponding  proofs will be omitted, however we will note the main changes required to adapt the arguments to the multi-dimensional case.
Let us stress the three main differences between the results in the one-dimensional case obtained in \cite{CK} and the ones presented here: 

\begin{itemize}

\item In general, the effect of a non-diagonal diffusion matrix on the behavior of the derivative process is to introduce a component covariance which influences the reflection. In fact,  Hypothesis \ref{hyp:12} implies that $ Y $ in \eqref{eq:Y} is normally reflected  at the boundary. 

\item Hypothesis \ref{hyp:12} implies that all rows except the first row of $ \Psi $ becomes zero when $ Y $ touches the boundary. We expect that these two first properties change without Hypothesis \ref{hyp:12}. Therefore this hypothesis influences both  $ Y $ and $ \Psi $.


\item There is lack of commutativity in matrix operations that have to be used in order to carry out the analysis in the paper. Obviously this technical problem is 
absent in the proofs for the one-dimensional case. This is a difficulty that will appear throughout the article, (but particularly in the arguments incorporated in the appendices) and is one of the reasons why we decided to separate the one dimensional case in \cite{CK}. In fact, section \ref{sec:7.6} requires a different argument in comparison with the one dimensional case. Other technical issues are discussed at the end of next section.
\end{itemize}


The representation \eqref{eq:main} is a stepping stone  to provide a natural physical interpretation of the boundary behaviour of the derivative of the semigroup.
We exploit this in a sequel to the paper (see \cite{CK3} for details), where we study the semigroup associated with a diffusion killed when it reaches the boundary of a domain which is perturbed, depending on a parameter $\varepsilon$ that measures the size of the perturbation.
In \cite{CK3}, we cover several cases:

\begin{itemize}
	
	\item Firstly, the initial domain is a half-space with boundary perturbed by a given smooth function. In this case we obtain an expression for the derivative of the semigroup with respect to the perturbation parameter. We also analyse (numerically as well as theoretically) its dependence with respect to various parameters: the drift term, the size of the time interval, the sign of the perturbation function, etc.  
	
	\item In a second case, 
%
%
	we study the case when the boundary is an	$\alpha$-H\"older continuous path (for example a Brownian path). Also in this case we obtain an expression for the derivative of the semigroup with respect to the perturbation parameter by using Malliavin integration by parts formula under the assumption that the diffusion coefficient is the identity matrix. We analyze its behaviour as a function of the distance to the boundary. \\[2mm]
	
\end{itemize}

Heuristically, the results in \cite{CK3} can be interpreted as a change in the concentration of  
particulates in a domain and, as a result, their  accumulation at the boundary of the domain. 
We think of the solution of  
(\ref{eq:2a}) as a model for the evolution of particulates
that drift through a given domain until they hit  the boundary. When they hit the boundary, they stick to it. The boundary then gets deformed, which in turns influences the evolution of the particulates. The question we want to answer is how does the boundary perturbation influences the rate of the accumulation of the sediments. The boundary can be deformed in different ways as explained above. \\[1mm]                   


For further information on these and other applications, heuristic explanations on the arguments used in this research we refer the reader to our YouTube video presentation \cite{YT} on the one-dimensional case. Many comments and references to related subjects that were done in the introduction of \cite{CK} will not be repeated here and we refer the reader to the discussions in \cite{CK}.

The geometric considerations presented here seem to have some dual connections with previous results about the differentiation of reflected processes (see \cite{Andres}, \cite{Burdzy}, \cite{DZ} and \cite{LK}). Mathematical results on this matter are not yet available.

\section{Notation}
        \label{sec:not}
        Much of the notation to be used is the natural extension of the one used in the one dimensional case. As such, we refer the reader to  \cite{CK} for further references on the choice of notation used in this article. 
 Let us start describing the notation used throughout the present article.
        
         The Dirac delta distribution at the point $L $ is denoted by $%
         \delta_L(x) $. Without raising any confusion, because of the context, we let $ \delta_{jk} $ denote the Kroenecker delta for $ j,k\in\{1,...,d\} $.

        We will also use the time indexes which were used in the one-dimensional case and we will also have upper indexes which relate to components of matrices.
        We assume the Einstein summation convention over doubly appearing indexes in a product which are related to the components of matrices and/or vectors such as in \eqref{eq:2a}. That is, if the same (matrix/vector) index symbol appears twice in the same formula  in an expression we automatically mean that a summation symbol has been suppressed for simplicity in the notation. Note that this rule does not apply to time indexes which may appear many times in the same formula and for which we systematically use the index symbol $ i $.

        Vectors are always understood as column vectors. The transpose of a matrix $ A $ is denoted by $ A^{\top}   $ and components of vectors are denoted by upper indexes. Recall that lower indices are used for time variables. When denoting powers of a one dimensional stochastic process $ X_t $, we will use $ (X_t)^3 $ instead of $ X_t^3 $ where the latter denotes the third component of the multi-dimensional stochastic process $ X_t $. One exception to this rule is the case of basis vectors for which we will use $ \{\mathbf{e}^1,...,\mathbf{e}^d\} $ to denote the standard basis of $ \mathbb{R}^d $. We also denote  $ x^j= x\cdot \mathbf{e}^j$ . 
        

        In the particular case of the diffusion coefficient matrix which is denoted by $ \sigma $, we will use the notation $ \sigma^{k}  $ to denote the $ k $-th column vector of the matrix $ \sigma $ which will be associated to the $ k $-th noise process.  In the case that we want to specifically use a $ k $-th row of the matrix $ A $ as a vector, we denote it by $ A^{k \cdot } $ { which is then considered as a column vector  given the above conventions. } We will also consider products of $ d\times d $ matrices using the notation $ \prod_{j=1}^nA_j=A_n\cdot ...\cdot A_1$ for  matrices $ A_1,...,A_n $.
        
        Many processes are defined by natural product definition. We use
        the notation 
        \begin{align*}
                A^n_{k:j}=\prod_{\ell=k+1}^ja_\ell.
        \end{align*}
        
                 We use total derivative notation $ \mathbf{D} $ which naturally include gradients.  In particular, in the case that $f:\mathbb{R}^d\to\mathbb{R}  $, $ {\mathbf{D}} f $ stands for the gradient row vector
unless stated otherwise.  The Hessian of $ f $ is denoted by $ \mathbf{D}^2f=(\partial^2_{jk}f)_{jk} $. In the cases considered in this article, this is always a symmetric matrix.

        When differentiating a function $ f:\mathbb{R}^d\to\mathbb{R}^d $, we use the following notation for the Jacobian
$ \mathbf{D}f=(\partial_kf_j)_{jk} $.  We may use alternatively the notation $ \partial_{x_i}f(x) $ or $ \mathbf{D}_xf $ in the case that we want to make explicit the variable with respect to which we are differentiating. Recall that under the present notation the chain rule reads 
$ {\mathbf{D}} (f\circ g)(x)
        =({\mathbf{D}} f)(g(x))\mathbf{D} g(x) $ for $ f\in C^1(\mathbb{R}^d,\mathbb{R}) $ and $ g\in C^1(\mathbb{R}^d,\mathbb{R}^d) $.
        

        We will use the capital $ O $ and small $ o $ Landau notation. In particular, for any (random) vector or matrix $ A $, we denote as $ A=O(\Delta^p) $ for $ p>0 $ the fact that $ |A|\leq C\Delta^p $ where $ C $ is non-random and is independent of $ \Delta<1  $. When $ A=0(1) $, we say that $ A $ is of constant order.
        
        
        We consider a filtered probability space $ (\Omega, (\mathcal{F}_t)_{t\geq 0},\mathbb{P}) $ which supports a $ d $-dimensional Wiener process $ W=(W^1,...,W^d) ^{\top}  $.  We will use approximations of solutions of sde's which require a time partition of the interval $ [0,T] $ whose norm is  $  \Delta $ which is chosen to be small enough with respect to certain fixed parameters to be chosen at a later stage.  
        
        We will denote by $C^1_b(A,B) $ the  space of bounded continuous functions defined on the open set $ A $  taking values in the open set $ B $. The notation $ f\in C^1({H^d_L},\mathbb{R}) $ means that the function is continuously differentiable in $ H^d_L $. In few instances, we will use the notation $ f\in C^1({\bar{H}^d_L},\mathbb{R}) $ which additionally means that the function is continuously differentiable for $ x\in \partial H^d_L $ when taking sequences in $ H^d_L $ converging to $ x $.
        
        When several functions are evaluated at the same value $y$, we may use the notation $ (fgh)(y)\equiv f(y)g(y)h(y) $.
        
        As already mentioned, we will be using the one dimensional results presented in \cite{CK}. Although some of the  arguments used throughout the paper are close to the corresponding ones introduced in \cite{CK}, the analysis here is more involved. We enumerate a few of the new difficulties: 
        \begin{enumerate}
                \item In the one dimensional case, the derivative of the killed process leads to a measure which corresponds to the reflected process in the case where there is no drift. With this in mind, one uses the Girsanov theorem. In the present case, the situation is similar but after using the Girsanov change of measure on the approximation process, the resulting process does not necessarily corresponds to the normally reflected equation unless one assumes Hypothesis \ref{hyp:12}. For more on this, see Lemma \ref{th:9} and \eqref{eq:sec}. 
                \item The characterization of the derivative process in one dimension was naturally obtained from the derivative of the killed semigroup. In the present case due to correlations this requires Hypothesis \ref{hyp:12} in order to simplify many arguments.
                \item The derivative process $ \Psi$ has the same local time terms as in the one dimensional case. In particular, in the one dimensional case, one has that in Section 4 of \cite{CK}, we proved that $ \Gamma^n$ converges to zero. In this case, this is not strictly true. In fact, if we impose Hypothesis \ref{hyp:12} the direction of the jump when the reflected process touches the boundary cancels the local time term which appears in $ \Gamma^n $. This is due to the multi-dimensional character of the problem and this does not occur in the one dimensional case (for definitions, see \cite{CK} or equivalently the term $ \gamma $ defined in \eqref{eq:td1}). This analysis generates the jump in the final result.
                \item The proof of convergence is more involved because the processes $ (\mathbf{e}^\ell)^{\top}\mathcal{E} $, $ \ell\neq 1 $ jump to zero when $ Y $ touches the boundary.
                \item From the technical point of view the proof of boundedness for second derivatives is more involved that in the one dimensional case due to the  differentiation of jumps of the process $ (\mathbf{e}^\ell)^{\top}\mathcal{E} $, $ \ell\neq 1 $ and the lack of commutativity because we have to deal with matrices in the present general case. 
        \end{enumerate}

\section{Set-up and preliminaries}

        \label{sec:set-up}
        In analogy, with the one dimensional case, we will work on the time interval $[0,T]$ with the following uniform partition $%
        t_{i}\equiv t_{i}^{n}:=i\Delta$, $i=0,...,n$ with $\Delta=\frac{T}{n}$. We assume without loss of generality, that $%
        \Delta\leq 1 $ and $ T $ is fixed. We consider the $d$-dimensional stochastic differential equation
        \begin{align*}
                X_t\equiv X_t(x)=x+\int_0^tb(X_s)ds+
                \int_0^t \sigma^{k} (X_s)dW^{k} _s.
        \end{align*}
                where $b,\sigma^{k}:\mathbb{R}^d\rightarrow \mathbb{R}^d$, $k=1,\ldots, d$ satisfy the conditions stated in Hypothesis \ref{hyp:12}. In the following, we will use the Euler approximation process of $X$ with
        initial point $X_0 $, defined as follows: For $i=0,...,n-1$ let
        \begin{equation*}  
        {X}_{t_{i+1}}^{n}=X_{t_i}^{n}+b(X_{t_i}^{n})(t_{i+1}-t_{i})+\sigma
        (X_{t_i}^{n})(W_{t_{i+1}}-W_{t_{i}}).
        \end{equation*}%
        We also use the following notation to simplify equations: $
        X_i^{n,x}=X_{t_i}^{n,x} $  where $X_{0}^{n,x}=x$ and in general $ X_i=X^n_{t_i} $. For coefficients, we use  $b_i=b_{i}^{n}=b(X_{i}^{n,x}),$ $\sigma_i=\sigma
        _{i}^{n}=\sigma (X_{i}^{n,x})$ and\footnote{%
                We use the difference operator notation $\Delta_{i+1}$ for other processes too.} $\Delta_{i+1}W=W_{t_{i+1}}-W_{t_{i}}$. Therefore, we have 
        \begin{equation}
        X_{i+1}^{n,x}\equiv X_i^{n,x}+b(X_i^{n,x})(t_{i+1}-t_{i})+\sigma
        (X_i^{n,x})\Delta_{i+1}W\equiv X_i^{n,x}+b_{i}^{n}\Delta +\sigma
        _{i+1}^{n}\Delta_iW,  \label{eq:defbX}
        \end{equation}%

        Subsequently, whenever possible, we will omit the dependence of $%
        X_{i+1}^{n,x}$ on $n$ and/or $x$ in their notation as well as that of $%
        b_{i}^{n}$ and $\sigma _{i}^{n}$ on $n$, hence  \begin{equation}
                X_{i+1}\equiv X_{i}+b(X_i)\Delta +\sigma^{k}  (X_{i})(W^{k} _{t_{i+1}}-W^{k} _{t_{i}})={X}%
                _{{i}}+b_i\Delta +\sigma^{k}  _{i}\Delta_{i+1}W^{k} ,  \label{eq:defbXm}
        \end{equation}%

Therefore using this notation, one has  
        \begin{equation}
                \mathbf{D}_{x}X_{n}=(I+\mathbf{D} b_{n-1}\Delta+\mathbf{D}\sigma _{n-1}^k\Delta_{i}W^{k} ){\mathbf{D}} _{x}X_{n-1}=\prod_{{ i  }=1}^{n}(I+\mathbf{D} b_{i-1}\Delta+\mathbf{D}\sigma _{i-1}^{k }\Delta_{i}W^{k} ).
                \label{eq:fdm}
        \end{equation}%


         Recall that we will need the following notation for the $ j$-th component of the column vector $ \sigma^{k} $ as $\sigma^{k,j} $ and $ \sigma^{\cdot j} $ stands for the $ j$-th row of the matrix $ \sigma $ considered as a column vector. Note that this notation does not create any confusion with $ \sigma_i $ introduced above where $ i $ is a time parameter\footnote{The index $ i $ is reserved to be used as an index of time throughout the rest of the article.}.
        
        \begin{definition}
                \label{def:2a}
                As in the one dimensional case we define for any r.v. $ G_i=G(X_1,...,X_i) $, the derivatives ${\mathbf{D}} _{i}G\equiv
                \partial _{X_{i}}G_i$ as the gradient (therefore it is a row vector) with respect to the i-th vector variable in $ G_i$. This definition generalizes naturally to random variables that may also depend on other independent random variables  and is also easily generalized to multi-dimensional r.v.'s. In the latter, we will not change the notation although the derivative becomes a matrix. For example, we have $ {\mathbf{D}}_{i-1}X_i= I+\mathbf{D} b_{i-1}\Delta+\mathbf{D} \sigma^k_{i-1}\Delta_iW^k$ as $ {\mathbf{D}}_{i-1}b_{i-1}=\mathbf{D}b_{i-1} $ and $ {\mathbf{D}}_{i-1}\sigma^k_{i-1}=\mathbf{D}\sigma_{i-1}^k. $
                In a similar fashion, $ D^k_i $ denotes the derivative with respect to the algebraic variable $ \Delta_iW^k $. Therefore $ D_i^kf(X_{i-1},\Delta_iW) $ denotes the derivative with respect to the $ d+k $-th variable in the function $ f $ (the first $d$ components correspond to $X_{i-1}$).                 
        \end{definition}
    
 We also make use of the continuous version $X^{c,n}$ of the Euler approximation defined as        
        \begin{equation}  \label{xcn}
        {X}_{t}^{c,n}=X_{t_i}^{c,n}+b(X_{t_i}^{c,n})(t-t_{i})+\sigma^k
        (X_{t_i}^{c,n})(W^k_{t}-W^k_{t_{i}}).
        \end{equation}%
    for $i=0,...,n-1$ and $t\in \left[ t_{i-1},t_{i}\right] $.
Note that $X_{t_i}^{c,n}=X_{t_i}^{n}=X_i$ for $i=0,...,n$.

Conditional expectations with respect to $\mathcal{F}_{i}=\sigma
(X_{j};j=1,...,i)$ are denoted by $\mathbb{E}_{i}$. 
In particular, $\mathbb{E}_{i-1}\left[ \sigma _{i-1}\Delta_{i}W\right] =\sigma
_{i-1}\mathbb{E}_{i-1}\left[ \Delta_{i}W\right] =0$.
We also let  $ \mathbb{E}_{i,x}\left[\cdot\right]:=\mathbb{E}_{i,x}\left[\cdot|X_i=x\right]$.

\begin{remark}Hypothesis \ref{hyp:12} allows for some substantial simplifications
in the calculations whilst still preserving the interesting properties of the general model. See Remark \ref{rem:12}(2) for details.  In the case where this Hypothesis is not assumed, the study of this problem seems possible but with different probabilistic representations leading to oblique reflection for $ Y $. 

\end{remark}
   \noindent    \textbf{\noindent Notation.}  
        \textit{In the following Lemma and later on we will use the notation }
       
\begin{equation}\label{maandmbar}
\frame{$
\begin{array}{|ll|ll|}
\hline\\[-4mm]
  \mathtt{X}_i^1&:=X_i^1-L&
    p_i&:=
  \exp\left(-2\frac{\mathtt{X}_i^1\mathtt{X}_{i-1}^1}{a^{11}_{i-1}\Delta}\right)
  =\exp\left(-2\frac{(X_i^1-L)(X_{i-1}^1-L)}{a^{11}_{i-1}\Delta}\right)
  \\[2mm]
  \hline\\[-4mm]
   \mathsf{h}_{i} & :=1_{(U_{i}\leq p_{i})} &  \pi_{\partial H^d_L}(X_i)&\equiv \pi(X_i):=(L,X^{2}_i,...,X^{d}_i)\\[2mm] 
 \hline\\[-4mm]
m_{i} & :=1_{(X_{i}^{1}>L)}1_{(U_{i}>p_{i})} & \bar{m}_{i} & :=1_{(X_{i}^{1}>L)}\left( 1+1_{(U_{i}\leq p_{i})}\right)  \\[2mm] 
\hline\\[-4mm]
M^n_{j} & :=\prod_{i=1}^{j}m_{i} & \bar{M}^n_{j} & :=\prod_{i=1}^{j}\bar{m}_{i}\\[1mm]
\hline
\end{array}%
$}
\end{equation}
\textit{We used a similar notation in the one dimensional (1D) case.  As in the 1D case,  we define the measure $\mathbb{P}^{M}$ such that $\left. \frac{d\mathbb{P}^{M}%
                }{d\mathbb{P}}\right\vert _{\mathcal{F}_{{n}}}=M^n_{n}$. Note that $M^n_{n}$ is not a
                martingale, but a \emph{supermartingale}, see Lemma \ref{lem:4} for details.
                Hence the measure $\mathbb{P}^{M}\,\ $is, in fact, a sub-probability measure as it
                does not integrate to 1. Under it, the continuously interpolated Euler approximation $X^{c,n}_{n}$ defined by (\ref{xcn}) has the same law as the process $X^{c,n}_{n}$ killed when it exits
                the domain, see Section \ref{apkilres} for details.
  Finally the push-forward derivative formula requires the following additional notation }
 \begin{align}
        \label{eq:piH}
        \pi_{i-1}:=& \sum_{\ell=2}^d\frac{\hat{a}^\ell_{i-1}}{a^{11}_{i-1}}
        \mathbf{e}^\ell(\mathbf{e}^{1})^{\top}  =
        \begin{pmatrix}
0&\frac{\hat{a}^2_{i-1}}{a^{11}_{i-1}}&\ldots &\frac{\hat{a}^d_{i-1}}{a^{11}_{i-1}}\\0&0&\ldots &0\\\vdots&\ddots &\ddots &\vdots \\0&\ldots&0&0\end{pmatrix}.
 \end{align}

 \begin{align}
         \nonumber
        e_{i}:=&I(1-{\mathsf{h}}_i)+\mathbf{e}^1(\mathbf{e}^1)^{\top}\mathsf{h}_i+\bar{e}_i\\
        \bar{e}_i:=&
        r_i+\mathbf{D}b_{i-1}\Delta\left(1-{\mathsf{h}}_i
        \right) +{\pi}_{i-1}\mathtt{X}^1_{i-1} \mathsf{h}_i
       \label{eq:td1bH}\\
   r_i:=&   \left(1-{\mathsf{h}}_i
   \right)\mathbf{D}\sigma_{i-1} ^{k }\Delta_{i}W^{k} 
   {+}
   \mathtt{X}^1_{i-1}\sigma^{k}_{i-1} {\mathbf{D}}_{i-1}\left(\frac{     \sigma^{1k} _{i-1}
   }{{a^{11}_{i-1}}}\right)
   \mathsf{h}_i\\
   E^{n}_j:=&\prod_{i=1}^je_i, \ \   E^{n}_0:=I \label{e}\\
        \varrho_i:=&\frac{{b}^1_{i-1}
        }{{a^{11}_{i-1}}}  
        \mathbf{e}^1\mathsf{h}_i+\tilde{\varrho}_i\label{eq:rho}\\
        \tilde{\varrho}_i:=&
        \mathsf{h}_i
        \mathtt{X}^1_{i-1}{\mathbf{D}}_{i-1}\left(\frac{{b}^1_{i-1}
        }{{a^{11}_{i-1}}}  
        \right)
        .\label{eq:rhot}
 \end{align}
\textit{The random variable  
 $ {\pi}_{i-1}\mathtt{X}^1_{i-1} \mathsf{h}_i $ can be rewritten as
\begin{align*}
{\pi}_{i-1}\mathtt{X}^1_{i-1} \mathsf{h}_i =
\sum_{\ell=2}^d\left(\frac{a_{i-1}^{1\ell}}{{a^{11}_{i-1}}}  -\frac{a^{1\ell}(\pi({X}_{i-1}))}{a^{11}_{i-1}} \right)
\mathbf{e}^\ell(\mathbf{e}^{1})^{\top}  \mathsf{h}_i.
\end{align*}
We will show that the process with increments given by $ {\pi}_{i-1}\mathtt{X}^1_{i-1} \mathsf{h}_i $  converges to a process that is proportional to  the process $B$ introduced in (\ref{bb}) . }

 Using the notation introduced above, the push-forward formula is as follows
        \begin{lemma}[\bf{The push-forward formula}]
        \label{lem:2.1H}
        Let $ f\in C^1_b(\mathbb{R}^d,\mathbb{R}) $ satisfying  $ f \big |_{\partial H^d_L}=0$ then  $f_{i}\in { C^1_b(\bar{H}^d_L,\mathbb{R})  } $ satisfies the boundary condition 
        $f_{i}\big |_{\partial H^d_L}=0$. Furthermore, for  $i=1,...,n$,
        \begin{align}
                \label{eq:tdH}
                & {\mathbf{D}} _{i-1}\mathbb{E}_{i-1}\left[ {f}_{i}{m}_{i}\right] =\mathbb{E}_{i-1}%
                \left[ {\mathbf{D}} _{i}{f}_{i}(X_i){e}_{i}
                \bar{m}_i
                \right] +\mathbb{E}_{i-1}%
                \left[{f}_{i}{{\varrho}}^{\top}  _i\bar{m}_i
                \right].
        \end{align}
        This gives the following iterated formulas for the derivative 
        \begin{align}
                \nonumber
                {\mathbf{D}} _{x}\mathbb{E}\left[ f\left( X_{T\wedge \tau }^{n,x}\right) \right]
                =&{\mathbf{D}} _{x}\mathbb{E}\left[ f\left( X_{n}^{n,x}\right) {M}_{n}\right]\\
                =&\mathbb{E}\left[{\mathbf{D}} {f}\left( X_{n}^{n,x}\right) E^{n}_n\bar{M}_{n}              \right] +\mathbb{E}\left[\sum_{i=1}^nf_i \varrho_i^{\top} E^{n}_{i-1} \bar{M}_{i}\right ].\label{fprimemH}
        \end{align}
 \end{lemma}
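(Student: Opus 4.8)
The plan is to establish the one–step identity \eqref{eq:tdH} directly and then obtain \eqref{fprimemH} by iteration, exactly mirroring the one–dimensional argument in \cite{CK} but keeping careful track of the matrix ordering. First I would note that $f_i\in C^1_b(\bar H^d_L,\mathbb{R})$ with the boundary condition $f_i|_{\partial H^d_L}=0$ follows by backward induction on $i$: $f_n=f$ satisfies it by hypothesis, and if $f_i$ does then $f_{i-1}(x)=\mathbb{E}_{i-1,x}[f_i(X_i)m_i]$ is $C^1_b$ because the Euler step $X_i=x+b(x)\Delta+\sigma(x)\Delta_iW$ depends smoothly on $x$, the factor $m_i=1_{(X_i^1>L)}1_{(U_i>p_i)}$ introduces a killing whose $x$–dependence is handled exactly by the computation below, and the vanishing of $f_i$ on $\partial H^d_L$ guarantees the boundary term produced by differentiating the indicator $1_{(X_i^1>L)}$ drops out — so $f_{i-1}|_{\partial H^d_L}=0$ as well.

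The core is the one–step formula \eqref{eq:tdH}. Here I would write $\mathbb{E}_{i-1}[f_i m_i]$ as an integral over the Gaussian increment $\Delta_iW$ and the uniform $U_i$, and differentiate in $x=X_{i-1}$. Differentiating under the integral produces three types of contributions: (i) the ``interior'' term, where $\mathbf{D}$ hits $f_i$ through the chain rule, giving $\mathbf{D}_i f_i(X_i)\,\mathbf{D}_{i-1}X_i=\mathbf{D}_i f_i(X_i)(I+\mathbf{D}b_{i-1}\Delta+\mathbf{D}\sigma_{i-1}^k\Delta_iW^k)$ times $m_i$; (ii) terms where $\mathbf{D}$ hits the killing density $p_i=\exp(-2\mathtt X_i^1\mathtt X_{i-1}^1/(a^{11}_{i-1}\Delta))$ in the factor $1_{(U_i>p_i)}$, which after an integration by parts in $\Delta_iW^1$ (Gaussian IBP / Girsanov–type manipulation, using $\mathtt X_i^1=\mathtt X_{i-1}^1+b^1_{i-1}\Delta+\sigma^{1k}_{i-1}\Delta_iW^k$) converts the $p_i$–derivative into the ``reflection'' contribution $\mathsf h_i=1_{(U_i\le p_i)}$ together with the extra weight $\bar m_i=m_i+1_{(X_i^1>L)}\mathsf h_i$, and simultaneously generates the $\mathbf{e}^1(\mathbf{e}^1)^\top\mathsf h_i$ projection and the $\varrho_i$ terms; and (iii) the term from differentiating $1_{(X_i^1>L)}$, a boundary Dirac mass at $X_i^1=L$, which vanishes because $f_i$ vanishes there. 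Collecting (i)+(ii), the matrix multiplying $\mathbf{D}_i f_i(X_i)$ is precisely $e_i=I(1-\mathsf h_i)+\mathbf{e}^1(\mathbf{e}^1)^\top\mathsf h_i+\bar e_i$ with $\bar e_i$ as in \eqref{eq:td1bH} — the term $\pi_{i-1}\mathtt X^1_{i-1}\mathsf h_i$ and the $r_i$ piece arise exactly from the cross–derivatives of the exponent involving $\sigma^{1k}_{i-1}/a^{11}_{i-1}$ and the off–diagonal entries $a^{1\ell}$, which is where Hypothesis \ref{hyp:12} ($\sigma^{1\ell}=0$ on the boundary, hence $a^{1\ell}(y)=\hat a^\ell(y)(y^1-L)$) is used to keep these terms of the stated order — and the scalar weight multiplying $f_i$ is $\varrho_i^\top\bar m_i$ with $\varrho_i$ from \eqref{eq:rho}. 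This gives \eqref{eq:tdH}.

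For the iterated formula I would apply \eqref{eq:tdH} repeatedly inside nested conditional expectations, using the tower property. Starting from $\mathbf{D}_x\mathbb{E}[f(X_n)M_n]$ and peeling off one time step at a time: at step $i$, the interior part propagates the derivative via $\mathbf{D}_{i-1}(\cdot)=\mathbf{D}_i(\cdot)\,e_i$ together with the replacement $M$–type weight $\bar m_i$, producing after $n$ steps the product $E^n_n=\prod_{i=1}^n e_i$ (with the paper's convention $\prod_{i=1}^n A_i=A_n\cdots A_1$, so the ordering is automatically correct — the most recent $e_i$ on the left) and the cumulative weight $\bar M_n$; each time a derivative ``lands'' on the killing density instead, it produces a residual term $\mathbb{E}[f_i\varrho_i^\top E^n_{i-1}\bar M_i]$ that is no longer differentiated and is simply carried along. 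Summing these residuals over $i=1,\dots,n$ yields $\mathbb{E}[\sum_{i=1}^n f_i\varrho_i^\top E^n_{i-1}\bar M_i]$, and together with the fully propagated term we obtain \eqref{fprimemH}; the first equality $\mathbf{D}_x\mathbb{E}[f(X_{T\wedge\tau}^{n,x})]=\mathbf{D}_x\mathbb{E}[f(X_n^{n,x})M_n]$ is just the fact that under $\mathbb{P}^M$ (i.e. with the weight $M_n$) the Euler chain has the law of the killed Euler chain, which is recalled in the Notation block.

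The main obstacle is step (ii): carefully carrying out the Gaussian integration by parts in the $\Delta_iW^1$ variable against the killing density $p_i$ and organizing the resulting terms so that exactly the structure $(1-\mathsf h_i)$ versus $\mathsf h_i$, the projection $\mathbf{e}^1(\mathbf{e}^1)^\top$, the matrices $\pi_{i-1}$ and $r_i$, and the scalars $\varrho_i,\tilde\varrho_i$ emerge — and in checking that all the error/boundary terms generated (in particular the Dirac term at $X_i^1=L$, and the terms forced to vanish or to be absorbed by Hypothesis \ref{hyp:12}) behave as claimed. Since the excerpt states this mirrors the one–dimensional proof in \cite{CK}, I would present the reduction and the bookkeeping of matrix ordering in detail and refer to \cite{CK} for the one–variable computation underlying the integration by parts, noting only the extra terms (the $\pi_{i-1}\mathtt X^1_{i-1}\mathsf h_i$ row–operation and the non‑commutativity of the $e_i$'s) that are genuinely new here.
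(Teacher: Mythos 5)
Your proposal is correct and follows essentially the same route as the paper: differentiate under the Gaussian integral, split the derivative into the chain-rule term on $f_i$, the Gaussian integration by parts against the killing density $p_i$ (which the paper packages as Lemmas \ref{lem:2H} and \ref{cor:23H}), and the Dirac term at $X_i^1=L$ which drops out because $f_i$ vanishes on the boundary, then iterate via the tower property. The only small imprecision is calling step (ii) an integration by parts "in $\Delta_iW^1$" — since $\mathtt X_i^1$ depends on $\Delta_iW^k$ through $\sigma^{1k}_{i-1}$ for all $k$, the paper carries out the IBP over all Gaussian coordinates $\Delta_iW^k$, $k=1,\dots,d$, which is precisely the source of the extra terms $\pi_{i-1}\mathtt X^1_{i-1}\mathsf h_i$ and $r_i$ that you correctly flag as the genuinely multi-dimensional contributions.
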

The proof of this result is given in Section \ref{diffrules} and is based on a set of differentiation formulas stated in the same section.

        \begin{remark}
                \label{rem:3} 
                Hypothesis \ref{hyp:12} implies that the term corresponding to the constant order in the definition of $ e_i $ has the following simplified heuristic structure:
                        \begin{align*}
                                (I+(\pi_{i-1}-I)\mathsf{h}_i)\bar{m}_i\approx
                                {\begin{pmatrix}1+\mathsf{h}_i&O^E_{i}(1)&\ldots &O^E_{i}(1)&O^E_{i}(1)\\0&1-\mathsf{h}_i&0&\ldots &0\\\vdots&\ddots&\ddots &\ddots &\vdots \\\vdots&&\ddots&\ddots &0\\0&\ldots&\ldots&0&1-\mathsf{h}_i\end{pmatrix}}.
                        \end{align*}
                        That is, the first component in the diagonal will behave as reflection while all other terms in the first line will be multiples of $ \mathtt{X}^1_{i-1}\mathsf{h}_i $. The other diagonal terms correspond to the characteristics of killing as $ m_i $ in the conditional probability of not crossing the boundary in the time interval $ [t_{i-1},t_i] $.                           
                
        \end{remark}

\section{The Girsanov change of measure}
\label{sec:4}
        
Similar to the one dimensional case, we will apply the Girsanov's change of measure in the original probability space $(\Omega ,%
       ( \mathcal{F}_t)_{t\in[0,T]},\mathbb{P})$ which removes the drift in the Euler approximation\ (%
        \ref{eq:defbX}). As in the one dimensional case, this change of measure is necessary in order to be able to use that $X^1 _i$ is a process without drift with normal reflection when it hits the boundary of the domain $H^d_L$ (see Lemma \ref{th:9}) under $\bar{M}^n $\footnote{This characterization is not valid for Brownian motion with non-zero drift.}. For this, we
        introduce the (discrete) exponential martingale $\mathcal{K}^n_{n}$ corresponding to the
        increments   
\begin{align*}\kappa_j:=&(\sigma^{-1}_{j-1})^{k\cdot}\cdot b_{j-1} Z^k_j-\frac{1}{2}\sum_{k=1}^d\left\|(\sigma^{-1}_{j-1})^{k\cdot}\cdot b_{j-1}\right \|^2\Delta\\
        {Z}^k_j:=&\Delta_jW^{k} + (\sigma^{-1}_{j-1})^{k\cdot}
        \cdot b_{j-1}\Delta,
\end{align*}
where 
\begin{equation}\label{kn}
\mathcal{K}^n_{i}:=\exp\left(\sum_{j=1}^i\kappa_j\right).
\end{equation}
 We define $\widetilde{\mathbb{P}}\equiv \widetilde{\mathbb{P}}^n$ to be given by 
        \begin{equation*}
        \left. \frac{d\widetilde{\mathbb{P}}^n}{d\mathbb{P}}\right\vert _{\mathcal{F}_{{i}}}=(\mathcal{K}%
        _{n}^{n})^{-1}.
        \end{equation*}%
        Then, under $\widetilde{\mathbb{P}}^n$, the Euler approximation $X^{n,x}$ satisfies 
        \begin{equation}
        X_{i}^{n,x}=X_{i-1}^{n,x}+\sigma ^k_{i-1}{Z}^k_{i},  \label{eq:defbXunderPtilde}
        \end{equation}%
        and, under $\widetilde{\mathbb{P}}^n$, the random variables ${Z}_{i}$, $i=1,...,n$ are
        i.i.d. random variables with common distribution $N\left( 0,\Delta \right).$ We will denote by  $ \mathbb{\widetilde{E}} $ the expectation under $\widetilde{\mathbb{P}}^n$ and from now on we work under this measure.

All random variables in Lemma \ref{lem:2.1H} are equivalently expressed  so that all terms appearing in \eqref{eq:tdH} depend now on $ Z_i^k $ and not on $ \Delta_iW^k $. In particular, we will decompose the terms in $ e_i $ (see \eqref{eq:td1bH}) as follows:
\begin{align}
        \label{eq:td1}
        e_i=&I+(\mathbf{e}^1(\mathbf{e}^1)^{\top}-I)
        \mathsf{h}_i
        +\bar{e}_i,\\
        \bar{e}_i=&\mathbf{D}\sigma_{i-1} ^{k }
        \bar {r}^k_{i}+\bar{b}_{i-1}\Delta(1-\mathsf{h}_i)+
        \gamma_{i}
        +\pi_{i-1}\mathtt{X}^1_{i-1}\mathsf{h}_i,\notag\\
        \bar{r}^k_i:=&{Z}^k_i-\tilde{\mathbb{E}}_{i-1}\left[{Z}^k_i\bar{m}_i\right],\\
        \gamma_i:=&\mathbf{D}\sigma_{i-1} ^{k }\tilde{\mathbb{E}}_{i-1}\left[{Z}^k_i\bar{m}_i\right]
        -\mathsf{h}_i\left(\mathbf{D}\sigma_{i-1} ^{k }Z_i^k-\mathtt{X}^1_{i-1}             \sigma^{k}_{i-1} {\mathbf{D}}_{i-1}\left(\frac{        \sigma^{1k} _{i-1} 
        }{{a^{11}_{i-1}}}\right)
\right) ,\nonumber
        \\
        \bar{b}_{i-1}:=&\mathbf{D}b_{i-1}-\mathbf{D}\sigma_{i-1}^k\left((\sigma^{-1}_{i-1})^{k\cdot}\cdot b_{i-1}\right).\nonumber
\end{align}
Similar to the 1D case $ \bar{r} $ will be the noise driving term of $ E^n $, its drift term will be $ \bar{b} $. The martingale part of the process $ \gamma_{i} $ is asymptotically  negligible as we will see later in Lemma \ref{lem:14}. 

Next we define the measure $ {\mathbb{Q}}^n$ such that $\left. \frac{d{\mathbb{Q}}^n}{d\widetilde{\mathbb{P}}^n}\right\vert _{\mathcal{F}_{{n}}}=\bar{M}^n_{n}$. Note that
        the process $\bar{M}^n_{n}$ is a martingale with mean $ 1. $

Therefore the goal is to determine for $ {f}_{i} =\mathbb{E}_{i,x}\left[ f\left( {X}_{n}\right) \mathcal{K}_{i:n}{M}_{i:n}\right]  $, the limit of the expression 
\begin{align*}
        &\tilde{\mathbb{E}}_{0,x}\left[  {\mathbf{D}} {f}\left( X_{n}\right)E^{n}\mathcal{K}_n\bar{M}_{n}%
        \right] +\tilde{\mathbb{E}}_{0,x}\left[\sum_{i=1}^n f_i\varrho_i^{\top} E^{n}_{i-1}\mathcal{K}_i\bar{M}_{i}\right ].
\end{align*}

To be sure that the above is well defined, we start with some uniform moments results.
In the same way as in the one dimensional case we have 

\begin{lemma}
        \label{moments}For arbitrary $p\in\mathbb{N}$, there exists a constant $C=C\left(
        p\right) $ independent of $i$ and $n$ such that for any compact set $ K\subseteq {H^d_L} $ 
            \begin{align}
                    \sup_{x\in K}\tilde{\mathbb{E}}_{0,x}\left[ \max_{i=1,...,n}e^{-C(t_n-t_i)}\left( {{\mathcal{K}}}_{i:n}^{n}\right) ^{p}
                \bar{M}_n^n\right]\leq& C,\nonumber\\ \sup_{x\in K}\tilde{\mathbb{E}}_{0,x}\left[ \max_{i=1,...,n}\left( \bar{M}_{i:n}^{n}\right)
                ^{p}\right]  \leq &2^{pn}. \label{Lnboundsm}
            \end{align}
\end{lemma}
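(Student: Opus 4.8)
\textbf{Proof plan for Lemma \ref{moments}.}

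The plan is to establish both bounds by conditioning and exploiting the explicit product structure of $\mathcal{K}^n$ and $\bar{M}^n$. First I would recall that under $\widetilde{\mathbb{P}}^n$ the increments $Z_i^k$ are i.i.d.\ $N(0,\Delta)$, so $\mathcal{K}_{i:n}^n=\exp\bigl(\sum_{j=i+1}^n\kappa_j\bigr)$ is a genuine $(\mathcal{F}_j)$-exponential martingale with increments that are conditionally Gaussian with bounded mean and variance (since $b,\sigma,\sigma^{-1}$ are all bounded by Hypothesis \ref{hyp:12}). The key elementary computation is that for any fixed $p$, $\widetilde{\mathbb{E}}_{j-1}\bigl[e^{p\kappa_j}\bigr]=\exp\bigl(c_{p}(X_{j-1})\Delta\bigr)$ for a bounded function $c_p$, because $\kappa_j$ is a linear-plus-deterministic function of $Z_j^k$; hence $\widetilde{\mathbb{E}}_{j-1}\bigl[(\mathcal{K}_{j-1:j}^n)^p\bigr]\le e^{C_p\Delta}$ uniformly. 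Chaining these bounds telescopes to $\widetilde{\mathbb{E}}_{i,x}\bigl[(\mathcal{K}_{i:n}^n)^p\bigr]\le e^{C_p(t_n-t_i)}$, which after absorbing the exponential prefactor $e^{-C(t_n-t_i)}$ with $C=C_p$ gives a uniform-in-$n$ bound for the expectation without the maximum.

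For the version with $\max_i$, I would first handle the product $(\mathcal{K}_{i:n}^n)^p\bar{M}_n^n$. Since $\bar{m}_i=1_{(X_i^1>L)}(1+\mathsf{h}_i)\in\{0,1,2\}$, we have $\bar{M}_n^n\le 2^{n}$ deterministically, so the second inequality in \eqref{Lnboundsm} is immediate once we note $\bar{M}_{i:n}^n=\prod_{j=i+1}^n\bar m_j$ and each factor is at most $2$; that gives the stated bound $2^{pn}$ trivially (indeed $\bar M^n$ is also a mean-one martingale, so the $L^1$ bound is $1$, but the crude $2^{pn}$ bound suffices for the statement). The first inequality is the substantive one: rewrite $M^n_n=\bar M^n_n\prod_{j}(1-2\mathsf{h}_j\cdot\text{(something)})$—more precisely recall $m_j=1_{(X_j^1>L)}1_{(U_j>p_j)}\le \bar m_j$, so $M^n_n\le \bar M^n_n$, but more useful is that $e^{-C(t_n-t_i)}(\mathcal{K}_{i:n}^n)^p\bar M^n_n$ can be dominated by first taking the maximum inside via a Doob-type argument on the martingale $N_i:=e^{-C(t_n-t_i)}(\mathcal{K}_{i:n}^n)^p\bar M_{i:n}^n$ (or a supermartingale after the $e^{-C(t_n-t_i)}$ correction and choosing $C$ large enough to kill the $e^{C_p\Delta}$ drift), so that $\{N_i\}_{i=1,\dots,n}$ is a nonnegative supermartingale in the reverse-time filtration, and then applying the maximal inequality $\widetilde{\mathbb{E}}[\max_i N_i\,\bar M_n^n]\le C\,\widetilde{\mathbb{E}}[N_0\,\bar M_n^n]\le C$.

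The main obstacle is handling the interaction of the $\max_i$ with the Radon--Nikodym factor $\bar M_n^n$ correctly: one cannot simply pull $\bar M_n^n$ out, since it depends on the whole path, so the argument must be phrased as a weighted maximal inequality, i.e.\ work under the measure $\mathbb{Q}^n$ with $d\mathbb{Q}^n/d\widetilde{\mathbb{P}}^n|_{\mathcal{F}_n}=\bar M_n^n$ and bound $\mathbb{E}^{\mathbb{Q}^n}[\max_i e^{-C(t_n-t_i)}(\mathcal{K}_{i:n}^n)^p]$. Under $\mathbb{Q}^n$ the increments $Z_i$ are no longer i.i.d., but the conditional law of $Z_i$ given $\mathcal{F}_{i-1}$ is still a bounded perturbation of the Gaussian (the density involves $\bar m_i$, which only tilts by bounded factors), so the same telescoping exponential moment bound $\mathbb{E}^{\mathbb{Q}^n}_{i-1}[(\mathcal{K}_{i-1:i}^n)^{p}]\le e^{C_p\Delta}$ survives; then Doob's maximal inequality under $\mathbb{Q}^n$ applied to the nonnegative supermartingale $e^{-C(t_n-t_i)}(\mathcal{K}_{i:n}^n)^p$ (for $C\ge C_p$) closes the estimate. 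Since this is entirely parallel to the corresponding step in \cite{CK} with matrix-valued quantities replaced by scalars, I would state the scalar exponential-moment lemma, note the boundedness of all coefficients from Hypothesis \ref{hyp:12}, and refer to \cite{CK} for the routine telescoping and maximal-inequality details.
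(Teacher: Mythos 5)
Your overall strategy — pass to $\mathbb{Q}^n$, establish conditional exponential moment bounds, and run a Doob-type maximal inequality — is indeed the same skeleton the paper uses (via the general Lemma~\ref{lem:60} and the 1D reference~\cite{CK}). The crude bound $\bar M^n_{i:n}\le 2^{n-i}$ for the second inequality is correct and trivial, as you say. The genuine gap is in the claim that $\mathbb{E}^{\mathbb{Q}^n}_{i-1}\bigl[e^{p\kappa_i}\bigr]\le e^{C_p\Delta}$. Write out the conditional expectation: $\mathbb{E}^{\mathbb{Q}^n}_{i-1}\bigl[e^{p\kappa_i}\bigr]=\widetilde{\mathbb{E}}_{i-1}\bigl[e^{p\kappa_i}\bar m_i\bigr]$. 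Factoring $e^{p\kappa_i}=e^{\frac{p(p-1)}{2}\|\sigma^{-1}_{i-1}b_{i-1}\|^2\Delta}\cdot\tilde\kappa_i$ with $\widetilde{\mathbb{E}}_{i-1}[\tilde\kappa_i]=1$, the factor $\tilde\kappa_i$ is a one-step Girsanov tilt that shifts the mean of $Z_i^k$ by $p\bigl(\sigma^{-1}_{i-1}b_{i-1}\bigr)^k\Delta$ and hence shifts the mean of $X_i^1$ by $pb^1_{i-1}\Delta$. Under this shift one no longer has $\widetilde{\mathbb{E}}^{\mathrm{tilt}}_{i-1}[\bar m_i]=1$; an explicit Gaussian computation shows
\begin{equation*}
\widetilde{\mathbb{E}}^{\mathrm{tilt}}_{i-1}[\bar m_i]=1+O(\sqrt{\Delta})\,\phi\bigl(\mathtt{X}^{1,\sigma}_{i-1}\bigr)+\cdots,
\end{equation*}
i.e.\ a correction of order $\sqrt{\Delta}$ concentrated near the boundary (and the correlation does not vanish unless $b^1=0$, since $e^{p\kappa_i}$ and $\bar m_i$ both depend on the single Gaussian direction $\sigma^{1\cdot}_{i-1}\cdot Z_i$). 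Telescoping these corrections over $n\sim\Delta^{-1}$ steps produces $\exp\bigl(O(1)\cdot\text{(discrete local time)}\bigr)$, not a bounded factor — so the one-step estimate you assert does not survive the change of measure, and the resulting ``supermartingale'' you propose to feed into Doob is not a supermartingale.

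This is exactly the difficulty that the paper's Lemma~\ref{lem:60} is engineered to absorb: the weight $e^{-\mathsf{C}_p(t_i+J_i)}$ carries an extra discount $J_i$ accumulated at a local-time rate, which offsets the boundary correction in the conditional exponential moment, and Corollary~\ref{cor:38} (exponential integrability of the discrete local time of $\mathcal{X}^n$) is then used to remove the $J$-weight and recover the stated form $e^{-C(t_n-t_i)}$. Your plan folds this step into ``routine telescoping,'' but it is the non-routine core of the argument. To repair the proposal you would need to replace the asserted one-step bound by one of the form $\widetilde{\mathbb{E}}_{i-1}\bigl[e^{p\kappa_i}\bar m_i\bigr]\le \exp\bigl(C_p\Delta+C_p\sqrt{\Delta}\,\phi(\mathtt{X}^{1,\sigma}_{i-1})\bigr)$, track the accumulated local-time weight, and then invoke Lemma~\ref{lem:essb}/Corollary~\ref{cor:38} to control its exponential moments before applying the maximal inequality.
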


As in the one dimensional case, the following result is an application of Lemma \ref{lem:60}.
\begin{lemma}
        \label{lemma:7} The following bound is satisfied for any $ p\in\mathbb{N} $ and for a positive constant $ C $ independent of $ n $, $ i $ and $ x $  \footnote{Here $ \|\cdot\|_F $ denotes the Frobenius norm. That is,  for any square matrix $ A $, $ \|A\|_F^2:=\sum_{j,k=1}^dA_{jk}^2 $. We often drop the subscript $ F $ as the norm is well understood if the argument is a matrix.}
        \begin{align*}
                \sup_{x\in K}\tilde{\mathbb{E}}_{0,x}\left[\max_{j\geq i}e^{-C(t_j-t_i)}\|E^{n}_{i:j}\|_F^{2p}\bar{M}_n\right]\leq C.
        \end{align*}
       
\end{lemma}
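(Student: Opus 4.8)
The plan is to mimic the proof of the analogous one-dimensional bound, exploiting the product structure $E^n_{i:j}=\prod_{\ell=i+1}^j e_\ell$ together with the multiplicative decomposition $e_\ell = I + (\mathbf{e}^1(\mathbf{e}^1)^\top - I)\mathsf{h}_\ell + \bar{e}_\ell$ from \eqref{eq:td1}. The key observation is that the ``constant order'' part $I+(\mathbf{e}^1(\mathbf{e}^1)^\top-I)\mathsf{h}_\ell$ is an orthogonal projection-type matrix with operator norm $\le 1$ (it is $I$ when $\mathsf{h}_\ell=0$ and $\mathbf{e}^1(\mathbf{e}^1)^\top$ when $\mathsf{h}_\ell=1$, whose Frobenius norm equals $1$), so it does not contribute to growth; all the growth comes from the ``small'' perturbation $\bar{e}_\ell$, which by \eqref{eq:td1} is $O(\sqrt{\Delta})$ in $L^p$ (its pieces are $\mathbf{D}\sigma^k_{\ell-1}\bar r^k_\ell = O(\sqrt\Delta)$ in $L^p$ since $\bar r^k_\ell$ is a centered Gaussian-type increment with variance $O(\Delta)$, $\bar b_{\ell-1}\Delta(1-\mathsf{h}_\ell)=O(\Delta)$, the asymptotically negligible martingale term $\gamma_\ell$, and $\pi_{\ell-1}\mathtt{X}^1_{\ell-1}\mathsf{h}_\ell$ which on the event $\mathsf{h}_\ell=1$ is controlled because $\mathtt{X}^1_{\ell-1}=O(\sqrt\Delta)$ with high probability under the killing constraint). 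This is precisely the setup of Lemma~\ref{lem:60} (a discrete Gronwall / Burkholder-type estimate for products of ``$I+\text{small}$'' matrices against a weight $\bar M_n$), which the excerpt tells us to invoke.

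Concretely, first I would write
\[
E^n_{i:j} = \prod_{\ell=i+1}^j\Bigl(P_\ell + \bar e_\ell\Bigr),\qquad P_\ell := I+(\mathbf{e}^1(\mathbf{e}^1)^\top-I)\mathsf{h}_\ell,
\]
and expand the product, grouping terms by the number of factors $\bar e_\ell$ that are selected. Using $\|P_\ell\|_F\le \sqrt d$ (or, more carefully, $\|P_\ell A\|_F\le \|A\|_F$ since $P_\ell$ is a contraction in the operator norm and one controls the Frobenius norm of $P_\ell A$ by the operator norm of $P_\ell$ times $\|A\|_F$), one reduces to estimating sums of products of the $\bar e_\ell$'s. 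The martingale-difference structure of the dominant piece $\mathbf{D}\sigma^k_{\ell-1}\bar r^k_\ell$ (recall $\bar r^k_\ell = Z^k_\ell-\tilde{\mathbb E}_{\ell-1}[Z^k_\ell\bar m_\ell]$ is $\mathcal F_{\ell-1}$-centered) lets one apply the Burkholder–Davis–Gundy inequality conditionally, and the weight $\bar M_n$ is handled exactly as in Lemma~\ref{moments}: one uses the second bound in \eqref{Lnboundsm}, $\tilde{\mathbb E}_{0,x}[\max_i(\bar M^n_{i:n})^p]\le 2^{pn}$, together with Cauchy–Schwarz, or better, one carries $\bar M_n$ inside the recursion and notes $\bar m_\ell \le 2$ so each step multiplies by at most a bounded factor, which after $n$ steps is absorbed into the $e^{Ct_j}$ prefactor (here $C$ will depend on $\log 2$ and on bounds for $\mathbf D b$, $\mathbf D\sigma^k$). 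The upshot is a Gronwall-type recursion
\[
u_j := \sup_{x\in K}\tilde{\mathbb E}_{0,x}\Bigl[\|E^n_{i:j}\|_F^{2p}\bar M_n\Bigr]\ \le\ (1+C\Delta)\,u_{j-1}+C\Delta\,(\text{lower order}),
\]
whose solution is bounded by $Ce^{C(t_j-t_i)}$, giving the claim after absorbing the exponential into the left-hand maximum.

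The main obstacle — and the reason this is genuinely harder than in \cite{CK} — is the lack of commutativity: the factors $e_\ell$ are $d\times d$ matrices that do not commute, so one cannot simply take logarithms or reorder the product, and in particular the jump matrix $\mathbf{e}^1(\mathbf{e}^1)^\top$ interleaved with the diffusive perturbations $\bar e_\ell$ must be tracked carefully. One has to verify that inserting a rank-one projection $\mathbf{e}^1(\mathbf{e}^1)^\top$ at step $\ell$ does not amplify the Frobenius norm of the accumulated product and, crucially, does not destroy the martingale/centering structure needed for BDG on subsequent steps (it doesn't, because $\mathbf{e}^1(\mathbf{e}^1)^\top$ is $\mathcal F_{\ell-1}$-measurable via $\mathsf h_\ell$ — wait, $\mathsf h_\ell$ depends on $U_\ell$ and $X_\ell$, so it is $\mathcal F_\ell$-measurable; one must condition appropriately, peeling one step at a time from the right and using $\mathbb E_{\ell-1}$). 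Handling this peeling in the non-commutative setting, while keeping uniform (in $n,i,x$) control of the constants and the exponential weight, is the technical crux; everything else is a routine adaptation of the one-dimensional argument via Lemma~\ref{lem:60}.
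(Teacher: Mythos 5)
Your overall strategy — decompose $e_\ell = P_\ell + \bar e_\ell$ with $P_\ell = I - K_\ell$ a contraction and $\bar e_\ell$ the $O(\sqrt\Delta)$ perturbation, then invoke Lemma~\ref{lem:60} — is exactly the paper's route, and you correctly identify $K_\ell = (I - \mathbf{e}^1(\mathbf{e}^1)^\top)\mathsf{h}_\ell$ and the verification of the negative-semi-definiteness of $\mathsf{A}_\ell^\top\mathsf{A}_\ell - I$ as the one genuinely new ingredient.

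However, your treatment of the weight $\bar M_n$ contains a real error. Both proposals you offer fail: Cauchy--Schwarz against the crude bound $\tilde{\mathbb{E}}[\max_i(\bar M^n_{i:n})^q]\le 2^{qn}$ from \eqref{Lnboundsm} gives a factor $2^{n}$, and carrying $\bar M_n$ through the recursion with the pointwise bound $\bar m_\ell\le 2$ also accumulates $2^n$. Since $2^n = e^{(T/\Delta)\log 2}$, this diverges as $n\to\infty$ and cannot be ``absorbed into the $e^{C t_j}$ prefactor'': the latter is bounded by $e^{CT}$ uniformly in $n$, the former is not. The mechanism that actually makes the estimate uniform in $n$ is the $\mathbb{Q}^n$-martingale property $\tilde{\mathbb{E}}_{\ell-1}[\bar m_\ell]=1$ (Lemma~\ref{lem:4}): one never bounds $\bar m_\ell$ pointwise, but passes expectations through one factor at a time, exploiting that $\bar r^k_\ell$ is centered under $\mathbb{Q}^n$ (i.e.\ $\tilde{\mathbb{E}}_{\ell-1}[\bar r^k_\ell\bar m_\ell]=0$, not under $\tilde{\mathbb{P}}$ alone), and that the remainder $R_\ell$ has $O(\Delta)$ conditional expectation as demanded in \eqref{eq:hp}--\eqref{eq:hypl}. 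This is precisely what Lemma~\ref{lem:60} packages; notice also that its conclusion carries the weight $e^{-\mathsf{C}_p(t_i + J_i)}$ involving the accumulated local time $J$, and one removes $J$ afterwards via Corollary~\ref{cor:38} and Cauchy--Schwarz — the local-time weight, not a crude $2^n$ bound, is what controls the repeated hits of the boundary. So: right lemma, right identification of $K_\ell,\alpha_\ell$, but your fallback explanation of how $\bar M_n$ is tamed would not close, and the correct mechanism is the change to $\mathbb{Q}^n$ together with the local-time weight.
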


\begin{proof}[Proof of Lemmas \ref{moments} and \ref{lemma:7}]
        The proof follows along the same lines as in the 1D case using the moment results in Lemma \ref{lem:60} of Section \ref{sec:app3}. See the proof of Lemma 9 in \cite{CK}.
        The only point that is new is to verify the hypotheses on $ K_i:=(I-\mathbf{e}^1(\mathbf{e}^1)^*)\mathsf{h}_i $ stated in Lemma \ref{lem:60}. This is straightforward.
\end{proof}

Recall that 
\[
   p_i:=p_i(X_{i-1},X_i)=
        \exp\left(-2\frac{\mathtt{X}_{i-1}^1\mathtt{X}_i^1}{a^{11}_{i-1}\Delta}\right)
        =\exp\left(-2\frac{(X_{i-1}^1-L)(X_i^1-L)}{a^{11}_{i-1}\Delta}\right).
\]

We have the following elementary lemma which is parallel to a similar result in the 1D case. We will use the notation (as in the 1D case)

\begin{align*}
         \mathtt{X}_{i-1}^{1,\sigma}:=\frac{\mathtt{X}_{i-1}^{1}}{\sqrt{a^{11}_{i-1}\Delta}}=\frac{{X}_{i-1}^{1}-L}{\sqrt{a^{11}_{i-1}\Delta}}
\end{align*}

        \begin{lemma}
                \label{lem:4}
                The discrete time process  $M^n$ is  a supermartingale and
                $\bar{M}^n$ is a martingale. In fact, we have
                \begin{align*}
                        \tilde{\mathbb{E}}_{i-1}[m_i]=&\bar{\Phi}\left(-
                        {\mathtt{X}_{i-1}^{1,\sigma}}\right)-\bar{\Phi}\left(
                        {\mathtt{X}_{i-1}^{1,\sigma}}\right)\leq 1,\\
                        \tilde{\mathbb{E}}_{i-1}[\bar{m}_i]=&\bar{\Phi}\left(-
                        {\mathtt{X}_{i-1}^{1,\sigma}}\right)+\bar{\Phi}\left(
                        {\mathtt{X}_{i-1}^{1,\sigma}}\right)=1.
                \end{align*}

        \end{lemma}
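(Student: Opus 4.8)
The plan is to compute the two conditional expectations $\tilde{\mathbb{E}}_{i-1}[m_i]$ and $\tilde{\mathbb{E}}_{i-1}[\bar m_i]$ explicitly using the reflection-principle-type identity built into the definition of $p_i$, and then deduce the (super)martingale properties as immediate consequences. First I would recall that under $\widetilde{\mathbb{P}}^n$ we have $X_i^1 = X_{i-1}^1 + \sigma^{1k}_{i-1}Z^k_i$ with $Z_i\sim N(0,\Delta I)$ independent of $\mathcal{F}_{i-1}$, so conditionally on $\mathcal{F}_{i-1}$ the increment $\sigma^{1k}_{i-1}Z^k_i$ is a centered Gaussian with variance $a^{11}_{i-1}\Delta$. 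Hence $\mathtt{X}^1_i = \mathtt{X}^1_{i-1} + \sqrt{a^{11}_{i-1}\Delta}\,\zeta$ where $\zeta\sim N(0,1)$. Next I would observe that the auxiliary uniform $U_i$ is (by construction in the $m,\bar m$ setup, as in \cite{CK}) independent of everything in $\mathcal{F}_{i-1}$ and of $Z_i$, so that $\tilde{\mathbb{E}}_{i-1}[\mathsf{h}_i \mid X_i] = p_i$, i.e. $\tilde{\mathbb{E}}_{i-1}[\,g(X_i)\mathsf{h}_i\,] = \tilde{\mathbb{E}}_{i-1}[\,g(X_i)\,p_i(X_{i-1},X_i)\,]$ for any bounded $g$.

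With these two reductions the core is a one-dimensional Gaussian integral: writing $x:=\mathtt{X}^1_{i-1}>0$, $v:=a^{11}_{i-1}\Delta$, and $y:=\mathtt{X}^1_i = x+\sqrt v\,\zeta$, one has
\[
\tilde{\mathbb{E}}_{i-1}\!\left[1_{(X^1_i>L)}\,p_i\right]
=\int_{-x/\sqrt v}^{\infty}\frac{1}{\sqrt{2\pi}}e^{-z^2/2}\exp\!\left(-\frac{2x(x+\sqrt v z)}{v}\right)dz .
\]
Completing the square in the exponent, $-z^2/2 - 2x(x+\sqrt v z)/v = -\tfrac12\big(z + 2x/\sqrt v\big)^2$, so the integral equals $\int_{-x/\sqrt v}^\infty \varphi(z+2x/\sqrt v)\,dz = \bar\Phi\big(x/\sqrt v\big)$, where $\bar\Phi$ is the standard normal survival function (using the paper's normalization for $\bar\Phi$, whatever constant it carries). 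Since $\tilde{\mathbb{E}}_{i-1}[1_{(X^1_i>L)}] = \bar\Phi(-x/\sqrt v)$, and $m_i = 1_{(X^1_i>L)}(1-\mathsf{h}_i)$ while $\bar m_i = 1_{(X^1_i>L)}(1+\mathsf{h}_i)$, subtracting resp. adding the two computed quantities gives exactly the stated formulas with $x/\sqrt v = \mathtt{X}^{1,\sigma}_{i-1}$. The inequality $\tilde{\mathbb{E}}_{i-1}[m_i]\le 1$ is then clear from $\bar\Phi(-x/\sqrt v)-\bar\Phi(x/\sqrt v)\le\bar\Phi(-x/\sqrt v)\le 1$, and the identity $\tilde{\mathbb{E}}_{i-1}[\bar m_i]=1$ gives the martingale property of $\bar M^n$ (by the tower property, since $\bar M^n_j = \bar M^n_{j-1}\bar m_j$ with $\bar m_j\ge 0$); likewise $M^n$ is a supermartingale.

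The main obstacle — really the only nontrivial point — is pinning down the independence and conditioning claims about $U_i$: specifically that $U_i$ is uniform on $[0,1]$, independent of $\mathcal{F}_{i-1}\vee\sigma(Z_i)$, so that $\tilde{\mathbb{E}}_{i-1}[\mathsf{h}_i\mid X_i]=p_i$ exactly. This is inherited from the construction of the coupled killing representation and is handled verbatim as in the 1D case in \cite{CK}; I would simply cite that construction rather than reprove it. A minor secondary check is that $p_i\le 1$ (so $\mathsf{h}_i$ is well defined as an indicator), which holds because $x,\mathtt{X}^1_i$ can have either sign but on the event $1_{(X^1_i>L)}$ both $\mathtt{X}^1_{i-1}$ and $\mathtt{X}^1_i$ are positive and $a^{11}_{i-1}>0$ by uniform ellipticity, forcing the exponent to be $\le 0$; off that event the indicator $1_{(X^1_i>L)}$ kills the term anyway. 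Everything else is the routine Gaussian computation above, so I would present it compactly and defer to \cite{CK} for the structural bookkeeping.
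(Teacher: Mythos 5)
Your computation is correct and follows essentially the same route as the paper: reduce to a one-dimensional Gaussian integral in the normal coordinate, complete the square in the exponent of $p_i$, and recognize the survival function $\bar\Phi$. The paper's own proof integrates the full $d$-dimensional Gaussian density and marginalizes in the first coordinate, arriving at the same expression $\frac{1}{\sqrt{2\pi a^{11}_{i-1}\Delta}}\int_{x^1_i>L}\exp\bigl(-\tfrac{(x^1_i+X^1_{i-1}-2L)^2}{2a^{11}_{i-1}\Delta}\bigr)dx^1_i = \bar\Phi(\mathtt{X}^{1,\sigma}_{i-1})$; your version phrases the same calculation after a preliminary reduction to the standard normal $\zeta$, which is just a cosmetic difference. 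Your handling of the role of $U_i$ (independence, tower property giving $\tilde{\mathbb{E}}_{i-1}[\mathsf{h}_i\mid X_i]=p_i$) and the deduction of the (super)martingale properties by the multiplicative structure $\bar M^n_j=\bar M^n_{j-1}\bar m_j$ are both sound and match the paper's intent, which it leaves implicit by citing the 1D case.
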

    
        \begin{proof}
Recall that $m_i=1_{({X}^1_i>L)}(1-\mathsf{h}_i) $. We only compute one of the terms  as the proof of the other ones are similar.  We have that    

                               \begin{align*}
                        \tilde{\mathbb{E}}_{i-1}\left[  1_{\left( {X}%
                                _{i}^1>L\right )}p_i\right]=&
                                 \int_{x_i^1>L}
                        \frac{p_i(X_{i-1},x_i)}{\sqrt{(2\pi \Delta)^d\det(a_{i-1})}}\exp\left(-\frac{(x_i-X_{i-1})\cdot[a_{i-1}^{-1}(x_i-X_{i-1})]}{2\Delta}\right)dx_i\\
                        =&
                        {\frac{1}{\sqrt{2\pi {a^{11}_{i-1}}\Delta}}} \int_{x_i^1>L}
                        \exp\left(-\frac{(x^1_i+X^1_{i-1}-2L)^2 }{2{a^{11}_{i-1}}\Delta}\right)dx^1_i,
                \end{align*}
where $ x^1_i $ denotes the first coordinate of the vector $ x_i $.  
        Finally, using the definition of $ \bar{\Phi} $ gives the result. 
        \end{proof}

        When dealing with remainders it will be useful to introduce the concept of asymptotically negligible in expectation.  We give next its definition which already appeared in the 1D case. 
       \begin{definition}
        \label{def:2}

        Let $ \Upsilon_i\equiv \Upsilon^n_i(x)\in \mathcal{F}_{t_i} $ be a sequence of $ L^{2q} (\Omega)$ -integrable random variables,
        we say that
        the family of random variables $Y_i\in \mathcal{F}_i $, $i=1,...,n 
        $ is of order $ \Delta^p $ in expectation (under the measure $ \bar{M}^n $) if for any $ q\in\mathbb{N} $ and any compact set $ K\subseteq \mathbb{R}^d $, we have
        \begin{align}
                \label{eq:neg}
                \sup_{x\in K}\tilde{\mathbb{E}}_{0,x}\left[\left|\sum_{i=1}^n\Upsilon_i\right |^q\bar{M}^n_n\right]\leq C\Delta^{pq}.
        \end{align}
        As notation, we will use $ \Upsilon_i=O^E_i(\Delta^p) $ or $ O^{E,\bar{M}}_i(\Delta^p) $ in the case the Radon-Nikodym random variable $ \bar{M}^n $ is stressed. 
        The usual big O notation will also be used as in $\Upsilon_i=O_i(\Delta^p) $. This means that $ \Upsilon_i\in\mathcal{F}_i $ and $ |\Upsilon_i|\leq C\Delta^p $. We may also abuse slightly this notation using $ \Upsilon_i=O_i(Z_i) $  to mean that $ |\Upsilon_i|\leq C|Z_i| $.
       \end{definition}
       
   The extension to higher dimensions is also clear by changing the absolute value in \eqref{eq:neg} by the appropriate norm. 
   
   When dealing with remainders this concept is useful and we say that the sequence is  asymptotically negligible in expectation when $ p>0 $. 
   
   As in the one dimensional case,  we will deduce the limit of the  noise driving process
   \begin{align}\label{rn}
        R^n_t:=&\sum_{i=0}^{n-1}1_{[0,t_{i+1})}(t)\bar{r}_i,\\
   \label{gamman}
        \Gamma^n_t:=&\sum_{i=0}^{n-1}1_{[0,t_{i+1})}(t)\gamma_i,
   \end{align}
   as well as the limit of the following processes 
   \begin{align}
        { \mathbf{B} _{t}^{n}  }  :=&\sum_{i=0}^{n-1}1_{[0,t_{i+1})}(t)\sigma_{i-1}\tilde{\mathbb{E}}_{i-1}[Z_{i}\bar{m}_{i}],
        \label{fn}\\
        E_{t}^{n} :=&
        \sum_{i=0}^{n-1}1_{[0,t_{i+1})}(t)\prod_{j=1}^{i}e_{j},\nonumber\\
        \mathcal{K}_{t}^{n} :=&
        \sum_{i=0}^{n-1}1_{[0,t_{i+1})}(t)\mathcal{K}^n_{i}.\label{Kn}
         \end{align} 
  In the proof of the following result, we will use the following notation: Let $g_{i}$ denote
the conditional density of the increment $\sigma _{i-1}Z_{i}$ with respect
to $\mathcal{F}_{i-1}$ (recall that $ a_{i-1}=\sigma_{i-1}\sigma^{\top}_{i-1}   $),
given by\begin{equation*}
        g_{i}(z)=\frac{1}{\sqrt{\det(a_{i-1})\left(2\pi \Delta\right)^{d}}}\exp \left( -\frac{%
                z\cdot [a_{i-1}^{-1}z]}{2\Delta }\right) .
\end{equation*}%

The marginal density for the first component (corresponding to the normal direction to the boundary) is given for $ z\in\mathbb{R} $ as 
\begin{align*}
        g_{i}^1(z)&=\sqrt{\frac{1}{{2\pi {a^{11}_{i-1}}\Delta  }}}
        \exp\left(-\frac{z^2 }{2{a^{11}_{i-1}}\Delta}\right).
\end{align*}

       Furthermore, from \eqref{eq:6.1m}, note that $ \Delta_i\mathbf{B}^n= a_{i-1}\mathbf{e}^1\Delta_iB^n$, where 
       \begin{align*}
        \Delta_iB^{n}   =&   2\Delta g_{i-1}^1(\mathtt{X}^1_{i-1})
        -      
        2\bar{\Phi}\left(
        \mathtt{X}_{i-1}^{1,\sigma}
        \right)
        \frac{\mathtt{X}_{i-1}^1}{ {{a^{11}_{i-1}} }}.
       \end{align*}
       With these definitions, we let
       \begin{align*}
       B _{t}^{n} :=&\sum_{i=0}^{n-1}1_{[t_i,t_{i+1})}(t)
        \sum_{j\leq i}\Delta_jB ^{n}.
       \end{align*}
        \begin{lemma}
                \label{lemma:4} Under $ \mathbb{Q}^n $ and for $ j,k\in\{1,...,d\} $ and $ i\in\{1,...,n\} $, we have  for any $ p>1 $
                \begin{align}
                        \tilde{\mathbb{E}}_{i-1}\left[Z_i\bar{m}_{i}\right]=&     
        (\sigma_{i-1})^{\top}  \mathbf{e}^1\Delta_iB^n.
                        \label{eq:6.1m}
                \end{align}
        
        For covariances, we have
\begin{align*}
   \tilde{\mathbb{E}}_{i-1}\left[Z^j_iZ^k_i\bar{m}_{i}\right]=&   \delta_{jk}\Delta+
   O_{i-1}^E(\sqrt{\Delta}).
\end{align*}

                Finally, for any $ p\geq 1 $, there exists a constant $ C_p>0 $ such that
                \begin{align}
                        \label{eq:6.4m} \tilde{\mathbb{E}}_{i-1}\left[|Z_i|^p\bar{m}_{i}\right]\leq C_p\Delta^{p/2}.
                \end{align}
        \end{lemma}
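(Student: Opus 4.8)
The plan is to compute all three conditional expectations directly from the Gaussian density $g_i$, exploiting the reflection structure built into $\bar m_i = 1_{(X_i^1>L)}(1+\mathsf{h}_i)$. Since under $\widetilde{\mathbb{P}}^n$ the increment $\sigma_{i-1}Z_i$ has conditional density $g_i$ and $\mathsf{h}_i = 1_{(U_i\le p_i)}$ with $U_i$ independent uniform, conditioning on $\mathcal{F}_{i-1}$ turns $\widetilde{\mathbb{E}}_{i-1}[\,\cdot\,\mathsf{h}_i]$ into integration of $p_i(X_{i-1},x_i)$ against $g_i(x_i-X_{i-1})$, and the key algebraic identity is the standard ``method of images'' one: multiplying the $d$-dimensional Gaussian density by $p_i$ produces the Gaussian density reflected in the first coordinate about $L$, i.e. it replaces $\mathtt{X}_{i-1}^1 = X_{i-1}^1-L$ by $-\mathtt{X}_{i-1}^1$ in the first marginal while leaving the conditional law of the remaining coordinates (given the first) unchanged. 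This is exactly the computation already carried out in the proof of Lemma \ref{lem:4}. First I would record this reflection identity for $g_i^1(z)\,p_i$ and note that the cross-covariance structure of $\sigma_{i-1}Z_i$ is unaffected in the tangential directions.

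For \eqref{eq:6.1m}: write $\widetilde{\mathbb{E}}_{i-1}[Z_i\bar m_i] = \widetilde{\mathbb{E}}_{i-1}[Z_i 1_{(X_i^1>L)}] + \widetilde{\mathbb{E}}_{i-1}[Z_i 1_{(X_i^1>L)}\mathsf{h}_i]$. The first term vanishes only up to the half-space truncation; more precisely, since $Z_i = \sigma_{i-1}^{-1}(\sigma_{i-1}Z_i)$, one computes $\widetilde{\mathbb{E}}_{i-1}[\sigma_{i-1}Z_i\,1_{(X_i^1>L)}]$ by integrating $z$ against $g_i(z)$ over $\{z^1 > -\mathtt{X}_{i-1}^1\}$; by the Gaussian conditional-mean formula this equals $a_{i-1}\mathbf{e}^1$ times a scalar depending only on the first marginal, namely $\Delta g_{i-1}^1(\mathtt{X}_{i-1}^1) - \bar\Phi(\mathtt{X}_{i-1}^{1,\sigma})\mathtt{X}_{i-1}^1/a^{11}_{i-1}$ (the regression coefficient of the other coordinates on $z^1$ being carried by the column $a_{i-1}\mathbf{e}^1/a^{11}_{i-1}$). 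For the term with $\mathsf{h}_i$, the image identity replaces $\mathtt{X}_{i-1}^1$ by $-\mathtt{X}_{i-1}^1$, which flips the sign of the $g^1$-term's argument-independent part and of the $\bar\Phi$ term appropriately, so that the two contributions combine into $\sigma_{i-1}^{-1} a_{i-1}\mathbf{e}^1 \Delta_i B^n = (\sigma_{i-1})^\top \mathbf{e}^1 \Delta_i B^n$, using $\sigma_{i-1}^{-1}a_{i-1} = \sigma_{i-1}^\top$. I would keep careful track of which boundary term gets reflected to make sure the signs assemble into the stated $\Delta_i B^n$.

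For the covariance: split again along $\bar m_i = 1_{(X_i^1>L)} + 1_{(X_i^1>L)}\mathsf{h}_i$. On the full space $\widetilde{\mathbb{E}}_{i-1}[Z_i^j Z_i^k] = \delta_{jk}\Delta$ exactly; the truncation to $\{X_i^1>L\}$ and the reflected measure each perturb this by boundary terms that are $O_{i-1}^E(\sqrt\Delta)$ — these are of the form (polynomial in $\mathtt{X}_{i-1}^{1,\sigma}$) times $g_{i-1}^1(\mathtt{X}_{i-1}^1)\Delta$ or $\bar\Phi(\pm\mathtt{X}_{i-1}^{1,\sigma})$ corrections, and the bound $\sup_x \widetilde{\mathbb{E}}_{0,x}[|\sum_i(\cdots)|^q\bar M_n^n]\le C\Delta^{q/2}$ follows from the sub-Gaussian tail of $g^1$ together with Lemma \ref{lem:4} exactly as in the 1D case. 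Finally \eqref{eq:6.4m} is immediate: $\bar m_i \le 2$ and $\widetilde{\mathbb{E}}_{i-1}[|Z_i|^p] \le C_p\Delta^{p/2}$ since $Z_i \sim N(0,\Delta I)$ under $\widetilde{\mathbb{P}}^n$. The main obstacle is purely bookkeeping: correctly identifying, in the $d$-dimensional Gaussian, the regression coefficient $a_{i-1}\mathbf{e}^1/a^{11}_{i-1}$ of the tangential coordinates on the normal one, and checking that after multiplication by $p_i$ and reflection the scalar prefactors combine into precisely $\Delta_i B^n$; the rest is a direct transcription of the 1D argument in \cite{CK}.
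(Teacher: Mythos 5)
Your plan matches the paper's proof: both split $\bar m_i = 1_{(X_i^1>L)} + 1_{(X_i^1>L)}\mathsf h_i$, integrate against the $d$-dimensional Gaussian density (with the method-of-images identity for the $\mathsf h_i$ term as in Lemma~\ref{lem:4}), identify the regression coefficient $a_{i-1}\mathbf e^1/a_{i-1}^{11}$ of the tangential coordinates on the normal one, and use $\sigma_{i-1}^{-1}a_{i-1}=\sigma_{i-1}^\top$ to arrive at $(\sigma_{i-1})^\top\mathbf e^1\Delta_iB^n$; the covariance and moment bounds are handled the same way. The only cosmetic difference is that for \eqref{eq:6.4m} you use the pointwise bound $\bar m_i\le 2$, where the paper invokes Cauchy--Schwarz with $\tilde{\mathbb E}_{i-1}[\bar m_i^2]\le 4$ — both give the same conclusion.
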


        \begin{proof}
                The proof uses many arguments that already appeared in the 1D case and in extended form in the proof of Lemma \ref{lem:4}.       
                We will explain in detail the steps for the estimation of $ \tilde{\mathbb{E}}_{i-1}\left[Z_i\bar{m}_{i}\right] $ without going into detailed calculations which are similar to the 1D case and uses the integration method which appears in the proof of Lemma \ref{lem:4}. The higher moments are done in a similar way. 
                
                Recall that we assume throughout the proof that $ X^1_{i-1}>L$. Using the definition of $ \bar{m}_i $, we have for $ i\in\{1,...n\} $ 
                \begin{align*}
                        \tilde{\mathbb{E}}_{i-1}\left[Z_i\bar{m}_{i}\right]=&   \tilde{\mathbb{E}}_{i-1}\left[Z_i       1_{\left( {X}%
                                _{i}^1> L\right )}\left(
                        1+\mathsf{h}_i
                        \right )\right].
                \end{align*}            
             We continue with the calculation in two parts as follows:
                \begin{align}
                        \label{eq:z1}
                        &      \tilde{\mathbb{E}}_{i-1}\left[Z_i   1_{\left(X^1_i
                                > L \right )}\right],\\
                        &\tilde{\mathbb{E}}_{i-1}\left[Z_i   1_{\left( X^1_i> L
                                \right )}\mathsf{h}_i
                \right].
                        \label{eq:z2}
                \end{align}

                {\it Part 1: Estimate for \eqref{eq:z1}} (recall that we are using the summation over double indexes notation)

                \begin{align*}
                        &\tilde{\mathbb{E}}_{i-1}\left[Z_i 1_{\left( {X}%
                                _{i}^1> L\right )}\right]\\=&
                        \int_{x^1_i> L}
                        \frac{[\sigma^{-1}_{i-1}(x_i-X_{i-1})]}{\sqrt{(2\pi \Delta)^d\det(a_{i-1})}}\exp\left(-\frac{(x_i-X_{i-1})\cdot[a_{i-1}^{-1}(x_i-X_{i-1})]}{2\Delta}\right)dx_i\\
                        =&
                        \int_{x^1_i> L}
                        \frac{[\sigma^{-1}_{i-1}\mathbf{e}^k](x_i-X_{i-1})^k}{\sqrt{(2\pi \Delta)^d\det(a_{i-1})}}\exp\left(-\frac{(x_i-X_{i-1})^{\ell} (x_i-X_{i-1})^l \mathbf{e}^{\ell} \cdot[a_{i-1}^{-1}\mathbf{e}^l]}{2\Delta}\right)dx_i.
                \end{align*}
%
                

                Therefore, using the explicit conditional distributions of multivariate Gaussian laws and integrating with respect to $ x^2,...,x^{d} $, we obtain for any $ p>0 $
                \begin{align*}
                        &\tilde{\mathbb{E}}_{i-1}\left[Z_i 1_{\left( {X}%
                                _{i}^1> L\right )}\right]\\=&
                       {\frac{1}{\sqrt {2\pi {a^{11}_{i-1}} \Delta}}}[\sigma^{-1}_{i-1}\mathbf{e}^k] \frac{ a^{k1}_{i-1}}{{a}^{11}_{i-1}}\int_{x_i^1>L}(x_i-X_{i-1})^1
                        \exp\left(-\frac{(x^1_i-X^1_{i-1})^2 }{2{a^{11}_{i-1}}\Delta}\right)dx^1_i\\
                        =&\sqrt{\frac{\Delta }{{2\pi {a^{11}_{i-1}} }}}{[\sigma^{-1}_{i-1}\mathbf{e}^k]}\frac{ a_{i-1}^{k1}}{{a}^{11}_{i-1}}
                        {a^{11}_{i-1}}
                        \exp\left(-\frac{(X^1_{i-1}-L)^2 }{2{a^{11}_{i-1}}\Delta}\right)\\
                        =&
                        [\sigma^{-1}_{i-1}\mathbf{e}^k]
                        { a_{i-1}^{k1}}\Delta g_{i-1}^1(X^1_{i-1}).
                \end{align*}
        The final step is to realize that $[\sigma^{-1}_{i-1}\mathbf{e}^k]^\ell
         a_{i-1}^{k1}= \sigma_{i-1}
        ^{1\ell} $.        
                This finishes our arguments for the estimation of \eqref{eq:z1}.    
                This type of argument will be used repeatedly in the rest of this proof.

                {\it Part 2: Estimate for \eqref{eq:z2}:}    
            Define $ \bar{g}_{i-1}(X_{i-1},x_i):=   p_i(X_{i-1},x_i)g_{i-1}(x_i-X_{i-1}) $, then \eqref{eq:z2}, becomes
                \begin{align}
                        \label{eq:9.1}
                        &\tilde{\mathbb{E}}_{i-1}\left[Z_i 
                        p_i\right]
                        = 
                        \int_{x^1_i> L}[\sigma^{-1}_{i-1}(x_i-X_{i-1})]
                        \bar{g}_{i-1}(X_{i-1},x_i)dx_i.
                \end{align}
        
                As in the previous calculation, algebraic simplifications and integrating with respect $x^\ell $, $ \ell\neq 1 $ first and then with respect to $ x^1 $, one obtains
                \begin{align}
                        \label{eq:7.1}
                       \tilde{\mathbb{E}}_{i-1}\left[Z_i 
                       p_i\right]
                       =&
                                             [\sigma^{-1}_{i-1}\mathbf{e}^k]{ a^{k1}_{i-1}}
                        \left (
                        \Delta g_{i-1}^1(\mathtt{X}^1_{i-1})
                        -2      \sqrt{\Delta}
                                \bar{\Phi}\left(
                                \mathtt{X}_{i-1}^{1,\sigma}
                                        \right)
                                        \frac{\mathtt{X}_{i-1}^{1,\sigma}}{\sqrt{a^{11}_{i-1}}}     
                        \right).
                \end{align}
                This finishes Part 2.
                
                The estimates on covariances are obtained 
                using results for conditional covariances of the components of the vector $ X_i-X_{i-1}$. For simplicity, we only discuss the case $ d=2 $. Using the conditional law of $  (X_i-X_{i-1})^2$ conditioned to $ (X_i-X_{i-1})^1 $, we obtain after explicit calculations:
                \begin{align*}
                        \tilde{\mathbb{E}}_{i-1}\left[(X_i-X_{i-1})^j(X_i-X_{i-1})^k\bar{m}_i\right]=a_{i-1}^{jk}\Delta+O_{i-1}^E(\sqrt{\Delta}).
                \end{align*}
            In fact, denote by $  \rho^2_{i-1}$, the correlation between  $ (X_i-X_{i-1})^1$ and $(X_i-X_{i-1})^2 $ then the term $ O_{i-1}^E(\sqrt{\Delta}) $ in the above expression can be written as
            \begin{align*}
     &-4\Delta    \mathtt{X}^{1,\sigma}_{i-1}
                \vartheta _{i-1}\times 
                \begin{cases}
                        a^{11}_{i-1}&\text{ if }j=k=1\\
                        \rho^2_{i-1}a^{22}_{i-1}&\text{ if }j=k=2\\
                        \rho_{i-1}\sqrt{a^{11}_{i-1}a^{22}_{i-1}}& \text{ if }j\neq k
                \end{cases}\\
            \vartheta _{i-1}    := &\sigma _{i-1}\sqrt{\Delta }
                g_{i}(\mathtt{X}^1_{i-1})-\mathtt{X}_{i-1}^{1,\sigma }\bar{\Phi}\left( \mathtt{X}_{i-1}^{1,\sigma }
                \right).
            \end{align*}
        Then in order to obtain the result, we compute
        \begin{align*}
                \tilde{\mathbb{E}}_{i-1}\left[Z_{i-1}^jZ_{i-1}^k\bar{m}_i\right]=&\tilde{\mathbb{E}}_{i-1}\left[\left(\sigma_{i-1}^{-1}(X_{i}-X_{i-1})(X_{i}-X_{i-1})^{\top}(\sigma_{i-1}^{-1})^{\top} \right)^{jk}\bar{m}_i\right]\\=&
                \sigma^{-1}_{i-1}a_{i-1}(\sigma_{i-1}^{-1})^{\top}\Delta+O_{i-1}^E(\sqrt{\Delta}).
        \end{align*}

                The estimates about general $ p $ moments in \eqref{eq:6.4m} follow using Cauchy-Schwartz inequality and the fact that 
               $ \tilde{\mathbb{E}}_{i-1}\left[\bar{m}^2_{i}\right] \leq 4$.
        \end{proof}
    

        \begin{lemma}
                \label{lem:5m}
                
                There exists a
                constant $c=c\left( T\right) $ independent of $n$ such that for any $ p\in\mathbb{N} $,
                \begin{equation}
                \widetilde{\mathbb{E}}\left[ \left 
                | \sum_{i=1}^{n}\widetilde{\mathbb{E}}_{i-1}\left[ 
                {Z}_{i}\bar{m}_{i}\right] \right| ^p\bar{M}^n_{n}\right] \leq c,\quad~~~~\widetilde{%
                        \mathbb{E}}\left[ \max_{j\leq n-k-1}\left| \sum_{i=j}^{j+k}
                        \widetilde{\mathbb{E}}%
                _{i-1}\left[ {Z}_{i}\bar{m}_{i}\right] \right|^p \bar{M}^n_{n}\right] \leq c%
                (\left( k+1\right) \Delta )^{p\over 2}  \label{control1}
                \end{equation}
                In addition, the following bounds hold $\mathbb{P}$ (or $\widetilde{\mathbb{P}}$%
                )-almost surely\footnote{%
                        Due to the absolute continuity of ${\mathbb{Q}}^n$ with respect to $\mathbb{P}$%
                        , bounds (\ref{control2}) and (\ref{control3}) also hold ${\mathbb{Q}}^n$%
                        -almost surely.} 
                \begin{align}
                \sum_{i=1}^{n}\widetilde{\mathbb{E}}_{i-1}\left[ |{Z}_{i}|^{2}\bar{m}_{i}\right]
                \leq &c,\quad\quad \max_{j\leq n-k-1}\sum_{i=j}^{j+k}\widetilde{\mathbb{E}}_{i-1}\left[
                |{Z}_{i}|^{2}\bar{m}_{i}\right] \leq c\left( k+1\right) \Delta .
                \label{control2} \\
                \sum_{i=1}^{n}\widetilde{\mathbb{E}}_{i-1}\left[ |{Z}_{i}|^{4}\bar{m}_{i}\right]
                \leq &c\Delta, \quad\quad\max_{j\leq n-k-1}\sum_{i=j}^{j+k}\widetilde{\mathbb{E}}%
                _{i-1}\left[ |{Z}_{i}|^{4}\bar{m}_{i}\right] \leq c\left( k+1\right) \Delta
                ^{2}.  \label{control3}
                \end{align}
        
        \end{lemma}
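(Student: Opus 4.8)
We sketch how one would prove Lemma \ref{lem:5m}. The a priori bounds \eqref{control2} and \eqref{control3} are the easy part: by \eqref{eq:6.4m} the conditional moments $\widetilde{\mathbb{E}}_{i-1}[|Z_i|^2\bar{m}_i]$ and $\widetilde{\mathbb{E}}_{i-1}[|Z_i|^4\bar{m}_i]$ are dominated $\widetilde{\mathbb{P}}$-a.s.\ by the deterministic quantities $C\Delta$ and $C\Delta^2$, so one simply sums these pointwise bounds over the relevant index set --- over all $n$ indices, using $n\Delta=T$, for the left-hand inequalities, and over a window of $k+1$ indices for the right-hand ones. For \eqref{control1} the plan is to reduce everything to a moment estimate for the discrete local time $B^n$. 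By \eqref{eq:6.1m}, $\widetilde{\mathbb{E}}_{i-1}[Z_i\bar{m}_i]=(\sigma_{i-1})^{\top}\mathbf{e}^1\,\Delta_iB^n$, and substituting $g^1_{i-1}$ into the formula for $\Delta_iB^n$ recorded before Lemma \ref{lemma:4} yields
\[
\Delta_iB^n=2\sqrt{\tfrac{\Delta}{a^{11}_{i-1}}}\;\psi\!\bigl(\mathtt{X}_{i-1}^{1,\sigma}\bigr),\qquad \psi(x):=\tfrac{1}{\sqrt{2\pi}}e^{-x^2/2}-x\bar{\Phi}(x).
\]
On $[0,\infty)$ the function $\psi$ is nonnegative (since $\bar{\Phi}(x)\le (2\pi)^{-1/2}x^{-1}e^{-x^2/2}$), satisfies $\psi'(x)=-\bar{\Phi}(x)<0$, $\psi(0)=(2\pi)^{-1/2}$, $\psi(x)\le (2\pi)^{-1/2}e^{-x^2/2}$, and $x\mapsto x^{m}\psi(x)$ is bounded for every $m\ge0$ (Mills ratio asymptotics). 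Using uniform ellipticity ($\lambda\le a^{11}\le\Lambda$) one then gets $0\le\Delta_iB^n\le C\sqrt\Delta$ and $\Delta_iB^n\le C\sqrt\Delta\,e^{-(\mathtt{X}_{i-1}^{1,\sigma})^2/2}$; since $\sigma$ is bounded and $B^n$ is nondecreasing, $|\sum_{i=j}^{j+k}\widetilde{\mathbb{E}}_{i-1}[Z_i\bar{m}_i]|\le C\,(B^n_{j+k}-B^n_{j-1})$. Writing $\mathbb{E}^{\mathbb{Q}^n}[\cdot]=\widetilde{\mathbb{E}}[\cdot\,\bar{M}^n_n]$, it therefore suffices to prove $\mathbb{E}^{\mathbb{Q}^n}[(B^n_n)^p]\le c$ and $\mathbb{E}^{\mathbb{Q}^n}[\max_{j\le n-k-1}(B^n_{j+k}-B^n_{j-1})^p]\le c((k+1)\Delta)^{p/2}$.

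The second step is to establish the semimartingale decomposition of $\mathtt{X}^1=X^1-L$ under $\mathbb{Q}^n$. Since $\bar{M}^n_n=\prod_i\bar{m}_i$ vanishes off $\bigcap_i\{X^1_i>L\}$, we have $\mathtt{X}_i^1>0$ $\mathbb{Q}^n$-a.s., and since $\widetilde{\mathbb{E}}_{i-1}[\bar{m}_i]=1$ (Lemma \ref{lem:4}), conditional expectations under $\mathbb{Q}^n$ are $\mathbb{E}^{\mathbb{Q}^n}_{i-1}[\cdot]=\widetilde{\mathbb{E}}_{i-1}[\cdot\,\bar{m}_i]$. From $X^1_i-X^1_{i-1}=\sigma^{1k}_{i-1}Z^k_i$ and $[(\sigma_{i-1})^{\top}\mathbf{e}^1]^k=\sigma^{1k}_{i-1}$ one gets $\mathbb{E}^{\mathbb{Q}^n}_{i-1}[\mathtt{X}_i^1-\mathtt{X}_{i-1}^1]=\sigma^{1k}_{i-1}\widetilde{\mathbb{E}}_{i-1}[Z^k_i\bar{m}_i]=a^{11}_{i-1}\Delta_iB^n\ge0$, so that $\mathtt{X}_i^1=\mathtt{X}_0^1+N_i+A_i$ with $N_i=\sum_{j\le i}(\sigma^{1k}_{j-1}Z^k_j-a^{11}_{j-1}\Delta_jB^n)$ a $\mathbb{Q}^n$-martingale and $A_i=\sum_{j\le i}a^{11}_{j-1}\Delta_jB^n$ nondecreasing; by ellipticity $B^n_n\le\lambda^{-1}A_n$. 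The martingale $N$ is harmless: $\mathbb{E}^{\mathbb{Q}^n}_{i-1}[(\Delta_iN)^2]\le\mathbb{E}^{\mathbb{Q}^n}_{i-1}[(\sigma^{1k}_{i-1}Z^k_i)^2]\le C\widetilde{\mathbb{E}}_{i-1}[|Z_i|^2\bar{m}_i]\le C\Delta$ by \eqref{eq:6.4m}, so its predictable bracket is $\le CT$, and together with $\mathbb{E}^{\mathbb{Q}^n}_{i-1}[|\Delta_iN|^q]\le C_q\Delta^{q/2}$ the Burkholder--Davis--Gundy and Doob inequalities give $\mathbb{E}^{\mathbb{Q}^n}[\max_i|N_i|^q]\le C_q$, uniformly in $n$, for every $q$.

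The crux of the first bound in \eqref{control1} is then a moment bound for $\mathtt{X}_n^1$. Expanding $(\mathtt{X}_i^1)^{2p}$ about $\mathtt{X}_{i-1}^1$ and taking $\mathbb{E}^{\mathbb{Q}^n}_{i-1}$, the key observation is that the boundary drift weighted by any power of the distance to the boundary is of order $\Delta$:
\[
(\mathtt{X}_{i-1}^1)^{2p-1}\Delta_iB^n=2\Delta^{p}\,(a^{11}_{i-1})^{p-1}\,(\mathtt{X}_{i-1}^{1,\sigma})^{2p-1}\psi(\mathtt{X}_{i-1}^{1,\sigma})=O(\Delta),
\]
since $x^{2p-1}\psi(x)$ is bounded on $[0,\infty)$; combined with $\mathbb{E}^{\mathbb{Q}^n}_{i-1}[|\mathtt{X}_i^1-\mathtt{X}_{i-1}^1|^\ell]\le C\Delta^{\ell/2}$ (again \eqref{eq:6.4m}) this gives $\mathbb{E}^{\mathbb{Q}^n}_{i-1}[(\mathtt{X}_i^1)^{2p}]\le(\mathtt{X}_{i-1}^1)^{2p}+C\Delta(1+(\mathtt{X}_{i-1}^1)^{2p-2})$, whence an induction on $p$ (with $n\Delta=T$) yields $\sup_n\sup_i\mathbb{E}^{\mathbb{Q}^n}[(\mathtt{X}_i^1)^{2p}]\le C_p$, uniformly for the starting point in compact subsets of $H^d_L$. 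Since $A_n=\mathtt{X}_n^1-\mathtt{X}_0^1-N_n$ and $0\le A_n\le\mathtt{X}_n^1+|N_n|$, this gives $\mathbb{E}^{\mathbb{Q}^n}[(B^n_n)^p]\le\lambda^{-p}\mathbb{E}^{\mathbb{Q}^n}[A_n^p]\le c$, which is the first assertion in \eqref{control1} (and, via $\max_i\mathtt{X}_i^1\le\mathtt{X}_0^1+2\max_i|N_i|+\mathtt{X}_n^1$, also $\mathbb{E}^{\mathbb{Q}^n}[(\max_i\mathtt{X}_i^1)^p]\le c$).

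Finally, the windowed estimate $\mathbb{E}^{\mathbb{Q}^n}[\max_j(B^n_{j+k}-B^n_{j-1})^p]\le c((k+1)\Delta)^{p/2}$ is the discrete analogue of the near-$\tfrac12$-H\"older regularity of Brownian local time, and I would prove it exactly as the corresponding estimate in the one-dimensional case (see Section 4 of \cite{CK}): restarting the decomposition $\mathtt{X}^1=\mathtt{X}_0^1+N+A$ at the left endpoint of each window, one exploits that $A$ increases only inside an $O(\sqrt\Delta)$-layer around the boundary --- off that layer $\Delta_iB^n\le C\sqrt\Delta\,e^{-(\mathtt{X}_{i-1}^{1,\sigma})^2/2}$ is super-polynomially small --- so that a discrete Skorokhod/Tanaka comparison bounds the local time accumulated over a window by the oscillation of $N$ over that window up to a negligible correction; the conditional Burkholder--Davis--Gundy bound for $N$ then closes the estimate. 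The only changes relative to \cite{CK} are that $\Delta_iB^n$ now carries the covariance-dependent factors $(\sigma_{i-1})^{\top}\mathbf{e}^1$ and $a^{11}_{i-1}$, absorbed by boundedness of $\sigma$ and uniform ellipticity of $a$, and that the Gaussian computations behind $\Delta_iB^n$ are the multivariate ones of Lemma \ref{lemma:4}. I expect this last, windowed, step to be the main obstacle: a union bound over the $O(n)$ windows is far too lossy, so the Tanaka-type argument is genuinely required, and uniform ellipticity of $a$ is essential since it fixes the width of the boundary layer at the scale $\sqrt\Delta$ and makes the bounds uniform in $n$.
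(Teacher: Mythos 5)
Your treatment of \eqref{control2} and \eqref{control3} matches the paper's: both simply sum the pointwise bound $\widetilde{\mathbb{E}}_{i-1}[|Z_i|^p\bar m_i]\le C\Delta^{p/2}$ from \eqref{eq:6.4m} over the relevant index set. For \eqref{control1}, however, you take a genuinely different route. The paper's proof is a one-line appeal to Lemma \ref{lem:essb}, whose proof in turn identifies the law of the Euler scheme under $\mathbb{Q}^n$ with that of the continuous-time reflected process $\mathcal{X}^{n}$ (via Lemma \ref{th:9} and Section \ref{sec:mlt}) and then imports the exponential-moment and modulus-of-continuity bounds for the reflected local time $\Lambda^n$ from Lemma \ref{lem:37}. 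You instead stay entirely in discrete time: you establish the $\mathbb{Q}^n$-Doob--Meyer decomposition $\mathtt{X}^1_i=\mathtt{X}^1_0+N_i+A_i$ of the distance to the boundary, observe via \eqref{eq:6.1m} that $\Delta_i B^n = 2\sqrt{\Delta/a^{11}_{i-1}}\,\psi(\mathtt{X}^{1,\sigma}_{i-1})$ with $\psi(x)=\tfrac{1}{\sqrt{2\pi}}e^{-x^2/2}-x\bar\Phi(x)\ge 0$, use the boundedness of $x\mapsto x^m\psi(x)$ to get the recursion $\mathbb{E}^{\mathbb{Q}^n}_{i-1}[(\mathtt{X}^1_i)^{2p}]\le(\mathtt{X}^1_{i-1})^{2p}+C\Delta(1+(\mathtt{X}^1_{i-1})^{2p-2})$, and close with Gronwall, Burkholder--Davis--Gundy for $N$, and $A_n\le\mathtt{X}^1_n+|N_n|$. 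This is correct and has the merit of being self-contained — it does not lean on the transfer to $\mathcal{X}^n$ or on Lemma \ref{lem:37}. Its drawback is that the windowed estimate in \eqref{control1} is only sketched (the ``discrete Skorokhod/Tanaka comparison''), whereas the paper gets that for free from the continuous-time modulus estimate \eqref{moduluso}; but since you appeal to the corresponding 1D argument in \cite{CK} there, as the paper itself does elsewhere, the gap is no worse than the paper's own level of detail. Two small points worth making explicit if you write this up: (i) the identity $\mathbb{E}^{\mathbb{Q}^n}_{i-1}[G]=\widetilde{\mathbb{E}}_{i-1}[G\,\bar m_i]$ for $G\in\mathcal{F}_i$ needs the facts that $\bar M^n$ is a martingale and $\widetilde{\mathbb{E}}_{i-1}[\bar m_i]=1$ (Lemma \ref{lem:4}); (ii) the constant you obtain depends on $\mathtt{X}^1_0=x^1-L$ through the Gronwall iteration, so the uniformity is indeed only over compact subsets of $H^d_L$, consistent with how the lemma is applied.
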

        \begin{proof}
                The estimates for \eqref{control2} and \eqref{control3} follow directly from \eqref{eq:6.4m}. In the proof of \eqref{control1}, one applies Lemma \ref{lem:essb}.
                                \end{proof}

                     Using the moment results in Lemma \ref{lemma:4}, one obtains:
        \begin{cor}
                \label{cor:2}
                Under $ \mathbb{Q}^n $ and for any $ p\geq 2 $, we have 
                \begin{align*}
                        \tilde{\mathbb{E}}_{i-1}\left[\left(        I(1-\mathsf{h}_i)+\mathbf{e}^1(\mathbf{e}^1)^{\top}
                        \mathsf{h}_i
                        \right)\bar{m}_i\right]=&I\left(1-2\bar{\Phi}\left(
                        {{\mathtt{X}_{i-1}^{1,\sigma}}}\right)\right )+       2\bar{\Phi}\left(
                        {{\mathtt{X}_{i-1}^{1,\sigma}}}\right)\mathbf{e}^1(\mathbf{e}^1)^{\top},
                        \\
                        \tilde{\mathbb{E}}_{i-1}\left[\left(        \mathbf{D}\sigma_{i-1} ^{k }
                        \bar {r}^k_{i}\right)\bar{m}_i\right]=&0.
                \end{align*}
        \end{cor}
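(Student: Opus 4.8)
\textbf{Proof plan for Corollary \ref{cor:2}.}

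The plan is to derive both identities directly from the conditional moment formulas already established, with essentially no new analysis. For the first identity, I would start by splitting the expectation linearly:
\[
\tilde{\mathbb{E}}_{i-1}\left[\left(I(1-\mathsf{h}_i)+\mathbf{e}^1(\mathbf{e}^1)^{\top}\mathsf{h}_i\right)\bar{m}_i\right]
= I\,\tilde{\mathbb{E}}_{i-1}\left[(1-\mathsf{h}_i)\bar{m}_i\right]
+ \mathbf{e}^1(\mathbf{e}^1)^{\top}\,\tilde{\mathbb{E}}_{i-1}\left[\mathsf{h}_i\bar{m}_i\right].
\]
Recall that $\bar{m}_i=1_{(X_i^1>L)}(1+\mathsf{h}_i)$ and $m_i=1_{(X_i^1>L)}(1-\mathsf{h}_i)$, so on the event $\{X_i^1>L\}$ we have $(1-\mathsf{h}_i)(1+\mathsf{h}_i)=1-\mathsf{h}_i^2=1-\mathsf{h}_i$ (since $\mathsf{h}_i\in\{0,1\}$); hence $(1-\mathsf{h}_i)\bar{m}_i=m_i$, and similarly $\mathsf{h}_i\bar m_i=1_{(X_i^1>L)}(\mathsf{h}_i+\mathsf{h}_i^2)=2\,1_{(X_i^1>L)}\mathsf{h}_i=\bar m_i-m_i$. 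Then Lemma \ref{lem:4} gives $\tilde{\mathbb{E}}_{i-1}[m_i]=1-2\bar\Phi(\mathtt{X}_{i-1}^{1,\sigma})$ and $\tilde{\mathbb{E}}_{i-1}[\bar m_i]=1$, so $\tilde{\mathbb{E}}_{i-1}[\mathsf{h}_i\bar m_i]=2\bar\Phi(\mathtt{X}_{i-1}^{1,\sigma})$. Substituting these two scalars back yields exactly the claimed right-hand side $I(1-2\bar\Phi(\mathtt{X}_{i-1}^{1,\sigma}))+2\bar\Phi(\mathtt{X}_{i-1}^{1,\sigma})\mathbf{e}^1(\mathbf{e}^1)^\top$.

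For the second identity, recall from \eqref{eq:td1} that $\bar r_i^k:={Z}_i^k-\tilde{\mathbb{E}}_{i-1}[{Z}_i^k\bar m_i]$. Since $\mathbf{D}\sigma_{i-1}^k$ is $\mathcal{F}_{i-1}$-measurable, I would pull it out of the conditional expectation and compute
\[
\tilde{\mathbb{E}}_{i-1}\left[\mathbf{D}\sigma_{i-1}^k\bar r_i^k\,\bar m_i\right]
= \mathbf{D}\sigma_{i-1}^k\left(\tilde{\mathbb{E}}_{i-1}\left[{Z}_i^k\bar m_i\right]-\tilde{\mathbb{E}}_{i-1}[{Z}_i^k\bar m_i]\,\tilde{\mathbb{E}}_{i-1}[\bar m_i]\right),
\]
which vanishes because $\tilde{\mathbb{E}}_{i-1}[\bar m_i]=1$ by Lemma \ref{lem:4}. (Here one uses that $\tilde{\mathbb{E}}_{i-1}[{Z}_i^k\bar m_i]$ is itself $\mathcal{F}_{i-1}$-measurable, so it factors out of the second term.) This completes the argument.

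There is no real obstacle here: the corollary is a bookkeeping consequence of Lemma \ref{lem:4} and the definition of the centered noise $\bar r_i$. The only point requiring a moment's care is the elementary algebra with the indicators $\mathsf{h}_i$, $m_i$, $\bar m_i$ — in particular remembering that $\mathsf{h}_i$ is a Bernoulli indicator so $\mathsf{h}_i^2=\mathsf{h}_i$, and that all the products of interest live on the event $\{X_i^1>L\}$. Since these identities were already used implicitly in the 1D case in \cite{CK}, I would keep the write-up brief, citing Lemma \ref{lem:4} for the two scalar expectations and noting that the second identity is immediate from the definition of $\bar r^k_i$ as a conditionally centered quantity.
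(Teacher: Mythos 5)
Your proof is correct and takes essentially the same approach the paper intends: the corollary is stated without proof, as a direct consequence of the moment formulas, and your bookkeeping via $(1-\mathsf{h}_i)\bar m_i=m_i$, $\mathsf{h}_i\bar m_i=\bar m_i-m_i$, and the centering of $\bar r_i^k$ against $\tilde{\mathbb E}_{i-1}[\bar m_i]=1$ is exactly that consequence. Note you are slightly more careful than the paper's own lead-in, which points to Lemma \ref{lemma:4} when the two scalar facts actually needed here live in Lemma \ref{lem:4}.
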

    \begin{lemma}
        \label{lem:14}
       The process $\bar\Gamma^n:=\sum_{i=0}^{n-1}1_{[t_i,t_{i+1})}(t) \left(\gamma_i- \mathbb{E}_{i-1}\left[ \gamma_{i}\bar{m}_{i}\right]\right)$ is a square integrable martingale under $\mathbb{Q}_{n}$ and its quadratic variation converges to zero.   
        \end{lemma}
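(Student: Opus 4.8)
The plan is to treat $\bar\Gamma^n$ as a sum of martingale differences under $\mathbb{Q}^n$ and then estimate the predictable quadratic variation. First I would observe that, by construction, $\gamma_i - \mathbb{E}_{i-1}[\gamma_i \bar m_i]$ has zero conditional expectation under $\mathbb{Q}^n$: indeed, since $\frac{d\mathbb{Q}^n}{d\widetilde{\mathbb{P}}^n}\big|_{\mathcal F_n} = \bar M^n_n$ and $\bar M^n$ is a martingale under $\widetilde{\mathbb{P}}^n$ (Lemma \ref{lem:4}), the conditional expectation of $\gamma_i$ under $\mathbb{Q}^n$ given $\mathcal F_{i-1}$ equals $\widetilde{\mathbb{E}}_{i-1}[\gamma_i \bar m_i]$, because $\bar m_i = \bar M^n_i / \bar M^n_{i-1}$. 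Hence $\bar\Gamma^n_t = \sum_i 1_{[t_i,t_{i+1})}(t)\,\Delta_i\bar\Gamma^n$ with $\widetilde{\mathbb{E}}^{\mathbb{Q}^n}_{i-1}[\Delta_i\bar\Gamma^n] = 0$, so $\bar\Gamma^n$ is a $\mathbb{Q}^n$-martingale; square-integrability follows from the moment bounds in Lemma \ref{lemma:4} (in particular \eqref{eq:6.4m}) together with Lemma \ref{moments}.

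Next I would write out $\gamma_i$ from its definition in \eqref{eq:td1},
\begin{align*}
\gamma_i = \mathbf{D}\sigma_{i-1}^k\,\widetilde{\mathbb{E}}_{i-1}[Z^k_i\bar m_i] - \mathsf{h}_i\Bigl(\mathbf{D}\sigma_{i-1}^k Z^k_i - \mathtt{X}^1_{i-1}\sigma^k_{i-1}\mathbf{D}_{i-1}\bigl(\tfrac{\sigma^{1k}_{i-1}}{a^{11}_{i-1}}\bigr)\Bigr),
\end{align*}
and split it into its $\mathcal F_{i-1}$-predictable part (the term with $\widetilde{\mathbb{E}}_{i-1}[Z^k_i\bar m_i]$ and the $\mathtt{X}^1_{i-1}$-term, both of which are $O_{i-1}(\sqrt\Delta)$ after using \eqref{eq:6.1m} and the bound $\Delta_iB^n = O_{i-1}(\sqrt\Delta)$ coming from $g^1_{i-1}(\mathtt{X}^1_{i-1}) = O(\Delta^{-1/2})$) and its genuinely stochastic part $-\mathsf{h}_i\mathbf{D}\sigma^k_{i-1}Z^k_i$. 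The quadratic variation of $\bar\Gamma^n$ is $\sum_{i\le \lfloor t/\Delta\rfloor}\widetilde{\mathbb{E}}^{\mathbb{Q}^n}_{i-1}[(\Delta_i\bar\Gamma^n)(\Delta_i\bar\Gamma^n)^\top]$. The predictable-part contributions are each of size $O_{i-1}(\Delta)$, so their sum over $i$ is $O(1)\cdot$? — no: they contribute terms of order $\Delta$ times a quantity that is itself $O(\sqrt\Delta)^2 = O(\Delta)$, i.e.\ $O(\Delta^2)$ per step, summing to $O(\Delta)\to 0$. The main term to control is $\sum_i \widetilde{\mathbb{E}}_{i-1}\bigl[\mathsf{h}_i (\mathbf{D}\sigma^k_{i-1}Z^k_i)(\mathbf{D}\sigma^\ell_{i-1}Z^\ell_i)^\top \bar m_i\bigr]$; here the key point is that $\mathsf{h}_i = 1_{(U_i\le p_i)}$ forces $X_i$ to be within $O(\sqrt\Delta)$ of the boundary, so on the event $\{\mathsf{h}_i=1\}$ one has $\mathtt{X}^1_{i-1}\le C\sqrt\Delta\,(\ldots)$ and the conditional probability $\widetilde{\mathbb{E}}_{i-1}[\mathsf{h}_i \bar m_i]\le 2\widetilde{\mathbb{E}}_{i-1}[p_i 1_{(X^1_i>L)}] = O_{i-1}(\sqrt\Delta)$ by the computation in the proof of Lemma \ref{lem:4}. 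Combining this with $\widetilde{\mathbb{E}}_{i-1}[|Z_i|^2 \bar m_i] = O_{i-1}(1)$ and Cauchy--Schwarz gives $\widetilde{\mathbb{E}}_{i-1}[\mathsf{h}_i |Z_i|^2 \bar m_i] = O_{i-1}(\sqrt\Delta)$, hence each term of the quadratic variation is $O_{i-1}(\sqrt\Delta)$ and the whole sum over the $n = T/\Delta$ steps is $O(\sqrt\Delta)\to 0$. Taking $\widetilde{\mathbb{E}}^{\mathbb{Q}^n}$-expectations and using the uniform integrability supplied by Lemmas \ref{moments} and \ref{lemma:7} upgrades this to $L^1$-convergence of the quadratic variation to zero.

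The main obstacle I anticipate is bookkeeping rather than conceptual: one has to be careful that $\mathsf{h}_i$ and $\bar m_i$ are correlated (both depend on $X_i^1$ and the uniform $U_i$), so the factorization $\widetilde{\mathbb{E}}_{i-1}[\mathsf{h}_i|Z_i|^2\bar m_i]$ must be handled by first integrating out $U_i$ to replace $\mathsf{h}_i\bar m_i$ by $2\,p_i 1_{(X^1_i>L)}$ and then doing the Gaussian integral in $X_i$ exactly as in Lemma \ref{lem:4} and Part 2 of the proof of Lemma \ref{lemma:4}; the cross terms between the predictable part and the $-\mathsf{h}_i\mathbf{D}\sigma^k_{i-1}Z^k_i$ part also need the same $\mathsf{h}_i$-localization argument. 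There is an additional subtlety flagged in the introduction — the lack of commutativity of the matrices $\mathbf{D}\sigma^k_{i-1}$ — but for the quadratic variation estimate this is immaterial since one only needs norm bounds, not algebraic cancellation; the cancellation with $\pi_{i-1}\mathtt{X}^1_{i-1}\mathsf{h}_i$ referred to in item (3) of the list of new difficulties is a feature of a different term and does not enter here. For completeness I would note that the detailed Gaussian computations are entirely parallel to the one-dimensional case treated in \cite{CK}, so I would only indicate the changes (the appearance of $\mathbf{D}\sigma^k_{i-1}$ as a matrix and the projection structure of $\mathsf{h}_i$) and refer to \cite{CK} for the routine parts.
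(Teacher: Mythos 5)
Your overall strategy — verify the martingale property by construction, then bound the (predictable) quadratic variation by exploiting that $\mathsf{h}_i$ localizes near the boundary — is the same one the paper uses; its proof is correspondingly terse, computing $\tilde{\mathbb{E}}_{i-1}[\gamma_i\bar m_i]$ explicitly and then deferring the quadratic-variation estimate to Lemmas~\ref{lem:4} and \ref{lemma:4}. However, there are two genuine problems with the bookkeeping in your version.

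First, the decomposition of $\gamma_i$ into an "$\mathcal{F}_{i-1}$-predictable part'' and a stochastic part is slightly wrong: in the definition
\[
\gamma_i=\mathbf{D}\sigma_{i-1}^k\tilde{\mathbb{E}}_{i-1}[Z_i^k\bar m_i]-\mathsf{h}_i\Bigl(\mathbf{D}\sigma_{i-1}^kZ_i^k-\mathtt{X}^1_{i-1}\sigma^k_{i-1}\mathbf{D}_{i-1}\bigl(\tfrac{\sigma^{1k}_{i-1}}{a^{11}_{i-1}}\bigr)\Bigr),
\]
the $\mathtt{X}^1_{i-1}$-term carries the factor $\mathsf{h}_i$, which depends on $X_i^1$ and $U_i$, so it is \emph{not} $\mathcal F_{i-1}$-measurable. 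Only the $\mathbf{D}\sigma^k_{i-1}\tilde{\mathbb{E}}_{i-1}[Z_i^k\bar m_i]$-term is predictable; the other two terms both belong to the stochastic part.

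Second, and more seriously, the final step of your quadratic-variation argument is arithmetically inconsistent as stated. You argue that each increment contributes $O_{i-1}(\sqrt\Delta)$ and then conclude that the sum over $n=T/\Delta$ steps is $O(\sqrt\Delta)$. If $O_{i-1}(\sqrt\Delta)$ is taken in the paper's deterministic sense (Definition~\ref{def:2}: $|\Upsilon_i|\le C\Delta^{1/2}$), then the sum of $n$ such terms is $O(n\sqrt\Delta)=O(T/\sqrt\Delta)$, which diverges. The reason the sum actually tends to zero is that the individual increments are not uniformly of size $\sqrt\Delta$: they are only of that size when $X_{i-1}$ is within $O(\sqrt\Delta)$ of the boundary, and the total number of such $i$'s is controlled by the local-time estimate Lemma~\ref{lem:essb}. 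In other words, the correct statement is the \emph{aggregate} expectation bound $O^E_{i-1}(\sqrt\Delta)$ of Definition~\ref{def:2} (which by its definition is already a statement about the whole sum), not a per-step bound followed by naive summation. Concretely, after $\tilde{\mathbb{E}}_{i-1}[\mathsf{h}_i|Z_i|^2\bar m_i]\le 2\tilde{\mathbb{E}}_{i-1}[|Z_i|^2 1_{(U_i\le p_i)}]$, Remark~\ref{rem:15} gives this quantity as $O^E_{i-1}(\sqrt\Delta)$; that relies on Lemma~\ref{lem:essb}, which you should invoke explicitly. The same aggregate argument handles the $\tilde{\mathbb{E}}_{i-1}[Z_i^k\bar m_i]$-terms via Lemma~\ref{lem:5m}. (A small typo: $\tilde{\mathbb{E}}_{i-1}[|Z_i|^2\bar m_i]$ is $O_{i-1}(\Delta)$ by~\eqref{eq:6.4m}, not $O_{i-1}(1)$.) With those corrections, the argument matches the paper's.
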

        \begin{proof}
               Applying Lemmas \ref{lemma:4} and \ref{lem:4}, one gets after algebraic simplification: 
                \begin{align}
                        \tilde{\mathbb{E}}_{i-1}\left[ \gamma_{i}\bar{m}_i\right]=2\bar{\Phi}\left(
                        {{\mathtt{X}_{i-1}^{1,\sigma}}}\right)\left(\mathbf{D}\sigma_{i-1} ^{k }
                        \frac{  \sigma^{k1} _{i-1} 
                        }{{a^{11}_{i-1}}}\mathtt{X}^1_{i-1}
                        +\mathtt{X}^1_{i-1}              \sigma^{k}_{i-1} {\mathbf{D}}_{i-1}\left(\frac{       \sigma^{1k} _{i-1} 
                        }{{a^{11}_{i-1}}}\right)\right).
                \label{eq:gnz}
                \end{align}
            Using the product rule for derivatives, one obtains:                
                 \begin{align*}
                        \tilde{\mathbb{E}}_{i-1}\left[ \gamma_{i}\bar{m}_i\right]=2\bar{\Phi}\left(
                        {{\mathtt{X}_{i-1}^{1,\sigma}}}\right)\mathtt{X}_{i-1}^{1}\pi_{i-1}\mathbf{e}^1+O_{i-1}^E(\sqrt{\Delta}).
                \end{align*}
            Therefore the first result follows.
            
                The result about the quadratic variation of $ \Gamma $  being negligible follows from explicit calculations together with Lemma \ref{lem:4} and Lemma \ref{lemma:4}.
        \end{proof}
    
In order to study terms related to local times, one uses arguments from the 1D case. In fact, one proves the following result: 
        \begin{lemma}
                \label{lem:essb} Let $F:\mathbb{R}\mapsto (0,\infty )$ be a positive valued
                function with Gaussian decay at infinity. In other words, there exists $c>0$
                such that 
                \begin{equation}
                        F(x)\leq ce^{-\frac{\left\vert x\right\vert ^{2}}{2c}}.  \label{eq:Fbu'}
                \end{equation}%
                Assuming that $\Delta \leq 1$ there exists a constant $C$ independent of $n$
                and $T$ such that for any $q\in\mathbb{N} $ and any compact set $ K\subseteq H^d_L $, we have
                \begin{equation}
                        \Delta ^{\frac{1}{2}}\sup_{x\in K}\tilde{\mathbb{E}}_{0,x}\left[ \left|\sum_{i=1}^{n}
                        F(\mathtt{X}_{i-1}^{1,\sigma })\right|^q \bar{M}_{n}\right] ^{1/q}<C\sqrt{T}.  \label{eq:essb}
                \end{equation}
                We will frequently use the above result with $g(x)=|x|^ke^{-\frac{|x|^2}{2}} $
                or $g(x)=|x|^k\bar{\Phi}(x) $, $k\geq 0 $.
        \end{lemma}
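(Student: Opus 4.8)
The statement is a discrete Gaussian-occupation-time estimate: the rescaled normal coordinate $\mathtt{X}_{i-1}^{1,\sigma}=(X_{i-1}^1-L)/\sqrt{a^{11}_{i-1}\Delta}$ spends only $O(\sqrt{\Delta})$ time "near the boundary" in an $L^q$ sense, uniformly in $n$. The plan is to reduce everything to the one-dimensional normal component $X^1$ and then invoke the corresponding estimate from the 1D paper \cite{CK}. First I would note that $F(\mathtt{X}_{i-1}^{1,\sigma})$ depends only on $X_{i-1}^1$, so after conditioning on the first coordinate the $d-1$ remaining coordinates are irrelevant; moreover under $\widetilde{\mathbb{P}}^n$ the increments $Z_i$ are i.i.d.\ $N(0,\Delta)$, and the weight $\bar M^n_n=\prod_{i=1}^n\bar m_i$ with $\bar m_i=1_{(X^1_i>L)}(1+\mathsf h_i)$ is itself a function of the first-coordinate path $X^1_0,\dots,X^1_n$ alone (the pair $(X^1_i,X^1_{i-1})$ enters $p_i$ only through $a^{11}_{i-1}$, which is $\mathcal F_{i-1}$-measurable up to the drift-removed dynamics). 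Hence $\tilde{\mathbb{E}}_{0,x}[\,|\sum_i F(\mathtt{X}_{i-1}^{1,\sigma})|^q\bar M^n_n\,]$ equals the same expectation computed for the scalar drift-free Euler scheme $X^1_i=X^1_{i-1}+\sigma^{11}_{i-1}Z^1_i$ (using $a^{11}=(\sigma^{11})^2$ on a neighborhood, or more carefully $X^1_i-X^1_{i-1}=\sigma^{1k}_{i-1}Z^k_i$ which is conditionally $N(0,a^{11}_{i-1}\Delta)$). This is exactly the one-dimensional setting of \cite{CK}, so the bound \eqref{eq:essb} follows verbatim from the 1D occupation estimate there.

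Spelling out the core 1D argument in case a self-contained proof is wanted: by the layer-cake / Hölder reduction it suffices to bound $\Delta^{1/2}\tilde{\mathbb{E}}_{0,x}[\sum_{i=1}^n F(\mathtt{X}_{i-1}^{1,\sigma})\bar M^n_n]$ and its running-maximum analogue, since higher $q$-moments follow by the Markov-property sub-multiplicativity of the tail of $\sum_i F$ together with the uniform moment control of $\bar M^n$ from Lemma \ref{moments}. For the first moment one writes, using the tower property and that $\bar M^n$ is a martingale (Lemma \ref{lem:4}), $\tilde{\mathbb{E}}_{0,x}[F(\mathtt{X}_{i-1}^{1,\sigma})\bar M^n_{i-1}]=\tilde{\mathbb{E}}^{\mathbb Q^n}_{0,x}[F(\mathtt{X}_{i-1}^{1,\sigma})]$, and then estimates the one-step transition density of $X^1$ under $\mathbb Q^n$: it has a Gaussian-type bound of order $\Delta^{-1/2}$, so summing $\sum_{i=1}^n \tilde{\mathbb{E}}^{\mathbb Q^n}_{0,x}[F(\mathtt{X}_{i-1}^{1,\sigma})]$ is a Riemann sum that converges to $c\int_0^T \frac{1}{\sqrt{s}}\,ds\asymp\sqrt T$ after the $\Delta^{1/2}$ prefactor is accounted for (the $\Delta^{-1/2}$ in the density, the extra $\Delta^{1/2}$ hidden in $F(\cdot/\sqrt{\Delta})$ having unit-order integral, and the $\Delta$ from $t_i-t_{i-1}$ combine correctly). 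The Gaussian decay hypothesis \eqref{eq:Fbu'} is what makes $\int_{\mathbb R}F(y/\sqrt{a^{11}\Delta})\,dy=O(\sqrt\Delta)$ uniformly, which is the crux of the scaling.

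For the running-maximum bound $\max_{j\le n-k-1}\sum_{i=j}^{j+k}$ one uses the same idea on a window of length $(k+1)\Delta$ together with a maximal inequality (or the Garsia–Rodemich–Rumsey / chaining device used in \cite{CK}) to pass from fixed windows to the maximum over $j$ at the cost of an absolute constant; the uniform-in-$n$ moment bounds of Lemma \ref{moments} and Lemma \ref{lemma:7} guarantee this costs nothing in the exponent. The main obstacle — and the reason this is isolated as a lemma rather than quoted outright — is bookkeeping the measure changes: $F(\mathtt{X}_{i-1}^{1,\sigma})$ is weighted by $\bar M^n$, which is a sub-probability/martingale density under $\widetilde{\mathbb P}^n$, and one must verify that the first coordinate of the Euler scheme, when reweighted by $\bar M^n$, still has a transition kernel with the clean $O(\Delta^{-1/2})$ Gaussian bound needed for the Riemann-sum estimate. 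This is precisely where Hypothesis \ref{hyp:12} is used: it guarantees that the normal component $X^1$ decouples from the tangential ones at the boundary (so that $p_i$ and hence $\bar m_i$ genuinely depend only on the one-dimensional data and the reflected-killing picture of Lemma \ref{th:9} applies), which is what reduces the computation to the already-proved 1D statement in \cite{CK}. Once that reduction is in place the rest is routine and I would simply cite the 1D proof.
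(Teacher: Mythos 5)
Your overall direction is the same as the paper's: reduce to the one–dimensional normal component $X^1$, observe that under the $\bar M^n$-weighted measure its transition behaviour is that of a reflected Brownian motion, and then invoke the 1D occupation-time argument of \cite{CK}. Two points need care, though.

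First, your claim that $\bar M^n_n$ is \emph{a function of the first-coordinate path alone} is not correct. In $p_i=\exp(-2\mathtt{X}^1_i\mathtt{X}^1_{i-1}/(a^{11}_{i-1}\Delta))$ the coefficient $a^{11}_{i-1}=a^{11}(X_{i-1})$ depends on the \emph{entire} vector $X_{i-1}$, not just $X^1_{i-1}$, so $\bar m_i$ involves the tangential coordinates through $a^{11}_{i-1}$. You do not actually need the false statement: all that matters is that, conditionally on $\mathcal F_{i-1}$, the increment $X^1_i-X^1_{i-1}=\sigma^{1k}_{i-1}Z^k_i$ is $N(0,a^{11}_{i-1}\Delta)$ with $a^{11}_{i-1}$ uniformly elliptic and bounded, which is exactly the ingredient used in Lemma~\ref{lem:4}. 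I'd drop the parenthetical aside about ``a function of the first-coordinate path alone'' and argue conditionally via the tower property instead, which you in fact do in the next sentence.

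Second, your treatment of the higher moments ($q>1$) is the genuinely nontrivial part and is currently too vague (``Markov-property sub-multiplicativity of the tail of $\sum_i F$''). The paper's actual mechanism — which your proof should route through — is the representation in terms of the hyperplane-reflected Euler scheme $\mathcal X^{n}$ from Section~\ref{sec:mlt} (identity \eqref{eq:fiA}/Lemma~\ref{th:9}): under $\mathbb Q^n$ the first coordinate of $X^{c,n}$ has the law of $\mathcal X^{n,1}$, and $\Delta^{1/2}\sum_i F(\mathtt X^{1,\sigma}_{i-1})$ is essentially a discretization of the local time at $L$ of that reflected 1D process. The $L^q$ control then comes from the exponential moment bounds on local times in Lemma~\ref{lem:37} and Corollary~\ref{cor:38}, exactly as in the 1D proof. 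Without that (or an explicit chaining / BDG-type argument on the sub-Gaussian kernel), the claim that ``higher $q$-moments follow'' is asserted rather than proved. I would cite Section~\ref{sec:mlt} explicitly at that point, as the paper does.
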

        \begin{proof}
                The proof follows the same sequence of equalities as in the 1D case if one recalls how integrals were handled in the proof of Lemma \ref{lem:4} and using the results in Section \ref{sec:mlt}. In fact, one has to replace the term $ X^1 $ of \eqref{eq:6.1m}
                by the distance of the reflected equation to the boundary based on $ \mathcal{X} $ which is introduced in Section \ref{sec:mlt}.
        \end{proof}
        
        \begin{remark}
                \label{rem:15}
                From Lemmas \ref{lem:4} and \ref{lem:essb}, we have  that the following sequences are asymptotic negligible for $ k,m\in \mathbb{N} $,
                \begin{align*}
                     \tilde{\mathbb{E}}_{i-1}\left[(\mathtt{X}^1_{i-1})^k1_{(U_i\leq p_i)}\right]   =&O^E_{i-1}(\Delta^{(k-1)/2}),\\
                    \tilde{\mathbb{E}}_{i-1}\left[    |Z_i|^{k}1_{(U_i\leq p_i)}\right] =&O^E_{i-1}(\Delta^{(k-1)/2}),\\
                        \tilde{\mathbb{E}}_{i-1}\left[|Z_i|^k (\mathtt{X}^1_{i-1})^m1_{(U_i\leq p_i)}\right]=& O^E_{i-1}(\Delta^{(k+m-1)/2}),\\
                       \tilde{\mathbb{E}}_{i-1}\left[ |Z_i|^k (\mathtt{X}^1_{i})^m1_{(U_i\leq p_i)}\right]=& O^E_{i-1}(\Delta^{(k+m-1)/2}) .
                \end{align*}
        \end{remark}
        From this remark, one deduces 
        \begin{cor}
                \label{cor16}
                The sequence $ \{-\tilde{\mathbb{E}}_{i-1}\left[\bar b_{i-1}\Delta\mathsf{h}_i\right];i=1,...,n\} $ is asymptotically negligible.
        \end{cor}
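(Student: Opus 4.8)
The plan is to reduce the statement to the occupation–time estimate of Lemma \ref{lem:essb} after extracting the $\mathcal{F}_{i-1}$-measurable factor and identifying $\tilde{\mathbb{E}}_{i-1}[\mathsf{h}_i]$. Since $\bar b_{i-1}$ is $\mathcal{F}_{i-1}$-measurable and $\mathsf{h}_i=1_{(U_i\le p_i)}$ with the auxiliary uniform variable $U_i$ independent of everything else, integrating out $U_i$ gives
\begin{align*}
-\tilde{\mathbb{E}}_{i-1}\!\left[\bar b_{i-1}\Delta\mathsf{h}_i\right]=-\Delta\,\bar b_{i-1}\,\tilde{\mathbb{E}}_{i-1}\!\left[p_i\wedge 1\right].
\end{align*}
Moreover $\bar b_{i-1}=\mathbf{D}b_{i-1}-\mathbf{D}\sigma_{i-1}^k\big((\sigma^{-1}_{i-1})^{k\cdot}\cdot b_{i-1}\big)$ is bounded by a \emph{deterministic} constant $C$: this uses $b,\sigma^k\in C^2_b$ together with the uniform ellipticity of $a=\sigma\sigma^{\top}$, which forces $\sigma^{-1}$ to be bounded. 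Hence it suffices to control $\Delta\sum_{i=1}^n\tilde{\mathbb{E}}_{i-1}[p_i\wedge 1]$ in $L^q(\bar M^n_n)$.

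Next I would compute $\tilde{\mathbb{E}}_{i-1}[\mathsf h_i]$ explicitly. Splitting according to the sign of $X^1_i-L$ and running the same Gaussian integration as in the proof of Lemma \ref{lem:4} (using that $X^1_i-X^1_{i-1}=\sigma^{1k}_{i-1}Z^k_i\sim N(0,a^{11}_{i-1}\Delta)$ under $\widetilde{\mathbb{P}}$, and that $p_i\ge 1$ on $\{X^1_i\le L\}$), one obtains
\begin{align*}
\tilde{\mathbb{E}}_{i-1}[\mathsf h_i]=\tilde{\mathbb{E}}_{i-1}\!\left[1_{(X^1_i>L)}p_i\right]+\tilde{\mathbb{E}}_{i-1}\!\left[1_{(X^1_i\le L)}\right]=2\bar\Phi\!\left(\mathtt{X}_{i-1}^{1,\sigma}\right),
\end{align*}
where the first term is exactly the quantity computed at the end of the proof of Lemma \ref{lem:4} and the second is the Gaussian tail of the $X^1_i$-marginal; equivalently this is the $k=0$ instance of the first estimate in Remark \ref{rem:15}.

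Combining the two steps, for every $q\in\mathbb{N}$,
\begin{align*}
\left|\sum_{i=1}^n\tilde{\mathbb{E}}_{i-1}\!\left[\bar b_{i-1}\Delta\mathsf h_i\right]\right|\le 2C\,\Delta\sum_{i=1}^n\bar\Phi\!\left(\mathtt{X}_{i-1}^{1,\sigma}\right).
\end{align*}
Since $\mathtt{X}_{i-1}^{1,\sigma}\ge 0$, the argument of $\bar\Phi$ never visits the region where $\bar\Phi$ fails to decay, so Lemma \ref{lem:essb} applies with $F=\bar\Phi$ and bounds the $L^q(\bar M^n_n)$-norm of $\sum_{i=1}^n\bar\Phi(\mathtt{X}_{i-1}^{1,\sigma})$ by $C\sqrt T\,\Delta^{-1/2}$, uniformly over $x$ in compacts. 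Therefore the $L^q(\bar M^n_n)$-norm of $\sum_{i=1}^n\tilde{\mathbb{E}}_{i-1}[\bar b_{i-1}\Delta\mathsf h_i]$ is of order $\Delta\cdot\Delta^{-1/2}=\Delta^{1/2}$; that is, the sequence is $O^{E}_{i-1}(\Delta^{1/2})$, and since $1/2>0$ it is asymptotically negligible in the sense of Definition \ref{def:2}.

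The argument has no real obstacle; the only points worth a moment's care are that the bound on $\bar b_{i-1}$ must be deterministic so it can be pulled out of the norm (this is where uniform ellipticity enters), and the identification $\tilde{\mathbb{E}}_{i-1}[\mathsf h_i]=2\bar\Phi(\mathtt{X}_{i-1}^{1,\sigma})$, which is just the boundary Gaussian computation already carried out for Lemma \ref{lem:4}. The extra factor $\Delta^{1/2}$, rather than merely $O^E(1)$, is produced entirely by the explicit $\Delta$ prefactor together with the $\Delta^{-1/2}$ occupation bound of Lemma \ref{lem:essb}.
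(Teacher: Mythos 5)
Your proof is correct and follows essentially the same route as the paper: the paper simply invokes the $k=0$ instance of Remark~\ref{rem:15}, which in turn rests on exactly the two ingredients you spell out, namely the Gaussian identity $\tilde{\mathbb{E}}_{i-1}[\mathsf h_i]=2\bar\Phi(\mathtt{X}^{1,\sigma}_{i-1})$ from the proof of Lemma~\ref{lem:4} and the occupation-time bound of Lemma~\ref{lem:essb}, combined with the explicit $\Delta$ prefactor and the deterministic bound on $\bar b_{i-1}$.
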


        The following lemma is also an extension of the analogous result in 1D.
        
        \begin{lemma}
                \label{lemmaBmm}There exists a $ \mathbb{R}^d$-valued random vector $\theta $ that satisfies the
                control%
                \begin{equation}
                        \sup_{t\in \left[ 0,T\right] }\tilde{\mathbb{E}}\left[ \left\vert \theta
                        _{t}\right\vert \bar{M}_{t}\right] \leq c\sqrt{\Delta }  \label{Bmcontrolm}
                \end{equation}%
                where the constant $c=c\left( T\right) $ is independent of $n$ such that for any $ j,k\in\{1,...,d\} $
                \begin{equation*}
                        \sum_{\left\{ i,t_{i}\leq t\right\} }\left( Z_{i}^j-\tilde{\mathbb{E}}_{i-1}\left[
                        Z_{i}^j\bar{m}_{i}\right] \right)\left( Z_{i}^k-\tilde{\mathbb{E}}_{i-1}\left[
                        Z_{i}^k\bar{m}_{i}\right] \right) =\delta_{jk}t+\theta^{jk} _{t}.
                \end{equation*}
        \end{lemma}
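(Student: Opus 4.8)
The plan is to prove Lemma \ref{lemmaBmm} by decomposing the sum of products into a leading diagonal term plus a remainder that is controlled in the sense of \eqref{Bmcontrolm}. First I would write, for each $i$,
\[
\left( Z_{i}^j-\tilde{\mathbb{E}}_{i-1}\left[Z_{i}^j\bar{m}_{i}\right] \right)\left( Z_{i}^k-\tilde{\mathbb{E}}_{i-1}\left[Z_{i}^k\bar{m}_{i}\right] \right)
= Z_i^jZ_i^k - Z_i^j\tilde{\mathbb{E}}_{i-1}[Z_i^k\bar m_i] - Z_i^k\tilde{\mathbb{E}}_{i-1}[Z_i^j\bar m_i] + \tilde{\mathbb{E}}_{i-1}[Z_i^j\bar m_i]\tilde{\mathbb{E}}_{i-1}[Z_i^k\bar m_i],
\]
and then further split $Z_i^jZ_i^k = \tilde{\mathbb{E}}_{i-1}[Z_i^jZ_i^k] + (Z_i^jZ_i^k - \tilde{\mathbb{E}}_{i-1}[Z_i^jZ_i^k])$. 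Under $\widetilde{\mathbb{P}}^n$ the $Z_i$ are i.i.d.\ $N(0,\Delta I)$, so $\tilde{\mathbb{E}}_{i-1}[Z_i^jZ_i^k] = \delta_{jk}\Delta$ exactly, and summing these over $\{i:t_i\le t\}$ gives precisely $\delta_{jk}t$ (up to the discretization of $t$ by the grid, which contributes at most $O(\Delta)$ and is absorbed into $\theta$). Everything else is collected into $\theta^{jk}_t$, and it remains to verify the bound \eqref{Bmcontrolm}.

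The remainder $\theta^{jk}_t$ then consists of three types of terms: (i) the martingale increments $Z_i^jZ_i^k - \tilde{\mathbb{E}}_{i-1}[Z_i^jZ_i^k]$ summed over $i$; (ii) the cross terms $-Z_i^j\tilde{\mathbb{E}}_{i-1}[Z_i^k\bar m_i]$ and its symmetric counterpart; and (iii) the product of conditional expectations $\tilde{\mathbb{E}}_{i-1}[Z_i^j\bar m_i]\tilde{\mathbb{E}}_{i-1}[Z_i^k\bar m_i]$. For (i) I would use the Burkholder--Davis--Gundy inequality: the bracket of this martingale under $\widetilde{\mathbb{P}}$ is of order $\sum_i \Delta^2 = T\Delta$, so its $L^1(\bar M_t\,d\widetilde{\mathbb{P}})$-norm is $O(\sqrt{\Delta})$ after a Cauchy--Schwarz step against $\tilde{\mathbb{E}}[(\bar M_t)^2]$, which is bounded by \eqref{Lnboundsm}. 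For (iii), Lemma \ref{lemma:4} gives $\tilde{\mathbb{E}}_{i-1}[Z_i\bar m_i] = (\sigma_{i-1})^\top\mathbf{e}^1\Delta_iB^n$ with $\Delta_iB^n$ expressed through $g_{i-1}^1(\mathtt{X}^1_{i-1})$ and $\bar\Phi(\mathtt{X}^{1,\sigma}_{i-1})$; hence each factor is $O_{i-1}(\sqrt\Delta\, F(\mathtt{X}^{1,\sigma}_{i-1}))$ for a function $F$ with Gaussian decay, so the product is $O_{i-1}(\Delta\, \tilde F(\mathtt{X}^{1,\sigma}_{i-1}))$, and Lemma \ref{lem:essb} (applied with exponent $q=1$) gives $\sum_i$ of this is $O(\sqrt\Delta\cdot\sqrt T)$ uniformly in $x$ under $\bar M_n$. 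For (ii), $\sum_i Z_i^j\tilde{\mathbb{E}}_{i-1}[Z_i^k\bar m_i]$ is again a martingale under $\widetilde{\mathbb{P}}$ (since $\tilde{\mathbb{E}}_{i-1}[Z_i^j] = 0$ and the weight is $\mathcal{F}_{i-1}$-measurable) whose bracket is $\sum_i \Delta\,(\tilde{\mathbb{E}}_{i-1}[Z_i^k\bar m_i])^2 = O(\Delta)\sum_i \Delta\, F(\mathtt{X}^{1,\sigma}_{i-1})^2$, which by Lemma \ref{lem:essb} and \eqref{control2} is $O(\Delta)\cdot O(1)$; another BDG plus Cauchy--Schwarz then yields the $O(\sqrt\Delta)$ bound. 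Taking the supremum over $t\in[0,T]$ can be handled via Doob's maximal inequality at the cost of harmless constants, or simply by noting the grid is finite and the estimates are uniform in the index.

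I would then define $\theta^{jk}_t$ as the sum of contributions (i), (ii), (iii) (together with the $O(\Delta)$ boundary-of-grid term), assemble $\theta_t := (\theta^{jk}_t)_{j,k}$ as the natural $\mathbb{R}^d$-valued (or $\mathbb{R}^{d\times d}$-valued, depending on how one reads the statement) object, and conclude \eqref{Bmcontrolm} by summing the three bounds, each of which is $c\sqrt\Delta$ with $c = c(T)$ independent of $n$. Since the paper says that this lemma is ``an extension of the analogous result in 1D,'' I would remark that the only genuinely new point compared to \cite{CK} is the presence of off-diagonal indices $j\ne k$: for these the leading term $\delta_{jk}\Delta$ vanishes and the whole expression is already a remainder, but the martingale-plus-Lemma \ref{lem:essb} machinery above applies verbatim, using the covariance estimate $\tilde{\mathbb{E}}_{i-1}[Z_i^jZ_i^k\bar m_i] = \delta_{jk}\Delta + O_{i-1}^E(\sqrt\Delta)$ from Lemma \ref{lemma:4} to control the contribution of $\bar m_i$ when it appears. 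The main obstacle I anticipate is bookkeeping rather than conceptual: one must be careful that every ``$O^E_{i-1}(\sqrt\Delta)$'' contribution, once summed over the $n = T/\Delta$ time steps, still produces $O(\sqrt\Delta)$ and not $O(1)$ — this is exactly where Lemma \ref{lem:essb} (which gives the crucial extra $\Delta^{1/2}$ from the local-time heuristic) must be invoked rather than a naive triangle-inequality bound, and one must ensure the Cauchy--Schwarz splittings against $\bar M_n$ are compatible with the moment bounds \eqref{Lnboundsm} so that the constants remain uniform over compact sets of initial points.
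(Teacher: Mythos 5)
Your decomposition matches the paper's: the paper's entire proof is to set $\theta^{jk}_i$ equal to the per-step remainder (formula \eqref{thetaim}) and then invoke the 1D argument, which is exactly the split you carry out into the diagonal term $\delta_{jk}\Delta$, the martingale increments, the cross terms, and the product of conditional expectations. Your identification of which pieces are $\widetilde{\mathbb P}$-martingales and which are controlled by Lemma~\ref{lem:essb} is also correct, and you rightly flag the ambiguity between ``$\mathbb{R}^d$-valued'' in the statement and the $\mathbb{R}^{d\times d}$ indexing.

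However, there is a genuine gap in the way you close the estimate for the martingale pieces. You propose to bound $\tilde{\mathbb E}[\,|\!\sum_i(\Delta_i M)|\,\bar M_t]$ by Cauchy--Schwarz as $\tilde{\mathbb E}[(\sum_i\Delta_i M)^2]^{1/2}\,\tilde{\mathbb E}[\bar M_t^2]^{1/2}$ and then cite \eqref{Lnboundsm} for the second factor; but \eqref{Lnboundsm} gives $\tilde{\mathbb E}[\max_i(\bar M_{i:n})^p]\le 2^{pn}$, which blows up as $n\to\infty$, so that step does not yield a bound that is uniform in $n$. The quantity you actually need to control is $\tilde{\mathbb E}[|\theta_t|\bar M_t]=\tilde{\mathbb E}[|\theta_t|\bar M_n]$ (by the martingale property of $\bar M$ and the $\mathcal F_t$-measurability of $\theta_t$), and the correct Cauchy--Schwarz keeps the weight attached: $\tilde{\mathbb E}[|\theta_t|\bar M_n]\le\tilde{\mathbb E}[\theta_t^2\bar M_n]^{1/2}\tilde{\mathbb E}[\bar M_n]^{1/2}=\tilde{\mathbb E}[\theta_t^2\bar M_n]^{1/2}$, using $\tilde{\mathbb E}[\bar M_n]=1$. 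Equivalently, one should pass to $\mathbb{Q}^n$ from the outset, split $\theta_t$ into its $\mathbb{Q}^n$-predictable part (which is $O^E_{i-1}(\sqrt\Delta)$ term by term, as Lemma~\ref{lemma:4} gives $\tilde{\mathbb E}_{i-1}[Z_i^jZ_i^k\bar m_i]=\delta_{jk}\Delta+O^E_{i-1}(\sqrt\Delta)$, and these sum to $O(\sqrt\Delta)$ by the very definition of the $O^E$ notation) and its $\mathbb{Q}^n$-martingale part (whose $\mathbb{Q}^n$-bracket is controlled by \eqref{control2} and \eqref{control3}, giving $O(T\Delta)$ and hence $O(\sqrt\Delta)$ in $L^1(\mathbb{Q}^n)$ after BDG). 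The same correction applies to your treatment of the cross terms (ii). Once the Cauchy--Schwarz is done against $\bar M_n$ itself rather than against $\bar M_t^2$, the rest of your bookkeeping — in particular the use of Lemma~\ref{lem:essb} for the product term (iii) and the observation that off-diagonal indices are entirely remainder — goes through.
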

        \begin{proof}
                The proof goes along the proof for the 1D case just by modifying the following definition 
                \begin{align}
                        \theta^{jk} _{i} =&\left( Z^j_{i}-\tilde{\mathbb{E}}_{i-1}\left[ Z^j_{i}\bar{m}_{i}\right]
                        \right)\left( Z^k_{i}-\tilde{\mathbb{E}}_{i-1}\left[ Z^k_{i}\bar{m}_{i}\right]
                        \right) -\delta_{jk}\Delta.  \label{thetaim}
                \end{align}
        \end{proof}

We now analyze the properties of $E^{n}$ and $R^{n}$ under the measure that appears due to the differentiation of the killed process. That is, define
\begin{align*}
        \mathbb{Q}^n(A):=&\tilde{\mathbb{E}}\left[1_A\bar{M}_n\right].
\end{align*} 
        
        As in the one dimensional case, we obtain due to Lemma \ref{lemmaBmm},
        \begin{theorem}\label{th:Rn}
                The sequence $R^{n}$ is a $  \mathbb{Q}^n$ square martingale which converges in distribution to a Brownian motion $W$.
        \end{theorem}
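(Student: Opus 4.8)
The plan is to recognize $R^n$ as the martingale part of the noise-driving process $E^n$ and to verify the hypotheses of a martingale functional central limit theorem (e.g. the one in \cite{EthierKurtz} or the version tailored to stochastic integrals in \cite{KP}), with all moment estimates taken under $\mathbb{Q}^n$. Recall from \eqref{rn} that $R^n_t=\sum_{i:\,t_i<t}\bar r_i$ with $\bar r^k_i = Z^k_i - \tilde{\mathbb{E}}_{i-1}[Z^k_i\bar m_i]$. The first step is to observe that, because $\bar M^n$ is a martingale with mean $1$ under $\widetilde{\mathbb{P}}^n$ and $\mathbb{Q}^n$ is defined by $d\mathbb{Q}^n/d\widetilde{\mathbb{P}}^n|_{\mathcal F_n}=\bar M^n_n$, the conditional expectation operator under $\mathbb{Q}^n$ restricted to $\mathcal F_{i-1}$-measurable integrands acts as $\mathbb{E}^{\mathbb{Q}^n}_{i-1}[\,\cdot\,] = \tilde{\mathbb{E}}_{i-1}[\,\cdot\,\bar m_i\,]/\tilde{\mathbb{E}}_{i-1}[\bar m_i]$, and by Lemma~\ref{lem:4} the denominator is exactly $1$. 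Hence $\mathbb{E}^{\mathbb{Q}^n}_{i-1}[\bar r^k_i] = \tilde{\mathbb{E}}_{i-1}[Z^k_i\bar m_i] - \tilde{\mathbb{E}}_{i-1}[Z^k_i\bar m_i]\,\tilde{\mathbb{E}}_{i-1}[\bar m_i] = 0$, so each increment is a $\mathbb{Q}^n$-martingale increment and $R^n$ is a $\mathbb{Q}^n$-martingale; square integrability follows from \eqref{eq:6.4m} and Lemma~\ref{moments}.

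Next I would compute the predictable quadratic variation. Writing $\langle R^{n,j},R^{n,k}\rangle_t = \sum_{i:\,t_i<t}\mathbb{E}^{\mathbb{Q}^n}_{i-1}[\bar r^j_i\bar r^k_i]$, expand $\mathbb{E}^{\mathbb{Q}^n}_{i-1}[\bar r^j_i\bar r^k_i] = \tilde{\mathbb{E}}_{i-1}[Z^j_iZ^k_i\bar m_i] - \tilde{\mathbb{E}}_{i-1}[Z^j_i\bar m_i]\tilde{\mathbb{E}}_{i-1}[Z^k_i\bar m_i]$. By the covariance estimate in Lemma~\ref{lemma:4}, the first term is $\delta_{jk}\Delta + O^E_{i-1}(\sqrt\Delta)$, and by \eqref{eq:6.1m} together with the Gaussian-decay bound of Lemma~\ref{lem:essb} (applied to $F(x)=|x|\bar\Phi(x)$ and to $x\mapsto \Delta g^1(x)$-type terms, which control $\Delta_iB^n$), the cross term $\tilde{\mathbb{E}}_{i-1}[Z^j_i\bar m_i]\tilde{\mathbb{E}}_{i-1}[Z^k_i\bar m_i]$ is $O^E_{i-1}(\Delta)$ and summable to a quantity of order $\Delta$ in expectation. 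Summing over $i$ with $t_i<t$ gives $\langle R^{n,j},R^{n,k}\rangle_t \to \delta_{jk}t$ in $\mathbb{Q}^n$-probability (equivalently in $L^1(\mathbb{Q}^n)$ after using the asymptotic-negligibility Definition~\ref{def:2} to absorb the $O^E$ remainders), exactly as in Lemma~\ref{lemmaBmm}, which in fact already packages the relevant statement for the plain increments $Z_i - \tilde{\mathbb{E}}_{i-1}[Z_i\bar m_i]$.

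Then I would check the conditional Lindeberg / jump-size condition: by \eqref{eq:6.4m} we have $\mathbb{E}^{\mathbb{Q}^n}_{i-1}[|\bar r_i|^4] \le C\Delta^2$, so $\sum_{i:\,t_i<t}\mathbb{E}^{\mathbb{Q}^n}_{i-1}[|\bar r_i|^4] \le C t\,\Delta \to 0$, which is more than enough to verify that the maximal jump of $R^n$ tends to zero in probability and that the Lindeberg condition holds. With a continuous Gaussian limit characterized by deterministic quadratic covariation $\delta_{jk}t$ and asymptotically vanishing jumps, the martingale FCLT (Theorem 7.1.4 in \cite{EthierKurtz}, or the corresponding statement in \cite{KP}) yields $R^n \Rightarrow W$ in the Skorokhod topology on $D([0,T],\mathbb{R}^d)$, with $W$ a standard $d$-dimensional Brownian motion. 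The main obstacle is the bookkeeping for the quadratic-variation convergence under the non-probability-like structure induced by $\bar M^n$: one must consistently pass conditional expectations through the weight $\bar m_i$, control the boundary-layer terms (the factors involving $\mathtt X^{1,\sigma}_{i-1}$ and $\bar\Phi$) via Lemma~\ref{lem:essb}, and confirm that the $O^E$ remainders are negligible in the precise sense of Definition~\ref{def:2} rather than merely pathwise; this is the place where the argument genuinely leans on the elliptic structure and Hypothesis~\ref{hyp:12}, though for $R^n$ itself it runs parallel to the 1D case since the reflection direction does not yet intervene.
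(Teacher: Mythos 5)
Your proof is correct and follows essentially the same route as the paper, which simply invokes Lemma~\ref{lemmaBmm} (quadratic-variation control) and defers the rest to the 1D case in \cite{CK}, itself a martingale FCLT argument. You spell out the suppressed details---verifying the $\mathbb{Q}^n$-martingale property via $\tilde{\mathbb{E}}_{i-1}[\bar m_i]=1$, the convergence of the predictable bracket to $\delta_{jk}t$, and the Lindeberg condition from the fourth-moment bound \eqref{eq:6.4m}---but the underlying strategy is the one the paper intends.
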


We complete this section with a couple of results concerning the uniform boundedness of derivatives and moments of the processes which appear in \eqref{fprimemH}.

\begin{lemma}
        \label{lem:24}
        There exists a positive constant $ C $ independent of $ n $ and $ i $ such that $ \|{\mathbf{D}}_if_i \|_\infty\leq C $.
\end{lemma}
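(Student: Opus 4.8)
The plan is to differentiate the iterated push‑forward representation of $f_i$ and then bound the resulting expression using the moment estimates of Section~\ref{sec:4} together with a single a priori bound on the size of $f_j$ close to the boundary. The first thing I would record is the representation obtained by iterating Lemma~\ref{lem:2.1H} from time $t_i$ instead of from $0$: since the one‑step identity \eqref{eq:tdH} is local, composes under the tower property, and the Euler scheme started at $t_i$ from $x\in\bar H^d_L$ satisfies the same hypotheses, the computation leading to \eqref{fprimemH} gives
\begin{align*}
\mathbf D_i f_i(x)={}&\widetilde{\mathbb E}_{i,x}\!\left[\mathbf D f(X_n)\,E^n_{i:n}\,\mathcal K_{i:n}\,\bar M_{i:n}\right]\\
&{}+\sum_{j=i+1}^{n}\widetilde{\mathbb E}_{i,x}\!\left[f_j(X_j)\,\varrho_j^{\top}\,E^n_{i:j-1}\,\mathcal K_{i:j}\,\bar M_{i:j}\right],
\end{align*}
with $E^n_{i:j}=\prod_{k=i+1}^{j}e_k$. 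It then suffices to bound each of the two terms uniformly in $i$, $n$ and $x$.

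For the first term, $\|\mathbf D f\|_\infty<\infty$ since $f\in C^1_b$, and by Cauchy--Schwarz $\widetilde{\mathbb E}_{i,x}[\|E^n_{i:n}\|\,\mathcal K_{i:n}\,\bar M_{i:n}]\le \widetilde{\mathbb E}_{i,x}[\|E^n_{i:n}\|^{2}\bar M_{i:n}]^{1/2}\,\widetilde{\mathbb E}_{i,x}[\mathcal K_{i:n}^{2}\bar M_{i:n}]^{1/2}\le C$, the last inequality following from Lemmas~\ref{lemma:7} and~\ref{moments} (the exponential factors $e^{C(t_n-t_i)}$ there are harmless since $T$ is fixed, and the constants are uniform in the starting time and point because only the boundedness of $b,\sigma$ enters the proofs). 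So the first term is $O(1)$.

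For the second term, \eqref{eq:rho}--\eqref{eq:rhot} give $|\varrho_j|\le C\,\mathsf h_j(1+\mathtt X^1_{j-1})$, and since under $\mathbb Q^n$ the first coordinate is reflected, $\sum_j\mathsf h_j$ is only of order $\Delta^{-1/2}$ in $\mathbb Q^n$--expectation; hence the crude bound $|f_j|\le\|f\|_\infty$ does not suffice, and one must exploit that $f_j$ vanishes on $\partial H^d_L$. The point is the a priori estimate $|f_j(x)|\le C\big((x^1-L)\wedge 1\big)$, with $C$ independent of $j$ and $n$: granting it, $|f_j(X_j)\,\varrho_j|\le C\,\mathsf h_j(\mathtt X^1_j+\mathtt X^1_j\mathtt X^1_{j-1})$, and by Remark~\ref{rem:15} (through Lemmas~\ref{lem:essb} and~\ref{lemma:4}) the sums $\sum_j\mathtt X^1_j\mathsf h_j$ and $\sum_j\mathtt X^1_j\mathtt X^1_{j-1}\mathsf h_j$ are bounded in every $L^q(\mathbb Q^n)$ uniformly in $n$; combining these, after a Cauchy--Schwarz extracting $\max_j\big(\|E^n_{i:j-1}\|\,\mathcal K_{i:j}\big)$ and the usual passage between $\bar M_{i:j}$ and $\bar M_{i:n}$ via the martingale property of $\bar M^n$, with the moment bounds of Lemmas~\ref{lemma:7} and~\ref{moments}, shows the second term is $O(1)$ as well. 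Adding the two estimates yields $\|\mathbf D_i f_i\|_\infty\le C$.

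The crux is the a priori boundary estimate $|f_j(x)|\le C((x^1-L)\wedge 1)$. It is the only ingredient that is not self‑referential — one cannot invoke a gradient bound on $f_j$, which is exactly what is being proved — and it is the reason the \emph{global} iterated formula must be used rather than a step‑by‑step Gronwall recursion (a one‑step iteration of \eqref{eq:tdH} accumulates an $O(\Delta^{-1/2})$ error from the boundary terms, whereas the global formula together with Remark~\ref{rem:15} captures the cancellations that keep everything of order one). The estimate itself follows from $|f(y)|\le\|\partial_{x^1}f\|_\infty(y^1-L)$ and, since $m_k$ kills every path with $X^1_k\le L$, reduces — exactly as in the one‑dimensional treatment of \cite{CK} — to a comparison (discrete Hopf‑type boundary gradient) estimate for the first coordinate killed near $L$, obtained by dominating $|f_j|$ by a supersolution of the discrete killed scheme that compensates the drift through an exponential factor.
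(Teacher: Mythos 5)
You correctly set up the iterated push--forward representation from time $t_i$ and dispose of the first term via Lemmas~\ref{moments} and~\ref{lemma:7}, but your treatment of the second term has a genuine gap and also misses the device the paper actually uses. Your argument hinges on an a priori boundary--Lipschitz estimate $|f_j(x)|\leq C\,((x^1-L)\wedge 1)$ uniform in $j$, $n$ and $x$; you flag it as ``the crux'' and sketch that it ``follows'' from a supersolution/comparison argument for the discrete killed scheme. That is a program, not a proof: to carry it out one would have to construct an explicit barrier compatible with the geometric--Bernoulli killing $m_k$, the conditional exponential factor $\mathcal{K}_{j:n}$, and the drift, and verify a discrete comparison principle step by step with constants uniform in $j,n$ and the starting point. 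None of this is elementary, nothing in the paper establishes it, and without it your bound $|f_j(X_j)\varrho_j|\leq C\,\mathsf h_j(\mathtt X^1_j+\mathtt X^1_j\mathtt X^1_{j-1})$ is unsupported.

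The paper avoids the boundary estimate entirely. After invoking Lemma~\ref{lem:uc} to strip off the $\tilde\varrho_j$ contribution, it writes $f_j=\tilde{\mathbb{E}}_j[f(X_n)\mathcal{K}_{j:n}M_{j:n}]$ and applies the tower property to push every summand to time $n$; the surviving indicator is $\bar{\mathsf{h}}_{j:n}=1_{(U_j\leq p_j,\,U_{j+1}>p_{j+1},\dots,U_n>p_n)}$, and the events $\{\bar{\mathsf{h}}_{j:n}=1\}$, $j=i+1,\dots,n$, are mutually exclusive. Consequently $\sum_{j}\frac{b^1_{j-1}}{a^{11}_{j-1}}(\mathbf{e}^1)^{\top}E^n_{i:j-1}\bar{\mathsf{h}}_{j:n}$ has at most one non-vanishing term and is dominated by $C\max_{j}\|E^n_{i:j-1}\|_F$. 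This collapses the apparent $O(\Delta^{-1/2})$ growth of $\sum_j\mathsf{h}_j$ in one stroke, and the whole expression is then bounded using only $\|f\|_\infty$ together with the moment estimates of Lemmas~\ref{moments} and~\ref{lemma:7}, with no boundary decay of $f_j$ required. Your order estimates on $\sum_j\mathtt X^1_j\mathsf h_j$ via Remark~\ref{rem:15} are correct but become unnecessary once this exclusivity is observed.
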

Now, we prove that the terms gathered in $ \tilde{\varrho}_i $  in the expression \eqref{fprimemH} are negligible.  The proof is similar to the proof of Lemma 24 in \cite{CK}.

\begin{lemma}
        \label{lem:uc}
        For any compact set $ K $
        \begin{align*}
                \lim_{n\to\infty}\sup_{x\in K}   \tilde{\mathbb{E}}_{0,x}\left[\sum_{i=1}^n f_i(\tilde{\varrho}_i)^{\top}  E^{n}_{i-1}\mathcal{K}_i\bar{M}_{i}\right ]=0
        \end{align*}
\end{lemma}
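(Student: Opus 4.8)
The plan is to show that $\tilde\varrho_i$ carries an extra factor of $\mathtt{X}^1_{i-1}$ relative to the terms that survive in the limit, and that multiplication by $\mathsf{h}_i$ together with the local-time estimate of Lemma \ref{lem:essb} kills it. Concretely, recall from \eqref{eq:rhot} that
\[
\tilde\varrho_i=\mathsf{h}_i\,\mathtt{X}^1_{i-1}\,{\mathbf{D}}_{i-1}\!\left(\frac{b^1_{i-1}}{a^{11}_{i-1}}\right),
\]
and since $b,\sigma\in C^2_b$ with $a$ uniformly elliptic, the factor ${\mathbf{D}}_{i-1}(b^1_{i-1}/a^{11}_{i-1})$ is bounded by a deterministic constant, i.e.\ it is $O_{i-1}(1)$. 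Hence $|\tilde\varrho_i|\le C\,\mathtt{X}^1_{i-1}\mathsf{h}_i$. The first step is therefore purely algebraic: reduce the claim to showing
\[
\lim_{n\to\infty}\sup_{x\in K}\tilde{\mathbb{E}}_{0,x}\!\left[\Big|\sum_{i=1}^n \mathtt{X}^1_{i-1}\mathsf{h}_i\,\big(\text{bounded }\mathcal{F}_i\text{-adapted factor}\big)\Big|\bar{M}_n\right]=0,
\]
where the bounded adapted factor absorbs $\|{\mathbf{D}}_if_i\|_\infty$ (bounded by Lemma \ref{lem:24}), the entries of $E^n_{i-1}$, and $\mathcal{K}_i$.

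The second step is to deal with the unbounded factors $E^n_{i-1}$ and $\mathcal{K}_i$ by a Cauchy--Schwarz (or Hölder) splitting under the measure $\mathbb{Q}^n$, exactly as in the proof of Lemma 24 of \cite{CK}. Applying Cauchy--Schwarz with respect to $\bar M_n\,d\tilde{\mathbb P}$ separates the sum $\sum_i \mathtt{X}^1_{i-1}\mathsf{h}_i$ (times only bounded quantities, using $|f_i|\le C$ from $f\in C^1_b$ and $|{\mathbf{D}}_i f_i|\le C$) from the product $\mathcal{K}_i\,\|E^n_{i-1}\|$; the latter has uniformly bounded moments by Lemma \ref{moments} and Lemma \ref{lemma:7}, after absorbing the exponential weights $e^{-C(t_n-t_i)}$, $e^{-C(t_{i-1})}$ at the (harmless, since $T$ is fixed) cost of a constant. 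One must be slightly careful that the weight structure in those two lemmas is compatible with a single Cauchy--Schwarz step; as in the 1D case one handles this by a further splitting over a finite partition of $[0,T]$ into blocks on which the exponential weight is of constant order, or simply by noting that on $[0,T]$ the weights $e^{-C(t_n-t_i)}$ are bounded below by $e^{-CT}>0$. After this step it remains to bound
\[
\sup_{x\in K}\tilde{\mathbb{E}}_{0,x}\!\left[\Big(\sum_{i=1}^n \mathtt{X}^1_{i-1}\mathsf{h}_i\Big)^{2q}\bar M_n\right].
\]

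The third step is the core estimate. Write $\mathtt{X}^1_{i-1}\mathsf{h}_i = \mathtt{X}^1_{i-1}1_{(U_i\le p_i)}$. Taking the conditional expectation $\tilde{\mathbb{E}}_{i-1}$ of $\mathtt{X}^1_{i-1}1_{(U_i\le p_i)}$ and using $\tilde{\mathbb{E}}_{i-1}[1_{(U_i\le p_i)}]=\tilde{\mathbb{E}}_{i-1}[p_i]$, the computation in Lemma \ref{lem:4} gives $\tilde{\mathbb{E}}_{i-1}[p_i]= 2\bar\Phi(\mathtt{X}^{1,\sigma}_{i-1})$ on $\{X^1_{i-1}>L\}$ (up to the indicator), so that
\[
\tilde{\mathbb{E}}_{i-1}\!\left[\mathtt{X}^1_{i-1}\mathsf{h}_i\right]= 2\,\mathtt{X}^1_{i-1}\,\bar\Phi(\mathtt{X}^{1,\sigma}_{i-1})=2\sqrt{a^{11}_{i-1}\Delta}\;\mathtt{X}^{1,\sigma}_{i-1}\bar\Phi(\mathtt{X}^{1,\sigma}_{i-1}).
\]
The function $x\mapsto x\bar\Phi(x)$ on $[0,\infty)$ has Gaussian decay, so Lemma \ref{lem:essb} (applied with $F(x)=|x|\bar\Phi(x)$, noting $\mathtt{X}^{1,\sigma}_{i-1}\ge 0$ on the relevant event and the factor $\sqrt{a^{11}_{i-1}}$ is bounded) shows that the $L^{2q}(\bar M_n)$-norm of $\sum_i \sqrt{\Delta}\,\mathtt{X}^{1,\sigma}_{i-1}\bar\Phi(\mathtt{X}^{1,\sigma}_{i-1})$ is $O(\sqrt{\Delta}\cdot\sqrt{T})=O(\sqrt\Delta)$. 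To pass from the conditional-expectation sum to $\sum_i\mathtt{X}^1_{i-1}\mathsf{h}_i$ itself one adds and subtracts: the martingale part $\sum_i(\mathtt{X}^1_{i-1}\mathsf{h}_i-\tilde{\mathbb{E}}_{i-1}[\mathtt{X}^1_{i-1}\mathsf{h}_i])$ is controlled by Burkholder--Davis--Gundy under $\mathbb{Q}^n$, whose bracket is $\sum_i \tilde{\mathbb{E}}_{i-1}[(\mathtt{X}^1_{i-1})^2\mathsf{h}_i]=\sum_i O^E_{i-1}(\Delta^{1/2})$ by Remark \ref{rem:15} (with $k=0$, $m=2$), hence also vanishing. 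Combining the two gives $\|\sum_i\mathtt{X}^1_{i-1}\mathsf{h}_i\|_{L^{2q}(\bar M_n)}=O(\Delta^{1/4})\to 0$, which together with Steps 1--2 yields the claim.

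The main obstacle is the bookkeeping in Step 2: reconciling the different exponential weights in the moment bounds of Lemmas \ref{moments} and \ref{lemma:7} with a clean Cauchy--Schwarz estimate, and making sure the matrix-valued nature of $E^n_{i-1}$ (with its non-commuting factors) does not spoil the uniform-in-$i$ moment control. This is exactly the kind of technical point the authors flag as being handled "similar to the proof of Lemma 24 in \cite{CK}", so I would follow that template, with the one genuinely new ingredient being that $E^n$ is matrix-valued and one must use the Frobenius-norm bound of Lemma \ref{lemma:7} rather than a scalar estimate. The analytic heart — that the extra $\mathtt{X}^1_{i-1}$ together with $\mathsf{h}_i$ produces an $x\bar\Phi(x)$-type local-time sum that vanishes — is the same mechanism as in 1D and presents no new difficulty.
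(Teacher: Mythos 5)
Your Step 3 contains a quantitative error that defeats the proof. Lemma \ref{lem:essb} states $\Delta^{1/2}\,\tilde{\mathbb{E}}_{0,x}\bigl[\,|\sum_i F(\mathtt{X}^{1,\sigma}_{i-1})|^q\,\bar M_n\bigr]^{1/q}\le C\sqrt{T}$, i.e.\ $\bigl\|\sum_i F(\mathtt{X}^{1,\sigma}_{i-1})\bigr\|_{L^q(\bar M_n)}=O(\Delta^{-1/2})$, not $O(1)$. Hence $\bigl\|\sum_i\sqrt{\Delta}\,\mathtt{X}^{1,\sigma}_{i-1}\bar\Phi(\mathtt{X}^{1,\sigma}_{i-1})\bigr\|_{L^{2q}(\bar M_n)}=O(1)$, not $O(\sqrt\Delta)$ as you claim. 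This is not a bookkeeping slip: the predictable sum $\sum_i\tilde{\mathbb{E}}_{i-1}[\mathtt{X}^1_{i-1}\mathsf{h}_i]$ is a discrete approximation of (a multiple of) the local time $B_T$ and converges to a nontrivial, $O(1)$ quantity. Equivalently, Remark \ref{rem:15} with $k=1$ gives $\tilde{\mathbb{E}}_{i-1}[\mathtt{X}^1_{i-1}\mathsf{h}_i]=O^E_{i-1}(\Delta^0)$, which is \emph{not} asymptotically negligible. So your reduction to $\sum_i\mathtt{X}^1_{i-1}\mathsf{h}_i\to 0$ fails: that sum does not go to zero, and the final estimate $O(\Delta^{1/4})$ does not hold.

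The missing idea is that you must exploit the boundary condition $f_i\big|_{\partial H^d_L}=0$ rather than merely bounding $|f_i|\le C$. The paper's proof starts by writing $f_i=f_i-f_i(\pi(X_i))$ (using $f_i(L,X^2_i,\dots,X^d_i)=0$), so that $|f_i-f_i(\pi(X_i))|\le\|\mathbf{D}_if_i\|_\infty\,\mathtt{X}^1_i$. Combined with the factor $\mathtt{X}^1_{i-1}\mathsf{h}_i$ already present in $\tilde\varrho_i$, this yields a quantity of order $\mathtt{X}^1_{i-1}\mathtt{X}^1_i\mathsf{h}_i$, whose $\mathcal{F}_{i-1}$-conditional expectation is $O^E_{i-1}(\sqrt\Delta)$ by the last display of Remark \ref{rem:15}. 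Then an application of Lemma \ref{lem:35}(c), together with the uniform bound on $\mathbf{D}_if_i$ (Lemma \ref{lem:24}) and the moment controls on $E^n$, $\mathcal{K}^n$ (Lemmas \ref{moments}, \ref{lemma:7}), completes the argument. In short: a single factor $\mathtt{X}^1_{i-1}$ together with $\mathsf{h}_i$ only produces a bounded local-time-type sum; you need the \emph{second} small factor $\mathtt{X}^1_i$ that comes from $f_i$ vanishing on the boundary, and that is the ingredient your proposal omits.
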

\begin{proof}
        As in the 1D case $ \mathbb{E}_{i-1}\left[(f_i-f_i(L,{X}^2_i,...,X^d_i))\tilde{\rho}_ie^{\kappa_i} \bar{m}_i\right]=O_{i-1}^E(\sqrt{\Delta})$. From here, the result follows from the application of Lemma \ref{lem:35} (c), the uniform boundedness of the first derivative of $ f_i $ and the moment estimates for $ E^n $.
\end{proof}

\section{Proof of the main result}
\label{sec:5}
Let us recall the definition of the various processes involved in the
        approximate representation of the derivative of the killed semigroup. In the list below, we review the discrete time versions of these processes, i.e, we specify the processes only at the times $t_i$, $i=0,1,...,n$. However, in the subsequent analysis and, whenever needed, we will work with their standard imbedding  of the discrete time versions into the piece-wise constant continuous time processes.

        \begin{itemize}
                \item $X^n$ satisfies the recurrence formula 
                \begin{align}
                        X_{i}^{n,x} =&X_{i-1}^{n,x}+\sigma _{i-1}{Z}_{i}  \notag \\
                        =& X_{i-1}^{n,x}+\sigma _{i-1}({Z}_{i}-\widetilde{\mathbb{E}}_{i-1}\left[ {Z}_{i}\bar{%
                                m}_{i}\right]) +\sigma _{i-1}\widetilde{\mathbb{E}}_{i-1}\left[ {Z}_{i}\bar{m}%
                        _{i}\right] \notag \\
                        =&X_{i-1}^{n,x}+\sigma
                        _{i-1}(R_{t_i}^{n}-R_{t_{i-1}}^{n})+(\mathbf{B}_{t_i}^{n}-\mathbf{B} _{t_{i-1}}^{n}),  \label{eq:defbXunderPtilde''}
                \end{align}
                We will show that $X^n$ converges in distribution to a reflected diffusion $%
                Y $ as defined in (\ref{eq:Y}), see also (\ref{reflected}) below.
                
                \item $R^n$ as defined in (\ref{rn}) is the martingale part the process $X^n$. We will show that $R^n$ converges in distribution
                to a Brownian motion.

                \item $\mathbf{B}^n$ as defined in (\ref{fn}) is the predictable part in the
                (discrete) Doob-Meyer decomposition of $X^n$ under the measure ${\mathbb{Q}}^n$. From (\ref{fn}) and \eqref{eq:6.1m} we have the alternative representation
   \begin{align}
        { \mathbf{B} _{t}^{n}  }  :=&\sum_{i=0}^{n-1}1_{[t_i,t_{i+1})}(t)a_{i-1}\mathbf{e}^1\Delta_iB^n
        \label{fn2},
   \end{align}
       where 
       \begin{align}
        \Delta_iB^{n}   =&   2\Delta g_{i-1}^1(\mathtt{X}^1_{i-1})
        -      
        2\bar{\Phi}\left(
        \mathtt{X}_{i-1}^{1,\sigma}
        \right)
        \frac{\mathtt{X}_{i-1}^1}{ {{a^{11}_{i-1}} }},\label{fn3}\\
       B _{t}^{n} =&\sum_{i=0}^{n-1}1_{[t_i,t_{i+1})}(t)\Delta_iB ^{n}. \label{fn4}
       \end{align}
We will prove that $\mathbf{B}^n$ and, respectively, $B^n$ converge in distribution to a pair $(\mathbf{B},B)$ of local time-type processes with finite variation paths and which only increase when $ Y $ is at the boundary so that 
\[
 \mathbf{B}_t=\int_{0}^{t}a(Y_{s})\mathbf{e}^1dB_s.
\]
                
                \item $E^n$ as defined in (\ref{e}) is the derivative of
                the ``stochastic flow" of $X^n$. Under the measure 
                ${\mathbb{Q}}^n$, it has the form 
\[
E^{n}_j:=\prod_{i=1}^je_i, \ \   E^{n}_0:=I 
\]         
where
\[
e_i=I+(\mathbf{e}^1(\mathbf{e}^1)^{\top}-I)
        \mathsf{h}_i+\bar{b}_{i-1}\Delta(1-\mathsf{h}_i)
        +\mathbf{D}\sigma_{i-1} ^{k }
        (R_{t_i}^{n}-R_{t_{i-1}}^{n})^k+
        (\Gamma_{t_i}^{n}-\Gamma_{t_{i-1}}^{n})
        +\pi_{i-1}\mathtt{X}^1_{i-1}\mathsf{h}_i.
\]       
\item $\Gamma^{n}$ as defined in (\ref{gamman}) is one of the
                process driving $E^n$. It satisfies the recurrence formula 
                \begin{align}
                        \Gamma_{t_i}^{n}-\Gamma_{t_{i-1}}^{n} =&({\gamma}_{i}-\widetilde{\mathbb{E}}_{i-1}\left[ {\gamma}_{i}\bar{m}_{i}\right]) +\widetilde{\mathbb{E}}_{i-1}\left[ {\gamma}_{i}\bar{m}%
                        _{i}\right] \notag \\
                        =&\bar\Gamma_{t_i}^{n}-\bar\Gamma_{t_{i-1}}^{n} 
                        +a^{11}_{i-1}\pi_{i-1}({B}_{t_i}^{n}-{B} _{t_{i-1}}^{n})+O_{i-1}^E(\sqrt{\Delta}). \label{eq:defG}
                \end{align}
We have shown that $\bar{\Gamma}^n$ is a martingale that vanishes
                in the limit (see Lemma \ref{lem:14}) and that $\Gamma^{n}$ converges in distribution to a process $\mathbf{\Gamma}$ that given by \[
 \mathbf{\Gamma}_t=\int_{0}^{t}a^{11}(Y_{s})\pi(Y_s)\mathbf{e}^1dB_s.
\]
                Later, we will see that this term does not contribute to the equation satisfied by the limit of $ E^n $.  

\item We will prove that $E^n$ converges in
                distribution to the process $\mathcal{E}=(\mathcal{E}^\ell)_{\ell=1}^d$, $\mathcal{E}^\ell:=(\mathbf{e}^\ell)^{\top}\mathcal{E}$ whose components satisfy equations see also (\ref{el1}) and (\ref{el2}) below. This process will satisfy equation (\ref{eq:E}) after adding the drift back in the equation for the reflected diffusion $Y$ (again, via Girsanov's theorem)   and is denoted by  $\xi$ in equation (\ref{eq:E}).   
                
                \item $\mathcal{K}^n$ as defined in (\ref{kn}) is the
                Radon-Nikodym derivative of the measure $\widetilde{\mathbb{P}} $ (this is the measure
                under which $X^n$ has no drift) with respect to the original measure $\mathbb{P}.$ That is,
\begin{equation}\label{kprimen}
\mathcal{\mathcal{K}}_{t_i}^{n} =\exp \left( \sum_{j=1}^{i}\left( 
(\sigma^{-1}_{j-1})^{k\cdot}\cdot b_{j-1}\left((R_{t_{j}}^{n}-R_{t_{j-1}}^{n})^k+\sigma^{1k}_{j-1}
(B_{t_{j}}^{n}-B_{t_{j-1}}^{n})\right)-\frac{1}{2}\left\|\left((\sigma^{-1}_{j-1})^{k\cdot}\cdot b_{j-1}\right)\mathbf{e}^k\right \|^2\Delta\right) \right) .
\end{equation}                       
We will prove that $\mathcal{K}^n$ converges in distribution to the process $               \mathcal{K}$ defined in (\ref{K}).
        \end{itemize}
We will show that the piece-wise constant version of the vector process
$$(X^{n},E^{n},R^{n},\Gamma^n, \bar\Gamma^n,\mathcal{K}^n,B^n,\bold{B}^n)$$ converges in distribution. We start with a relative compactness result and a convergence result of the processes that drive the various terms appearing in the push-forward formula.       
       
       \begin{theorem}
        \label{relcomptheoremm}The family of processes $(X^{n},E^{n},R^{n},\Gamma^n, \bar\Gamma^n,\mathcal{K}^n,B^n,\bold{B}^n)$ 
        is relatively compact under $\mathbb{Q}_{n}$
       \end{theorem}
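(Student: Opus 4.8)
The plan is to prove relative compactness of each of the eight coordinate processes separately under $\mathbb{Q}^n$ and then assemble them. Seven of the processes --- $X^n,R^n,\Gamma^n,\bar\Gamma^n,\mathcal{K}^n,B^n$ and $\mathbf{B}^n$ --- will be shown to be $C$-tight (relatively compact with all limit points having continuous paths), whereas $E^n$ is only tight in the Skorokhod $J_1$ topology. Since the juxtaposition of a $C$-tight sequence with a tight one is again tight (see e.g. \cite{EthierKurtz}), relative compactness of the vector $(X^n,E^n,R^n,\Gamma^n,\bar\Gamma^n,\mathcal{K}^n,B^n,\mathbf{B}^n)$ follows once each marginal has been treated. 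All moment bounds needed below are taken from Lemmas \ref{moments}, \ref{lemma:7}, \ref{lemma:4}, \ref{lem:5m}, \ref{lemmaBmm}, \ref{lem:essb} and Remark \ref{rem:15}.

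\textbf{The $C$-tight coordinates.} For $R^n$ and $\bar\Gamma^n$ I would use that both are square-integrable $\mathbb{Q}^n$-martingales (Theorem \ref{th:Rn}, Lemma \ref{lem:14}), that their quadratic variations converge --- $[R^n]_t\to t\,I$ by Lemma \ref{lemmaBmm}, $[\bar\Gamma^n]_t\to 0$ by Lemma \ref{lem:14} --- and that their maximal jumps vanish, since $\max_i(|\bar r_i|+|\gamma_i-\tilde{\mathbb{E}}_{i-1}[\gamma_i\bar m_i]|)\to 0$ in $L^p(\mathbb{Q}^n)$ for every $p$ by the cited moment estimates; Rebolledo's criterion then gives $C$-tightness. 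The process $B^n$ is nondecreasing with increments of order $\sqrt{\Delta}$, and Lemma \ref{lem:essb} (applied with $F(x)=|x|e^{-|x|^2/2}$ and $F(x)=|x|\bar\Phi(x)$, both of Gaussian decay) bounds $\sum_{i=j}^{j+k}\Delta_iB^n$ in $L^q(\mathbb{Q}^n)$ by $C\sqrt{(k+1)\Delta}$, uniformly in $j,n$; Aldous's criterion then yields $C$-tightness of $B^n$, and since $|\mathbf{B}^n_t-\mathbf{B}^n_s|\le C(B^n_t-B^n_s)$ by boundedness of $a$, also of $\mathbf{B}^n$. For $\Gamma^n$ I would invoke the decomposition \eqref{eq:defG}: $\Gamma^n$ is $\bar\Gamma^n$ plus $\sum a^{11}_{i-1}\pi_{i-1}\Delta_iB^n$ plus an asymptotically negligible remainder, the three pieces being $C$-tight, so $\Gamma^n$ is $C$-tight. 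By \eqref{kprimen}, $\log\mathcal{K}^n$ is a stochastic integral with bounded predictable integrand against the $C$-tight martingale $R^n$ plus a finite-variation term controlled by increments of $B^n$ and a term of order $\Delta$; hence $\log\mathcal{K}^n$ is $C$-tight and, by continuity of $\exp$ and the uniform bound of Lemma \ref{moments}, so is $\mathcal{K}^n$. Finally, by \eqref{eq:defbXunderPtilde''}, $X^n$ is the sum of a discrete stochastic integral of the bounded coefficient $\sigma$ against $R^n$ --- $C$-tight by \cite{KP}, or directly via the Burkholder--Davis--Gundy inequality together with $[R^n]_t\to t\,I$ and vanishing maximal jumps --- and the $C$-tight process $\mathbf{B}^n$, so $X^n$ is $C$-tight.

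\textbf{The process $E^n$ --- the main obstacle.} Contrary to the one-dimensional case of \cite{CK}, $E^n$ genuinely jumps: writing the recursion $E^n_{t_i}=e_iE^n_{t_{i-1}}$ with $e_i=I+(\mathbf{e}^1(\mathbf{e}^1)^{\top}-I)\mathsf{h}_i+\zeta_i$, where $\tilde{\mathbb{E}}_{0,x}[\max_i|\zeta_i|^p\bar M_n]\to0$ by Lemmas \ref{lemma:4}, \ref{lem:14} and Remark \ref{rem:15}, one sees that $E^n$ has jumps of order $1$ at every index with $\mathsf{h}_i=1$. Compact containment of $\{\sup_{t\le T}\|E^n_t\|\}$ is immediate from Lemma \ref{lemma:7}, so the task is to control the $J_1$ oscillation modulus $w'(E^n,\delta)$. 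The decisive algebraic fact is the near-idempotency $(\mathbf{e}^1(\mathbf{e}^1)^{\top}-I)^2=-(\mathbf{e}^1(\mathbf{e}^1)^{\top}-I)$: once $\mathsf{h}_i=1$, the rows $\ell\neq1$ of $E^n$ are reduced to size $O(|\zeta|)$, and over any subsequent run of indices $t_i,\dots,t_{i+k}$ on which $\mathsf{h}$ may again equal $1$ they stay of order $\sqrt{(k+1)\Delta}+\sum|\zeta|$, because such a run has length $O((k+1)\Delta)$ --- $\mathsf{h}_j=1$ forces $\mathtt{X}^1_{j-1}$, $\mathtt{X}^1_j$ of order $\sqrt{\Delta}$ --- and the total number of boundary-indices that carry non-negligible mass over $[0,T]$ is controlled through the local-time estimate of Lemma \ref{lem:essb}. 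Hence each maximal cluster of boundary-indices contributes one macroscopic jump of $E^n$ at its start and only $o_p(1)$ oscillation thereafter, while on the interior stretches (all $\mathsf{h}_i=0$) $E^n$ is driven solely by the $C$-tight processes $R^n,\Gamma^n$ and by $\bar b_{i-1}\Delta$, for which a standard increment estimate applies. Grouping the grid into interior stretches and boundary clusters, and bounding via Lemma \ref{lem:essb} both the number and the spacing of the clusters that matter, I would produce, for every $\varepsilon>0$, a random partition of $[0,T]$ into $O_p(1)$ intervals of positive length on each of which $\mathrm{osc}(E^n)\le\varepsilon+o_p(1)$; that is, $w'(E^n,\delta)\to0$ in $\mathbb{Q}^n$-probability as $\delta\to0$ uniformly in $n$, which together with compact containment gives tightness of $E^n$ in $D([0,T],\mathbb{R}^{d\times d})$ (\cite{EthierKurtz}).

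\textbf{Assembly and the hard step.} Combining the $C$-tightness of $X^n,R^n,\Gamma^n,\bar\Gamma^n,\mathcal{K}^n,B^n,\mathbf{B}^n$ with the $J_1$-tightness of $E^n$, and using that a $C$-tight component never obstructs joint tightness, one concludes that $(X^n,E^n,R^n,\Gamma^n,\bar\Gamma^n,\mathcal{K}^n,B^n,\mathbf{B}^n)$ is relatively compact under $\mathbb{Q}^n$. The genuinely delicate point, and the one absent from the one-dimensional treatment of \cite{CK}, is the oscillation control for $E^n$: the boundary jumps accumulate (their count along a boundary excursion diverges like $\Delta^{-1/2}$), and because the matrix factors $e_i$ do not commute one cannot diagonalize the jump structure away as in the scalar case; the argument therefore rests squarely on the idempotency identity for $\mathbf{e}^1(\mathbf{e}^1)^{\top}-I$ together with the block local-time estimates of Lemma \ref{lem:essb}, which show that within each cluster only the first jump is macroscopic and the residual fluctuation is negligible.
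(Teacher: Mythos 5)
Your overall plan — coordinate-wise relative compactness under $\mathbb{Q}^n$, with $C$-tightness of seven components established from the moment bounds and a separate $J_1$ argument for $E^n$ — matches the paper's, and the moment controls you cite from Lemmas \ref{moments}, \ref{lemma:7}, \ref{lem:5m} and \ref{lem:essb}, as well as the identification of $\mathbf e^1(\mathbf e^1)^\top$ as the source of $E^n$'s jumps, are exactly the right ingredients. For the seven components other than $E^n$ your route (Rebolledo for the martingales, Aldous for the finite-variation pieces) is equivalent to the paper's, which verifies the conditional-increment bound (\ref{filt}) and invokes Theorem~8.6/Remark~8.7 of \cite{EthierKurtz}; either chain of citations works.

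The genuine divergence is in the treatment of $E^n$, and there your sketch has a gap. You attempt to bound the $J_1$ oscillation modulus $w'(E^n,\delta)$ directly, by grouping the grid into interior stretches and ``boundary clusters'' and asserting that only $O_p(1)$ of the clusters ``matter.'' Lemma~\ref{lem:essb} bounds the accumulated local-time mass, but it does not give you that assertion: the set of boundary times of the limiting reflected $Y^1$ is an uncountable closed null set (the support of the local time), jumps of $\mathcal E^\ell$ ($\ell\neq1$) accumulate at every point of it, and what makes $\mathcal E$ c\`adl\`ag is the implicit fact that jumps of size $\geq\varepsilon$ can only follow an excursion of $Y^1$ away from $L$ of duration bounded below. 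Proving this excursion-separation uniformly in $n$ for the prelimiting $E^n$ is exactly the delicate step, and your proposal treats it as already settled. The paper avoids the issue entirely by verifying instead the product criterion of Theorem~8.8 (p.~139) of \cite{EthierKurtz}:
\begin{align*}
\Xi^{n,t,\delta}:=\widetilde{\mathbb{E}}\!\left[\min(\|E_{t+\delta}^n-E_t^n\|,1)^4\min(\|E_t^n-E_{t-\delta}^n\|,1)^4\,\bar m_{t+\delta}\,\big|\,\mathcal F_t\right]\leq C\delta^2.
\end{align*}
This converts the clustering difficulty into a clean three-way case analysis over the event $\Phi^{n,t-\delta,t}\cup\Phi^{n,t,t+\delta}$ and its complement. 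On $\Phi^{n,t,t+\delta}$ (no hits in $[t,t+\delta]$) the forward factor is controlled by Burkholder plus Lemma~\ref{lem:5m}; symmetrically for the backward factor on $\Phi^{n,t-\delta,t}$; and when both intervals contain a hit one decomposes around the last hit indices $i_{\mathrm{last}}^t,i_{\mathrm{last}}^{t+\delta}$, uses that $\bar e_{i_{\mathrm{last}}}$ is proportional to $\mathtt X^1_{i_{\mathrm{last}}-1}\mathsf h_{i_{\mathrm{last}}}=O(\sqrt\Delta)$, and reduces the residual to the increment of the first row $\mathbf e^1(\mathbf e^1)^\top E^n$, which has no jumps and is handled like the $C$-tight pieces. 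Your idempotency observation and the $O(\sqrt\Delta)$ magnitude of $\mathtt X^1_{j-1}$ on $\{\mathsf h_j=1\}$ are the right algebra, but to close the argument you need either Theorem~8.8 (as the paper does) or a precise, quantitative separation statement for the boundary excursions of $\mathcal X^{n,1}$ that your proposal does not supply.
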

   \begin{proof}
   
        The proof of the corresponding statement in the one dimensional case is done considering the control on each component as follows:
          \begin{equation*}
                \sup_{n}\widetilde{\mathbb{E}}\left[ \left\vert q_{t}^{n}\right\vert \bar{M}^n_{n}%
                \right] <\infty ,
        \end{equation*}%
        where $q^{n}$ is replaced by each of the processes in the sequence 
        $(X^{n},E^{n},R^{n},\Gamma^n, \bar\Gamma^n,\mathcal{K}^n,B^n,\bold{B}^n)$ in turn. The control for the processes in
        the triplet $({X}^{n},E^{n},\mathcal{K}^{n})$ follows from Lemmas \ref{lemma:7} and \ref{moments}. The bounds on the moments of $(B^{n})$ follows from Lemma \ref{lem:essb}.
This, in  turn implies the control on $\bold{B}^n$ from the representation
(\ref{fn2}) and the boundedness of $a$. The process $\bar\Gamma^n$ vanishes in the limit (see Lemma \ref{lem:14}) so it is also relative compact. The control on $\Gamma^n$ follows from (\ref{eq:defG}) and the respective control of $\bar\Gamma^n$ and $B^n$. Finally, we have the result for $R^n$ from Theorem \ref{th:Rn},    
         
        As in the one-dimensional case, to complete the relative compactness argument, we apply Theorem 8.6 and Remark 8.7 in Chapter 3 from \cite{EthierKurtz}. In particular, we show that, for all the processes but $E^n$ we find a family $(\gamma _{\delta }^{n})_{0<\delta <1, n\in 
                \mathbb{N}}$ of nonnegative random variables such that  
        \begin{equation}\label{filt}
                \widetilde{\mathbb{E}}\left[ \left\vert q_{t+u}^{n}-q_{t}^{n}\right\vert \bar{m}%
                _{t+u}|\mathcal{F}_{t}^{n}\right] \leq \widetilde{\mathbb{E}}\left[ \gamma
                _{\delta }^{n}\bar{M}^n_{n}|\mathcal{F}_{t}^{n}\right]
        \end{equation}%
        and $\lim_{\delta \rightarrow 0}\limsup_{n\rightarrow \infty }\widetilde{\mathbb{%
                        E}}\left[ \gamma _{\delta }^{n}\bar{M}^n_{n}\right] =0$ for $t\in \lbrack
        0,T],u\in \lbrack 0,\delta ]$, where $q^{n}$ is replaced by $X^{n},R^{n},\Gamma^n, \bar\Gamma^n,\mathcal{K}^n$, $B^n$, and $\bold{B}^n$. The arguments are similar to those in the one-dimensional case and we omit them.   
        
        The control of the oscillation of the processes $ E^n $ is much more delicate and we have to use a different argument. We show that, for arbitrary $0\leq t\leq
T,~0\leq \delta \leq t$, there exists a constant independent of $n,t$ and $%
\delta $ such that 
\begin{equation}
\Xi ^{n,t,\delta }:=\widetilde{\mathbb{E}}\left[ \min (\Vert E_{t+\delta
}^{n}-E_{t}^{n}\Vert ,1)^{4}\min (\Vert E_{t}^{n}-E_{t-\delta }^{n}\Vert
,1)^{4}\bar{m}_{t+\delta }|\mathcal{F}_{t}\right] \leq C\delta ^{2}
\label{maincontrol}
\end{equation}%
This is an application of Theorem 8.8 page 139 in \cite{EthierKurtz}. Note
that we require the control of the fourth moments of the increments as we
need the exponent of $\delta $ on the right hand side of (\ref{maincontrol})
to be strictly larger than 1.

Furthermore, we may assume without loss of generality that $\Delta
\leq 2\delta $. In fact, if $\delta <\frac{\Delta }{2}$ at least one of the two
quantities $\Vert E_{t+\delta }^{n}-E_{t}^{n}\Vert $ or $\Vert
E_{t}^{n}-E_{t-\delta }^{n}\Vert $ is null.

 For $0\leq u<v\leq T$, we define the set 
\begin{equation*}
\Phi ^{n,u,v}=\left\{ \omega \in \Omega |\mathsf{h}_{i}\left( \omega \right)
=0~\mathrm{for~any}~i~\text{\textrm{such~that}}~t_{i}\in \left[ u,v\right]
\right\} 
\end{equation*}
and note that 
\begin{align}
\Xi ^{n,t,\delta } \leq &\widetilde{\mathbb{E}}\left[ \Vert E_{t+\delta
}^{n}-E_{t}^{n}\Vert ^{4}1_{\Phi ^{n,t,t+\delta }}\bar{m}_{t+\delta }\right]
+\widetilde{\mathbb{E}}\left[ \Vert E_{t}^{n}-E_{t-\delta }^{n}\Vert
^{4}1_{\Phi ^{n,t-\delta ,t}}\bar{m}_{t}\right]   \notag \\
&+\widetilde{\mathbb{E}}\left[ \Vert E_{t+\delta }^{n}-E_{t}^{n}\Vert
^{4}1_{\Omega \backslash \left( \Phi ^{n,t-\delta ,t}\cup \Phi
^{n,t,t+\delta }\right) }\bar{m}_{t+\delta }\right]. \label{bib}
\end{align}%
We control each of the three terms on the right hand side of (\ref{bib}):

For the first term, observe that on the set $\Phi ^{n,t,t+\delta }$, \ the
matrices $e_{i}$ have the simpler form (due to the fact that all the $h_{i}$%
's are null in the interval $\left[ t,t+\delta \right] $ by the definition
of $\Phi ^{n,t,t+\delta }$). 
\begin{align}
e_{i}& =I+\bar{b}_{i-1}\Delta +\mathbf{D}\sigma _{i-1}^{k}{Z}_{i}^{k}
\label{eq:R1} \\
\bar{b}_{i-1}:=& \mathbf{D}b_{i-1}-\mathbf{D}\sigma _{i-1}^{k}\left( (\sigma
_{i-1})_{k\cdot }^{-1}\cdot b_{i-1}\right) .  \notag
\end{align}%
Using the fact that the matrices $\bar{b}_{i-1}$ are uniformly bounded and
that $\Delta \leq 2\delta $, one deduces that there exists a constant $C$
independent of $i$ and $n$ such that%
\begin{eqnarray}
\Vert E_{t+\delta }^{n}-E_{t}^{n}\Vert  &\leq &\Delta \left( \left[ \frac{%
\delta }{\Delta }\right] +1\right) C\max_{\left\{ i,t_{i}\in \left[
t,t+\delta \right] \right\} }\left\| E_{i-1}^{n}\right\|+\left\vert
\left\vert \sum_{\left\{ i,t_{i}\in \left[ t,t+\delta \right] \right\}
}\left( \mathbf{D}\sigma _{i-1}^{k}{Z}_{i}^{k}\right) E_{i-1}^{n}\right\vert
\right\vert .  \notag \\
&\leq &\left( \delta +\Delta \right) C\max_{\left\{ i,t_{i}\in \left[
t,t+\delta \right] \right\} }\left\| E_{i-1}^{n}\right\|+\left\| \mathcal{A}^{n,t,t+\delta }+\mathcal{B}^{n,t,t+\delta
}\right\|   \notag \\
&\leq &3C\delta \max_{i=0,...,n}\left\|E_{i}^{n}\right\|
+\left\| \mathcal{A}^{n,t,t+\delta }\right\|
+\left\| \mathcal{B}^{n,t,t+\delta }\right\|
\label{f3}
\end{eqnarray}%
where\footnote{Here and elsewhere $\left[ a\right] $ is the integer part of $a$ and $\left\lceil
a\right\rceil $ is the smallest integer larger than or equal to $a$.} 
\begin{eqnarray*}
\mathcal{A}^{n,t,t+\delta } &=&\sum_{\left\{ i,t_{i}\in \left[ t,t+\delta %
\right] \right\} }\left( \mathbf{D}\sigma _{i-1}^{k}\left( {Z}_{i}^{k}-%
\tilde{\mathbb{E}}_{i-1}\left[ Z_{i}^{k}\bar{m}_{i}\right] \right) \right)
E_{i-1}^{n}=\sum_{i=\left\lceil \frac{t}{\Delta }\right\rceil }^{\left[ 
\frac{t+\delta }{\Delta }\right] }\left( \mathbf{D}\sigma _{i-1}^{k}\left( {Z%
}_{i}^{k}-\tilde{\mathbb{E}}_{i-1}\left[ Z_{i}^{k}\bar{m}_{i}\right] \right)
\right) E_{i-1}^{n} \\
\mathcal{B}^{n,t,t+\delta } &=&\sum_{\left\{ i,t_{i}\in \left[ t,t+\delta %
\right] \right\} }\left( \mathbf{D}\sigma _{i-1}^{k}\tilde{\mathbb{E}}_{i-1}%
\left[ Z_{i}^{k}\bar{m}_{i}\right] \right) E_{i-1}^{n}.
\end{eqnarray*}%
 Since the process 
\begin{equation*}
j\rightarrow \sum_{i=\left\lceil \frac{t}{\Delta }\right\rceil }^{i+j}\left( 
\mathbf{D}\sigma _{i-1}^{k}\left( {Z}_{i}^{k}-\tilde{\mathbb{E}}_{i-1}\left[
Z_{i}^{k}\bar{m}_{i}\right] \right) \right) E_{i-1}^{n}
\end{equation*}%
is a (discrete) martingale, it follows that (see e.g. \cite{Bur}) there
exists a universal constant $M$ such that 
\begin{eqnarray}
\widetilde{\mathbb{E}}\left[ \left\| \mathcal{A}^{n,t,t+\delta
}\right\| ^{4}\bar{m}_{t+\delta }\right]  &\leq &M\widetilde{%
\mathbb{E}}\left[ \left( \sum_{i=\left\lceil \frac{t}{\Delta }\right\rceil
}^{\left[ \frac{t+\delta }{\Delta }\right] }\left\| \left( 
\mathbf{D}\sigma _{i-1}^{k}\left( {Z}_{i}^{k}-\tilde{\mathbb{E}}_{i-1}\left[
Z_{i}^{k}\bar{m}_{i}\right] \right) \right) E_{i-1}^{n}
\right\| ^{2}\right) ^{2}\bar{m}_{t+\delta }\right]   \notag \\
&\leq &c\left( \left[ \frac{t+\delta }{\Delta }\right] -\left\lceil \frac{t}{%
\Delta }\right\rceil \right) \sum_{i=\left\lceil \frac{t}{\Delta }%
\right\rceil }^{\left[ \frac{t+\delta }{\Delta }\right] }\widetilde{\mathbb{E%
}}\left[ \left\|\left( {Z}_{i}-\tilde{\mathbb{E}}_{i-1}\left[
Z_{i}\bar{m}_{i}\right] \right) \right\| ^{4}\left\| E_{i-1}^{n}\right\| ^{4}\bar{m}_{t+\delta }\right] 
\notag \\
&\leq &c\left( \left[ \frac{\delta }{\Delta }\right] +1\right)
\sum_{i=\left\lceil \frac{t}{\Delta }\right\rceil }^{\left[ \frac{t+\delta }{%
\Delta }\right] }\widetilde{\mathbb{E}}\left[ \widetilde{\mathbb{E}}_{i}%
\left[ \left\|\left( {Z}_{i}-\tilde{\mathbb{E}}_{i-1}\left[
Z_{i}\bar{m}_{i}\right] \right) \right\| ^{4}\bar{m}_{i}%
\right] \left\| E_{i-1}^{n}\right\| ^{4}\bar{m}%
_{t+\delta }\right]   \notag \\
&\leq &16c\left( \left[ \frac{\delta }{\Delta }\right] +1\right) \widetilde{%
\mathbb{E}}\left[ \max_{i=0,...,n}\left\|E_{i}^{n}\right\|
^{4}\left( \sum_{i=\left\lceil \frac{t}{\Delta }\right\rceil }^{\left[ \frac{%
t+\delta }{\Delta }\right] }\widetilde{\mathbb{E}}_{i}\left[ \left\|{Z}_{i}\right\| ^{4}\bar{m}_{i}\right] \right) 
\bar{m}_{t+\delta }\right]   \notag \\
&\leq &16c\left( \left[ \frac{\delta }{\Delta }\right] +1\right) ^{2}\Delta
^{2}\widetilde{\mathbb{E}}\left[ \max_{i=0,...,n}\left\|E_{i}^{n}\right\|^{4}\bar{m}_{t+\delta }\right]   \notag \\
&\leq &C\delta ^{2}  \label{ff1}
\end{eqnarray}%
where we used the fact that $\mathbf{D}\sigma _{i-1}^{k}$ is uniformly
bounded, the control on the fourth moment of $\max_{i=0,...,n}\left\vert
|E_{i}^{n}|\right\vert $ (Lemma \ref{lemma:7}) and Lemma \ref{lem:5m}. Next, we have that%
\begin{equation*}
\left\| \mathcal{B}^{n,t,t+\delta }\right\|
\leq C\max_{i=0,...,n}\left\| E_{i}^{n}\right\|
\max_{j\leq n-k-1}\left( \sum_{i=\left\lceil \frac{t}{\Delta }\right\rceil
}^{\left[ \frac{t+\delta }{\Delta }\right] }\left\| \tilde{%
\mathbb{E}}_{i-1}\left[ {Z}_{i}\bar{m}_{i}\right] \right\|\right)
\end{equation*}%
from which we deduce that 
\begin{equation}
\widetilde{\mathbb{E}}\left[ \left\| \mathcal{B}^{n,t,t+\delta
}\right\| ^{4}\bar{m}_{t+\delta }\right] \leq C\delta ^{2}
\label{fff2}
\end{equation}%
again by using the control on the fourth moment of $\max_{i=0,...,n}\left%
\|E_{i}^{n}\right\| $ (Lemma \ref{lemma:7}) and Lemma \ref{lem:5m}. The control on the first term
on the right hand side of (\ref{bib}) follows from (\ref{f3}), (\ref{ff1})
and (\ref{fff2}).

The control on second term on the right hand side of (\ref{bib}) is done
identically as with the first term of  (\ref{bib}) and we omit it here.

We treat now the third term of (\ref{bib}). On $\Omega \backslash \left(
\Phi ^{n,t-\delta ,t}\cup \Phi ^{n,t,t+\delta }\right) $, there exists at
least one $t_{i}\in \left[ t-\delta ,t\right] $ such that $\mathsf{h}%
_{i} =1_{(U_{i}\leq p_{i})}=1$.\ Let us denote by  $\bar{\rho}_t^{n}:=\sup\{t_i<t;U_i\le p_i\} $  and by $i_{last}^{t}:= n\bar{\rho}_{t}^n  $ the largest $%
i$ such that $t_{i}\in \left[ t-\delta ,t\right] $ and $\mathsf{h}_{i}\left(
\omega \right) =1$ and we similarly define $i_{last}^{t+\delta }$. Therefore  
\begin{eqnarray*}
E_{t_{i_{last}^{t}}}^{n}-\mathbf{e}^{1}(\mathbf{e}^{1})^{\top }%
E_{t_{i_{last}^{t}-1}}^{n} &=&\bar{e}%
_{i_{last}^{t}}E_{t_{i_{last}^{t}-1}}^{n} \\
E_{t_{i_{last}^{t+\delta }}}^{n}-\mathbf{e}^{1}(\mathbf{e}^{1})^{{\top }%
}E_{t_{i_{last}^{t+\delta }-1}}^{n} &=&\bar{e}_{i_{last}^{t+\delta
}}E_{t_{i_{last}^{t+\delta }-1}}^{n}.
\end{eqnarray*}%
Using these relations, we can estimate
\begin{eqnarray}
\left\| E_{t+\delta }^{n}-E_{t}^{n}\right\|
&\leq &\left\| E_{t+\delta }^{n}-E_{t_{i_{last}^{t+\delta
}}}^{n}\right\| +\left\|
E_{t}^{n}-E_{t_{i_{last}^{t}}}^{n}\right\| +\left\| E_{t_{i_{last}^{t+\delta }}}^{n}-\mathbf{e}^{1}(\mathbf{e}^{1})^{{\top }%
}E_{t_{i_{last}^{t+\delta }-1}}^{n}\right\| \notag \\
&&+\left\| E_{t_{i_{last}^{t}}}^{n}-\mathbf{e}^{1}(\mathbf{e}^{1})^{{%
\ast }}E_{t_{i_{last}^{t}-1}}^{n}\right\|+\left\| \mathbf{e}^{1}(\mathbf{e}^{1})^{{\top }}E_{t_{i_{last}^{t+\delta
}-1}}^{n}-\mathbf{e}^{1}(\mathbf{e}^{1})^{{\top }}E_{t_{i_{last}^{t}-1}}^{n}\right%
\|  \notag \\
&=&\left\| E_{t+\delta }^{n}-E_{t_{i_{last}^{t+\delta
}}}^{n}\right\| +\left\|
E_{t}^{n}-E_{t_{i_{last}^{t}}}^{n}\right\| +\left\| \bar{e}_{i_{last}^{t}}E_{t_{i_{last}^{t}-1}}^{n}\right\| +\left\| \bar{e}_{i_{last}^{t+\delta
}}E_{t_{i_{last}^{t+\delta }-1}}^{n}\right\|  \notag \\
&&+\left\| \mathbf{e}^{1}(\mathbf{e}^{1})^{{\top }%
}E_{t_{i_{last}^{t+\delta }-1}}^{n}-\mathbf{e}^{1}(\mathbf{e}^{1})^{{\top }%
}E_{t_{i_{last}^{t}-1}}^{n}\right\|. \label{coco}
\end{eqnarray}%
The first two terms in (\ref{coco}) are controlled in the same way as the
first two terms on the right hand side of (\ref{bib}): Recall that there are
no $t_{i}$'s in the intervals $\left[ t_{i_{last}^{t}},t\right] $ and $\left[
t_{i_{last}^{t+\delta }},t+\delta \right] $ such that $\mathsf{h}_{i}\left(
\omega \right) =1$.

The term $\bar{e}_{i_{last}^{t}}$( $\bar{e}_{i_{last}^{t+\delta }}$ is
treated in a similar way) takes the form (recall that $\mathsf{h}%
_{i_{last}^{t}}\left( \omega \right) =1$ on\\ $\Omega \backslash \left( \Phi
^{n,t-\delta ,t}\cup \Phi ^{n,t,t+\delta }\right) $)   
\begin{equation*}
\bar{e}_{i_{last}^{t}}=\mathtt{X}_{i_{last}^{t}-1}^{1}\sigma
_{i_{last}^{t}-1}^{k}{\mathbf{D}}_{i_{last}^{t}-1}\left( \frac{\sigma
_{i_{last}^{t}-1}^{k1}}{{a_{i_{last}^{t}-1}^{11}}}\right) \mathsf{h}%
_{i_{last}^{t}}+{\pi }_{i_{last}^{t}-1}\mathtt{X}_{i_{last}^{t}-1}^{1}%
\mathsf{h}_{i_{last}^{t}}.
\end{equation*}%
Therefore 
\begin{eqnarray*}
\widetilde{\mathbb{E}}\left[ \left\| \bar{e}%
_{i_{last}^{t}}E_{t_{i_{last}^{t}-1}}^{n}\right\| ^{4}\bar{m}%
_{t+\delta }\right]  &\leq &C\widetilde{\mathbb{E}}\left[ \max_{j=1,..,n}%
\left\| \mathtt{X}_{j-1}^{1}\right\|^{4}%
\widetilde{\mathbb{E}}_{j-1}\left[ \mathsf{h}_{j}\bar{m}_{j}\right]
\left\| E_{j-1}^{n}\right\| ^{4}\bar{M}_{n}^{n}%
\right]  \\
&\leq &C\sqrt{\widetilde{\mathbb{E}}\left[ \max_{j=1,..,n}\left\| \mathtt{X}_{j-1}^{1}\right\|^{8}\widetilde{%
\mathbb{E}}_{j-1}\left[ \mathsf{h}_{j}\bar{m}_{j}\right] ^{2}\bar{M}_{n}^{n}%
\right] \widetilde{\mathbb{E}}\left[ \max_{j=1,..,n}\left\|
E_{j-1}^{n}\right\| ^{8}\bar{M}_{n}^{n}\right] }
\end{eqnarray*}%
which gives the result following the control on the eighth moment of $%
\max_{j=1,..,n}\left\| E_{j-1}^{n}\right\| $
(Lemma \ref{lemma:7} and the fact that 
\begin{equation*}
\widetilde{\mathbb{E}}\left[ \max_{j=1,..,n}\left\| \mathtt{X}%
_{j-1}^{1}\right\| ^{8}\widetilde{\mathbb{E}}_{j-1}\left[ 
\mathsf{h}_{j}\bar{m}_{j}\right] ^{2}\bar{M}_{n}^{n}\right] \leq c\Delta ^{4}.
\end{equation*}%
The above inequality follows because $ \widetilde{\mathbb{E}}_{j-1}\left[ 
\mathsf{h}_{j}\bar{m}_{j}\right]=2\bar{\Phi}\left(
{\mathtt{X}_{j-1}^{1,\sigma}}\right)\leq Ce^{-\frac{(\mathtt{X}_{j-1}^{1,\sigma})^2}2} $ (see the proof of Lemma \ref{lem:4}) and the fact that there exists $ C>0 $ such that the inequality $ x^8e^{-\frac{x^2}2}\leq C $ is valid for $ x\geq 0 $.
The term $\bar{e}_{i_{last}^{t+\delta }}E_{t_{i_{last}^{t+\delta }-1}}^{n}$
is treated in a similar manner. 

Finally, observe that we have the following decompositions
\begin{align*}
\mathbf{e}^{1}(\mathbf{e}^{1})^{{\top }}E_{t_{i_{last}^{t+\delta }-1}}^{n}-\mathbf{e}%
^{1}(\mathbf{e}^{1})^{{\top }}E_{t_{i_{last}^{t}-1}}^{n}
=&\sum_{i=i_{last}^{t}}^{i_{last}^{t+\delta }-1}\mathbf{e}^{1}(\mathbf{e}^{1})^{{\top }}\left(
E_{t_{i}}^{n}-E_{t_{i-1}}^{n}\right),\\
\mathbf{e}^{1}(\mathbf{e}^{1})^{{\top }}\left(
E_{t_{i}}^{n}-E_{t_{i-1}}^{n}\right) =&\mathbf{e}^{1}(\mathbf{e}^{1})^{{\top }%
}\left( e_{i}-I\right) E_{t_{i-1}}^{n}=\mathbf{e}^{1}(\mathbf{e}^{1})^{{\top }}%
\bar{e}_{i}E_{t_{i-1}}^{n}.
\end{align*}
Therefore,
\begin{align*}
\mathbf{e}^{1}(\mathbf{e}^{1})^{{\top }}E_{t_{i_{last}^{t+\delta }-1}}^{n}-\mathbf{e}%
^{1}(\mathbf{e}^{1})^{{\top }}E_{t_{i_{last}^{t}-1}}^{n}=&\mathcal{C}^{n,t}+\mathcal{D}^{n,t}
\end{align*}%
where%
\begin{equation*}
\mathcal{C}^{n,t}=\sum_{j=i_{last}^{t}}^{i_{last}^{t+\delta }-1}\mathbf{e}^{1}(%
\mathbf{e}^{1})^{{\top }}\widetilde{\mathbb{E}}_{i-1}\left[ \bar{e}_{i}\bar{m}%
_{i}\right] E_{t_{i-1}}^{n},~~~\mathcal{D}^{n,t}=%
\sum_{j=i_{last}^{t}}^{i_{last}^{t+\delta }-1}\mathbf{e}^{1}(\mathbf{e}^{1})^{{%
\ast }}\left( \bar{e}_{i}-\widetilde{\mathbb{E}}_{i-1}\left[ \bar{e}_{i}\bar{%
m}_{i}\right] \right) E_{t_{i-1}}^{n}.
\end{equation*}
From here, the analysis follows in the similar manner as that for controlling the
first term on the right hand side of (\ref{bib}). 
                \end{proof}
  
The next convergence theorem is similar in many aspects to the one dimensional situation except for the term $ (\mathbf{e}^1(\mathbf{e}^1)^{\top}-I)\mathsf{h}_i $ which appears in the definition of $ e_i $ and requires a particular argument in the proof to be provided later. This will be resolved by considering separately the process $ E^n $ in different components. That is, $  E^{n,\ell}:=(\mathbf{e}^\ell )^{\top} E^n$ for $ \ell=1 $ will be analyzed following the same arguments as in the one dimensional case. The cases $ \ell\neq 1 $ will require a separate argument that takes into consideration the heuristic fact that this process jumps to zero when $ X^n $ touches the boundary.

In the arguments that follow we will frequently use the convergence of approximations to stopping times or last visit times. For this reason, as in the one dimensional case, we need to describe an equivalent to all the indicator functions of $ U_i $, $ i\in\mathbb{N} $, using instead of the process $ X^n  $ and its  continuous version $ {X}^{n,x} $, the process $ \mathcal{X}^{n,x} $ described in Section \ref{sec:mlt}.

        \begin{theorem}
                \label{bigT}
                The family of processes $(X^{n},E^{n},R^{n},\Gamma^n, \bar\Gamma^n,\mathcal{K}^n,B^n,\bold{B}^n)$ converge in distribution under $\mathbb{Q}^{n}$ to the processes $(Y,\mathcal{E},W,\bold{\Gamma},0,\mathcal{K},B,\bold{B}),$  where
 \begin{itemize} 
 \item The process  $ W $ is a standard
                Brownian motion 
 \item  The   process $Y$ is the solution of the normally reflected equation in the domain $H^d_L$
                \begin{align}
                                Y_{t}=&x+\int_{0}^{t}\sigma (Y_{s})dW_{s}+\bold{B}_t\in H^d_L. \label{reflected}\\
                                \bold{B}_t=&\mathbf{e}^1 \int_0^t1_{(Y^1_s=L)}d|\boldsymbol B|_s\nonumber
                \end{align}
  \item The process $\mathbf{\Gamma}$ satisfies
  \[
 \mathbf{\Gamma}_t=\int_{0}^{t}a^{11}(Y_{s})\pi(Y_s)\mathbf{e}^1dB_s.
\]              
  \item The one-dimensional process $B$ is defined as 
  \[
  B_t=\int_0^ta^{11}(Y_s)^{-1}\mathbf{e}^1\cdot d\bold{B}_s.              \]         
  \item       The ``change of measure'' $ \mathcal{K} $ is given by 
        \begin{align}\label{K}
                \mathcal{K}_t=\exp
                \left(\int_0^tb^{\top}  a^{-1}(Y_s)dY_s-\frac 12\int_0^t\left(b^\top a^{-1}b\right)(Y_s)ds\right).
        \end{align}
       
        Furthermore, $\mathcal{E}^\ell=(\mathbf{e}^\ell)^{\top}\mathcal{E}$ satisfies a linear equation which can be described as follows: For $ s<t $
        \begin{align}\label{el1}
                \mathcal{E}^1_t=\mathcal{E}^1_s+\int_s^t    \left((\mathbf{e}^1)^{\top} \mathbf{D}\bar{b}(Y_u)du+(\mathbf{e}^1)^{\top} \mathbf{D}\sigma^k(Y_u) dW_u^{k} \right)\mathcal{E}_{u}.
        \end{align}
    For any  $t>0 $ and $ \ell\neq 1 $
    \begin{align}\label{el2}
        \mathcal{E}^\ell_t=(\mathbf{e}^\ell)^{\top}1_{(\rho_t=-\infty)}+(\mathbf{e}^\ell)^{\top}\mathcal{E}^\ell_{\rho_t-}1_{(\rho_t\in [0,t])}
        +\int_{\rho_t\vee 0}^t    \left((\mathbf{e}^\ell)^{\top} \mathbf{D}\bar{b}(Y_u)du+(\mathbf{e}^\ell )^{\top} \mathbf{D}\sigma^k(Y_u) dW_u^{k} 
        \right)\mathcal{E}_{u}
    \end{align}
Here, $ \bar{b}=\mathbf{D} b- \mathbf{D}\sigma^{k} (\sigma^{-1}_{k\cdot}\cdot b)$, and
\begin{equation}\label{rhot}
                {\rho}_t:=\sup\{s\leq t;Y^1_s=L\}.
\end{equation}

\end{itemize}
The system \eqref{el1}-\eqref{el2} has a unique solution and it satisfies that $ \mathcal{E}_{s,t} \mathcal{E}_{s} =\mathcal{E}_{t}$ where $   \mathcal{E}_{s,t}$ denotes the solution of the system \eqref{el1}-\eqref{el2} started at $ s\in (0,t) $. That is, for $ \rho_t(s):=\sup\{u\in [s,t);Y_u^1=L\} $ it solves the system
\begin{align*}
\mathcal{E}_{s,t}=&I1_{(\rho_t(s)=-\infty)}+1_{(\rho_t(s)\in [s,t])}\mathbf{e}^1(\mathbf{e}^1)^{\top} 
\mathcal{E}_{\rho_t(s)-}+\int_{\rho_t(s)}^t
\left( \mathbf{D}\bar{b}(Y_u)du+ \mathbf{D}\sigma^k(Y_u) dW_u^{k} 
\right)\mathcal{E}_{s,u}.
\end{align*}

        \end{theorem}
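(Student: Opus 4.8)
The plan is to establish Theorem \ref{bigT} in two stages: first prove relative compactness of the whole vector of processes, then identify every subsequential limit as the claimed limit, so that the full sequence converges. Relative compactness is already provided by Theorem \ref{relcomptheoremm}, so by Prohoro's theorem every subsequence has a further subsequence converging in distribution in the Skorokhod space; fix such a subsequence and call its limit $(Y,\mathcal{E},W,\mathbf{\Gamma},0,\mathcal{K},B,\mathbf{B})$ (the fact that the $\bar\Gamma^n$-component converges to $0$ is Lemma \ref{lem:14}). We may assume, by the Skorokhod representation theorem, that this convergence is almost sure on a common probability space; on this space the limit processes inherit continuity because the jump sizes of $X^n$, $R^n$, $\mathcal{K}^n$, $B^n$, $\mathbf{B}^n$ vanish uniformly (while $E^n$ may jump at boundary touchings, whose limit we treat separately).

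Next I would identify the limiting SDEs. That $W$ is a standard Brownian motion under the limiting measure is Theorem \ref{th:Rn}. To pass to the limit in the recursions \eqref{eq:defbXunderPtilde''}, \eqref{eq:defG}, \eqref{kprimen}, and in $E^n=\prod e_i$, I would invoke the stability theory of stochastic integrals with respect to ``good'' sequences of semimartingales from \cite{KP} (as in Chapter 7 of \cite{EthierKurtz}): since $R^n\Rightarrow W$ and the integrands are continuous functions of $X^n$ (hence converge), the stochastic integrals $\int \sigma(X^n_{s-})dR^n_s$, $\int \mathbf D\sigma^k(X^n_{s-})\,d(R^n)^k_s\,E^n_{s-}$, etc., converge jointly to the corresponding Itô integrals against $W$. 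The finite-variation terms $\mathbf{B}^n$, $B^n$, $\Gamma^n$ converge because $\Delta_iB^n = 2\Delta g^1_{i-1}(\mathtt X^1_{i-1})-2\bar\Phi(\mathtt X^{1,\sigma}_{i-1})\mathtt X^1_{i-1}/a^{11}_{i-1}$ is a bounded-variation additive functional whose total variation is controlled by Lemma \ref{lem:essb}; that the limit $B$ increases only on $\{Y^1=L\}$ follows from the standard argument that $g^1_{i-1}(\mathtt X^1_{i-1})$ and $\bar\Phi(\mathtt X^{1,\sigma}_{i-1})$ are negligible away from the boundary (exponential decay), while the Skorokhod-problem characterization forces $\mathbf{B}=\int a(Y_s)\mathbf e^1\,dB_s$ with $B_t=\int a^{11}(Y_s)^{-1}\mathbf e^1\cdot d\mathbf{B}_s$, giving normal reflection for $Y$ as in \eqref{reflected}. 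The identity $\mathbf{\Gamma}_t=\int_0^t a^{11}(Y_s)\pi(Y_s)\mathbf e^1\,dB_s$ then follows from \eqref{eq:defG} and Lemma \ref{lem:14}, and $\mathcal{K}$ is read off \eqref{kprimen} upon substituting $dY_s=\sigma\,dW_s+\mathbf{B}$ and using $\pi$ from Hypothesis \ref{hyp:12} to absorb the $B^n$-contribution.

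The main obstacle — and the place where the multidimensional case genuinely departs from \cite{CK} — is the identification of the limit of $E^n$, in particular the jump structure \eqref{el1}--\eqref{el2}. Here the plan is to split $E^{n,\ell}=(\mathbf e^\ell)^\top E^n$ by rows. For $\ell=1$, since $(\mathbf e^1)^\top(\mathbf e^1(\mathbf e^1)^\top - I)\mathsf h_i = 0$, the factor $(\mathbf e^1(\mathbf e^1)^\top-I)\mathsf h_i$ drops out when we multiply $e_i$ on the left by $(\mathbf e^1)^\top$, and the $\pi_{i-1}\mathtt X^1_{i-1}\mathsf h_i$ term likewise has zero first row, so $E^{n,1}$ satisfies a driftless-plus-covariance linear recursion whose limit is \eqref{el1} by the same stochastic-stability argument as in one dimension (the covariance-generated reflection direction is handled through $\bar b$). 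For $\ell\neq 1$, the term $(\mathbf e^1(\mathbf e^1)^\top - I)\mathsf h_i$ contributes $-\mathsf h_i\, (\mathbf e^\ell)^\top$, i.e. it zeroes out the $\ell$-th row whenever $\mathsf h_i=1$; combined with the fact (Corollary \ref{cor:2}, Lemma \ref{lem:14}, and the rewriting of $\pi_{i-1}\mathtt X^1_{i-1}\mathsf h_i$ via $a^{1\ell}(y)-a^{1\ell}(\pi(y))$) that all the remaining boundary contributions to row $\ell$ cancel against the $\Gamma^n$-term, the $\ell$-th row is, in the limit, reset to $(\mathbf e^\ell)^\top \mathcal E_{\rho_t-}$ at the last boundary visit $\rho_t$ and then evolves freely. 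The delicate points are: (i) showing the approximate last-visit times $\bar\rho^n_t$ converge to $\rho_t$ (using the process $\mathcal X^{n,x}$ of Section \ref{sec:mlt} and continuity of $Y$ at its boundary visits, so that the jump times of $E^n$ converge), which requires that $Y^1$ does not have isolated boundary touches — true for reflected diffusions; (ii) justifying the interchange of limit and the (countably many) jumps, via the fourth-moment oscillation bound \eqref{maincontrol} from Theorem \ref{relcomptheoremm} together with the Skorokhod $J_1$-continuity of the map that reads off a càdlàg path from its pre-jump values and continuous pieces; and (iii) propagating the lack of commutativity of matrix products through these estimates, which is why the row-by-row decomposition rather than a single matrix argument is essential. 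Finally, uniqueness of the system \eqref{el1}--\eqref{el2} and the flow property $\mathcal E_{s,t}\mathcal E_s=\mathcal E_t$ follow by a Gronwall argument on each inter-excursion interval $(\rho_t,t]$ together with the deterministic composition of the resets $\mathbf e^1(\mathbf e^1)^\top$ at successive boundary times, which also shows the limit does not depend on the chosen subsequence; hence the whole sequence converges, completing the proof.
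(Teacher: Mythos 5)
Your proposal follows essentially the same route as the paper: relative compactness from Theorem \ref{relcomptheoremm} plus a Skorokhod representation, Kurtz--Protter stability for the stochastic integrals, the localization argument for normal reflection, the row-by-row split of $E^n$ with the $\ell=1$ row evolving freely (since $(\mathbf{e}^1)^{\top}\pi_{i-1}=0$) and the $\ell\neq 1$ rows reset to zero at boundary visits, convergence of the discrete last-visit times $\bar\rho^n_t\to\rho_t$ via the auxiliary reflected process $\mathcal{X}^n$, and a Gronwall argument for uniqueness and the flow property. One small correction of mechanism: for $\ell\neq 1$ the $\Gamma^n$- and $\pi_{i-1}\mathtt{X}^1_{i-1}\mathsf{h}_i$-contributions do not \emph{cancel against each other} but rather both vanish on the interval $(\bar\rho^n_t,t]$ because $\mathsf{h}_j=0$ and $\Delta_jB^n\to 0$ there, which is why \eqref{el2} carries no $\Gamma$-term; the genuine cancellation via $\pi$ occurs only for the first row.
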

        
        \begin{proof}
                From Theorem \ref{relcomptheoremm},  the laws of family of processes $(X^{n},E^{n},R^{n},\Gamma^n, \bar\Gamma^n,\mathcal{K}^n,B^n,\bold{B}^n)$ are relatively compact. We extract a convergent (in law) subsequence from $(X^{n},E^{n},R^{n},\Gamma^n, \bar\Gamma^n,\mathcal{K}^n,B^n,\bold{B}^n)$ which we re-index.  As in the one-dimensional case, the identification of the limits follows from the individual representations and the application of the Corollary 5.6 in \cite{KurtzProtter}.  

Let us start with the convergence of the processes $(X^{n},R^{n},B^n,\bold{B}^n)$
to the quadruplet $(Y,W,B,\bold{B})$. From (\ref{eq:defbXunderPtilde''}) we can write, for  $t\in [t_{i},t_{i+1})$ 
                \begin{align*}
                                X_{t}^{n} =&X_{t_i}^{n}\notag\\
                                =&X_{i-1}^{n,x}+\sigma
                        _{i-1}(R_{t_i}^{n}-R_{t_{i-1}}^{n})+(\mathbf{B}_{t_i}^{n}-\mathbf{B} _{t_{i-1}}^{n})\\
                                =&x_{0}+\sum_{j=0}^{i-1}\left(\sigma (X_{t_{j}}^{n}) (R^n_{t_{j+1}}-R_{t_{j}}^{n})+(\mathbf{B}_{t_i}^{n}-\mathbf{B} _{t_{i-1}}^{n})\right),\notag\\
                      =&x_{0}+\int_{0}^{t}\sigma ^{n}(X^{n},s)dR_{s}^{n}+\mathbf{B}_{t}^{n},\notag\\
                \end{align*}
                where 
                \[
                        \sigma ^{n}(X^{n},s) =\sigma (X_{t_{j}}^{n}),~~~~~s\in \lbrack
                        t_{j}^{n},t_{j+1}^{n}).  
                \]
From (\ref{fn2}), (\ref{fn3}) and (\ref{fn4}), we have
   \begin{align}
        { \mathbf{B} _{t}^{n}  }  :=&\sum_{i=0}^{n-1}1_{[t_i,t_{i+1})}(t)a_{i-1}\mathbf{e}^1\Delta_iB^n
        ,\\
        =&\sum_{i=0}^{n-1}1_{[t_i,t_{i+1})}(t)\sum_{\ell=1}^da^{\ell 1 }_{i-1}\mathbf{e}^\ell\Delta_iB^n
        ,\\
        \Delta_iB^{n}   =&   2\Delta g_{i-1}^1(\mathtt{X}^1_{i-1})
        -      
        2\bar{\Phi}\left(
        \mathtt{X}_{i-1}^{1,\sigma}
        \right)
        \frac{\mathtt{X}_{i-1}^1}{ {{a^{11}_{i-1}} }},\\
       B _{t}^{n} =&\sum_{i=0}^{n-1}1_{[t_i,t_{i+1})}(t)\Delta_iB ^{n}. 
       \end{align}
In the above identities we used the piecewise constant versions of the processes involved. By applying Corollary 5.6 in \cite{KurtzProtter} and the fact that $(X^{n},R^{n},B^n,\bold{B}^n)$
converges to the quadruplet $(Y,W,B,\bold{B}) $, we
                deduce that $Y$ satisfies the equation \eqref{reflected}, where $B$ is an
                increasing process.

To   show that the process $Y$ is the solution of the normally reflected equation (\ref{reflected}), we deduce that $Y_{t}^1\geq L$ , $t\geq 0$, and that $\int_{0}^{t}1_{(Y_s^{1}>L)} dB_{s}=0,t\geq 0$. Then,  the result will follow in a similar fashion as in the proof of  Theorem 1.2.1 in \cite{ap}. We note first that, since the piecewise constant Euler approximation $X^{n}$ converges in distribution, so does its continuous version $X^{c,n}$ (to the same limit) and the two approximations coincide at partition times.  We can therefore replace $X^{n}$ by $X^{c,n}$ in all identities, if needed. 
               Under $ \mathbb{Q}^n $ the law of  $ X^{c,n}$, on the interval $[t_{i-1},t_i] $ is that of a process reflected when it reaches the boundary
$ H_L^d $ . More precisely, the law of the first coordinate of $ X^{c,n}$, on the interval $[t_{i-1},t_i] $, is that of the one dimensional process \begin{align}\label{calxn}
        \bar{\mathcal{X}}^{n,1}_t=(X^n_{i-1})^1+\sigma_{i-1}(W_t- W_{t_{i-1}})+\Lambda_{t_{i-1}\mapsto t}.
\end{align}
With a slight abuse of notation, here $W$ is a Wiener process under $ \mathbb{Q}^n$ and 
$\Lambda_{t_{i-1}\mapsto t}$ is an increasing (finite variation) process such that  
\begin{align*}
\Lambda_{t_{i-1}\mapsto t}=&\int_{t_{i-1}}^t1_{(\mathcal{X}^1_s=L)}d|\Lambda^{n,x}|_{ s}  
\end{align*}
that only increases when $\bar{\mathcal{X}}^{n,1}$ hits $L$. Alternatively, consider 
        $ \mathcal{X}^{n} $ to be the reflected process defined in Section \ref{sec:mlt}.
Then $\bar{\mathcal{X}}^{n,1}$ is the first component process of the process $ \mathcal{X}^{n} $ conditioned to start at $X^n_{i-1}$ at time $t_{i-1}$ and run on the interval $[t_{i-1},t_i] $.

We make use a slight modification of the function $ \varphi_m $ used in one dimension, i.e., 
                \begin{align*}
                        \varphi_m(x)=\min(1,-m(x^1-L)) I_{(x^1\le  L)}.
                \end{align*}
   We deduce that, since the support of $\mathcal{X}_{t}^{n}$ is in the set $[L,\infty)$ 
                \begin{eqnarray*}
                \mathbb{P}\left( Y_{t}\leq L-\frac{1}{m}\right) &\leq& \widetilde{\mathbb{E}}\left[
                \varphi _{m}\left( Y_{t}\right) \right] =
                \lim_{n\mapsto \infty }\widetilde{%
                        \mathbb{E}}\left[ \varphi _{m}\left( X_{t}^{n}\right) \bar{M}_{n}^{n}\right]\\
&=&\lim_{n\mapsto \infty }\widetilde{%
                        \mathbb{E}}\left[ \varphi _{m}\left( X_{t}^{c,n}\right) \bar{M}_{n}^{n}\right]=
\lim_{n\mapsto \infty }\widetilde{%
                        \mathbb{E}}\left[ \varphi _{m}\left( \mathcal{X}_{t}^{n}\right) \right]
                =0.
                \end{eqnarray*}%
     Hence $Y_{t}^1\geq L$ for any $t\geq 0$.      Similarly, we use    $\psi _{m}$ defined as the positive continuous function with support in the
                interval $(L+\frac{1}{2m},\infty )$ given by 
                \begin{equation*}
                \psi _{m}\left( x\right) =\min \left( 1,m\left( x^{1}-L-\frac{1}{2m}\right)
                \right)1_{(x^{1}\geq L+\frac{1}{2m})}
                \end{equation*}%
         to deduce  that, almost surely, 
                \begin{equation*}
                \int_{0}^{T}\psi _{m}\left( Y_{s}\right) dB_{s}=0.
                \end{equation*}%
In fact,
from (\ref{fn2}), (\ref{fn3}) and (\ref{fn4})
we have that 
\begin{align*}
                        \mathbb{E}\left[ \int_{0}^{T}\psi _{m}\left( Y_{s}\right) dB_{s}\right]
                =&\lim_{n\mapsto \infty }\widetilde{\mathbb{E}}\left[ \left(
                \sum_{i=0}^{n-1}\psi _{m}\left( X_{i-1}^{n}\right)
                a_{i-1}(B_{t_{i}}^{n}-B_{t_{i-1}}^{n})\right) \bar{M}_{n}^{n}\right] \\
                \le&\lim_{n\mapsto \infty } 2\|a\|_\infty\Delta\widetilde{\mathbb{E}}\left[ \left(
                \sum_{i=0}^{n-1}\psi _{m}\left( X_{i-1}^{n}\right) g_{i}(X_{i-1}^{1,L})\right) \bar{M}_{n}^{n}\right] \\
                \leq &2\|a\|_\infty\Delta \widetilde{\mathbb{E}}\left[ \left(
                \sum_{i=0}^{n-1}\psi _{m}\left( X_{i-1}^{n}\right) g_{i}(X_{i-1}^{L})\right) 
                \bar{M}_{n}^{n}\right] \\
                \leq &C\exp \left( -\frac{1}{16\| a\|_\infty \Delta m^{2}}\right) \sqrt{\Delta }\widetilde{\mathbb{E}}\left[
                \left( \sum_{i=0}^{n-1}\exp \left( -\frac{\left( X^1_{i-1}-L\right) ^{2}}{%
                        4a_{i}\Delta }\right) \right) \bar{M}_{n}^{n}\right] \\
                \leq &C\lim_{n\mapsto \infty }\exp \left( -\frac{n}{16\| a\|_\infty  Tm^{2}}\right) =0.
                \end{align*}

               Therefore,
                \begin{equation*}
                0\leq \int_{0}^{t}1_{(Y_s>L)}dB_{s}=
                \lim_{m\rightarrow \infty }\int_{0}^{t}\psi _{m}\left( Y_{s}\right) dB_{s}=0.
                \end{equation*}
        Let us turn now to study the limit the pair $(\Gamma^n, \bar\Gamma^n)$. From the recurrence formula (\ref{eq:defG}) we get that   $t\in [t_{i},t_{i+1})$ 
                \begin{align*}
                                \Gamma_{t}^{n} =&\Gamma_{t_i}^{n}\notag\\
                                =&\bar\Gamma_{t_i}^{n} +\int_{0}^{t}\bar\sigma ^{n}(X^{n},s)dB_{s}^{n}+O_{i-1}^E(\sqrt{\Delta}),\notag
                \end{align*}
                where 
                \[
                        \bar\sigma ^{n}(X^{n},s) =a^{11} (X_{t_{j}}^{n})\pi(X_{t_{j}}^{n})\mathbf{e}^1,~~~~~s\in \lbrack
                        t_{j}^{n},t_{j+1}^{n}).  
                \]                
We have shown that $\bar{\Gamma}^n$ is a martingale that vanishes
                in the limit (see Lemma \ref{lem:14}). Therefore, again by applying Corollary 5.6 in \cite{KurtzProtter} we deduce that $\Gamma^{n}$ converges in distribution to a process $\mathbf{\Gamma}$ that given by \[
 \mathbf{\Gamma}_t=\int_{0}^{t}a^{11}(Y_{s})\pi(Y_s)\mathbf{e}^1dB_s.
\]
            Finally note that the process     $\Gamma^{n}$ is one of the 
                processes driving $E^n$. 
 We show below that this term does not contribute to the equation satisfied by the limit of $ E^n $. Therefore $\Gamma$  does not appear in the equation satisfied by $\mathcal{E}$.
                
Also, let us note that the law of the processes $(Y,\mathcal{E},W,\bold{\Gamma},0,\mathcal{K},B,\bold{B})$
                are uniquely identified by the corresponding representation, the pathwise uniqueness of 
                equation (\ref{reflected}) and the fact that $W$ is a Brownian motion. This together with the
                convergence along subsequences, gives us the convergence in law of the whole
                sequence $(X^{n},R^{n},\Gamma^n, \bar\Gamma^n,\mathcal{K}^n,B^n,\bold{B}^n)$ under $%
                {\mathbb{Q}}^n$ to the process $(Y,W,\bold{\Gamma},0,\mathcal{K},B,\bold{B})$. 

The same arguments apply to the process $(\bold{e}^1)^{\top}E^{n}$ which converges to $ \mathcal{E}^1 $ as defined in (\ref{el1}). The only new limit process that requires an extra proof of convergence is $ \mathcal{E}^\ell := (\bold{e}^\ell)^{\top}E^n $ for $ \ell\neq 1 $.  To characterize the limit of $ \mathcal{E}^\ell $, we will use the Skorohod representation which allows us to assume that 
\[
\Xi ^n :=(X^{n},E^{n},R^{n},\Gamma^n, \bar\Gamma^n,\mathcal{K}^n,B^n,\mathbf{B}^n)
\] 
converges almost surely in the Skorohod topology to the process 
\[
\Xi :=(Y,\mathcal{E},W,\mathbf{\Gamma},0,\mathcal{K},B,\mathbf{B}),
\] 
where $ X^{n}$ is not the piecewise constant Euler approximation but the continuous paths Euler approximation $ X^{c,n}$. We can toggle freely between the two as they have the same limit in distribution. As stated above,
under $ \mathbb{Q}^n $ the law of  $ X^{c,n}$, on the interval $[t_{i-1},t_i] $ is that of a process reflected when it reaches the boundary
$ H_L^d $ . More precisely, the law of the first coordinate of $ X^{c,n}$, on the interval $[t_{i-1},t_i] $, is that of the one dimensional process $\mathcal{X}^{n,1}$ introduced in (\ref{calxn}). Alternatively, consider 
        $ \mathcal{X}^{n} $ the reflected process defined in Section \ref{sec:mlt}.
        Then using Lemma \ref{th:9}, we obtain
        \begin{align}
                \label{eq:fiA}
                \tilde{\mathbb{E}}_{i-1}[f(X^{c,n}_{i})\bar{m}_{i}]=&\tilde{\mathbb{E}}_{i-1}[
        \tilde{\mathbb{E}}_{i-1}[f(X^{c,n}_{i})/X_{i}^{c,n}\cdot\mathbf{e}^1]\bar{m}_{i}]\\
                =&
                \mathbb{E}_{i-1}[f(\mathcal{X}^{n}_{t_i})/\mathcal{X}^n_{t_{i-1}}=X_{i-1}].\nonumber
        \end{align}
       
    Therefore, we can include the process $\mathcal{X}^{n}  $ in the Skorohod representation if required. 
For example, recall that  $\bar{\rho}_t^{n}:=\sup\{t_i\le t;U_i\le p_i\} $, therefore alternatively, for fixed $ t>0 $, following from (\ref{eq:fiA}) we have that 
                \begin{equation}\label{rhobant}
                        \bar{\rho}_t^n=\sup\{t_{i}\leq t;\text{ there exists }s\in[t_{i-1},t_i],\ \mathcal{X}^{n,1}_s=L \}.
                \end{equation}   
Since $X^n$ converges pathwise to $Y$, it follows that the random times $\bar\rho_t^n$
converge almost surely to the times $\rho_t$ as introduced in (\ref{rhot}), see Section \ref{sec:8} for details. This is the last ingredient required to complete the characterization of the limiting process  $\mathcal{E}$. 

                Under the measure  ${\mathbb{Q}}^n$, $E^n$ has the form 
\[
E^{n}_j:=\prod_{i=1}^je_i, \ \   E^{n}_0:=I 
\]         
where
\begin{eqnarray*}
e_i&=&I+(\mathbf{e}^1(\mathbf{e}^1)^{\top}-I)
        \mathsf{h}_i+\bar{e}_{i}\\
\bar{e}_{i}&=& \bar{b}_{i-1}\Delta(1-\mathsf{h}_i)+\mathbf{D}\sigma_{i-1} ^{k }  (R_{t_i}^{n}-R_{t_{i-1}}^{n})^k+
        (\Gamma_{t_i}^{n}-\Gamma_{t_{i-1}}^{n})
        +\pi_{i-1}\mathtt{X}^1_{i-1}\mathsf{h}_i        
\end{eqnarray*}
Since $(\mathbf{e}^1)^{\top} (\mathbf{e}^1(\mathbf{e}^1)^{\top}-I) = 0$ and $ (\mathbf{e}^1)^{\top}\pi_{i-1}=0 $ we deduce, in the case of $ E^{n,1} :=(\mathbf{e}^1)^{\top} E^n$, the following difference equation:
                \begin{align*}
                        E^{n,1}_{t_i} &:=(\mathbf{e}^1)^{\top} E^n_{t_i}\\
                        &=(\mathbf{e}^1)^{\top} E^{n}_{t_{j-1}}+
                        \sum_{j=1}
                        ^i(\mathbf{e}^1)^{\top}\bar{e}_jE^{n}_{t_{j-1}}    ,
                        \end{align*}
      from which, by induction one proves that 
\begin{align*}
        E^{n,1}_{t}=&E^{n,1}_{0}+\sum_{j=1}^{\lceil \frac{t}{\Delta}\rceil}
        (\mathbf{e}^1)^{\top}\bar{e}_iE^{n}_{j-1}\\
         =&(\mathbf{e}^1)^{\top} +U^n_t+\int_0^t \widetilde{b}^n(E^{n},X^{n},s)ds +\int_{0}^{t}\widetilde{\sigma}^{n,k}(E^{n},X^{n},s)d(R_{s}^{n})^k+%
                        \int_{0}^{t}(\mathbf{e}^1)^{\top} d\Gamma _{s}^{n}E_{s}^{n}. 
                \end{align*}
                Here we have used  the following equalities
                \begin{align*}
                	\widetilde{\sigma}^{n,k}(E^{n},X^{n},s) =&\mathbf{(\mathbf{e}^1)^{\top}D}\sigma_{j}(X_{t_{j}}^{n})E_{t_{j}}^{n},~~~~~s\in \lbrack
                	t_{j}^{n},t_{j+1}^{n}) \\
                	\widetilde{b}^{n}(E^{n},X^{n},s) =&(\mathbf{e}^1)^{\top}\bar{b}_j ^{
                	}(X_{t_{j}}^{n})E_{t_{j}}^{n}, ~~~~~s\in \lbrack t_{j}^{n},t_{j+1}^{n})\\
                	U^n_t =&- \sum_{i=1}^{\lceil \frac{t}{\Delta}\rceil}\bar{b}_{i-1}\Delta\mathsf{h}_iE^n_{t_{i-1}}.
                \end{align*}
               Note that due to the definition of $ \pi $ and \eqref{eq:defG}, we have that $$\lim_{n\to\infty}  \int_{0}^{t}(\mathbf{e}^1)^{\top} d\Gamma _{s}^{n}E_{s}^{n}=0.$$

Therefore the result, \eqref{el1},  for the component $E^{n,1}_{t}$ follows from \cite{KP} using Corollary \ref{cor16}.    
        Again in the above identities we have used the piecewise constant versions of the processes involved.

   For the analysis of $E^{n,\ell}$ we proceed as follows. We define the set 
\begin{equation*}
\Phi ^{n,t}=\left\{ \omega \in \Omega |\mathsf{h}_{i}\left( \omega \right)
=0~\mathrm{for~any}~i~\text{\textrm{such~that}}~t_{i}\in [0,t]
\right\}. 
\end{equation*}      
 Again recall that 
        as in the 1D case, we can describe an equivalent event to all the indicators functions of $ U_i $, $ i\in\mathbb{N} $, using instead of the process $ X^n  $ and its  continuous version $ {X}^{c,n} $, the process $ \mathcal{X}^{n} $ described in Section \ref{sec:mlt} and
        \begin{align}\label{eq:fi}
                \tilde{\mathbb{E}}_{i-1}[f(X^{c,n}_{i})\bar{m}_{i}]=
                \mathbb{E}_{i-1}[f(\mathcal{X}^{n}_{t_i})/\mathcal{X}^n_{t_{i-1}}=X_{i-1}].
        \end{align}
        Furthermore,
        \begin{align}\label{eq:sec}
                \tilde{\mathbb{E}}_{i-1}[f(X^{c,n}_{n})E^n_{i-1:n}\mathsf{h}_i\bar{M}^n_{i-1:n}]=
                \mathbb{E}_{i-1}[f(\mathcal{X}^{n}_{T})1_{(\exists s\in [t_{i-1},t_i];\mathcal{X}^n_s\cdot\mathbf{e}^1=L)}E^n_{i-1:n}/\mathcal{X}^n_{t_{i-1}}=X_{i-1}].
        \end{align}
Here, we remark that the definition of $ E^n $ on the right hand side of the above equality needs to be modified using $ \mathcal{X} $ instead of $ X $. In this sense, we abuse the notation using the same symbol for this process. 
 
 Therefore, we have that the set $\Phi ^{n,t}$ can be identified with the set where $X^{c,n}$ does not hit the boundary, in other words $X^{c,n}_s\cdot\mathbf{e}^1>L$ for any $s\in [0,t]$. On the complement of the set, we denote
 \begin{align*}
                        \bar{\rho}_t^n:=\sup\{t_{i}\leq t;\mathsf{h}_{i}\left( \omega \right)
=1 \}.
                \end{align*}
The complement of the set $\Phi ^{n,t}$ can be identified with the set where $X^{c,n}$ hits the boundary at least once, in other words $X^{c,n}_s\cdot\mathbf{e}^1=L$ for some $s\in [0,T]$. On the set $ \Phi ^{n,t} $, we have similarly to the case $\ell=1$ that
            \begin{align*}
                E^{n,\ell}_{t}=E^{n,\ell}_{0}+\sum_{j=1}^{\lceil \frac{t}{\Delta}\rceil}
                (\mathbf{e}^\ell)^{\top}\bar{e}_iE^{n}_{j-1}.
            \end{align*}  
On the complement of $ \Phi ^{n,t}$, we have
\[
        E^{n,\ell}_{t}=(\mathbf{e}^\ell)^{\top} E^{n,\ell}_{\bar{\rho}_t^n\Delta^{-1}-1}+
        \sum_{j=\bar\rho_t^n\Delta^{-1}}^{\lceil \frac{t}{\Delta}\rceil}(\mathbf{e}^\ell)^{\top}\bar{e}_jE^{n}_{t_{j-1}}
\]
We put the two cases together and use  $\bar{\rho}_t^n\vee 0$ on $\Phi ^{n,t}$ so that it equals $ 0 $ on this set. Then 
\[
        E^{n,\ell}_{t}=E^{n,\ell}_{0}1_{\bar{\rho} ^{n}_t=-\infty}+ E^{n,\ell}_{\bar{\rho}_t^n\Delta^{-1}}1_{\bar{\rho} ^{n}_t\in [0,t]}+
        \sum_{j=(\bar\rho_t^n\Delta^{-1}+1)\vee 0}^{\lceil \frac{t}{\Delta}\rceil}(\mathbf{e}^\ell)^{\top}\bar{e}_jE^{n}_{t_{j-1}}.
\]

            As $ E^n  $ converges, we have that for $ \bar{\rho} ^{n}_t\in [0,t] $
            \begin{align*}
            (\mathbf{e}^\ell)^{\top} E^{n}_{\rho_t\Delta^{-1}}=
             (\mathbf{e}^\ell)^{\top}\bar{e}_{\bar\rho_t\Delta^{-1}}E^{n}_{\bar\rho_t\Delta^{-1}-1}\to 0.
            \end{align*}
       As above, we have that the convergence of the sum follows due to the uniform convergence 
        \begin{align*}
        \sum_{j=0}^{\lceil \frac{t}{\Delta}\rceil}
        \mathbf{e}^\ell\bar{e}_iE^{n}_{j-1}\to \int_0^t    \left((\mathbf{e}^\ell)^{\top} \mathbf{D}\bar{b}(Y_u)du+(\mathbf{e}^\ell )^{\top} \mathbf{D}\sigma^k(Y_u) dW_u^{k} \right)\mathcal{E}_{u}.
        \end{align*}
    
    Since $ \bar{\rho}^n_t\to\rho_t $ (see Section \ref{sec:8}) and the above terms converge uniformly we get that \eqref{el1} and \eqref{el2} hold true for each fixed $ t>0 $.
    
    Note there is a unique solution to  the system \eqref{el1}-\eqref{el2}.
    For this, is enough to consider  two solutions, say $ \mathcal{E}^i $, $ i=1,2 $ and define $ \xi_t:=\mathcal{E}^1_t-\mathcal{E}^2_t $. Then, we have 
    \begin{align*}
        |\xi^{\ell j}_t|^2\leq &2\sup_{s\leq t}\left|\int_0^s    \left((\mathbf{e}^\ell)^{\top} \mathbf{D}\bar{b}(Y_u)du+(\mathbf{e}^\ell )^{\top} \mathbf{D}\sigma^k(Y_u) dW_u^{k} 
        \right)\xi_{u}\mathbf{e}^j\right|^2.
    \end{align*}
From here, using classical Gronwall type arguments one obtains that $ \xi=0 $. Therefore using the continuity by the right of $ \rho_t $, the characterization of $ \mathcal{E} $ follows.

Moreover, $\mathcal{E}$ satisfies $ \mathcal{E}_{s,t} \mathcal{E}_{s} =\mathcal{E}_{t}$ where $   \mathcal{E}_{s,t}$ denotes the solution of the system \eqref{el1}-\eqref{el2} started at $ s\in (0,t) $.
This is obtained by taking the limit of the identities $ E^n_{\lceil s/\Delta\rceil ,\lceil t/\Delta\rceil } E^n_{\lceil s/\Delta\rceil }= E^n_{\lceil t/\Delta\rceil }$.

\end{proof}

With the above result, as in the one dimensional case, one obtains using Girsanov's theorem and It\^o formula the following result.
\begin{theorem} 
        \label{th:5}Let $ (Y,\mathcal{E}) $ satisfy \eqref{eq:Y} then 
        \begin{equation}  \label{limder}
        \begin{aligned}
        \tilde{\mathbb{E}}\left[  {\mathbf{D}}{f}\left( X_{n}^{n,x}\right)E^{n}\mathcal{K}_n\bar{M}_{n}%
                \right]\to &
                \widetilde{\mathbb{E}}\left[ {{\mathbf{D}}f}^{\prime }\left( Y_T\right)
                \mathcal{E}_T\mathcal{K}_T\right]\\
                &=\mathbb{E}\left[  {\mathbf{D}} {f}\left( Y_T\right)\xi_T\right ],\\
                        \tilde{\mathbb{E}}\left[\sum_{i=1}^n f_i\frac{b^1(z_{i-1})
                        }{{{a^{11}}(z_{i-1})}}       
                        (\mathbf{e}^1)^{\top}  E^{n}_{i-1}\mathsf{h} _i\mathcal{K}_i\bar{M}_{i}\right ]
                \to &   
               \widetilde{\mathbb{E}}_{}\left[f(Y_T)\frac{b^1
                }{{{a^{11}}}} (Y_{\rho_T})    
                (\mathbf{e}^1)^{\top} 
                \mathcal{E}_{\rho_T}\mathcal{K}_T 1_{(\tau\leq T)} \right]\\
                &=
                \mathbb{E}\left[ f(Y_T)
                \frac{b^1
                }{{{a^{11}}}} (Y_{\rho_T})    
                (\mathbf{e}^1)^{\top}  \xi_{\rho_T}
                        1_{\tau\leq T}\right ].
        \end{aligned}
        \end{equation} 
    Here $\xi$ is the solution of the equation (\ref{eq:E}). 
\end{theorem}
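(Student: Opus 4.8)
Both limits in \eqref{limder} are obtained by passing to the limit in the (post-Girsanov) push-forward formula \eqref{fprimemH}, using the joint convergence in distribution of Theorem \ref{bigT} together with the uniform moment bounds of Lemmas \ref{moments} and \ref{lemma:7}; the equalities on the right of \eqref{limder} are then a change of measure, carried out as in the one-dimensional case \cite{CK}. For the first term, since $\widetilde{\mathbb{E}}_{0,x}[\,\cdot\,\bar M^{n}_{n}]=\mathbb{E}_{\mathbb{Q}^{n}}[\,\cdot\,]$, we must compute $\lim_{n}\mathbb{E}_{\mathbb{Q}^{n}}[{\mathbf{D}}f(X^{n}_{n})E^{n}_{n}\mathcal{K}^{n}_{n}]$. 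By Theorem \ref{bigT} the triple $(X^{n},E^{n},\mathcal{K}^{n})$ converges in distribution under $\mathbb{Q}^{n}$ to $(Y,\mathcal{E},\mathcal{K})$, so by the Skorokhod representation theorem we may assume these convergences hold almost surely, and since ${\mathbf{D}}f\in C^{1}_{b}$ the integrand converges a.s. to ${\mathbf{D}}f(Y_{T})\mathcal{E}_{T}\mathcal{K}_{T}$. Moreover $\{{\mathbf{D}}f(X^{n}_{n})E^{n}_{n}\mathcal{K}^{n}_{n}\}_{n}$ is uniformly integrable: by the boundedness of ${\mathbf{D}}f$, Cauchy--Schwarz and Lemmas \ref{lemma:7} and \ref{moments} applied at $i=0$ (absorbing the bounded factor $e^{-C(t_{n}-t_{0})}$ into the constant), one has $\sup_{n}\mathbb{E}_{\mathbb{Q}^{n}}[\|E^{n}_{n}\|^{4}]<\infty$ and $\sup_{n}\mathbb{E}_{\mathbb{Q}^{n}}[|\mathcal{K}^{n}_{n}|^{4}]<\infty$. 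Hence, by the Vitali convergence theorem, the first convergence in \eqref{limder} follows, the limiting expectation being under the law $\widetilde{\mathbb{P}}$ of the drift-free reflected diffusion $Y$, with $\mathcal{K}$ as in \eqref{K}.

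For the second term, Lemma \ref{lem:uc} removes the $\widetilde\varrho_{i}$ contribution to $\sum_{i=1}^{n}f_{i}\varrho_{i}^{\top}E^{n}_{i-1}\mathcal{K}_{i}\bar M_{i}$, leaving $\sum_{i=1}^{n}f_{i}\tfrac{b^{1}}{a^{11}}(X^{n}_{i-1})(\mathbf{e}^{1})^{\top}E^{n}_{i-1}\mathsf{h}_{i}\mathcal{K}_{i}\bar M_{i}$. Recalling $f_{i}=\widetilde{\mathbb{E}}_{i,x}[f(X^{n}_{n})\mathcal{K}_{i:n}M_{i:n}]$, using the tower property, $\mathcal{K}_{i:n}\mathcal{K}^{n}_{i}=\mathcal{K}^{n}_{n}$, and the definitions $m_{j}=1_{(X^{1}_{j}>L)}1_{(\mathsf{h}_{j}=0)}$, $\bar m_{i}=1_{(X^{1}_{i}>L)}(1+\mathsf{h}_{i})$, one checks directly that $M_{i:n}\mathsf{h}_{i}\bar M_{i}=\bar M^{n}_{n}\mathsf{h}_{i}\prod_{j>i}1_{(\mathsf{h}_{j}=0)}$, whence
\begin{equation*}
\sum_{i=1}^{n}\widetilde{\mathbb{E}}_{0,x}\Big[f_{i}\tfrac{b^{1}}{a^{11}}(X^{n}_{i-1})(\mathbf{e}^{1})^{\top}E^{n}_{i-1}\mathsf{h}_{i}\mathcal{K}_{i}\bar M_{i}\Big]
=\mathbb{E}_{\mathbb{Q}^{n}}\Big[f(X^{n}_{n})\mathcal{K}^{n}_{n}\,\tfrac{b^{1}}{a^{11}}(X^{n}_{\bar\rho^{n}_{T}-\Delta})(\mathbf{e}^{1})^{\top}E^{n}_{\bar\rho^{n}_{T}-\Delta}\,1_{(\exists i:\,\mathsf{h}_{i}=1)}\Big],
\end{equation*}
since $\mathsf{h}_{i}\prod_{j>i}1_{(\mathsf{h}_{j}=0)}$ is nonzero only at the largest index $i$ with $\mathsf{h}_{i}=1$, whose partition time we write $\bar\rho^{n}_{T}$. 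Now $1_{(\exists i:\,\mathsf{h}_{i}=1)}\to 1_{(\tau\le T)}$ a.s. and $\bar\rho^{n}_{T}\to\rho_{T}$ a.s. on $\{\tau\le T\}$ (see the proof of Theorem \ref{bigT} and Section \ref{sec:8}); combining this with the a.s. convergence $(X^{n},E^{n},\mathcal{K}^{n})\to(Y,\mathcal{E},\mathcal{K})$, the continuity of $t\mapsto Y_{t}$, and the fact that $(\mathbf{e}^{1})^{\top}\mathcal{E}$ has no jump at $\rho_{T}$ (it solves \eqref{el1}), the integrand converges a.s. to $f(Y_{T})\mathcal{K}_{T}\tfrac{b^{1}}{a^{11}}(Y_{\rho_{T}})(\mathbf{e}^{1})^{\top}\mathcal{E}_{\rho_{T}}1_{(\tau\le T)}$. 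Uniform integrability holds exactly as before (using uniform ellipticity to bound $1/a^{11}$ and boundedness of $b^{1}$), so the Vitali theorem gives the second convergence in \eqref{limder}.

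It remains to establish the equalities on the right of \eqref{limder}. From \eqref{K} (equivalently \eqref{kprimen}) one has the factorization $\mathcal{K}_{t}=\exp\!\big(\int_{0}^{t}b^{1}(Y_{s})\,dB_{s}\big)\,\mathcal{Z}_{t}$ with $\mathcal{Z}_{t}=\exp\!\big(\int_{0}^{t}(\sigma^{-1}b)(Y_{s})\cdot dW_{s}-\tfrac12\int_{0}^{t}(b^{\top}a^{-1}b)(Y_{s})\,ds\big)$; since $b\in C^{2}_{b}$ and $\sigma^{-1}$ is bounded (by uniform ellipticity), $\mathcal{Z}$ is a true $\widetilde{\mathbb{P}}$-martingale by Novikov's criterion and defines $\mathbb{P}$ via $d\mathbb{P}/d\widetilde{\mathbb{P}}|_{\mathcal{F}_{T}}=\mathcal{Z}_{T}$, under which $Y$ solves \eqref{eq:Y}. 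Rewriting \eqref{el1}--\eqref{el2} in terms of the $\mathbb{P}$-Brownian motion converts the drift coefficient $\bar b=\mathbf{D}b-\mathbf{D}\sigma^{k}(\sigma^{-1}_{k\cdot}\cdot b)$ into $\mathbf{D}b$, so that $\widetilde{\mathbb{E}}[{\mathbf{D}}f(Y_{T})\mathcal{E}_{T}\mathcal{K}_{T}]=\mathbb{E}\big[{\mathbf{D}}f(Y_{T})\,\mathcal{E}_{T}\exp(\int_{0}^{T}b^{1}(Y_{s})\,dB_{s})\big]$; It\^o's product rule applied to $\mathcal{E}_{t}\exp(\int_{0}^{t}b^{1}(Y_{s})\,dB_{s})$ (the second factor being of finite variation) shows that this product solves \eqref{eq:E} and hence equals $\xi_{T}$, and evaluating the same identity at $\rho_{T}$ gives the corresponding equality for the boundary term. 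All of this follows the one-dimensional computation in \cite{CK}, the only new features being the non-commutativity of the matrix factors and the jumps of $(\mathbf{e}^{\ell})^{\top}\mathcal{E}$, $\ell\neq1$, at the boundary visits.

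The main obstacle is the boundary term: proving that the sum over all boundary excursions collapses, in the limit, to the single last-visit contribution (the resummation above, which rests on the algebraic structure of the killing and reflection indicators and on the jump structure of $E^{n}$), and then upgrading the resulting convergence in distribution, together with the convergence $\bar\rho^{n}_{T}\to\rho_{T}$ of the approximating last-visit times, to convergence of expectations with the required uniform integrability.
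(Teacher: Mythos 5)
Your proof follows essentially the same route as the paper: the first convergence is obtained from Theorem \ref{bigT} and the moment bounds (Lemmas \ref{moments} and \ref{lemma:7}) via Skorokhod representation plus uniform integrability; the second convergence rests on exactly the same path-decomposition/resummation as the paper (the identity $M_{i:n}\mathsf{h}_i\bar M_i=\bar M^n_n\,\mathsf{h}_i\prod_{j>i}1_{(\mathsf{h}_j=0)}$ you derive is precisely what the paper encodes with the indicators $1_{(\bar\rho^n_T=t_i)}$), collapsing the sum to the single last-visit index $\bar\rho^n_T$ and then invoking the a.s.\ convergence $\bar\rho^n_T\to\rho_T$; and the equalities on the right-hand side are the Girsanov/It\^o computation relating $\xi_T$ to $\mathcal{E}_T\mathcal{K}_T/\widetilde K_T$, which the paper places in Remark \ref{rem:16}. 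Your write-up is somewhat more explicit than the paper (in particular the factorization $\mathcal{K}_t=\mathcal{Z}_t\exp(\int_0^t b^1(Y_s)\,dB_s)$ and the drift conversion $\bar b\mapsto\mathbf{D}b$ under $\mathbb P$ are spelled out), which is fine. The only gap is a small one: to pass from a.s.\ convergence of the integrand to the stated limit you need the boundary event to be convergence-friendly, and the paper closes the argument by noting $P(\tau=T)=0$; you should add this observation, since $1_{(\exists i:\mathsf h_i=1)}\to 1_{(\tau\le T)}$ and the continuity of $s\mapsto Y_s$ at $\rho_T$ can both fail on the null set $\{\tau=T\}$.
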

        \begin{remark}
                \label{rem:16}To keep the notation consistent, in (\ref{limder}) we kept the
                tilde in the expression for the expectation in the above two limits, 
                $\widetilde{\mathbb{E}}\left[ {{\mathbf{D}}f}^{\prime }\left( Y_T\right)
                \mathcal{E}_T\mathcal{K}_T\right]$ and $\widetilde{\mathbb{E}}_{}\left[f(Y_T)\frac{b^1
                }{{{a^{11}}}} (Y_{\rho_T})    
                (\mathbf{e}^1)^{\top} 
                \mathcal{E}_{\rho_T}\mathcal{K}_{\rho_T} 1_{(\tau\leq T)} \right]$. The expectation in these two quantities are taken with respect to a
                probability measure, which we can denote by $\widetilde {\mathbb{P}}$ (via a slight abuse
                of notation as this is not necessarily the measure $\widetilde {\mathbb{P}}$ defined in
                Section \ref{sec:4}), under which $(Y,\mathbf{B})$ is the solution of (\ref{reflected}%
                ), the normally reflected equation in the domain $H^d_L$with no drift term. One
                can use an equivalent representation of the two limits as $\mathbb{E}\left[  {\mathbf{D}} {f}\left( Y_T\right)\xi_T\right ]$ and $\mathbb{E}\left[ f(Y_T)
                \frac{b^1
                }{{{a^{11}}}} (Y_{\rho_T})    
                (\mathbf{e}^1)^{\top}  \xi_{\rho_T}
                        1_{(\tau\leq T)}\right ]$, where the expectation $\mathbb{E}$ in the two previous quantities
                are taken with respect to a probability measure, which we can denote by $\mathbb{P}$
                (again, via a slight abuse of notation as this is not necessarily the
                original measure $\mathbb{P}$), under which $(Y,\mathbf{B})$ is the solution of (\ref{eq:Y})+(\ref{eq:B}), the normally reflected equation in the domain $H^d_L$ that incorporates the
                drift term $b$. The transfer from $\widetilde {\mathbb{P}}$ to $\mathbb{P}$ is done via Girsanov's
                theorem. More precisely, we have that $\displaystyle \left.\frac{d\widetilde {\mathbb{P}}}{d
                        \mathbb{P}}\right|_{{\mathcal{F}}_T}={\widetilde{K}_T}^{-1}$, where 
                \begin{equation*}
                \widetilde{K}_t:=\exp \left( \int_{0}^{t}b^{\top}\sigma^{-1}(Y_{s})^\top dW_{s}-\frac{1}{2}%
                \int_{0}^{t}b^{\top}a^{-1}b(Y_{s})ds\right),\ \ \ t\ge 0 . 
                \end{equation*}
                The equivalent representation in (\ref{limder}) is obtained once we observe
                that the process $\xi$, as defined in equation (\ref{eq:E}), satisfies the
                identity $\xi_{T}=\frac{\mathcal{E}_T\mathcal{K}_T}{\widetilde{K}_T}$.
        \end{remark}

        \begin{proof}[Proof of Theorem \protect\ref{th:5}]
                The first result is immediate from Theorem \ref{bigT} and the moment
                estimates in Lemma \ref{moments}. The second one, requires a rewriting of
                the expectation using a path decomposition. Recall that $\bar{\rho}_T^{n}=\sup\{t_i;U_i\le p_i\} $ and therefore $1_{(\bar{\rho}%
                        _T^{n}=t_i)}=1_{(U_i\leq p_i,U_{i+1}>p_{i+1},...,U_n>p_n)} $. In particular,
                we define $\bar{\rho}_T^n=0 $ and $E^n_{-\Delta}=0 $ if $\{t_i;U_i \le p_i\}
                =\emptyset $. Furthermore if we let  $\bar{\tau}_T^n=\inf\{t_i;U_i \le p_i \}\wedge T $ and using the
                tower property for conditional expectations we obtain that
                \begin{align*}
                \sum_{i=1}^{n}\widetilde{\mathbb{E}}_{0,x}\left[ f_{i}\left( X_{i}^{n}\right)
                E_{i-1}^{n}\bar{h}_{i}\mathcal{K}^n_i\bar{M}_{i}^{n}\right] = &{\frac{b}{a}}%
                \left( L\right)\widetilde{\mathbb{E}}_{0,x}\left[f(X^{n}_n){\mathcal{K}}%
                _n\sum_{i=1}^{n}{E}^n_{i-1}1_{(\bar{\rho}^n_T=t_i)}\bar{M}^n_{i-1}\right] \\
                =&{\frac{b}{a}}\left( L\right)\widetilde{\mathbb{E}}_{0,x}\left[f(X^{n}_n){%
                        \mathcal{K}}_n1_{(\bar{\tau}_T^{n}<T)} {E}^n_{\bar{\rho}_T^{n}-\Delta}\bar{M}^n%
                _n\right].
                \end{align*}
                Also
                $( \bar{\rho}_T^{n},\bar{\tau}_T^{n}) $
                converges to $( \rho_T,\tau)$, where $\rho_T:=\sup\{s<T: Y^1_s=L\}$ (see Section \ref{sec:8})) from the
                convergence of  $(X^{n},E^{n},R^{n},\Gamma^n, \bar\Gamma^n,\mathcal{K}^n,B^n,\mathbf{B}^n)$ to  $(Y,\mathcal{E},W,\mathbf{\Gamma},0,\mathcal{K},B,\mathbf{B}),$  first in distribution and then $\mathbb P$-almost surely via a Skorohod representation. The result follows as $P(\tau=T)=0$.
        \end{proof}
        
        We are now ready to prove that the limit deduced in Theorem \ref{th:5}
        corresponds to the representation (\ref{eq:diff}) of the derivative of $\mathbb{E}[f(X_{T})1_{(\tau >T)}]$. To do so, we re-introduce the explicit dependence on the
        starting value $x$ of the various processes needed to obtain the
        representation of the derivative and define the functions $\varphi
        _{n},\varphi ,\psi :[T,\infty )\mapsto \mathbb{R}$ as follows 
        \begin{align*}
\varphi \left( x\right) =&\mathbb{E}_{0,x}[f(X_{T\wedge \tau })]. \\
\psi \left( x\right) =&\mathbb{E}\left[  {\mathbf{D}} {f}\left( Y_T\right)\xi_T\right ]+               \mathbb{E}\left[ f(Y_T)
                \frac{b^1
                }{{{a^{11}}}} (Y_{\rho_T})    
                (\mathbf{e}^1)^{\top}  \xi_{\rho_T}
                        1_{\tau\leq T}\right ] . \\
\varphi _{n}\left( x\right) =&\mathbb{E}_{0,x}[f(X_{T\wedge \tau^{n}
}^{n})].
        \end{align*}

Putting all the results together up to this point, we obtain the first result announced in \eqref{eq:diff}.
\begin{theorem}\label{th:24}
        Let $ f\in C^1_b(H^d_L) $ such that $ f(x)=0 $, $ x\in \partial H^d_L $. Then, under {\bf Hypothesis 1},  $P_Tf$ is differentiable on $H^d_L$ and its derivative has the representation  \eqref{eq:diff}. That is,
                \begin{equation}\label{eq:0'}
                {\mathbf{D}}_x P_Tf(x)=    {\mathbf{D}}_x\mathbb{E}[f(X_T)1_{(\tau>T)}]=
                \mathbb{E}[{\mathbf{D}} f(Y_T)\xi_T]+
                \mathbb{E}\left[f(Y_T)
                {  \frac{b^1}{{a^{11}}}(Y_{\rho_T})(\mathbf{e}^1)^{\top}  } \xi_{\rho_T}     1_{\tau\leq T}
                \right].
        \end{equation}
        \end{theorem}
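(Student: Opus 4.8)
The plan is to derive \eqref{eq:0'} by passing to the limit in the push-forward identity for the Euler approximation. Fix a compact set $K\subseteq H^d_L$ and $x\in K$. After extending $f$ to a function in $C^1_b(\mathbb{R}^d,\mathbb{R})$ vanishing on $\partial H^d_L$ (which is harmless), Lemma \ref{lem:2.1H} combined with the Girsanov change of measure of Section \ref{sec:4} gives
\[
{\mathbf{D}}_x\varphi_n(x)=\tilde{\mathbb{E}}_{0,x}\!\left[{\mathbf{D}}f(X_n)E^n\mathcal{K}_n\bar{M}_n\right]+\tilde{\mathbb{E}}_{0,x}\!\left[\sum_{i=1}^n f_i\varrho_i^{\top}E^n_{i-1}\mathcal{K}_i\bar{M}_i\right].
\]
Using the splitting \eqref{eq:rho}, the contribution of $\tilde\varrho_i$ to the second term is asymptotically negligible by Lemma \ref{lem:uc}, uniformly over $x\in K$, so that only the term with increment $(b^1_{i-1}/a^{11}_{i-1})\mathbf{e}^1\mathsf{h}_i$ survives.

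Next I would invoke Theorem \ref{th:5}: the first term converges to $\mathbb{E}[{\mathbf{D}}f(Y_T)\xi_T]$ and the surviving part of the second term to $\mathbb{E}[f(Y_T)(b^1/a^{11})(Y_{\rho_T})(\mathbf{e}^1)^{\top}\xi_{\rho_T}1_{(\tau\le T)}]$, whose sum is exactly $\psi(x)$; hence ${\mathbf{D}}_x\varphi_n(x)\to\psi(x)$ for every $x\in H^d_L$. Since every estimate entering Theorems \ref{relcomptheoremm}, \ref{bigT} and \ref{th:5} (Lemmas \ref{moments}, \ref{lemma:7}, \ref{lem:5m}, \ref{lem:essb}) is uniform over compact sets of initial points, and since the limiting objects $(Y,\mathcal{E},\mathcal{K},\rho_T)$ are characterised pathwise by \eqref{reflected} and \eqref{el1}--\eqref{el2}, whose data depend continuously on the starting point, this pointwise convergence upgrades to convergence uniform on $K$; in particular $\psi$ is continuous on $H^d_L$.

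It remains to recover the statement about $\varphi=P_Tf$ itself. I would first note that $\varphi_n\to\varphi$ pointwise: by Section \ref{apkilres} the law of $X^{c,n}_n$ under $\mathbb{P}^{M}$ is that of the continuous Euler scheme killed at $\partial H^d_L$, and $X^{c,n}\Rightarrow X$ with the corresponding exit times $\tau^n$ converging to $\tau$ (the half-space boundary being regular and $\mathbb{P}(\tau=T)=0$), so $\mathbb{E}_{0,x}[f(X^n_{T\wedge\tau^n})]\to\mathbb{E}_{0,x}[f(X_{T\wedge\tau})]$ by boundedness of $f$. Having $\varphi_n\to\varphi$ pointwise and ${\mathbf{D}}\varphi_n\to\psi$ uniformly on compacts with $\psi$ continuous, the classical argument — for $x_0$ and $x$ in a small ball contained in $H^d_L$, write $\varphi_n(x)-\varphi_n(x_0)=\int_0^1{\mathbf{D}}\varphi_n(x_0+t(x-x_0))(x-x_0)\,dt$ and let $n\to\infty$ — shows that $\varphi$ is continuously differentiable on $H^d_L$ with ${\mathbf{D}}\varphi=\psi$, which is precisely \eqref{eq:0'}.

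The step I expect to be the main obstacle is the passage from pointwise to locally uniform convergence of ${\mathbf{D}}_x\varphi_n$, together with the continuity of $\psi$. This requires a uniform-in-$n$ modulus of continuity for the maps $x\mapsto{\mathbf{D}}_x\varphi_n(x)$ — equivalently, uniform bounds on the second derivatives of $\varphi_n$ — whose proof is one of the technical cores of the paper, complicated here by the differentiation of the jumps of $(\mathbf{e}^\ell)^{\top}E^n$, $\ell\neq 1$, and by the non-commutativity of the matrix products. The delicate region is near $\partial H^d_L$, where the law of the last boundary-visit time $\rho_T$ (and hence $\psi$) depends sensitively on the initial distance to the hyperplane.
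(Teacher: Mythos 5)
Your overall architecture matches the paper's proof: obtain $\mathbf{D}_x\varphi_n$ from the push-forward formula and Girsanov, discard $\tilde\varrho_i$ via Lemma~\ref{lem:uc}, pass to the pointwise limit via Theorem~\ref{th:5}, upgrade to locally uniform convergence, and then use a closed-operator/FTC argument to identify $\mathbf{D}\varphi=\psi$. You also correctly locate the genuine technical core of the proof: the uniform-in-$n$ bound on second derivatives (Proposition~\ref{prop:30}), which yields the equicontinuity needed for the upgrade.

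However, the middle paragraph contains a genuine logical gap. You assert that because the moment estimates in Lemmas~\ref{moments}, \ref{lemma:7}, \ref{lem:5m}, \ref{lem:essb} are uniform over compacts and the limiting objects are characterised pathwise by equations whose data depend continuously on $x$, the pointwise convergence $\mathbf{D}_x\varphi_n\to\psi$ upgrades to uniform convergence on $K$. That implication is false: uniform moment bounds give uniform boundedness of the family $\{\mathbf{D}_x\varphi_n\}_n$ but say nothing about a common modulus of continuity, and the continuity of the \emph{limit} $\psi$ is what you are trying to \emph{prove}, not something you may assume to close the loop. The only route to equicontinuity is exactly Proposition~\ref{prop:30}, and the paper goes further: it does not bound $\mathbf{D}^2_x\varphi_n$ directly, but introduces the modified functional $\phi_n$ of \eqref{eq:DD} (obtained after pruning the negligible $\tilde\varrho_i$ part), proves $\phi_n-\mathbf{D}_x\varphi_n\to 0$ uniformly, and establishes uniform bounds on $\mathbf{D}_x\phi_n$; this separation matters because the discarded terms are only small in expectation, not pointwise. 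You acknowledge the second-derivative bound at the end as ``the main obstacle,'' but your written argument never actually uses it, so as stated the proof does not go through.

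A second, smaller omission: the second-derivative analysis (Lemma~\ref{lem:43}, Proposition~\ref{prop:30}) requires $f\in C^2_b$, since it involves $\mathbf{D}^2_i f_i$. The paper therefore first proves the theorem for $f\in C^2_b(H^d_L)$ and then extends to $f\in C^1_b(H^d_L)$ by approximating $f$ by $f_n\in C^2_b$ with $\mathbf{D}f_n\to\mathbf{D}f$ uniformly and $f_n\to f$ pointwise, passing to the limit in the identity \eqref{eq:0'}. Your proposal applies the argument directly to $f\in C^1_b$, which bypasses a step that cannot be bypassed given the ingredients used.
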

     \begin{proof}   
        The existence of the above derivative is proved as in the one dimensional case, which requires the non-trivial technical arguments proving the boundedness of the second derivative provided in Proposition \ref{prop:30}. This is an important difference with the 1D argument. Once we accept this, the proof follows the use of an Arzel\`a-Ascoli classical argument.
        
 Following  \cite{Gobet}, we
                have that $\lim_{n\rightarrow \infty}\varphi^n=\varphi$ uniformly on any
                compact set $K\subseteq H^d_L $. 
                Let us restrict first to the case that $f\in C_{b}^{2}(H^d_L)$. We proved in Theorem \ref{th:5} that $\mathbf{D}_x\varphi _{n}$ converges
                pointwise to $\psi$. Now, there exists a function $ \phi_n $, whose explicit definition is given in Section 
                \ref{sec:7.6}, for which  $ \phi_n-\mathbf{D}_x\varphi _{n} $ converges uniformly to zero on compact sets. Moreover the derivative $\mathbf{D}_x\phi _{n}$  is  uniformly bounded. As a result, the convergence of  $\mathbf{D}_x\varphi _{n}$ to $\psi$ is uniform on any compact set $K\subseteq H^d_L$. 
                
                Since the differentiation operator is a closed operator in the uniform topology, we deduce immediately that the limit $\varphi$ is differentiable and its limit is indeed $\psi$.
                The generalization to $f\in C_{b}^{1}(H^d_L)$ is done by taking the limit of the identity \eqref{eq:0'} along a sequence $f_n\in C_{b}^{2}(H^d_L)$ such that $\lim_{n\rightarrow \infty}\mathbf{D}f_n =\mathbf{D}f$ uniformly on $H^d_L $ and $\lim_{n\rightarrow \infty}f_n =f$ pointwise (and therefore $\lim_{n\rightarrow \infty}f_n =f$ uniformly on any compact $K\in H^d_L $).  
  \end{proof}

\subsection{Characterization of the derivative of the semigroup}
\label{sec:6.1}
In the previous sections, we have been able to prove that $ \mathbf{D} _{x}\mathbb{E}\left[ f\left( X_{T\wedge \tau }^{n,x}\right) \right] $ converges and characterize the limit. 
In this section, we prove the final part of the program which states that the formula \eqref{eq:main} is satisfied.

In order to do this, we will first neglect the terms in $ \mathbf{D} _{x}\mathbb{E}\left[ f\left( X_{T\wedge \tau }^{n,x}\right) \right] $ which converge to zero uniformly (see Lemma \ref{lem:uc}). Then we will prove that the remaining terms generate a linear equation and solving it leads to \eqref{eq:main}. This argument was also used in the 1D case.

%
%

        \begin{theorem}
                \label{th:44}
                Let $ f\in C^1_b (H^d_L)$ and $ \Psi $ defined as in \eqref{eq:Psi} then
                \begin{align*}
                        {\mathbf{D}}_x P_Tf(x)=&\mathbb{E}\left[{\mathbf{D}} f(Y_T)\Psi_T\right].
                \end{align*}
        \end{theorem}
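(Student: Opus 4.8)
The plan is to derive \eqref{eq:main} from the already-established representation \eqref{eq:0'} in Theorem \ref{th:24} by resolving the linear (integral) equation satisfied by the limiting process. Write $P_Tf(x)=\mathbb{E}[f(X_T)1_{(\tau>T)}]$ and recall from \eqref{eq:diff}, \eqref{eq:0'} that
\[
{\mathbf{D}}_xP_Tf(x)=\mathbb{E}[{\mathbf{D}}f(Y_T)\xi_T]+\mathbb{E}\!\left[f(Y_T)\Big(\tfrac{b^1}{a^{11}}\Big)(Y_{\rho_T\vee0})(\mathbf{e}^1)^{\top}\xi_{\rho_T\vee0}1_{(\tau\le T)}\right],
\]
where $\xi$ solves \eqref{eq:E}. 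The key observation is that the flow property $\mathcal{E}_{s,t}\mathcal{E}_s=\mathcal{E}_t$ established at the end of Theorem \ref{bigT} carries over to $\xi$ (after the Girsanov transfer of Remark \ref{rem:16}), so one can apply the Markov property of $(Y,\xi)$ at the last boundary time $\rho_T$. Since $(\mathbf{e}^\ell)^{\top}\xi_{\rho_T}=0$ for $\ell\ne 1$ by the jump structure in \eqref{eq:E}, only the first row $(\mathbf{e}^1)^{\top}\xi_{\rho_T}$ survives, and the extra boundary term in \eqref{eq:0'} is precisely what is needed to "integrate back" the killing-measure contribution into the dynamics, converting $\xi$ into $\Psi$.

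Concretely, I would first apply the strong Markov property at $\rho_T$ to write both terms on the right of \eqref{eq:0'} as $\mathbb{E}[\,g(Y_{\rho_T},\xi_{\rho_T})1_{(\tau\le T)}\,]$ plus the contribution on $\{\tau>T\}=\{\rho_T=-\infty\}$, where $g$ is defined through the (drift-incorporating) semigroup acting on the post-$\rho_T$ evolution. Then I would show that the function
\[
\Theta_T(x):=\mathbb{E}[{\mathbf{D}}f(Y_T)\xi_T]+\mathbb{E}\!\left[f(Y_T)\Big(\tfrac{b^1}{a^{11}}\Big)(Y_{\rho_T})(\mathbf{e}^1)^{\top}\xi_{\rho_T}1_{(\tau\le T)}\right]
\]
satisfies a renewal-type (closed) equation obtained by conditioning on whether $Y$ reaches the boundary before $T$; substituting the strong Markov identity for $(\mathbf{e}^1)^{\top}\xi_{\rho_T}$ in terms of $\Theta$ at an earlier time shows that $\Theta_T(x)=\mathbb{E}[{\mathbf{D}}f(Y_T)\Psi_T]$, because $\Psi$ is by \eqref{eq:Psi} exactly the unique solution of the corresponding linear SDE with the projection $\mathbf{e}^1(\mathbf{e}^1)^{\top}\Psi_{\rho_t-}$ at each boundary hit together with the extra local-time drift $2b^1(Y_s)I\,dB_s$ in \eqref{eq:alpha}; the identity $(a^{11}b^{\top}a^{-1}\mathbf{e}^1+b^1)(Y_s)=2b^1(Y_s)$ from Hypothesis \ref{hyp:12} (noted in the bullet after \eqref{eq:main}) is what makes the boundary term from \eqref{eq:0'} combine with the Girsanov drift to produce precisely the $2b^1\,dB_s$ term. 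One must verify the technical matching: the relation between $\xi$ and $\Psi$ is $(\mathbf{e}^1)^{\top}\xi$ accumulates the term $-b^1(Y_s)dB_s$ (see \eqref{eq:E}) which, upon re-adding the drift via $\mathcal{K}$, becomes the $+2b^1(Y_s)\,dB_s$ in $d\alpha_s$; so the boundary correction in \eqref{eq:0'} is exactly the "missing" piece that upgrades $\xi$ to $\Psi$.

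The rigorous implementation would proceed as follows. \textbf{Step 1:} Establish uniqueness for the linear system defining $\Psi$ in \eqref{eq:Psi}--\eqref{eq:sc} by the same Gronwall argument used for $\mathcal{E}$ at the end of Theorem \ref{bigT}, so that $\Psi$ is well defined. \textbf{Step 2:} Using the flow property and the strong Markov property of $(Y,\mathbf{B})$ at $\rho_T$, decompose $\mathbb{E}[{\mathbf{D}}f(Y_T)\xi_T]$ on $\{\tau\le T\}$ as $\mathbb{E}[h(Y_{\rho_T})(\mathbf{e}^1)^{\top}\xi_{\rho_T}1_{(\tau\le T)}]$ for a suitable $h$ involving ${\mathbf{D}}_xP_{T-\rho_T}f$ (here one uses that only the first row of $\xi_{\rho_T}$ is nonzero). \textbf{Step 3:} Combine with the second term in \eqref{eq:0'}; using the Hypothesis \ref{hyp:12} identity, recognize that $h(Y_{\rho_T})+f(\,\cdot\,)\big(\tfrac{b^1}{a^{11}}\big)(Y_{\rho_T})$ evaluated against $(\mathbf{e}^1)^{\top}\xi_{\rho_T}$ equals $({\mathbf{D}}_xP_{T-\rho_T}f)(Y_{\rho_T})\cdot\big(\mathbf{e}^1(\mathbf{e}^1)^{\top}\big)$ applied to $\Psi_{\rho_T-}$ — i.e. the renewal equation closes with $\Psi$ rather than $\xi$. \textbf{Step 4:} Conclude that $x\mapsto \mathbb{E}[{\mathbf{D}}f(Y_T)\Psi_T]$ and $x\mapsto{\mathbf{D}}_xP_Tf(x)$ solve the same renewal equation with the same data, hence coincide by the uniqueness from Step 1. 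I expect the main obstacle to be \textbf{Step 3}: carefully tracking the interaction between the Girsanov change of measure (which converts the $-b^1\,dB_s$ term in $\xi$ into a contribution to $d\alpha_s$) and the extra boundary term in \eqref{eq:0'}, and checking that the algebra of the projections $\mathbf{e}^1(\mathbf{e}^1)^{\top}$ at the last boundary hit — in the noncommutative matrix setting — genuinely produces $\Psi$ as defined in \eqref{eq:Psi}. This is where Hypothesis \ref{hyp:12} and the identity $(a^{11}b^{\top}a^{-1}\mathbf{e}^1+b^1)(Y_s)=2b^1(Y_s)$ do the decisive work, and it is the step with no one-dimensional analogue.
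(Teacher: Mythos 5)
Your proposal correctly identifies the key structural ingredients --- the projection at the last boundary hit, the Hypothesis \ref{hyp:12} identity $(a^{11}b^\top a^{-1}\mathbf{e}^1+b^1)=2b^1$, and the flow property $\mathcal{E}_{s,t}\mathcal{E}_s=\mathcal{E}_t$ --- but the route you take has two genuine gaps. The central one is your use of the ``strong Markov property at $\rho_T$'' in Step 2: $\rho_T$ is a last exit time, not a stopping time, and the Markov property does not apply there without substantial additional machinery (a Williams-type path decomposition or time reversal) that your proposal does not supply. The paper sidesteps this entirely: it does \emph{not} start from \eqref{eq:0'} and condition at $\rho_T$. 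Instead it goes back to the discrete push-forward approximation (replacing the boundary sum in Theorem \ref{th:5} by $\sum_i(f_i-f_i(\pi(X_i)))\frac{b^1_{i-1}}{a^{11}_{i-1}}(\mathbf{e}^1)^\top E^n_{i-1}\mathsf{h}_i\mathcal{K}_i\bar{M}_i$) and takes limits, obtaining the linear Volterra integral equation
\begin{align*}
\mathbf{D}_xP_Tf(\hat{x})=\mathbb{E}\left[{\mathbf{D}}f(Y_T)\mathcal{E}_T\right]+\mathbb{E}\left[\int_0^T b^1(Y_s)\,(\mathbf{e}^1)^\top\mathcal{E}_s\,\partial_1P_{T-s}f(Y_s)\,dB_s\right],
\end{align*}
in which the Markov property is applied at deterministic times $s$ inside the $dB_s$ integral (the $f_i$ are already $P_{T-t_i}$-type conditional expectations), so no Markov-at-$\rho_T$ step ever appears.

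Second, your Step 4 appeals to the Gronwall uniqueness of the linear SDE for $\Psi$ from Step 1 to conclude that two functions of $x$ solving ``the same renewal equation'' coincide; this is a category mismatch, since uniqueness of the process $\Psi$ is not uniqueness of solutions to the Volterra equation in the unknown function $\partial_1 P_{T-\cdot}f$. The paper instead solves the Volterra equation explicitly by an exponential resolvent formula; the iterated integrals telescope thanks to the identity
\begin{align*}
(\mathbf{e}^1)^\top\mathcal{E}_{s,r}\mathbf{e}^1\,(\mathbf{e}^1)^\top\mathcal{E}_s\mathbf{e}^1\,dB_s=(\mathbf{e}^1)^\top\mathcal{E}_r\mathbf{e}^1\,dB_s,
\end{align*}
valid because on the support of $dB_s$ the rows $\ell\neq 1$ of $\mathcal{E}_s$ vanish. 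This yields $\Psi_t=\mathcal{E}_t\exp\!\left(\int_0^t b^1(Y_u)\,dB_u\right)$, and hence \eqref{eq:Psi}--\eqref{eq:sc} by It\^o's formula. Without that explicit telescoping (or a separately proved uniqueness statement for the integral equation), the claim in your Step 3 that the renewal equation ``closes with $\Psi$'' is not established.
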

        \begin{proof}

                To characterize the derivative process, one has to consider first the linear equation associated to 
                $ \partial_1P_Tf(x) $. 
                
                In fact, as in the one dimensional case using derivative estimates and taking limits of
                \begin{align*}
                \mathbb{E}
          \left[
           \sum_{i=1}^n
            \left(f_i-f_i(\pi({X}_i))
            \right)
            \frac{b^1_{i-1}
            }{a^{11}_{i-1}}     
        (\mathbf{e}^1)^{\top}  E^{n}_{i-1}\mathsf{h}_i
        \mathcal{K}_i\bar{M}_{i}
        \right] 
                \end{align*}
            instead of the second result in Theorem \ref{th:5}
                                one obtains the following linear equation for $ \hat{x}:=(L,x_2,...,x_d) $
                \begin{align}
                       \mathbf{D}_xP_Tf(\hat{x})=\mathbb{E}\left[{\mathbf{D}} f(Y_T)\mathcal{E}_T\right]+\mathbb{E}\left[\int_0^T{{b}^1({Y}_s)
                        }
                        (\mathbf{e}^1)^{\top} \mathcal{E}_s\partial_1P_{T-s}f({Y}_s)d{B}_s\right].
                        \label{eq:gen}
                \end{align}
            In particular, considering the derivative with respect to $ x^1 $ one obtains the following linear equation in $ \partial_1P_{T-s}f $:
            \begin{align*}
                \partial_{1}P_Tf(\hat{x})=\mathbb{E}\left[{\mathbf{D}} f(Y_T)\mathcal{E}_T\mathbf{e}^1\right]+\mathbb{E}\left[\int_0^T{{b}^1({Y}_s)
                }
                (\mathbf{e}^1)^{\top} \mathcal{E}_s\mathbf{e}^1\partial_1P_{T-s}f({Y}_s)d{B}_s\right].
            \end{align*}
                This equation is solved with similar steps as in the one dimensional case except that one has to take into account the following identity:
                \begin{align*}
                    (\mathbf{e}^1)^{\top} \mathcal{E}_{s,r}\mathbf{e}^1    (\mathbf{e}^1)^{\top} \mathcal{E}_s\mathbf{e}^1d{B}_s=
                        (\mathbf{e}^1)^{\top} \mathcal{E}_r\mathbf{e}^1d{B}_s.
                \end{align*}
                The above identity holds true as $ d{B}_s\neq 0 $ implies
                \begin{align}
                       (\mathbf{e}^1)^{\top} \mathcal{E}_{s,r}\mathbf{e}^1 (\mathbf{e}^1)^{\top} \mathcal{E}_s\mathbf{e}^1=
                        \sum_{\ell=1}^d(\mathbf{e}^1)^{\top} \mathcal{E}_{s,r}\mathbf{e}^\ell(\mathbf{e}^\ell)^{\top} \mathcal{E}_{s}\mathbf{e}^1=
                        (\mathbf{e}^1)^{\top} \mathcal{E}_{s,r}\mathcal{E}_{s}\mathbf{e}^1=(\mathbf{e}^1)^{\top} \mathcal{E}_r\mathbf{e}^1. \label{eq:aha}
                \end{align}
                This gives using a similar argument as in the 1D case
                \begin{align}
                      \notag  \partial_1P_Tf(\hat{x})=&\mathbb{E}\left[{\mathbf{D}} f(Y_T)\left(\mathcal{E}_T\mathbf{e}^1+\int_0^T\mathcal{E}_{r,T}\mathbf{e}^1(\mathbf{e}^1)^{\top}\mathcal{E}_r\mathbf{e}^1e^{\int_0^r{{b}^1({Y}_s)
                                } d{B}_s}b^1(Y_r)d{B}_r
                        \right)\right]\\
                        =&
                        \mathbb{E}\left[{\mathbf{D}} f(Y_T)\mathcal{E}_T\mathbf{e}^1e^{\int_0^T{{b}^1({Y}_s)
                                } d{B}_s}
                        \right].
                       \notag
                \end{align}
                 Replacing in the general formula  \eqref{eq:gen}, we obtain
                \begin{align*}
                        {\mathbf{D}}_x P_Tf(x)=\mathbb{E}\left[{\mathbf{D}} f(Y_T)\mathcal{E}_T\right]+\mathbb{E}\left[\int_0^T{{b}^1({Y}_s)
                        }
                        (\mathbf{e}^1)^{\top} \mathcal{E}_s\partial_1P_{T-s}f({Y}_s)d{B}_s\right]
                \end{align*}
                which gives using the explicit formula for $ \partial_1P_Tf(\hat{x}) $ and the tower property
                \begin{align*}
                        {\mathbf{D}}_x P_Tf(x)=&\mathbb{E}\left[{\mathbf{D}} f(Y_T)\mathcal{E}_T\right]+
                        \mathbb{E}\left[\int_0^T{{b}^1({Y}_s)
                        }
                        {\mathbf{D}} f(Y_T)\mathcal{E}_{s,T}\mathbf{e}^1
                        (\mathbf{e}^1)^{\top} \mathcal{E}_se^{\int_s^T{{b}^1({Y}_u)
                                } d{B}_u}d{B}_s\right].
                \end{align*}
                Using the same argument as in \eqref{eq:aha}, one obtains:
                \begin{align*}
                        {\mathbf{D}}_x P_Tf(x)=&\mathbb{E}\left[{\mathbf{D}} f(Y_T)\mathcal{E}_T\right]+
                        \mathbb{E}\left[        {\mathbf{D}} f(Y_T)\mathcal{E}_{T}\int_0^T{{b}^1({Y}_s)
                        }
                        e^{\int_s^T{{b}^1({Y}_u)
                                } d{B}_u}d{B}_s\right]\\
                        =&\mathbb{E}\left[      {\mathbf{D}} f(Y_T)\mathcal{E}_{T} 
                        e^{\int_0^T{{b}^1({Y}_u)
                                } d{B}_u}\right].
                \end{align*}
            The conditions on $ \Psi_t =\mathcal{E}_{T} 
            e^{\int_0^T{{b}^1({Y}_u)
            	} d{B}_u}$ in \eqref{eq:Psi} and \eqref{eq:sc} follow by Ito's formula and using the equation \eqref{el1}.
        \end{proof}
    \subsection{The BEL formula}
        We now proceed with the integration by parts formula. The proof is parallel to the one dimensional case and for this reason, we only mention the main points in the proof. 
       
        \begin{theorem}
                \label{th:19}
                Let $f:H^d_L \to\mathbb{R} $ be a measurable and bounded function such
                that $f(x)=0 $ for $ x\in\partial H^d_L $ .   Then for $ x\in {H^d_L}$, we have 
               \begin{align*}
                 T\mathbf{D}_xP_Tf(x)=\mathbb{E}\left[f(Y_T)\int_{\rho_T\vee 0}^T(\mathbf{e}^\ell)^{\top}\sigma^{-1}(Y_s)\Psi_sdW^\ell_s\right].
               \end{align*}   
            
        \end{theorem}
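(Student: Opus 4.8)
The plan is to combine the flow representation ${\mathbf{D}}_xP_tf(x)=\mathbb{E}[{\mathbf{D}} f(Y_t)\Psi_t]$ from Theorem \ref{th:44} with a Bismut–Elworthy–Li type argument carried out along the reflected trajectory of $Y$. First I would record the semigroup/flow identity $\mathbf{D}_xP_Tf(x)=\mathbb{E}[\partial_{x}P_{T-s}f(Y_s)\Psi_s]$ for each $s\in[0,T]$, which follows from Theorem \ref{th:44}, the Markov property of $(Y,\mathbf{B})$, and the cocycle property $\Psi_t=\Psi_{s,t}\Psi_s$ inherited from $\mathcal{E}_{s,t}\mathcal{E}_s=\mathcal{E}_t$ together with the scalar exponential factor $e^{\int_0^t b^1(Y_u)dB_u}$ that links $\Psi$ to $\mathcal{E}$. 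Integrating this constant-in-$s$ quantity over $s\in[\rho_T\vee0,T]$ and dividing by the (random) length $T-\rho_T\vee 0$ is the wrong normalization, so instead I would integrate over the full interval $[0,T]$ to get the factor $T$ on the left; the contributions coming from $s<\rho_T$ will have to be shown to telescope into boundary terms that vanish because $f(Y_T)$ is the only surviving factor when the flow is reset at the last boundary visit.

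The heart of the argument is the integration-by-parts step. For fixed $s$, write $\partial_xP_{T-s}f(Y_s)\,\Psi_s=\mathbb{E}[{\mathbf{D}} f(Y_T)\Psi_{s,T}\,|\,\mathcal{F}_s]\,\Psi_s$, and then express ${\mathbf{D}} f(Y_T)\Psi_{s,T}$ as a Malliavin-type stochastic integral: since $Y$ solves the reflected SDE \eqref{eq:Y}–\eqref{eq:B} with invertible $\sigma$ and the reflection is normal (Hypothesis \ref{hyp:12}), the directional derivative of $f(Y_T)$ in the direction of $\Psi_{s,T}$ integrates against the martingale $\int_s^T(\mathbf{e}^\ell)^\top\sigma^{-1}(Y_u)\Psi_{s,u}\,dW^\ell_u$. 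This is the usual computation: $\mathbb{E}[{\mathbf{D}} f(Y_T)\Psi_{s,T}]=\mathbb{E}\!\big[f(Y_T)\int_s^T(\mathbf{e}^\ell)^\top\sigma^{-1}(Y_u)\Psi_{s,u}dW^\ell_u\big]$, obtained by perturbing the driving Brownian motion $W\mapsto W+\varepsilon\int_0^\cdot h_u\,du$ with $h_u=\sigma^{-1}(Y_u)\Psi_{s,u}\mathbf{1}_{[s,T]}(u)$ and using that the reflection term $\mathbf{B}$, being directed along $\mathbf{e}^1$ and only active at the boundary, does not contribute a Girsanov correction transverse to the boundary (this is exactly where normal reflection is used, as in the one-dimensional case of \cite{CK}). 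Multiplying through by $\Psi_s$ on the right inside the conditional expectation, using $\Psi_{s,u}\Psi_s=\Psi_u$, and taking $\mathbb{E}$ turns the integrand into $(\mathbf{e}^\ell)^\top\sigma^{-1}(Y_u)\Psi_u\,dW^\ell_u$; then integrating over $s\in[0,T]$ and applying the stochastic Fubini theorem collapses the double integral, with the lower limit $s$ being replaced by $\rho_T\vee 0$ because for $u<\rho_T$ the inner stochastic integral is killed by the jump of $\Psi$ (its non-first rows reset to zero at $\rho_T$, and the $\mathbf{e}^1$-row contribution is handled by the same exponential-factor bookkeeping as in Theorem \ref{th:44}).

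The main obstacle I anticipate is justifying the integration-by-parts formula for the \emph{reflected} process and, in particular, controlling the interchange of the $ds$-integral with the stochastic integral in the presence of the random, path-dependent cutoff at $\rho_T$. Two technical points need care: (i) the perturbation argument must be run on a regularized (e.g. penalized or Euler-type) approximation of the reflected SDE and then passed to the limit, since the map $W\mapsto Y$ is not classically differentiable at the boundary — here I would invoke the weak-convergence machinery and moment bounds already set up (Lemmas \ref{moments}, \ref{lemma:7}, \ref{lem:essb}) exactly as in the proof of Theorem \ref{bigT}; and (ii) one must verify that the localizing indicator $\mathbf{1}_{(\rho_T\vee0,T]}$ is adapted in the right way — it is not $\mathcal{F}_s$-measurable for $s<\rho_T$ — which is why the clean statement only emerges after integrating over all of $[0,T]$ and recognizing that the pre-$\rho_T$ part of the time integral produces the boundary resets rather than a genuine stochastic integral. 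For $f$ merely bounded measurable (not $C^1_b$), the formula is extended by the usual density/approximation argument, approximating $f$ by $C^1_b$ functions vanishing on $\partial H^d_L$ and using the uniform moment bounds on $\Psi$ and on $\int_{\rho_T\vee0}^T(\mathbf{e}^\ell)^\top\sigma^{-1}(Y_s)\Psi_s\,dW^\ell_s$ to pass to the limit, together with the smoothing provided by the semigroup as in \cite{Gobet}.
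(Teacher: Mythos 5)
Your high-level plan — write $T\mathbf{D}_xP_Tf(x)=\int_0^T\mathbf{D}_xP_Tf(x)\,ds$, use the flow identity from Theorem~\ref{th:44} and the cocycle property of $\Psi$ to make the integrand constant in $s$, then apply a Bismut-type integration by parts — is the right spirit, and the paper's proof is indeed a discrete-time rendering of this idea. But there are concrete problems that stop the continuous-time version as you have stated it.

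First, the displayed identity
$\mathbb{E}[\mathbf{D} f(Y_T)\Psi_{s,T}]=\mathbb{E}\bigl[f(Y_T)\int_s^T(\mathbf{e}^\ell)^{\top}\sigma^{-1}(Y_u)\Psi_{s,u}dW^\ell_u\bigr]$
is false as written: set $s=T$ and the left side becomes $\mathbb{E}[\mathbf{D} f(Y_T)]$ while the right side is $0$. In the classical (unreflected) BEL computation the analogous identity carries a factor $(T-s)$ on the left; if you then integrate over $s\in[0,T]$ you get a factor $T^2/2$, not $T$. The correct structure applies the Malliavin/Skorokhod duality \emph{once} to the full integral $\int_0^T\sigma^{-1}(Y_u)\Psi_u v\,dW_u$, using the chain rule $D^\ell_u f(Y_T)=\mathbf{D} f(Y_T)\Psi_{u,T}\sigma^\ell(Y_u)$ and then $\Psi_{u,T}\Psi_u=\Psi_T$; it is not a per-$s$ integration by parts.

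Second, and more fundamentally, that chain rule does not hold here, because $\Psi$ is \emph{not} the Jacobian (or Malliavin derivative) of the reflected flow $Y$. It is the derivative process associated with the \emph{killed} semigroup, whose rows $\ell\neq1$ reset to zero at each boundary visit and whose first row additionally accumulates the local-time factor $2b^1(Y_s)dB_s$ in \eqref{eq:alpha}. The pathwise/Malliavin derivative of a normally reflected diffusion, as studied in \cite{Andres}, \cite{DZ}, \cite{LK}, has a different boundary behaviour (projection onto the tangent space, no annihilation of the non-normal rows). So perturbing $W\mapsto W+\varepsilon\int_0^\cdot h$ and arguing that "$\mathbf{B}$ does not contribute a Girsanov correction transverse to the boundary" produces the derivative of the reflected flow, not $\Psi$. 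This is precisely the gap that the paper bridges by working at the Euler level, where the weight $\bar{M}^n$ links the killed and reflected pictures.

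Third, the explanation for the random lower limit $\rho_T\vee 0$ — "for $u<\rho_T$ the inner stochastic integral is killed by the jump of $\Psi$" — is incorrect: $\Psi_u\neq0$ for $u<\rho_T$, and its first row carries through every boundary visit. The localization comes from the interaction between the reflection indicators $\mathsf{h}_i$ in the one-step discrete integration by parts (see the explicit $\varkappa_i$ in \eqref{eq:BEL}) and the reset structure of the \emph{modified} discrete derivative $\widehat{E}^n$ (built from $\widehat{e}_i$, not $e_i$, so that it converges to $\Psi$ rather than $\mathcal{E}$). Your proposal never introduces $\widehat{E}^n$, and the paper's proof hinges on it. You do correctly flag the non-adaptedness of $\mathbf{1}_{(\rho_T\vee 0,T]}$ and the need to regularize; the paper resolves both by staying discrete until the final limit. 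To salvage your route you would need either a rigorous substitute for the Malliavin chain rule involving $\Psi$ along the reflected path, or you should simply carry out the one-step integration by parts on $\widehat{E}^n$, multiply by $\Delta$, sum, and pass to the limit as the paper does.
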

        
        We remark that the random variable which appears in the above BEL formula
        corresponds to the one in the classical BEL diffusion  formula in the case when $B_T=0 $ or equivalently $ \rho_T=0 $.
        
        \begin{proof}
                The idea of the proof is  to  start from an approximation to the result in
                Theorem \ref{th:44} but considering the interval $[0,t_i] $ instead of $%
                [0,t_n] $. Using \eqref{eq:td1} and with the same arguments as in the proof of Theorem \ref{bigT}  gives 
                \begin{align*}
                        &\mathbf{D}_xP_Tf(x) =\widetilde{\mathbb{E}}\left[\mathbf{D}_if_i\widehat{E}_{i}^{n}%
                        \mathcal{K}_i \bar{M}_{i}^{n}\right]+o(1) .
                \end{align*}
            As in the one dimensional case, $  \widehat{E}%
            _j^{n,x}:=\prod_{i=1} ^j\widehat{e}_j $ with

            \begin{align*}
                \widehat{e}_i:=&I+(\mathbf{e}^1(\mathbf{e}^1)^{\top}-I)
                \mathsf{h}_i
                +\mathbf{D}\sigma_{i-1} ^{k }
                \bar {r}^k_{i}+\bar{b}_{i-1}\Delta+\frac{b^1_{i-1}}{a^{11}_{i-1}}\mathtt{X}_{i}^1I
                \mathsf{h}_i.
            \end{align*} 
        
                Now applying an integration by parts in the interval $[t_{i-1},t_i] $, one obtains  
                \begin{align}
                        &\widetilde{\mathbb{E}}_{i-1}\left[ \mathbf{D}_if_i \widehat{E}_{i}^{n}\mathcal{K}^n_i
                        \bar{M}_{i}^{n}\right]= \widetilde{\mathbb{E}}_{i-1}\left[ f_i\varkappa_i \widehat{E%
                        }_{i-1}^{n} \mathcal{K}^n_i\bar{M}_{i}^{n}\right]+\frac{O_{i-1}^E(1)}{\Delta}
                        \notag\\
                        &\varkappa_i\Delta:= \left(\sigma^{-1}_{i-1}\left(I+(\mathbf{e}^1(\mathbf{e}^1)^{\top}-I)\mathsf{h}_i\right)\right)^{k\cdot}Z^k_i+\frac{\sigma^{1\cdot}_{i-1}\mathtt{X}^1_{i-1}}{a_{i-1}^{11}}\sigma^{-1}_{i-1}(2\mathbf{e}^1(\mathbf{e}^1)^\top-I
                        )\mathsf{h}_i  \notag\\
                        & -\Delta b^{\top}_{i-1} a^{-1}_{i-1}   \left(I+(\mathbf{e}^1(\mathbf{e}^1)^{\top}-I)\mathsf{h}_i\right)+Z_i^{\top}\sigma^{-1}_{i-1}\mathbf{D}\sigma^{k}_{i-1}Z_i^k-(\sigma^{-1})^{k\cdot}\mathbf{D}\sigma^{k}_{i-1}\Delta.  
                        \label{eq:BEL}
                \end{align}      
                Doing this at any time $j$ one obtains an approximative discrete time BEL formula which can
                be multiplied by $\Delta $ and then summed over all $j =1,...,n$ . Using this argument one obtains
                approximations to integrals as in the one dimensional case.

                Taking limits the result follows. 
                In particular, note that we have used for the last term in \eqref{eq:BEL}, the fact that $ (Z^k_i)^2-\Delta $ and $ Z^k_iZ^{\ell_i} $ are increments of discrete time martingales with quadratic variation that converge to zero. On the other hand, for the second term in \eqref{eq:BEL}, we have that as $ f_i(\pi({X}_{i}))=0 $ then
                \begin{align*}
                      \widetilde{\mathbb{E}}_{i-1}\left[   f_i\mathtt{X}^1_{i-1}\mathsf{h}_i\right] =\widetilde{\mathbb{E}}_{i-1}\left[ (f_i-f_i(\pi({X}_i)))\mathtt{X}^1_{i-1}\mathsf{h}_i\right]=O^E_{i-1}(\sqrt{\Delta}).
                \end{align*}
                
                Then as in the one dimensional case, we obtain the result.
             
        \end{proof}



\section{Appendix}

\subsection{Differentiation rules}\label{diffrules}
In the following, we will use an integration by parts formula with respect to the law of the Gaussian increment $ \Delta_iW^k$. Recall that we denote the derivative with respect to $\Delta_iW^k $ as $ D^k_i $ as given in Definition \ref{def:2a}. We remark that the results in this Section  do not use Hypothesis \ref{hyp:12} and therefore it also applies to the case when this hypothesis is not assumed. A similar remark applies to the formulas for second derivatives in Section \ref{sec:7.6}.

\begin{lemma}
        \label{lem:2H}
        Let $ G_i:=g(X_{i-1},\Delta_iW^k)$  for  $ g:\mathbb{R}^d\times\mathbb{R}^d\to \mathbb{R}$ a Lipschitz function. Then
        \begin{align}
                &\mathbb{E}_{i-1}\left[G_i{\mathbf{D}}_{i-1} p_i\right]=-2\mathbb{E}_{i-1}\left[G_i{\mathbf{D}}_{i-1}\left(\frac{{b}^1_{i-1}
                        \mathtt{X}_{i-1}^1}{a^{11}_{i-1}}\right )\mathsf{h}_i\right]-2\mathbb{E}_{i-1}\left[D_i^kG_i{\mathbf{D}}_{i-1}\left (\frac{      {\sigma}^{1k} _{i-1}
                        \mathtt{X}_{i-1}^1
                }{a^{11}_{i-1}}\right )\mathsf{h}_i
                \right],
                \label{eq:2f}
        \end{align}
where $ p_i= \exp\left(-2\frac{\mathtt{X}_i^1\mathtt{X}_{i-1}^1}{a^{11}_{i-1}\Delta}\right)$ is defined in \eqref{maandmbar}.
\end{lemma}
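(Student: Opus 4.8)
The plan is to work entirely at the level of the single Euler step: differentiate the explicit exponential form of $p_i$, convert the part of the $X_{i-1}$-derivative that is multiplied by the Gaussian increment $\Delta_iW^k$ into a $\Delta_iW$-derivative by a Gaussian integration by parts, and then exploit a cancellation that relies on the identity $\sum_{k}(\sigma^{1k}_{i-1})^2=a^{11}_{i-1}$ built into $a=\sigma\sigma^\top$. The final $p_i\leftrightarrow\mathsf h_i$ swap is then just conditioning on the auxiliary uniform variable.

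Concretely, I would write $p_i=e^{-2q_i}$ with $q_i:=\mathtt{X}_i^1\mathtt{X}_{i-1}^1/(a^{11}_{i-1}\Delta)$ and set $u^k_i:=\sigma^{1k}_{i-1}\mathtt{X}_{i-1}^1/a^{11}_{i-1}$. Since $\mathtt{X}_i^1=\mathtt{X}_{i-1}^1+b^1_{i-1}\Delta+\sigma^{1k}_{i-1}\Delta_iW^k$, one gets the decomposition $q_i=A_i+B_i+C_i$, where
\[
A_i:=\frac{(\mathtt{X}_{i-1}^1)^2}{a^{11}_{i-1}\Delta}=\frac1\Delta\sum_{k}(u^k_i)^2,\qquad
B_i:=\frac{b^1_{i-1}\mathtt{X}_{i-1}^1}{a^{11}_{i-1}},\qquad
C_i:=\frac1\Delta\sum_{k}u^k_i\,\Delta_iW^k,
\]
the first identity for $A_i$ being exactly where $\sum_{k}(\sigma^{1k}_{i-1})^2=a^{11}_{i-1}$ enters. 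Then
\[
\mathbf{D}_{i-1}p_i=-2p_i\big(\mathbf{D}_{i-1}A_i+\mathbf{D}_{i-1}B_i+\mathbf{D}_{i-1}C_i\big),\qquad
D^k_ip_i=-2p_i\,\frac{u^k_i}{\Delta},
\]
and inside $\mathbb{E}_{i-1}[G_i\mathbf{D}_{i-1}p_i]$ the only term carrying a factor $\Delta_iW^k$ is the $C_i$-term, equal to $-\tfrac2\Delta\sum_k\mathbb{E}_{i-1}\big[(G_ip_i\,\mathbf{D}_{i-1}u^k_i)\,\Delta_iW^k\big]$.

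Next I would apply the Gaussian integration-by-parts formula $\mathbb{E}_{i-1}[H\,\Delta_iW^k]=\Delta\,\mathbb{E}_{i-1}[D^k_iH]$ (established for smooth $H$ of polynomial growth, then extended to the Lipschitz regime by mollification; all conditional expectations are finite because $0\le p_i\le1$ and $G_i$ grows at most linearly in $\Delta_iW$) with $H=G_ip_i\,\mathbf{D}_{i-1}u^k_i$. Using $D^k_i\mathbf{D}_{i-1}u^k_i=0$ (as $u^k_i$ depends only on $X_{i-1}$) and $D^k_ip_i=-2p_iu^k_i/\Delta$, the $C_i$-term becomes
\[
-2\sum_k\mathbb{E}_{i-1}\big[(D^k_iG_i)\,p_i\,\mathbf{D}_{i-1}u^k_i\big]+\frac4\Delta\sum_k\mathbb{E}_{i-1}\big[G_i\,p_i\,u^k_i\,\mathbf{D}_{i-1}u^k_i\big];
\]
since $\mathbf{D}_{i-1}A_i=\tfrac2\Delta\sum_ku^k_i\mathbf{D}_{i-1}u^k_i$, the $A_i$-contribution $-2\mathbb{E}_{i-1}[G_ip_i\mathbf{D}_{i-1}A_i]=-\tfrac4\Delta\sum_k\mathbb{E}_{i-1}[G_ip_iu^k_i\mathbf{D}_{i-1}u^k_i]$ cancels the second sum exactly, leaving
\[
\mathbb{E}_{i-1}[G_i\mathbf{D}_{i-1}p_i]
=-2\mathbb{E}_{i-1}\big[G_i\,p_i\,\mathbf{D}_{i-1}B_i\big]
-2\sum_k\mathbb{E}_{i-1}\big[(D^k_iG_i)\,p_i\,\mathbf{D}_{i-1}u^k_i\big].
\]
Recalling the definitions of $B_i$ and $u^k_i$, this is the asserted identity with $p_i$ in place of $\mathsf h_i$; and since none of $G_i,D^k_iG_i,\mathbf{D}_{i-1}B_i,\mathbf{D}_{i-1}u^k_i$ depends on the independent uniform $U_i$, conditioning on $(\mathcal F_{i-1},\Delta_iW)$ replaces each $p_i$ on the right by $\mathbb{E}[\mathsf h_i\mid\mathcal F_{i-1},\Delta_iW]=p_i$ (using $p_i\le1$ on the standing set $\mathtt X^1_{i-1}>0$), which gives the statement; for general Lipschitz $g$ one proves it first for smooth $g$ and passes to the limit.

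I expect the decomposition together with the cancellation to be the crux: the $\Delta^{-1}$-singular contribution coming from $\mathbf{D}_{i-1}A_i$ has no counterpart in the claimed formula and must disappear, and it is annihilated precisely by the ``self-interaction'' term $G_i(D^k_ip_i)\mathbf{D}_{i-1}u^k_i$ thrown up by the integration by parts — the matching being an exact consequence of the ellipticity relation $\sum_k(\sigma^{1k}_{i-1})^2=a^{11}_{i-1}$. Everything else is routine differentiation of the Euler map and standard Gaussian calculus.
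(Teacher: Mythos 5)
Your proof is correct and follows the same strategy as the paper: decompose the exponent $q_i=\mathtt{X}^1_i\mathtt{X}^1_{i-1}/(a^{11}_{i-1}\Delta)$ into an $\mathcal F_{i-1}$-measurable part plus a part linear in $\Delta_iW$, apply Gaussian integration by parts to the $\Delta_iW$-piece, observe the exact cancellation of the $\Delta^{-1}$-singular term coming from $\mathbf D_{i-1}\!\big((\mathtt{X}^1_{i-1})^2/(a^{11}_{i-1}\Delta)\big)$ via $\sum_k(\sigma^{1k}_{i-1})^2=a^{11}_{i-1}$, and finish with the tower property identifying $p_i=\mathbb E[\mathsf h_i\mid\mathcal F_{i-1},\Delta_iW]$. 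The only cosmetic difference is that you make the cancellation explicit by writing $A_i=\tfrac1\Delta\sum_k(u^k_i)^2$, whereas the paper condenses this into the phrase ``using the product rule of differentiation''.
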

\begin{proof}
        The proof is similar to the one-dimensional case, except that one has to take into account the fact that the gradient is a row vector. 
        We have that 
        \begin{equation}
                {\mathbf{D}}_{i-1} p_i=-2p_iA_{i},
                \ \ \ A_{i}:={\mathbf{D}}_{i-1}\left(
                \frac{
                        \mathtt{X}^1_i\mathtt{X}_{i-1}^1
                }{      a^{11}_{i-1}\Delta}
                \right).
                \label{def:aim0}
        \end{equation}
        As in the one dimensional case, using \eqref{eq:defbXm}, we obtain the decomposition $ A_i=B_{i-1}+C_i $ for $ B_i\in\mathcal{F}_{i-1} $ and $ C_i\in\mathcal{F}_i $ with
        \begin{align*}
                B_{i-1}:=&{\mathbf{D}}_{i-1}\left( \frac{
                        (\mathtt{X}_{i-1}^1)^2
                }{a^{11}_{i-1}\Delta}\right )+{\mathbf{D}}_{i-1}\left(\frac{{b}^1_{i-1}
                        \mathtt{X}_{i-1}^1}{a^{11}_{i-1}}\right )
                \\
                C_i:=&  
                {\mathbf{D}}_{i-1}\left(
                \frac{{\sigma}^{1k} _{i-1}
                        \mathtt{X}_{i-1}^1}{a^{11}_{i-1}}
                \right )\frac{\Delta_{i}W^{k} }{\Delta}.
        \end{align*}
        Note that 
        \begin{align*}
                \psi_{i-1}^k:={\mathbf{D}}_{i-1}\left(\frac{{\sigma}^{1k} _{i-1} }{a^{11}_{i-1}}
                \mathtt{X}_{i-1}^1
                \right )\in\mathcal{F}_{i-1} \text{ and }D_i^kp_i=
                -2\frac{        {\sigma}^{1k} _{i-1}
                        \mathtt{X}_{i-1}^1
                }{a^{11}_{i-1}\Delta } p_i.
        \end{align*}
        Using the above information, we can obtain \eqref{eq:2f}. In fact,
        \begin{align*}
                \mathbb{E}_{i-1}\left[ G_{i}
                2p_i\psi_{i-1}^k\frac{\Delta_{i}W^{k} }{\Delta}\right]
                =&\mathbb{E}_{i-1}\left[ G_{i}
                2p_i\psi_{i-1}^k\frac{\Delta_iW^k}{\Delta}\right]
                =\mathbb{E}_{i-1}\left[D^k_i\left( G_{i} p_i\right)
                2\psi^k_{i-1}\right]\\
                =&\mathbb{E}_{i-1}\left[D^k_i\left( G_{i} p_i\right)
                2\psi^k_{i-1}\right]\\
                =&2\mathbb{E}_{i-1}\left[ D_i^kG_{i} p_i
                \psi^k_{i-1}\right]-4
                \mathbb{E}_{i-1}\left[G_{i} p_i
                \frac{  {\sigma}^{1k} _{i-1}
                        \mathtt{X}_{i-1}^1
                }{a^{11}_{i-1}\Delta }
                \psi^k_{i-1}\right].
        \end{align*}
        Adding this result to the computation of $ B_{i-1} $,  we obtain, using the product rule of differentiation,  that
        \begin{align*}
                \mathbb{E}_{i-1}\left[ G_{i}
                2 p_iA_{i}\right]
                =&2\mathbb{E}_{i-1}\left[ D_i^kG_{i} p_i
                \psi^k_{i-1}\right]+2\mathbb{E}_{i-1}\left[ G_{i} p_i
                {\mathbf{D}}_{i-1}\left(
                {b}^1_{i-1} 
                \frac{\mathtt{X}^1_{i-1}}{a^{11}_{i-1}}
                \right )\right].
        \end{align*}
        From here \eqref{eq:2f} follows.   
        %
        %
\end{proof}

\begin{lemma}
        \label{cor:23H}
        Assume that $ f:\mathbb{R}^d\to\mathbb{R} $ is a Lipschitz function such that $ f(x)=0 $ for all $ x\in  (H^d_L)^c $ and $ \lambda^0\in C^1_b(\mathbb{R}^d\times\mathbb{R}^d,\mathbb{R}) $. Define $ f_i:=f(X_i) $ and $ \lambda_i^0:= \lambda^0(X_{i-1},\Delta _i W)$.  Then we have 
        \begin{align*}
                {\mathbf{D}}_{{i-1}}\mathbb{E}_{{i-1}}\left[f_i{\lambda}^0_{i}{m}_{i}\right]
                =&\mathbb{E}_{{i-1}}\left[
                {\mathbf{D}}_{i-1}(f_i\lambda_i^0){m}_i
                +2D_i^k(f_i\lambda_i^0){\mathbf{D}}_{i-1}\left (\frac{        {\sigma}^{1k} _{i-1}
                        \mathtt{X}_{i-1}^1
                }{a^{11}_{i-1}}\right )1_{({X}^1_i>L)}\mathsf{h}_i\right.\\
                &\quad\quad\left.+2{f_i\lambda_i^0}{\mathbf{D}}_{i-1}\left(\frac{{b}^1_{i-1}
                        \mathtt{X}_{i-1}^1}{a^{11}_{i-1}}\right )1_{({X}^1_i>L)}\mathsf{h}_i\right].
        \end{align*}
        
        Furthermore for any random variables $ A_i,B_i\in\mathcal{F}_i $, we have
        \begin{align*}
                \mathbb{E}_{i-1}[A_i{m}_i+B_i1_{({X}^1_i>L)}\mathsf{h}_i]=\mathbb{E}_{i-1}\left[\left(A_i(1-\mathsf{h}_i)+\frac 12 B_i\mathsf{h}_i
                \right)\bar{m}_i\right ].
        \end{align*}
\end{lemma}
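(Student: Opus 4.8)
The plan is to split the proof into its two assertions. For the first identity, I would start from the product rule for the total derivative $\mathbf{D}_{i-1}$, writing
\[
\mathbf{D}_{i-1}\mathbb{E}_{i-1}\!\left[f_i\lambda_i^0 m_i\right]
=\mathbf{D}_{i-1}\mathbb{E}_{i-1}\!\left[f_i\lambda_i^0\,1_{(X_i^1>L)}\right]
-\mathbf{D}_{i-1}\mathbb{E}_{i-1}\!\left[f_i\lambda_i^0\,1_{(X_i^1>L)}\mathsf{h}_i\right],
\]
using $m_i=1_{(X_i^1>L)}(1-\mathsf{h}_i)$ and $\mathsf{h}_i=1_{(U_i\le p_i)}$. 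Since $f$ vanishes off $H^d_L$, the indicator $1_{(X_i^1>L)}$ can be dropped wherever it multiplies $f_i$, so the first term is just $\mathbb{E}_{i-1}[\mathbf{D}_{i-1}(f_i\lambda_i^0)]$ after differentiating under the expectation (the dependence on $X_{i-1}$ enters through $X_i = X_{i-1}+b_{i-1}\Delta+\sigma^k_{i-1}\Delta_iW^k$ and directly through $\lambda_i^0$; one uses $\mathbf{D}_{i-1}X_i = I + \mathbf{D}b_{i-1}\Delta + \mathbf{D}\sigma^k_{i-1}\Delta_iW^k$ as in Definition \ref{def:2a}). For the second term, the $X_{i-1}$-dependence sits both in $f_i\lambda_i^0\,1_{(X_i^1>L)}$ and in $p_i$ through $\mathsf{h}_i$; differentiating the product and then applying Lemma \ref{lem:2H} with $G_i = f_i\lambda_i^0\,1_{(X_i^1>L)}$ (which is Lipschitz up to the $1_{(X_i^1>L)}$ factor — but that factor is exactly what is needed to make $G_i$ vanish near the boundary, so $G_i\,\mathsf{h}_i$ and $D_i^k G_i$ are well-behaved there) converts the $\mathbf{D}_{i-1}p_i$ contribution into the two correction terms with $\mathbf{D}_{i-1}\big(\tfrac{\sigma^{1k}_{i-1}\mathtt{X}^1_{i-1}}{a^{11}_{i-1}}\big)$ and $\mathbf{D}_{i-1}\big(\tfrac{b^1_{i-1}\mathtt{X}^1_{i-1}}{a^{11}_{i-1}}\big)$. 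Collecting the three resulting pieces gives exactly the claimed formula.

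For the second identity, I would simply compute directly: by definition $m_i = 1_{(X_i^1>L)}(1-\mathsf{h}_i) = 1_{(X_i^1>L)}-1_{(X_i^1>L)}\mathsf{h}_i$ and $\bar m_i = 1_{(X_i^1>L)}(1+\mathsf{h}_i) = 1_{(X_i^1>L)}+1_{(X_i^1>L)}\mathsf{h}_i$. The key elementary fact — which is the crux of the whole Girsanov-killing correspondence used throughout the paper — is that, conditionally on $\mathcal{F}_{i-1}$ (and after the $\widetilde{\mathbb{P}}$-change of measure that makes the increment a centered Gaussian), the reflection symmetry $x_i^1 \mapsto 2L - x_i^1$ of the Gaussian density for $X_i^1$ sends $1_{(X_i^1>L)}$ to $1_{(X_i^1<L)}$ while fixing $p_i$ up to the density weight. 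Concretely, for any $A_i,B_i\in\mathcal{F}_i$ one has
\[
\mathbb{E}_{i-1}\!\left[A_i\,1_{(X_i^1>L)}\mathsf{h}_i\right]
=\mathbb{E}_{i-1}\!\left[\tfrac12\,A_i\,1_{(X_i^1>L)}\mathsf{h}_i\right]
+\mathbb{E}_{i-1}\!\left[\tfrac12\,A_i\,1_{(X_i^1>L)}\mathsf{h}_i\right],
\]
and the symmetry identifies one of these halves with a term built from $1_{(X_i^1<L)}$ so that $A_i m_i + B_i 1_{(X_i^1>L)}\mathsf{h}_i$ reassembles (after taking conditional expectation) into $\big(A_i(1-\mathsf{h}_i)+\tfrac12 B_i\mathsf{h}_i\big)\bar m_i$. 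This is the same computation already carried out in the proof of Lemma \ref{lem:4}, so I would invoke that calculation rather than redo it.

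The main obstacle is the first identity, specifically the legitimacy of applying Lemma \ref{lem:2H} when $G_i$ contains the non-smooth factor $1_{(X_i^1>L)}$: one must check that $\mathbf{D}_{i-1}$ does not produce a boundary (Dirac) term. This is where the hypothesis $f(x)=0$ for $x\notin H^d_L$ is essential — it forces $f_i\lambda_i^0\,1_{(X_i^1>L)}$ to be (Lipschitz) continuous across $\{X_i^1=L\}$ with value $0$, so the would-be jump term vanishes and the integration-by-parts / differentiation-under-the-integral manipulations of Lemma \ref{lem:2H} go through verbatim. Once that technical point is dispatched, the rest is bookkeeping: expanding $\mathbf{D}_{i-1}(f_i\lambda_i^0 p_i \cdot(\text{Gaussian density}))$ by the product rule and matching terms. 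I expect the write-up to mirror, mutatis mutandis, the one-dimensional argument in \cite{CK}, the only genuinely new feature being that $\mathbf{D}_{i-1}$ and all the $\mathbf{D}_{i-1}(\cdots)$ objects are now row vectors rather than scalars, which affects notation but not the structure of the argument.
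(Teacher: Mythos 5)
Your treatment of the first identity matches the paper's route: decompose $m_i$, differentiate under the conditional expectation, kill the Dirac-delta terms coming from $1_{(X_i^1>L)}$ using $f\big|_{(H^d_L)^c}=0$, and apply Lemma~\ref{lem:2H} to convert the $\mathbf{D}_{i-1}p_i$ contribution into the two correction terms. The paper phrases the delta-killing slightly differently --- it points out that $x\mapsto 1_{(x^1>L)}(1-p(X_{i-1},x))$ is itself Lipschitz and vanishes at $\partial H^d_L$ --- but this is the same idea wearing a different hat, so that part of your proposal is fine.

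The second identity is where your proposal goes wrong. You invoke a ``reflection symmetry'' of the Gaussian density and a splitting $\tfrac12+\tfrac12$ that, as written, is vacuous (it is just $x=\tfrac12 x+\tfrac12 x$), and then assert that ``the symmetry identifies one of these halves with a term built from $1_{(X_i^1<L)}$.'' This does not constitute a proof and, more importantly, is not the mechanism at work: the identity is not distributional at all but a \emph{pointwise} algebraic fact, holding $\omega$-by-$\omega$ before any expectation is taken. Since $\mathsf{h}_i$ is an indicator, $\mathsf{h}_i^2=\mathsf{h}_i$, and one computes directly
\begin{align*}
\Bigl(A_i(1-\mathsf{h}_i)+\tfrac12 B_i\mathsf{h}_i\Bigr)\bar m_i
&=\Bigl(A_i(1-\mathsf{h}_i)+\tfrac12 B_i\mathsf{h}_i\Bigr)1_{(X_i^1>L)}(1+\mathsf{h}_i)\\
&=A_i 1_{(X_i^1>L)}(1-\mathsf{h}_i^2)+\tfrac12 B_i 1_{(X_i^1>L)}(\mathsf{h}_i+\mathsf{h}_i^2)
=A_i m_i + B_i 1_{(X_i^1>L)}\mathsf{h}_i,
\end{align*}
so the two sides agree pointwise and the conditional expectations trivially coincide. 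No appeal to Lemma~\ref{lem:4}, to the Gaussian law, or to the $\widetilde{\mathbb{P}}$-change of measure is needed; indeed the identity holds for arbitrary $A_i,B_i\in\mathcal{F}_i$ and arbitrary underlying law precisely because it is an identity of random variables, not of expectations.
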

\begin{proof}
        
        First, note that the following is a Lipschitz function which vanishes at the boundary of $ H^d_L $:
        \begin{align*}
                x\mapsto 1_{(x^1>L)}(1- p(X_{i-1},x))=1_{(x^1>L )}\left(1-
                \exp\left({-2\frac{
                                (x^1-L)\cdot 1_{(x^1>L)}\cdot \mathtt{X}_{i-1}^1
                        }{a^{11}_{i-1}\Delta }}  \right)\right).
        \end{align*} 
        Here, recall that $ p(x,y) $ is defined as the function that defines $ p_i $.
        Second, we remark in reference to $ 1_{({X}^1_i>L)}$ that one can always ignore the differentiation of this term because it leads to a Dirac delta distribution and $ f $ is a Lipschitz continuous function with $ f(x)=0 $ for $ x\in (H^d_L)^c $.
        
        With these two remarks, the result follows from direct applications of differentiation and{ the arguments in the proof of Lemma \ref{lem:2H}  }.
\end{proof}

With these results we can now prove Lemma \ref{lem:2.1H}.

\begin{proof}[Proof of Lemma \ref{lem:2.1H}]
        First, note that $ f_i $ is a continuously differentiable  function with  $ f_i\big|_{\partial H^d_L} =0$. 
        The proof is an application of  Lemma \ref{cor:23H} with the corresponding algebra in order to identify all terms in \eqref{eq:td1bH}. Most of the proof is as in the 1D case. In fact, for $ \tau^n:=\inf\{s\geq 0; X^{c,n,1}_s=L\} $, we have
        \begin{equation*}
        	\mathbb{E}_0\left[f(X_n)1_{(\tau^n>T)}\right]=
        	\mathbb{E}_0\left[f(X_n)\mathbb{E}\left[1_{(\tau^n>T)}\Big/\mathcal{F}_n\right]\right]=
        	\mathbb{E}\left[ f\left( {X}_{n}\right) M^n_{n}\right] =\mathbb{E}\left[
        	f_{1}M^n_{1}\right] .
        \end{equation*}
    Here, we have used the fact that $ \mathbb{E}\left[1_{(\tau^n>T)}\Big/\mathcal{F}_n\right]=\mathbb{E}\left[1_{(\tau^n>T)}\Big/\sigma(X^{c,n,1}_i;i\leq n)\right] =M^n_n$.

        In order to guide the reader through the extra terms that do not appear in the 1D case, note that when one differentiates $ m_i $, one obtains:
        \begin{align*}
                \mathbb{E}_{i-1}\left[f_i1_{({X}^1_i>L)}\mathbf{D}_{i-1}p_i\right]      =&2\mathbb{E}_{i-1}\left[f_i{\mathbf{D}}_{i-1}\left(\frac{{b}^1_{i-1}
                        \mathtt{X}_{i-1}^1}{a^{11}_{i-1}}\right )1_{({X}^1_i>L)}\mathsf{h}_i\right]\\
                &-2\mathbb{E}_{i-1}\left[{\mathbf{D}}_if_i\sigma^k_{i-1}{\mathbf{D}}_{i-1}\left (\frac{      {\sigma}^{1k} _{i-1}
                        \mathtt{X}_{i-1}^1
                }{a^{11}_{i-1}}\right )1_{({X}^1_i>L)}\mathsf{h}_i\right].
        \end{align*}
        The explicit calculation of the derivative in the second term of the above expression give the constant order term and the last term in $ \bar{e}_i $ which appear in \eqref{eq:td1bH}. In fact, note that 
        ${\mathbf{D}}_{i-1}\mathtt{X}_{i-1}=(\mathbf{e}^1 )^{\top} $, as $\mathtt{X}_{i-1}=X^1_{i-1}-L  $. Therefore, 
        \begin{align*}
                {\mathbf{D}}_if_i\sigma^k_{i-1}\left (\frac{      {\sigma}^{1k} _{i-1}{\mathbf{D}}_{i-1}
                        \mathtt{X}_{i-1}^1
                }{a^{11}_{i-1}}\right )=   {\mathbf{D}}_if_i\sum_{\ell=1}^d\frac{{a}^{1\ell}_{i-1}}{a^{11}_{i-1}}
                \mathbf{e}^\ell(\mathbf{e}^{1})^{\top}={\mathbf{D}}_if_i\mathbf{e}^{1}(\mathbf{e}^{1})^{\top}+{\mathbf{D}}_if_i\pi_{i-1}.
        \end{align*}
        This expression appears in \eqref{eq:td1bH}  and this expression is new in comparison with the one dimensional case.
\end{proof}

\subsection{Probabilistic representations of killed and reflected Brownian motion}\label{apkilres}
The goal of this section is to introduce a probabilistic representation for the first component of multidimensional correlated Wiener process with killing and likewise for the reflecting case. \
These results are used in  the definition of $ p_i $ in \eqref{maandmbar} and \eqref{eq:fiA}.

For this, we define $ U^j_t:=u^j+\sigma^{jk} W^k_t $, $ j=1,...,d $, $ k=1,...,r $ and we assume that $ \sigma\sigma^{\top}  =a $ is an invertible matrix. We define the stopping time  $ \tau:=\inf\{s>0;\ \mathbf{e}^1\cdot U_s=L\} $, where we assume without loss of generality that $\mathbf{e}^1\cdot u>L  $. We have the following probabilistic representation:
\begin{lemma}
        \label{th:9}
        Let $ f:H^d_L\to\mathbb{R} $ be a bounded measurable function such that $ f(x)=0 $ for $ x\in\partial H^d_L $.
        The following probabilistic representation is valid for the killed process semigroup with $ u^1>L\geq 0 $, $a_1:=\|\sigma_1\|^2$ and $ \sigma_1 :=(\sigma^{11},...,\sigma^{1d})$.
        \begin{align*}
                \mathbb{E}\left[f(U_T)1_{(\tau>T)}\right]=&\mathbb{E}\left[f(U_T)1_{(U^1_T> L)}\left(1-e^{-2\frac{(U_T^1-L)(u^1-L)}{a_{1}T}}\right)\right]
        \end{align*}
        In particular, we have that the following conditional probability characterization for $ z\geq L $
        \begin{align*}
                \mathbb{P}\left(\tau\leq T\Big|U^1_T=z\right)=e^{-2\frac{(z-L)(u^1-L)}{a_{1}T}}.
        \end{align*}
        In a similar fashion, we have for $ f:H^d_L\to\mathbb{R} $ a bounded measurable function and the normally reflected process $ V_t=U_t+\ell_t$ on $ H^d_L $. Then if $ f $ is a function of only the first component, we have 
        \begin{align*}
                \mathbb{E}\left[f(V^1_T)\right]=&\mathbb{E}\left[f(U^1_T)1_{(U^1_T> L)}\left(1+e^{-2\frac{(U_T^1-L)(u^1-L)}{a_{1}T}}\right)\right].
        \end{align*}
        
\end{lemma}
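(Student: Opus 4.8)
The plan is to reduce everything to a one-dimensional statement about the first coordinate. The key observation is that, although $U=(U^1,\dots,U^d)$ is a correlated Brownian motion, the event $\{\tau>T\}$ and the function $f$ (in the reflected case) depend only on $U^1_t = u^1 + \sigma^{1k}W^k_t$, which is itself a one-dimensional Brownian motion with variance parameter $a_1 = \|\sigma_1\|^2 = \sum_k (\sigma^{1k})^2 = a^{11}$, since $\langle U^1\rangle_t = \sum_k (\sigma^{1k})^2\, t = a_1 t$. So first I would write $U^1_t = u^1 + \sqrt{a_1}\,\beta_t$ for a standard one-dimensional Brownian motion $\beta$ (which exists on the same space by Lévy's characterization, or is obtained as $\beta_t := a_1^{-1/2}\sigma^{1k}W^k_t$), and note $\tau = \inf\{s>0: U^1_s = L\} = \inf\{s>0: \beta_s = (L-u^1)/\sqrt{a_1}\}$.

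For the killed case, I would condition on $U^1_T$: since $\{\tau>T\}$ is measurable with respect to the path of $U^1$ up to time $T$, we have
\begin{align*}
\mathbb{E}\left[f(U_T)1_{(\tau>T)}\right] = \mathbb{E}\left[f(U_T)\,\mathbb{P}\left(\tau>T \,\middle|\, \sigma(U^1_s:s\le T),\ U_T\right)\right],
\end{align*}
and then use that, conditionally on the whole path of $U^1$ on $[0,T]$, the indicator $1_{(\tau>T)}$ is determined, while the remaining randomness in $U_T$ (its components $U^2_T,\dots,U^d_T$) is Gaussian. More precisely, by the tower property it suffices to compute $\mathbb{E}[f(U_T)\,1_{(\tau>T)} \mid U^1_T = z]$, and the classical reflection principle / Lévy's formula for the bridge gives, for $z>L$,
\begin{align*}
\mathbb{P}\left(\tau\le T \,\middle|\, U^1_T = z\right) = \exp\left(-2\frac{(z-L)(u^1-L)}{a_1 T}\right),
\end{align*}
which is the standard hitting probability for a Brownian bridge of diffusivity $a_1$ from $u^1$ to $z$ of a level $L$ below both endpoints. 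Plugging this back in and using $1 - \mathbb{P}(\tau\le T\mid U^1_T=z)$ for the survival probability yields the stated identity; the conditional probability characterization is then read off directly.

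For the reflected case, since $f$ depends only on the first component $V^1_t = U^1_t + \ell_t$, where $\ell$ is the Skorokhod reflection term keeping $V^1$ in $[L,\infty)$, the claim is purely one-dimensional: it is the standard identity that the law of reflected Brownian motion (with diffusivity $a_1$, started at $u^1$) at time $T$ has density $p(z) + p(2L - z)$ for $z\ge L$, where $p$ is the density of $U^1_T$, equivalently $\mathbb{E}[f(V^1_T)] = \mathbb{E}[f(U^1_T)1_{(U^1_T>L)}] + \mathbb{E}[f(2L - U^1_T)1_{(U^1_T<L)}]$; a change of variables in the second term and the explicit Gaussian density turn the factor $p(2L-z)/p(z)$ into $\exp(-2(z-L)(u^1-L)/(a_1 T))$, giving the asserted formula. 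I expect the main (only genuine) obstacle to be carefully justifying the conditioning step in the killed case — i.e. that $1_{(\tau>T)}$ depends only on the $U^1$-path and that the conditional hitting probability of the Brownian bridge has the stated closed form — but this is entirely classical (reflection principle for the Brownian bridge), so the write-up can cite the one-dimensional result from \cite{CK} and simply note that the only new point is identifying the correct diffusivity $a_1 = a^{11} = \|\sigma_1\|^2$ and that the higher components play no role.
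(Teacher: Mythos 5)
Your proposal takes essentially the same route as the paper: condition on the first coordinate $U^1$, which is a one-dimensional Brownian motion with diffusivity $a_1 = a^{11}$, and invoke the one-dimensional reflection principle (for the killed case via the bridge hitting probability $e^{-2(z-L)(u^1-L)/(a_1T)}$, and for the reflected case via the density of reflected Brownian motion). The only step you gloss over—and which is the one genuinely $d$-dimensional ingredient—is the conditional independence of $U_T$ and $1_{(\tau>T)}$ given $U^1_T$, needed to factor $\mathbb{E}[f(U_T)1_{(\tau>T)}\mid U^1_T=z]$ into $\mathbb{E}[f(U_T)\mid U^1_T=z]\,\mathbb{P}(\tau>T\mid U^1_T=z)$; this follows from the Gaussian projection computation that $U^j_T - \frac{a^{j1}}{a^{11}}(U^1_T-u^1)$ is orthogonal to, hence independent of, the whole path $(U^1_s)_{s\le T}$, but it should be stated explicitly.
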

\begin{proof}
        The proof 
        of all the above result is obtained by considering the conditional expectation with respect to $ U^1 $ which is a process to be killed in the first result and a one dimensional reflected Wiener process with variance $ a_{1}$ in the second statement. Then all results follow from the 1D case and reflection principle (see e.g. \cite{Gobet} or \cite{Baldi} ) 
\end{proof}

\begin{remark}
        \label{rem:12}
        The generalization of the above results  to the representation  of $ \mathbb{E}[f(V_T)] $ is more complex than stated in the above expression. In fact, the full distribution for the correlated case is implicitly given in \cite{Lepingle}. A careful calculation shows that only in particular situations, such as the one assumed in {\bf Hypothesis \ref{hyp:12} },  a simple statement as the one above holds with $ V^1 $ and $ U^1 $ replaced by $ V $
and $ U $.

\end{remark}

\subsection{Moments of local times }

\label{sec:mlt}

Similar  to the 1D case, we define $  (\mathcal{X}^n ,\Lambda^{n})$ as the unique solution of the following hyperplane reflected equation 
\begin{align*}
        \mathcal{X}^{n}_t=&x+\int_{0}^t\sigma^{k}(\mathcal{X}^{n}_{\tau(s)})dW^k_s+\int_0^t\sum_{\ell=1}^d\frac{a^{\ell 1}}{a^{11}}(\mathcal{X}^{n}_{\tau(s)})\mathbf{e}^\ell d\Lambda^{n}_{ s}\in H^d_L,\\
        \Lambda^{n}_{ t}=&\int_{0}^t1_{(\mathcal{X}^1_s=L)}d|\Lambda^{n}|_{ s}\geq 0.
\end{align*}

Here $ |\Lambda^{n}| $ denotes the total variation of the real valued process $ \Lambda^{n} $. Furthermore, $ \tau(s)=t_i $ if $ s\in [t_i,t_{i+1}) $.
We also define $ \bar{\Lambda}^y$ as the local time of the one dimensional process $ ( \mathcal{X}^{n}\cdot\mathbf{e}^1)_{t\in [0,T]}$ at the point $ y>L $.

\begin{lemma}
        \label{lem:37}
        There exists a positive constant $C_q $ independent of $n, x, y,z$ such that for any $ q>0 $
        \begin{align*}
                \mathbb{E}\left[ \exp \left( q\max_{i=1,...N}|\mathcal{X}^{n}_{i}-x|\right) %
                \right]+\mathbb{E}\left[ \exp \left( q\Lambda^{n}_T\right) %
                \right]+\mathbb{E}\left[ \exp \left( q\bar{\Lambda}^y_T\right) %
                \right]\leq C_q.
        \end{align*}
        Also,  for any $ 0<s<t\leq T $, we have for $ \Lambda_{s\mapsto t}:=\Lambda_{ t}- \Lambda_{ s}$ and similarly for $ \bar{\Lambda} ^y$, there exists a constant $ c >0$ independent of $ n,y $ such that
        \begin{eqnarray}
                \mathbb{E}\left[ \left( \Lambda^{n} _{s\mapsto t}\right) ^{2}\right] &\leq
                &c\left( t-s\right) ,~~~\mathbb{E}\left[ \Lambda ^{n}_{s\mapsto t}\right]
                \leq c\sqrt{t-s}  \label{moduluso} \\
                \mathbb{E}\left[ \left( \bar{\Lambda}_{s\mapsto t}^{y}\right) ^{2}\right]
                &\leq &c\left( t-s\right) ,~~~\mathbb{E}\left[ \bar{\Lambda}_{s\mapsto
                        t}^{y}\right] \leq c\sqrt{t-s}. \label{modulusoo}
        \end{eqnarray}
\end{lemma}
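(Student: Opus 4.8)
The plan is to reduce all three quantities to the normal coordinate $\mathcal{X}^{n,1}:=\mathcal{X}^{n}\cdot\mathbf{e}^{1}$ and then apply the standard toolbox (the one--dimensional Skorokhod map, Dambis--Dubins--Schwarz, Doob's inequality and Tanaka's formula), exactly as in the one--dimensional estimates of \cite{CK}. Since the first coordinate of the reflection direction $\sum_{\ell=1}^{d}\frac{a^{\ell 1}}{a^{11}}(\mathcal{X}^{n}_{\tau(s)})\mathbf{e}^{\ell}$ equals $1$, the defining equation gives
\[
\mathcal{X}^{n,1}_{t}=x^{1}+\psi^{n}_{t}+\Lambda^{n}_{t},\qquad \psi^{n}_{t}:=\int_{0}^{t}\sigma^{1k}(\mathcal{X}^{n}_{\tau(s)})\,dW^{k}_{s},
\]
with $\mathcal{X}^{n,1}\ge L$, $\Lambda^{n}$ nondecreasing and increasing only where $\mathcal{X}^{n,1}=L$; that is, $(\mathcal{X}^{n,1},\Lambda^{n})$ is the one--dimensional Skorokhod reflection of $x^{1}+\psi^{n}$ at the level $L$, so $\Lambda^{n}_{t}=\sup_{u\le t}(L-x^{1}-\psi^{n}_{u})^{+}\le\sup_{u\le t}|\psi^{n}_{u}|$, using $x^{1}\ge L$. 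By Hypothesis \ref{hyp:12} the coefficients $\sigma^{k}$ and $a^{\ell 1}/a^{11}$ are bounded, so $\psi^{n}$ is a continuous martingale with $\langle\psi^{n}\rangle_{t}=\int_{0}^{t}a^{11}(\mathcal{X}^{n}_{\tau(s)})\,ds\le\|a^{11}\|_{\infty}T$; Dambis--Dubins--Schwarz then gives $\psi^{n}_{t}=\beta_{\langle\psi^{n}\rangle_{t}}$ for a Brownian motion $\beta$, hence $\sup_{t\le T}|\psi^{n}_{t}|\le\sup_{u\le\|a^{11}\|_{\infty}T}|\beta_{u}|$.

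For the exponential bounds, $\sup_{u\le\theta}|\beta_{u}|$ has a Gaussian tail by the reflection principle, so $\mathbb{E}[\exp(q\sup_{u\le\theta}|\beta_{u}|)]<\infty$ for every $q$, which already controls $\mathbb{E}[\exp(q\Lambda^{n}_{T})]$ uniformly in $n$ and $x$. Next, $\max_{i}|\mathcal{X}^{n}_{i}-x|\le\sup_{t\le T}|\mathcal{X}^{n}_{t}-x|$, and writing the equation for $\mathcal{X}^{n}-x$ as the sum of the martingale $\int_{0}^{\cdot}\sigma^{k}(\mathcal{X}^{n}_{\tau(s)})\,dW^{k}_{s}$ (bounded bracket, DDS again) and a finite--variation term bounded by $C\Lambda^{n}_{T}$, Cauchy--Schwarz splits $\mathbb{E}[\exp(q\sup_{t\le T}|\mathcal{X}^{n}_{t}-x|)]$ into two finite factors. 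Finally, Tanaka's formula for $\mathcal{X}^{n,1}$ at the level $y$ reads $\bar{\Lambda}^{y}_{T}=|\mathcal{X}^{n,1}_{T}-y|-|x^{1}-y|-\int_{0}^{T}\mathrm{sgn}(\mathcal{X}^{n,1}_{s}-y)\,d\mathcal{X}^{n,1}_{s}$ (the reflection term $d\Lambda^{n}$ does not contribute, since it acts only on $\{\mathcal{X}^{n,1}=L\}\neq\{y\}$); bounding $|\mathcal{X}^{n,1}_{T}-y|-|x^{1}-y|\le|\mathcal{X}^{n,1}_{T}-x^{1}|$ and splitting the stochastic integral into a bounded--bracket martingale plus a term dominated by $\Lambda^{n}_{T}$ shows that $\bar{\Lambda}^{y}_{T}$ is dominated, \emph{uniformly in} $y$, by quantities already shown to have all exponential moments.

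For the modulus estimates I would use the flow property of the Skorokhod map: on $[s,t]$ the pair $(\mathcal{X}^{n,1},\Lambda^{n})$ solves the Skorokhod problem for the path $u\mapsto\mathcal{X}^{n,1}_{s}+(\psi^{n}_{u}-\psi^{n}_{s})$, so $\Lambda^{n}_{t}-\Lambda^{n}_{s}=\sup_{s\le u\le t}\bigl(L-\mathcal{X}^{n,1}_{s}-(\psi^{n}_{u}-\psi^{n}_{s})\bigr)^{+}\le\sup_{s\le u\le t}|\psi^{n}_{u}-\psi^{n}_{s}|$ (again using $\mathcal{X}^{n,1}_{s}\ge L$). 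Doob's inequality and the It\^o isometry give $\mathbb{E}[\sup_{s\le u\le t}|\psi^{n}_{u}-\psi^{n}_{s}|^{2}]\le 4\|a^{11}\|_{\infty}(t-s)$, hence $\mathbb{E}[(\Lambda^{n}_{s\mapsto t})^{2}]\le c(t-s)$ and, by Jensen, $\mathbb{E}[\Lambda^{n}_{s\mapsto t}]\le c\sqrt{t-s}$. For $\bar{\Lambda}^{y}$, Tanaka on $[s,t]$ together with $|\mathcal{X}^{n,1}_{t}-y|-|\mathcal{X}^{n,1}_{s}-y|\le|\mathcal{X}^{n,1}_{t}-\mathcal{X}^{n,1}_{s}|\le|\psi^{n}_{t}-\psi^{n}_{s}|+\Lambda^{n}_{s\mapsto t}$, and the splitting of $\int_{s}^{t}\mathrm{sgn}(\mathcal{X}^{n,1}_{r}-y)\,d\mathcal{X}^{n,1}_{r}$ into a martingale with bracket $\le\|a^{11}\|_{\infty}(t-s)$ plus a term $\le\Lambda^{n}_{s\mapsto t}$, yields $\|\bar{\Lambda}^{y}_{s\mapsto t}\|_{2}\le c\sqrt{t-s}$ uniformly in $y$, and the first--moment bound follows by Jensen. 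The routine parts here are the DDS/Doob/Tanaka manipulations; the two points needing care are (i) the flow (semigroup) property of the one--dimensional Skorokhod map, used to express the increment $\Lambda^{n}_{s\mapsto t}$ through the increment of $\psi^{n}$ alone, and (ii) the uniformity of every constant in $n$, $x$ and especially $y$, which relies throughout on bounding a difference of absolute values by the absolute value of the difference rather than by the individual (possibly large) terms. I expect (ii) --- verifying that all constants are genuinely $y$--free --- to be the main bookkeeping obstacle, while (i) is classical.
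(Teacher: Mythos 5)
Your proposal is correct and tracks the paper's own (very terse) proof: both reduce to the normal coordinate $\mathcal{X}^{n,1}=\mathcal{X}^{n}\cdot\mathbf{e}^1$, which satisfies a one--dimensional reflected equation driven by the sum of stochastic integrals $\psi^n$, apply Dambis--Dubins--Schwarz to that sum, and then invoke the one--dimensional estimates from \cite{CK}; the remaining coordinates are then controlled via boundedness of $a^{\ell 1}/a^{11}$, monotonicity of $\Lambda^n$, and DDS again. You have supplied the content that the paper's proof outsources to the 1D case, namely the explicit Skorokhod-map identity $\Lambda^n_t=\sup_{u\le t}(L-x^1-\psi^n_u)^+$, its flow property on $[s,t]$, the reflection-principle Gaussian tail, and Tanaka's formula for $\bar\Lambda^y$; all of these are sound and the constants are indeed $y$-uniform for the reason you give. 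One small imprecision: in the Tanaka step, the parenthetical claim that \emph{the reflection term $d\Lambda^n$ does not contribute} is not literally true --- it enters $\int_0^T\operatorname{sgn}(\mathcal{X}^{n,1}_s-y)\,d\mathcal{X}^{n,1}_s$ as $-\Lambda^n_T$ (since $\operatorname{sgn}(L-y)=-1$ on the support of $d\Lambda^n$) --- but your subsequent splitting of the stochastic integral into a bounded--bracket martingale plus a term dominated by $\Lambda^n_T$ correctly accounts for exactly this; the remark would be better stated as ``$\Lambda^n$ creates no extra local time at the level $y\neq L$.''
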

\begin{proof}
        Note the process $ \mathcal{X}^{n}\cdot\mathbf{e}^1 $ satisfies the reflected 1D equation
        \begin{align*}
                 \mathcal{X}^{n}_t\cdot\mathbf{e}^1=&x^1+\int_{0}^t\sigma^{1k}(\mathcal{X}^{n}_{\tau(s)})dW^k_s+\Lambda^{n}_{ t}.
        \end{align*}
    The proof is as in the 1D case except that the use of the Dambis-Dubins-Schwartz theorem (see page 174 in \cite{KS}) is applied to the sum of stochastic integrals $ \int_{0}^t\sigma^{1k}(\mathcal{X}^{n}_{\tau(s)})dW^k_s $. Once the estimates for $ \Lambda^n $ are obtained one can also obtain the estimates for the other coordinates of $ \mathcal{X}^{n} $. In order to prove the moment properties for 
     \begin{align*}
        \mathcal{X}^{n}_t\cdot\mathbf{e}^\ell=&x^\ell+\int_{0}^t\sigma^{\ell k}(\mathcal{X}^{n}_{\tau(s)})dW^k_s+\int_0^t\frac{a^{\ell 1}}{a^{11}}(\mathcal{X}^{n}_{\tau(s)})\mathbf{e}^\ell d\Lambda^{n}_{ s},
    \end{align*}
one uses the previous results to bound the last term and additionally using that  $ \frac{a^{\ell 1}}{a^{11}} $ is bounded and that $ \Lambda^{n} $ is increasing together with the exponential moment bounds for the above stochastic integral which follows by application of the Dambis-Dubins-Schwartz theorem.
\end{proof}

As in the 1D case, we have the following corollary which is used when proving the finiteness of moments in the next section and expectations in Theorems \ref{th:24} and \ref{th:44}.

\begin{cor}
        \label{cor:38}
        a. For any constants  $ p,c>0 $, we have that 
        \begin{align*}
                &\sup_n\mathbb{E}\left[J_i^p\right]<\infty,\\
                J_i:=&\exp\left(\Delta^{-1/2}\sum_{i=1}^n\int_L^\infty e^{-\frac{(y-L)^2}{2c\Delta}}\bar{\Lambda}^y_{t_{i-1}\mapsto t_i}dy\right).
        \end{align*}
        b.  Let $(Y,B) $ be solution of the reflected equation (\ref{eq:Y}). Then, for
        any $q>0 $, we have that 
        \begin{align*}
                \mathbb{E}[e^{q\max_{s\in [0,T]}|Y_s|}]+ \mathbb{E}[e^{qB_T}]<\infty.
        \end{align*}
\end{cor}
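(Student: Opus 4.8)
The plan is to treat the two parts separately, and in each case reduce to the exponential moment bounds for local times already established in Lemma \ref{lem:37}. For part (b) this is essentially immediate: the process $(Y,B)$ is the solution of the reflected equation \eqref{eq:Y}--\eqref{eq:B}, which after a Girsanov change of measure (as in Remark \ref{rem:16}) has the same law on $[0,T]$, up to an equivalent change of measure with exponentially integrable density, as the drift-free normally reflected process $\mathcal{X}$ from Section \ref{sec:mlt} (with its local time $\Lambda$ playing the role of $B$, up to the bounded factor $a^{11}(Y)^{-1}$ coming from \eqref{bb}). Lemma \ref{lem:37} gives $\mathbb{E}[\exp(q\max_{i}|\mathcal{X}^n_i-x|)]\le C_q$ and $\mathbb{E}[\exp(q\Lambda^n_T)]\le C_q$ uniformly in $n$; passing to the limit $n\to\infty$ (using that $\mathcal{X}^n$ converges to $\mathcal{X}$, which has the law of $Y$ under $\widetilde{\mathbb P}$) and then a Cauchy--Schwarz/Hölder argument to absorb the Girsanov density yields $\mathbb{E}[e^{q\max_{s}|Y_s|}]+\mathbb{E}[e^{qB_T}]<\infty$ for every $q>0$, since all exponents can be made arbitrarily large.

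For part (a) the key observation is that the random variable
\[
J_i=\exp\left(\Delta^{-1/2}\sum_{i=1}^n\int_L^\infty e^{-\frac{(y-L)^2}{2c\Delta}}\bar{\Lambda}^y_{t_{i-1}\mapsto t_i}\,dy\right)
\]
has an exponent which is a weighted integral over the level $y$ of local time increments $\bar\Lambda^y_{t_{i-1}\mapsto t_i}$ of the one-dimensional reflected process $\mathcal{X}^n\cdot\mathbf{e}^1$. The first step is to interchange the sum over $i$ and the integral over $y$, so that $\sum_{i=1}^n\bar\Lambda^y_{t_{i-1}\mapsto t_i}=\bar\Lambda^y_T$, reducing the exponent to $\Delta^{-1/2}\int_L^\infty e^{-(y-L)^2/(2c\Delta)}\bar\Lambda^y_T\,dy$. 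Next I would bound this integral crudely: since $\bar\Lambda^y_T$ is the local time at level $y$ of a process that stays in a compact (random) set, we have $\bar\Lambda^y_T=0$ for $y$ outside $[L,\max_{s}\mathcal{X}^n_s\cdot\mathbf{e}^1]$, and $\Delta^{-1/2}\int_L^\infty e^{-(y-L)^2/(2c\Delta)}\,dy\le C$ uniformly in $\Delta\le1$. One can therefore dominate the exponent by $C\,\bar\Lambda^{y^*}_T$ for a suitable level $y^*$ (or, more carefully, by $C\sup_{y}\bar\Lambda^y_T$), and then invoke the exponential moment bound $\mathbb{E}[\exp(q\bar\Lambda^y_T)]\le C_q$ from Lemma \ref{lem:37}, uniform in $n$ and $y$. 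To handle the supremum over $y$ (rather than a fixed $y$) one uses the occupation-times formula together with the modulus estimates \eqref{modulusoo}, or alternatively a chaining/Kolmogorov argument on $y\mapsto\bar\Lambda^y_T$; in either case the Gaussian decay of the kernel $e^{-(y-L)^2/(2c\Delta)}$ provides more than enough room.

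The main obstacle is the uniformity in $n$ of the estimate in part (a): one must be careful that the factor $\Delta^{-1/2}$ multiplying the integral does not blow up, and this is exactly what the Gaussian weight compensates for, via the scaling $\Delta^{-1/2}\int e^{-(y-L)^2/(2c\Delta)}dy=O(1)$. A secondary technical point is controlling the $y$-dependence of the local times $\bar\Lambda^y_T$ uniformly enough to pull the bound inside the exponential (since $\exp$ is convex, one cannot simply take expectations termwise in $y$); here one should use Jensen's inequality in the form $\exp\big(\int g(y)\bar\Lambda^y_T\,dy\big)\le\int \frac{g(y)}{\|g\|_1}\exp(\|g\|_1\bar\Lambda^y_T)\,dy$ with $g(y)=C\Delta^{-1/2}e^{-(y-L)^2/(2c\Delta)}$, and then $\|g\|_1=O(1)$ makes the right-hand side integrable with a bound uniform in $n$ by Lemma \ref{lem:37}. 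Part (b) presents no real difficulty beyond correctly tracking the Girsanov density, which is standard.
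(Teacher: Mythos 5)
Your proposal is sound, and the key technical insight — the Jensen inequality trick
\[
\exp\Big(\int g(y)\bar\Lambda^y_T\,dy\Big)\le \int \frac{g(y)}{\|g\|_{1}}\exp\big(\|g\|_{1}\bar\Lambda^y_T\big)\,dy,\qquad g(y)=p\Delta^{-1/2}e^{-(y-L)^2/(2c\Delta)},
\]
combined with $\|g\|_{1}=p\sqrt{\pi c/2}=O(1)$ uniformly in $\Delta\le 1$ and the uniform-in-$n,y$ bound $\mathbb{E}[\exp(q\bar\Lambda^y_T)]\le C_q$ from Lemma \ref{lem:37} — is exactly what makes part (a) go through. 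One remark: your first heuristic (dominating the exponent by $C\sup_y\bar\Lambda^y_T$ and invoking the moment bound) does not work as stated, since Lemma \ref{lem:37} controls $\sup_y\mathbb{E}[\exp(q\bar\Lambda^y_T)]$ and not $\mathbb{E}[\exp(q\sup_y\bar\Lambda^y_T)]$; the chaining route you allude to would be much heavier. You correctly identify this as a ``secondary technical point'' and replace it with the Jensen argument, which avoids the issue entirely — that should be presented as the actual proof, not a fallback. Also note the telescoping identity $\sum_{i=1}^n\bar\Lambda^y_{t_{i-1}\mapsto t_i}=\bar\Lambda^y_T$ means the index $i$ on $J_i$ is vacuous (a typo in the statement), as you implicitly use.

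For part (b), your outline (Girsanov to reduce to the drift-free reflected process, pass to the limit from $(\mathcal{X}^n,\Lambda^n)$ via Skorohod representation and Fatou to transfer the uniform exponential bounds of Lemma \ref{lem:37}, then H\"older to absorb the Girsanov density, plus the boundedness of $a^{11}(\cdot)^{-1}$ to compare $B$ with the reflecting local time) is the standard argument and is correct. The paper itself only says ``as in the 1D case'', so there is no detailed proof here to compare against line by line, but your route matches what the cited 1D argument would deliver.
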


%
%


\subsection{ Moment type inequalities}
\label{sec:app3}
The proof of the boundedness of the first derivative and the moment estimate for the corresponding matrix exponential follows directly from Lemma \ref{lem:60} below.

Lemma \ref{lem:60} below, is a variant of Lemma 27 of \cite{CK} obtained in the 1D case and is used to prove that products of matrices appearing in the first two derivatives are uniformly bounded. Recall that, for a $ d\times d $ matrix $ A $, its Frobenius norm is given by 
$ \|A\|_F^2:=\langle A,A\rangle_F$ with $\langle A,B\rangle_F:=\sum_{i,j=1}^dA_{ij}B_{ij}$ and that this norm satisfies the following monotonic property: For two positive definite symmetric matrices $ A,B $ such that $ B-A\geq 0 $ we have $ \|A\|_F\leq \|B\|_F $.


%

\begin{lemma}
        \label{lem:60}
        Let $ \mathsf{E} _i\in\mathcal{F}_{t_i}$, $ i=0,...,n$ be a sequence of random square matrices  which satisfy the following linear stochastic difference equation for $ \alpha_i,\ K_i\in\mathcal{F}_{t_i} $
        \begin{align*}
                \mathsf{E}_{i}=(I-K_i+\alpha_i)\mathsf{E}_{i-1}.
        \end{align*}
        Let $\mathsf{A}_{i+1}:=I-K_{i+1}$ and assume  that the matrix $\mathsf{A}_{i+1}^\top\mathsf{A}_{i+1}  -I  $ is negative definite.     
        Given $ p\in 4\mathbb{N} $
        and $C$ independent of 
        $n $, assume that $\alpha_i\in L^p (\Omega)$ and for $q= p,p/2 $, we assume the following inequality is satisfied

        \begin{align}
               &e^{-q{C}(\Delta+\Delta_iJ)}\left(\|\mathsf{E}_{i-1}\|^2_F+R_i\right)^{q/2}-\|\mathsf{E}_{i-1}\|_F^q
                \leq M_i(q)+\Upsilon_i(q),\label{eq:hp}\\
                R_i&:=2\langle \alpha_{i-1}\mathsf{E}_{i-1},(I+\frac {\alpha_{i-1}}2)\mathsf{E}_{i-1}\rangle_F-2
                \langle K_{i}\mathsf{E}_{i-1},\alpha_{i-1}\mathsf{E}_{i-1}\rangle_F.\notag
        \end{align}
        Here, $ M_i (q) $ and $\Upsilon_i(q)$ are $\mathcal{F}_{i}$ measurable random variables and  $\widetilde{\mathbb{E}}_{i-1}[M_i (q)]=0$.
        Furthermore let $ \tilde{C}_q\geq0 $ be a constant independent of $ n $ such that for any $ C\geq \tilde{C}_q $, there exists $ \bar{C}_q\geq 1$  and $ \delta_0(q)>0 $ such that for $ \Delta\leq \delta_0(q)$, $ i=1,..,n $ the following inequality is satisfied:
        \begin{align}
                \label{eq:hypl}
                \widetilde{\mathbb{E}}_{i-1}[   | \Upsilon_i(q)|]+\widetilde{\mathbb{E}}_{i-1}[M_i(q)^2]\leq& \bar{C}_p\|\mathsf{E}_{i-1}\|_F^q\Delta .
        \end{align}
        Then there exists positive constants $ n_0 $ and $ \mathsf{C}_p $ independent of $ n $ such that
        \begin{align*}
                \sup_{n\geq n_0}\tilde{\mathbb{E}}\left[\max_ie^{-\mathsf{C}_p(t_i+J_i)}\|\mathsf{E}_i\|_F^p\right]\leq \mathsf{C}_p.
        \end{align*}
\end{lemma}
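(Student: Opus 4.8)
\textbf{Proof strategy for Lemma \ref{lem:60}.}

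The plan is to run a discrete Gronwall/supermartingale argument in the spirit of the 1D result (Lemma 27 in \cite{CK}), but now with the Frobenius norm playing the role that the absolute value played in one dimension. First I would fix $q\in\{p,p/2\}$ and set $N_i(q):=e^{-qC(t_i+J_i)}\|\mathsf{E}_i\|_F^q$. The key algebraic identity is that $\|\mathsf{E}_i\|_F^2=\|(I-K_i+\alpha_{i-1})\mathsf{E}_{i-1}\|_F^2$ expands as $\|\mathsf{A}_i\mathsf{E}_{i-1}\|_F^2+R_i$, where $R_i$ is exactly the cross term written in the statement; since $\mathsf{A}_i^\top\mathsf{A}_i-I$ is negative definite, the monotonicity of the Frobenius norm for positive semidefinite matrices gives $\|\mathsf{A}_i\mathsf{E}_{i-1}\|_F^2\le\|\mathsf{E}_{i-1}\|_F^2$, hence $\|\mathsf{E}_i\|_F^2\le\|\mathsf{E}_{i-1}\|_F^2+R_i$. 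Raising to the power $q/2$ and multiplying by $e^{-qC(\Delta+\Delta_iJ)}$, hypothesis \eqref{eq:hp} lets me write
\[
e^{-qC(\Delta+\Delta_iJ)}\|\mathsf{E}_i\|_F^q-\|\mathsf{E}_{i-1}\|_F^q\le M_i(q)+\Upsilon_i(q),
\]
so that $N_i(q)-N_{i-1}(q)\le e^{-qC(t_{i-1}+J_{i-1})}(M_i(q)+\Upsilon_i(q))$.

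Next I would take conditional expectations. Since $\widetilde{\mathbb{E}}_{i-1}[M_i(q)]=0$, the process $\sum_{j\le i}e^{-qC(t_{j-1}+J_{j-1})}M_j(q)$ is a discrete martingale, and the $\Upsilon$-terms are controlled by \eqref{eq:hypl}: $\widetilde{\mathbb{E}}_{i-1}[|\Upsilon_i(q)|]\le\bar C_p\|\mathsf{E}_{i-1}\|_F^q\Delta=\bar C_pe^{qC(t_{i-1}+J_{i-1})}N_{i-1}(q)\Delta$. Summing and using Doob's maximal inequality on the martingale part together with the $L^2$ control $\widetilde{\mathbb{E}}_{i-1}[M_i(q)^2]\le\bar C_p\|\mathsf{E}_{i-1}\|_F^q\Delta$, I get, for the running maximum $N^*_i(q):=\max_{j\le i}N_j(q)$,
\[
\widetilde{\mathbb{E}}[N^*_i(q)]\le 1+C_p\sum_{j\le i}\widetilde{\mathbb{E}}[N^*_{j-1}(q)]\Delta
\]
after absorbing the martingale contribution (here one uses $q/2$ control of $M$ via Cauchy--Schwarz when $q=p$, which is why both values of $q$ appear in the hypotheses — the $p/2$ estimate feeds the quadratic-variation bound needed to close the $p$ estimate). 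A discrete Gronwall inequality then yields $\sup_n\widetilde{\mathbb{E}}[N^*_n(q)]\le C_pe^{C_pT}$, which is the claim with $\mathsf{C}_p=qC$ for a suitably large but $n$-independent choice $C\ge\tilde C_q$.

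I would organize the induction to handle the two exponents jointly: first establish the $q=p/2$ bound (which only needs the first-moment control of $\Upsilon$ and the crude bound on $M$), then use it to control $\widetilde{\mathbb{E}}_i[M_i(p)^2]$ and hence close the $q=p$ estimate. The main obstacle is bookkeeping the coupling between the three ``clocks'' $t_i$, $J_i$ and the martingale increments so that the exponential discount factors $e^{-qC(t_{i-1}+J_{i-1})}$ genuinely dominate the growth coming from $R_i$ and $\Upsilon_i$ uniformly in $n$; this is where the negative-definiteness of $\mathsf{A}_i^\top\mathsf{A}_i-I$ and the choice of $C\ge\tilde C_q$ are essential, and it is also the step that differs most from the one-dimensional argument because $R_i$ now mixes matrix cross-terms that do not commute. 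Apart from this, every step is a routine adaptation of the scalar computation in \cite{CK}, so I would state the adaptations and refer to that proof for the remaining details.
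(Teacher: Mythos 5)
Your approach matches the paper's almost verbatim: the same Frobenius-norm expansion, where negative definiteness of $\mathsf{A}_i^\top\mathsf{A}_i-I$ gives $\|\mathsf{E}_i\|_F^2\le\|\mathsf{E}_{i-1}\|_F^2+R_i$, then monotonicity of $x^{q/2}$ and hypothesis \eqref{eq:hp} yield the telescoping inequality for $N_i(q)=e^{-qC(t_i+J_i)}\|\mathsf{E}_i\|_F^q$, and finally discrete Gronwall for the pointwise moment followed by Doob's $L^2$ maximal inequality on the $q=p/2$ decomposition for the running maximum — exactly the two steps the paper outlines. The one slip is the parenthetical claiming the $p/2$ estimate is used to ``control $\widetilde{\mathbb{E}}_i[M_i(p)^2]$'': that conditional bound is given directly by \eqref{eq:hypl}; what the pointwise $p/2$ (or, in the paper's ordering, the $p$) bound actually provides is the unconditional summability of the quadratic variation $\sum_j\widetilde{\mathbb{E}}[e^{-pC(t_{j-1}+J_{j-1})}M_j(p/2)^2]\le\bar C_p\sum_j\widetilde{\mathbb{E}}[N_{j-1}(p/2)]\Delta$ needed to close the Doob step for $\widetilde{\mathbb{E}}[\max_iN_i(p)]=\widetilde{\mathbb{E}}[\max_iN_i(p/2)^2]$.
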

\begin{proof}
        Most steps in this proof are the same as in the 1D case (see Lemma 27 in \cite{CK}), we only mention the changes required here for $ p=4$. To simplify expressions, let $ \mathsf{L}_i:=\prod_{j=1}^i\rho_j:=\prod_{j=1}^i\exp\left(-C(\Delta+\Delta_j J)\right) .$
        With this notation, one proceeds as in the 1D case for the matrix process $ \mathsf{L}_i\mathsf{E}_i $ as follows: Consider the differences \begin{align}
                \label{eq:ref}
                \|\mathsf{L}_{i+1}\mathsf{E}_{i+1}\|_F^4 -\|\mathsf{L}_i\mathsf{E}_i \|_F^4=
                \mathsf{L}_{i}^4\left(\rho_{i+1}^{4}\|\mathsf{E}_{i+1}\|_F^4-\|\mathsf{E}_{i}\|_F^4\right).
        \end{align}
        
        Using  the fact that the matrix $\mathsf{A}_{i+1}^\top\mathsf{A}_{i+1}  -I  $ is negative definite, we have 
        \begin{align*}
                0\leq& \|\mathsf{E}_{i+1}\|_F^2=\sum_{j,m,\ell,\ell'=1}^d\mathsf{E}_{i}^{\ell m }\mathsf{E}_{i}^{\ell' m }(\mathsf{A}_{i+1}+\alpha_{i+1})^{j\ell }
                (\mathsf{A}_{i+1}+\alpha_{i+1})^{j\ell' }\\
                &\leq \sum_{j,m,\ell,\ell'=1}^d\mathsf{E}_{i}^{\ell m }\mathsf{E}_{i}^{\ell' m }\left((I+\alpha_{i+1})^{j\ell}
                (I+\alpha_{i+1})^{j\ell' }-K_{i+1}^{j \ell }
                \alpha_{i+1}^{j \ell' }-
                \alpha_{i+1}^{j\ell }
                K_{i+1}^{j\ell' }\right)\\
                &=\|\mathsf{E}_{i}\|^2_F+R_{i+1}.
        \end{align*}

        One uses  the hypothesis \eqref{eq:hypl} with $ q=4 $ to find an upper bound for  \eqref{eq:ref}.
       The first step in order to obtain the estimates is to get a bound for 
        $ \max_i\widetilde{\mathbb{E}}\left[\|e^{-C(t_i+J_i)}\mathsf{E}_i\|_F^4\right] $
        using the above expression, the hypotheses \eqref{eq:hp}-\eqref{eq:hypl} and Gronwall's lemma.  
        The second and final step to bound $ \widetilde{\mathbb{E}}\left[\max_i\|e^{-C(t_i+J_i)}\mathsf{E}_i\|_F^4\right] $  uses a similar argument together with Doob's martingale inequality applied to the square and inequalities in the hypotheses \eqref{eq:hp}-\eqref{eq:hypl} for the case $ q=2 $ and the discrete version of the Gronwall inequality as in the 1D case. 
\end{proof}

The above result is used to prove Lemmas \ref{moments} and \ref{lemma:7}.

The following moment estimates are also required here to bound second derivatives and in the proof of Lemma \ref{lem:uc}. Its proof is the same as the corresponding  proof in the 1D case. 
\begin{lemma}
        \label{lem:35} Assume that $\mathsf{E} _i\in \mathcal{F}_i$, $i=1,...,n$, $ 
        \mathsf{E} _0=1$ is as stated in Lemmas \ref{moments} or \ref{lem:60}. That 
        is, we let $\mathsf{C}\geq 1 $ be a constant so that $\widetilde{\mathbb{E}}\left[
        (\max_ie^{-\mathsf{C}t_i}\|\mathsf{E} _i\|_F)^4\right]^{1/4}\leq \mathsf{C} $. Furthermore, suppose 
        that the sequence of r.v.'s $g_i\in\mathcal{F}_i $ satisfies $|g_i|\leq  
        \mathsf{M}e^{\mathsf{M}(T-t_i)} $ a.s. for some fixed constant $\mathsf{M}%
        >17\mathsf{C}$. Then for $\Delta\leq    ( \mathsf{M}-\mathsf{C})^{-1}\ln(\frac{\mathsf{M}-\mathsf{C}}{16\mathsf{C}}) $  
        \begin{align*}
                (a)\quad \left|\Delta \sum_{i=1}^n\widetilde{\mathbb{E}}\left[g_i\mathsf{E}
                _{i-1}\bar{M}^n_i\right] \right |\leq \frac{\mathsf{M}}8e^{\mathsf{M}T}.
        \end{align*}
        
        Assume now that the above constant $\mathsf{M}>0 $, also satisfies that $%
        \mathsf{M}>\bar C_2$ for a universal constant $\bar{C}_2>\mathsf{C} $
        depending only on the bounds for the coefficients. Then for $\Delta\leq
        \frac 5{\mathsf{M}-\mathsf{C}} $ and $\xi_{i}=\mathtt{X}_{i-1},\mathtt{X}_{i}, Z_i $, we have  
        \begin{align*}
                (b)\quad \left| \sum_{i=1}^n\widetilde{\mathbb{E}}\left[g_i\mathsf{E} _{i-1}
                \xi_{i}1_{(U_i\leq p_i)}\bar{M}^n_i\right]\right|\leq \frac {\mathsf{M}} 8e^{%
                        \mathsf{M}T}.
        \end{align*}
        
        Next, assume $Y_i\in\mathcal{F}_{i} $ satisfies for $p\geq 0 $, $\widetilde{ 
                \mathbb{E}}_{i-1}[Y_i] =O^E_{i-1}(\Delta^{p})$ and $\widetilde{\mathbb{E}}%
        _{i-1}[|Z_iY_i|] =O^E_{i-1}(\Delta^{p}) $. Then for $f\in C^1_b(H^d_L,%
        \mathbb{R} )$ 
        \begin{align*}
                (c)\quad \sup_{x\in H^d_L}\left|\widetilde{\mathbb{E}}_{0,x}\left[\sum_{i=1}^nf(X_i)%
                \mathsf{E} _{i-1}Y_i\bar{M}_n^n\right]\right|\leq C(\|f\|_\infty+\|f^{\prime
                }\|_\infty)\Delta^{p}.
        \end{align*}
        If $Y_i\geq 0 $, then $(c)$ holds with the weaker
        hypotheses: $\widetilde{\mathbb{E}}_{i-1}[Y_i] =O^E_{i-1}(\Delta^p)$ and $f\in
        C_b(H^d_L,\mathbb{R})$.
\end{lemma}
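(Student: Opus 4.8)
The plan is to establish (a), (b) and (c) along the lines of Lemma~24 of \cite{CK}, the only substantive change being notational: $\mathsf{E}_i$ is now a $d\times d$ random matrix, so moduli are replaced by the Frobenius norm and products $\mathsf{E}_{i-1}\xi_i$ are controlled through $\|\mathsf{E}_{i-1}\|_F|\xi_i|$. Throughout I would work under $\mathbb{Q}^n$, writing $\mathbb{E}^{\mathbb{Q}^n}[\,\cdot\,]=\widetilde{\mathbb{E}}[\,\cdot\,\bar{M}^n_n]$; since $\bar{M}^n$ is a $\widetilde{\mathbb{P}}$-martingale and the random factor multiplying $\bar{M}^n_i$ (resp.\ $\bar{M}^n_n$) is $\mathcal{F}_i$-measurable, the tower property lets me replace $\bar{M}^n_i$ by $\bar{M}^n_n$ everywhere. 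Two external inputs are used repeatedly: (i) the moment bound $\mathbb{E}^{\mathbb{Q}^n}[\max_j e^{-\mathsf{C}t_j}\|\mathsf{E}_j\|_F]\le\mathsf{C}$, which follows from Lemma~\ref{lemma:7} (case $i=0$) and Jensen's inequality under $\mathbb{Q}^n$; and, for the terms carrying $\mathsf{h}_i$, the pointwise conditional estimates of the type computed in the proof of Lemma~\ref{lem:4} (summarised in Remark~\ref{rem:15}) together with the local-time summation bound of Lemma~\ref{lem:essb}. For (a), bounding $|g_i|\le\mathsf{M}e^{\mathsf{M}(T-t_i)}$, conditioning via $\widetilde{\mathbb{E}}_{i-1}[\bar{m}_i]=1$ and factoring out $\max_j e^{-\mathsf{C}t_j}\|\mathsf{E}_j\|_F$ yields
\[
\left|\Delta\sum_{i=1}^n\widetilde{\mathbb{E}}\left[g_i\mathsf{E}_{i-1}\bar{M}^n_i\right]\right|
\le \mathsf{M}\,\Delta\sum_{i=1}^n e^{\mathsf{M}(T-t_i)+\mathsf{C}t_{i-1}}\,\mathbb{E}^{\mathbb{Q}^n}\left[\max_j e^{-\mathsf{C}t_j}\|\mathsf{E}_j\|_F\right]\le \frac{\mathsf{M}\mathsf{C}}{\mathsf{M}-\mathsf{C}}\,e^{\mathsf{M}T},
\]
where the last step uses (i), $e^{\mathsf{M}(T-t_i)+\mathsf{C}t_{i-1}}\le e^{\mathsf{M}T}e^{-(\mathsf{M}-\mathsf{C})t_i}$ and $\Delta\sum_{i\ge1}e^{-(\mathsf{M}-\mathsf{C})i\Delta}\le 1/(\mathsf{M}-\mathsf{C})$; the assumption $\mathsf{M}>17\mathsf{C}$ forces $\frac{\mathsf{M}\mathsf{C}}{\mathsf{M}-\mathsf{C}}\le\frac{\mathsf{M}}{8}$, and the stated bound on $\Delta$ is what keeps Lemma~\ref{lemma:7}, hence (i), in force.

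For (b) the same reduction leaves $\sum_i\widetilde{\mathbb{E}}[\mathsf{E}_{i-1}\bar{M}^n_{i-1}\,\widetilde{\mathbb{E}}_{i-1}[g_i\xi_i\mathsf{h}_i\bar{m}_i]]$. Using $\mathsf{h}_i1_{(X_i^1>L)}=\tfrac12\mathsf{h}_i\bar{m}_i$, $|g_i|\le\mathsf{M}e^{\mathsf{M}(T-t_i)}$ and an explicit Gaussian computation of the kind performed in the proof of Lemma~\ref{lem:4} (compare Remark~\ref{rem:15}), one obtains, for each of $\xi_i=\mathtt{X}^1_{i-1},\mathtt{X}^1_i,Z_i$, the bound $\widetilde{\mathbb{E}}_{i-1}[|\xi_i|\mathsf{h}_i1_{(X_i^1>L)}]\le C\sqrt{\Delta}\,G(\mathtt{X}^{1,\sigma}_{i-1})$ with $G$ of Gaussian decay. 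Factoring out $\max_j e^{-\mathsf{C}t_j}\|\mathsf{E}_j\|_F$ and applying Cauchy--Schwarz under $\mathbb{Q}^n$, the task reduces to bounding $\sqrt{\Delta}\,\left\|\sum_i e^{-(\mathsf{M}-\mathsf{C})t_i}G(\mathtt{X}^{1,\sigma}_{i-1})\right\|_{L^2(\mathbb{Q}^n)}$, which I would handle by splitting the time axis into consecutive blocks of length $\ell=5/(\mathsf{M}-\mathsf{C})$ (so $\Delta\le\ell$, which is the hypothesis on $\Delta$): on the $m$-th block $e^{-(\mathsf{M}-\mathsf{C})t_i}\le e^{-5m}$, while Lemma~\ref{lem:essb} (in its partial-sum version) contributes $O(\sqrt{\ell}/\sqrt{\Delta})$, so summing the geometric series gives $O(1/\sqrt{\mathsf{M}-\mathsf{C}})$. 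Hence the sum in (b) is $\le C\mathsf{M}\mathsf{C}e^{\mathsf{M}T}/\sqrt{\mathsf{M}-\mathsf{C}}\le\frac{\mathsf{M}}{8}e^{\mathsf{M}T}$ as soon as $\mathsf{M}>\bar{C}_2$, the universal constant $\bar{C}_2$ being chosen so as to absorb $C$ and $\mathsf{C}$. This last blocking argument, which must beat simultaneously the exponential weight $e^{\mathsf{M}(T-t_i)}$ coming from $g_i$ and the $\Theta(\sqrt{T})$ growth of the local-time sums $\sqrt{\Delta}\sum_i G(\mathtt{X}^{1,\sigma}_{i-1})$, is the only genuinely delicate point of the proof and is the reason one needs $\mathsf{M}>\bar{C}_2$ rather than merely $\mathsf{M}>\mathsf{C}$.

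For (c) I would Taylor expand $f(X_i)=f(X_{i-1})+\left(\int_0^1\mathbf{D}f(X_{i-1}+\theta\sigma_{i-1}Z_i)\,d\theta\right)\sigma_{i-1}Z_i$ and split the sum accordingly. In the first piece $f(X_{i-1})\mathsf{E}_{i-1}$ is $\mathcal{F}_{i-1}$-measurable with $\|f(X_{i-1})\mathsf{E}_{i-1}\|_F\le\|f\|_\infty\|\mathsf{E}_{i-1}\|_F$, and conditioning on $\mathcal{F}_{i-1}$ reduces matters to $\widetilde{\mathbb{E}}_{i-1}[Y_i\bar{m}_i]$, which is controlled by $\widetilde{\mathbb{E}}_{i-1}[Y_i]=O^E_{i-1}(\Delta^p)$ after writing $\bar{m}_i=1_{(X_i^1>L)}+1_{(X_i^1>L)}\mathsf{h}_i$ and absorbing the $\mathsf{h}_i$-part (an extra factor $\Delta^{1/2}$); a Cauchy--Schwarz under $\mathbb{Q}^n$ against input (i) then gives $O\left((\|f\|_\infty+\|f'\|_\infty)\Delta^p\right)$, uniformly in $x$ since all the constants are. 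The second piece carries an extra $Z_i$, so the analogous conditioning uses $\widetilde{\mathbb{E}}_{i-1}[|Z_iY_i|]=O^E_{i-1}(\Delta^p)$ and the Lipschitz bound $\|\int_0^1\mathbf{D}f\,d\theta\|\le\|f'\|_\infty$. When $Y_i\ge0$ the Taylor step is unnecessary: bounding $|f(X_i)|\le\|f\|_\infty$ and using $\widetilde{\mathbb{E}}_{i-1}[Y_i\bar{m}_i]\le 2\widetilde{\mathbb{E}}_{i-1}[Y_i]$, one needs only $\widetilde{\mathbb{E}}_{i-1}[Y_i]=O^E_{i-1}(\Delta^p)$ and $f\in C_b$. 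All the remaining steps are the scalar ones of \cite{CK} with the Frobenius norm replacing the modulus, and I would omit the repetitive constant-chasing.
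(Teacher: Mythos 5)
The paper itself provides no proof of this lemma beyond referring to "the corresponding proof in the 1D case," so you are reconstructing an argument that is only implicit, and your reconstruction is sound: passing to $\mathbb{Q}^n$, factoring out $\max_j e^{-\mathsf{C}t_j}\|\mathsf{E}_j\|_F$ at the price of the exponential weight $e^{\mathsf{C}t_{i-1}}$, summing the resulting geometric series for (a), the block decomposition against the local-time estimate of Lemma~\ref{lem:essb} for (b), and the Taylor step plus conditioning for (c) are all the right moves, in the right order, and the constants close.

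One point is off, though it is harmless for your version of the computation. You attribute the hypothesis $\Delta\le(\mathsf{M}-\mathsf{C})^{-1}\ln\bigl(\tfrac{\mathsf{M}-\mathsf{C}}{16\mathsf{C}}\bigr)$ in (a) to "keeping Lemma~\ref{lemma:7}, hence (i), in force," but Lemma~\ref{lemma:7} has no dependence on $\mathsf{M}$ whatsoever. That $\Delta$-constraint is there to control the geometric sum in the weighted form that the 1D argument produces: writing
$\Delta\sum_i e^{-(\mathsf{M}-\mathsf{C})t_{i-1}}\le\frac{\Delta}{1-e^{-(\mathsf{M}-\mathsf{C})\Delta}}\le\frac{e^{(\mathsf{M}-\mathsf{C})\Delta}}{\mathsf{M}-\mathsf{C}}$,
the constraint delivers $e^{(\mathsf{M}-\mathsf{C})\Delta}\le\frac{\mathsf{M}-\mathsf{C}}{16\mathsf{C}}$ and hence a bound $\le\frac{1}{16\mathsf{C}}$, which is where the $16$ in the hypothesis comes from. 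In your write-up you instead kept $e^{\mathsf{C}t_{i-1}}$ paired with $e^{-\mathsf{M}t_i}$ so that the series starts with $e^{-(\mathsf{M}-\mathsf{C})\Delta}$ and is already $\le 1/(\mathsf{M}-\mathsf{C})$ without invoking the constraint at all — that is fine, and in fact slightly cleaner, but then the $\Delta$-constraint plays no role in (a) and the explanation you gave for it is mistaken. Similarly, in (c) be aware that $\widetilde{\mathbb{E}}_{i-1}[Y_i\bar{m}_i]$ is not literally $\widetilde{\mathbb{E}}_{i-1}[Y_i]$; the reduction goes through because $\bar m_i-1=-1_{(X_i^1\le L)}+1_{(X_i^1>L)}\mathsf h_i$ contributes only terms that are controlled by Remark~\ref{rem:15} and the second hypothesis $\widetilde{\mathbb{E}}_{i-1}[|Z_iY_i|]=O^E_{i-1}(\Delta^p)$, which you correctly identified as the reason that hypothesis is there.
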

\subsection{The first derivative process: The proof of Lemma \ref{lem:24} }
\label{sec:bdd1}
An important consequence of Lemma \ref{lem:60} is the uniform boundedness of the derivatives ${\mathbf D}_i f_i$. The proof of this fact follows as in the 1D case following the corresponding decomposition formulas.

The boundedness of the first term in \eqref{fprimemH} is straightforward from Lemmas \ref{moments} and \ref{lemma:7}.

For the second term, we proceed by using some reduction implied by the result in Lemma \ref{lem:uc}. In fact, instead of proving the uniform  boundedness of $ \mathbb{E}\left[\sum_{j=i+1}^nf_j\varrho_j^{\top} E^{n}_{j-1} \bar{M}_{j}\right ] $ we can reduce it to the uniform boundedness of $$ \mathbb{E}\left[\sum_{j=i+1}^nf_j\frac{{b}^1_{j-1}
}{{a^{11}_{j-1}}}  \mathsf{h}_j
(\mathbf{e}^1)^\top E^{n}_{j-1} \bar{M}_{j}\right ] .$$

For this, define the following variables which are used to perform path decompositions:
\begin{align*}
        \bar{\mathsf{h}}_i\equiv\bar{\mathsf{h}}_{i:n} =&1_{(U_i\leq p_i,U_{i+1}>p_{i+1},...,U_n>p_n)}, i=1,...,n-1.
\end{align*}
Note that $ \sum_{j=0}^k\bar{\mathsf{h}}_{j:k}=1$ and 
$ \sum_{j=1}^{k}\bar{\mathsf{h}}_{j:k}=1 -1_{(\mathsf{h}_j= 0,j=1,...,k)}$. With these properties, one can rewrite 

\begin{align*}
        &\left |\sum_{j=i+1}^{n}\tilde{\mathbb{E}}_{i,x}\left[ f_{j}\left( X_{j}^{n,x}\right)\frac{{b}^1_{j-1}
        }{{a^{11}_{j-1}}} 
       (\mathbf{e}^1)^\top E_{i:j-1}^{n}\mathsf{h}_j\bar{M}_{i:j}{{\mathcal{K}}}_{i:j}\right]\right |\leq 
        \tilde{\mathbb{E}}_{i,x}\left[| f|(X^{n,x}_n)\left| \sum_{j=i+1}^{n}\frac{{b}^1_{j-1}
        }{{a^{11}_{j-1}}} (\mathbf{e}^1)^\top E_{i:j-1}^{n}\bar{\mathsf{h}}_{j}\right|\bar{M%
        }_{i:n}{{\mathcal{K}}}_{i:n}\right].
\end{align*}

Now, using $\left|\sum_{j=i+1}^{n}\frac{{b}^1_{j-1}
}{{a^{11}_{j-1}}} (\mathbf{e}^1)^\top E_{i:j-1}^{n}\bar{\mathsf{h}}_{j}\right|\leq C\max_{j\geq i+1}
\|E_{i:j-1}^{n}\|_F$ and the moment properties in Lemma \ref{lemma:7}, we obtain:

\begin{align*}
        \left|\sum_{j=i+1}^{n}\tilde{\mathbb{E}}_{i,x}\left[ f_{j}\left( X_{j}^{n,x}\right)
        E_{i:j-1}^{n}\mathsf{h}_j\bar{M}_{i:j}{{\mathcal{K}}}_{i:j}\right]%
        \right|\leq C\|f\|_\infty.
\end{align*}

\subsection{The properties of the second derivative}
\label{sec:7.6}
This section gives the essential part of the argument in proof of Theorem \ref{th:24} in order to prove the convergence of the approximation to the  first derivative  of $ \mathbb{E}[f(X_T)1_{(\tau>T)}] $. Recall that in the argument of the proof, we stated the existence of a function $ \phi_n $ which is defined as 
\begin{align}
        \phi_n(x):=
        {\tilde{\mathbb{E}}}_{0,x}\left[  {\mathbf{D}} {f}\left( X_{n}\right)E^{n}_n\mathcal{K}_n\bar{M}_{n}                \right] + {\tilde{\mathbb{E}}}_{0,x}\left[\sum_{i=1}^n f_i(\mathbf{e}^1)^{\top}  E^{n}_{i-1}\frac{b^1_{i-1}
        }{{{a^{11}_{i-1}}}}       
        \mathsf{h}_i\mathcal{K}_i\bar{M}_{i}\right ].
        \label{eq:DD}
\end{align}

In fact, $ \mathbf{D}_x\varphi_n-\phi_n $ converges uniformly to zero as explained at the beginning of 
Section \ref{sec:bdd1}

%

Therefore, it is enough to obtain a bound which is independent of $ n $ for the derivative of $ \mathbf{D}_x\phi_n(x) $ where

Recall that the random matrix $E^n_{i-1}=\prod_{j=1}^{i-1} e_i $ is the one given in \eqref{e} and \eqref{eq:td1bH}.
Also in order to write in compact form various expressions we define the operator $ \mathbf{D}^\top f=(\partial_1f,...,\partial_df)^\top $.


\begin{proposition}
        \label{prop:30}  There exists a universal constant $\mathsf{M} $ which
        depends only on  the constants of the problem such that for all $i=0,...,n-1 $  
        \begin{align}  \label{eq:87a}
                \sup_{x\in H^d_L}\left |\mathbf{D}^\top_{x}\mathbb{E}_{i,x}\left[\mathbf{D} {f}\left( X_{n}^{n}\right)
                E_{i:n}^{n}\bar{M}_{i:n}^{n}\right]\right| +\sup_{x\in H^d_L}\left |\sum_{j=i}^{n}\mathbf{D}^\top_{x}\mathbb{E}
                _{i,x}\left[ f_{j}\left( X_{j}^{n}\right) (\mathbf{e}^1)^{\top}E_{i:j-1}^{n}\frac{b^1_{i-1}
                }{{{a^{11}_{i-1}}}}       \mathsf{h}_{j}\bar{M}
                _{i:n}^{n}\right]\right|\leq \mathsf{M}e^{\mathsf{M}(T-t_{i})}.
        \end{align}
        In particular, this implies that $ \|\mathbf{D}_i^2f_i\|_\infty  \leq \mathsf{M}e^{\mathsf{M}(T-t_{i})}$.
\end{proposition}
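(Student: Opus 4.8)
The plan is to differentiate the expression defining $\phi_n$ with respect to the initial point $x$ and control the resulting terms uniformly in $n$, mimicking the one-dimensional argument of \cite{CK} but keeping careful track of the non-commutativity of the matrix factors and of the extra jump terms $(\mathbf{e}^1(\mathbf{e}^1)^{\top}-I)\mathsf{h}_i$ and $\pi_{i-1}\mathtt{X}^1_{i-1}\mathsf{h}_i$ in the definition of $e_i$. First I would set up the Bismut--Elworthy--Li--type differentiation of the flow: differentiating $\mathbb{E}_{i,x}[\,\cdot\,]$ in $x$ produces, by the chain rule and the differentiation rules of Section \ref{diffrules} (in particular Lemma \ref{lem:2H} and Lemma \ref{cor:23H}), a sum of terms in which one of the factors $\mathbf{D}f(X_n)$, $f_j$, $E^n_{i:n}$, or $\bar M^n_{i:n}$ carries an extra derivative. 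The differentiation of $E^n_{i:n}=\prod e_\ell$ yields, by the Leibniz rule for products, a sum over $\ell$ of terms $E^n_{\ell:n}\,(\mathbf{D}_x e_\ell)\,E^n_{i:\ell-1}$; the differentiation of $e_\ell$ itself produces second-derivative coefficients $\mathbf{D}^2 b_{\ell-1}$, $\mathbf{D}^2\sigma^k_{\ell-1}$ (bounded by Hypothesis \ref{hyp:12}) and, crucially, the derivatives of the jump/correction coefficients $\pi_{\ell-1}\mathtt{X}^1_{\ell-1}\mathsf{h}_\ell$ and $\mathtt{X}^1_{\ell-1}\sigma^k_{\ell-1}\mathbf{D}_{\ell-1}(\sigma^{1k}_{\ell-1}/a^{11}_{\ell-1})\mathsf{h}_\ell$.

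Next I would organize the resulting terms into three classes and bound each: (i) terms where the extra derivative lands on $\mathbf{D}f$ or on $f_j$ and is absorbed by $\mathbf{D}^2f$, using the already-established bound $\|\mathbf{D}_if_i\|_\infty\le C$ (Lemma \ref{lem:24}) and, recursively, the bound $\|\mathbf{D}_i^2 f_i\|_\infty\le\mathsf{M}e^{\mathsf{M}(T-t_i)}$ that we are proving --- this is the closing-the-loop step and must be handled by a Gronwall/Picard-type estimate in $i$ as in the 1D case; (ii) terms where the extra derivative lands on $\bar M^n_{i:n}$, producing a factor of the form $\mathbf{D}_{j-1}p_j$, which by Lemma \ref{lem:2H} converts into a local-time type contribution controlled by Lemma \ref{lem:essb} and Remark \ref{rem:15} (the key estimates $\tilde{\mathbb{E}}_{i-1}[(\mathtt{X}^1_{i-1})^k\mathsf{h}_i]=O^E_{i-1}(\Delta^{(k-1)/2})$); (iii) terms where the derivative lands on $E^n$, which, after the Leibniz expansion, reduce to sums of the form $\sum_\ell \mathbb{E}[\,(\text{bounded matrix})\,E^n_{\ell:n}\,(\text{coefficient}_\ell)\,E^n_{i:\ell-1}\,\bar M^n\,]$; here the uniform fourth- and eighth-moment bounds on $\max_i\|E^n_i\|_F$ from Lemma \ref{lemma:7}, combined with Cauchy--Schwarz and the martingale/local-time controls of Lemma \ref{lem:5m} and Lemma \ref{lem:35}(a),(b),(c), give the bound $\mathsf{M}e^{\mathsf{M}(T-t_i)}$ after summing the geometric-type series in $\ell$. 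For the second sum in \eqref{eq:87a}, involving $(\mathbf{e}^1)^{\top}E^n_{i:j-1}\frac{b^1_{i-1}}{a^{11}_{i-1}}\mathsf{h}_j$, I would first use the path decomposition with the variables $\bar{\mathsf{h}}_{j:n}$ as in Section \ref{sec:bdd1} to replace the sum over $j$ by a single last-visit term, then differentiate; the extra derivative on $\mathsf{h}_j$ (equivalently on $p_j$) is again handled by Lemma \ref{lem:2H}, noting that the $\mathtt{X}^1_{i-1}$ prefactor it produces is exactly what is needed to make the term $O^E(\sqrt\Delta)$-summable via Lemma \ref{lem:essb}.

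The main obstacle I anticipate is the interaction between the non-commutativity of the matrix product $E^n_{i:n}=\prod e_\ell$ and the differentiation of the \emph{jump} factor $(\mathbf{e}^1(\mathbf{e}^1)^{\top}-I)\mathsf{h}_\ell+\pi_{\ell-1}\mathtt{X}^1_{\ell-1}\mathsf{h}_\ell$ in $e_\ell$: in one dimension this factor is scalar and the differentiated product telescopes cleanly, whereas here one obtains a genuinely matrix-valued sum $\sum_\ell E^n_{\ell:n}\,\mathbf{D}_x(\text{jump}_\ell)\,E^n_{i:\ell-1}$ in which the left and right matrix blocks cannot be merged. The resolution is to bound each summand by $\|E^n_{\ell:n}\|_F\,\|\mathbf{D}_x(\text{jump}_\ell)\|_F\,\|E^n_{i:\ell-1}\|_F$ and observe that $\mathbf{D}_x(\pi_{\ell-1}\mathtt{X}^1_{\ell-1}\mathsf{h}_\ell)$ contributes, besides a bounded piece times $\mathtt{X}^1_{\ell-1}\mathsf{h}_\ell$, a piece $\pi_{\ell-1}(\mathbf{e}^1)^{\top}\mathsf{h}_\ell$ --- but the derivative of $\mathsf{h}_\ell$ itself (i.e.\ of $1_{(U_\ell\le p_\ell)}$, handled through $\mathbf{D}_{\ell-1}p_\ell$) always produces an additional $\mathtt{X}^1_{\ell-1}$ prefactor by Lemma \ref{lem:2H}, so that every such term is of the form $\mathtt{X}^1_{\ell-1}\mathsf{h}_\ell\times(\text{bounded})$, whose $L^q(\bar M^n)$-sum over $\ell$ is $O(\sqrt\Delta)$ by Lemma \ref{lem:essb}. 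This is precisely where Hypothesis \ref{hyp:12} enters, since it guarantees $a^{\ell 1}(y)=\hat a^\ell(y)(y^1-L)$ and hence that $\pi_{\ell-1}$ is multiplied by exactly one power of $\mathtt{X}^1_{\ell-1}$. Once the three classes are bounded by $C e^{\mathsf{M}(T-t_i)}$ plus a term $C\sum_{j>i}\Delta e^{\mathsf{M}(T-t_j)}\|\mathbf{D}^2_j f_j\|_\infty$, the discrete Gronwall inequality closes the induction and yields $\|\mathbf{D}^2_if_i\|_\infty\le\mathsf{M}e^{\mathsf{M}(T-t_i)}$, which is the last assertion of the Proposition.
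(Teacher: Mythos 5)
Your plan has the right high-level shape (Leibniz differentiation, moment bounds, Gronwall) but it misses two ideas that the paper's Section \ref{sec:7.6} is built around, and without them the argument as written fails.

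First, the compensation trick. When you differentiate $\mathbb{E}_{i,x}[\mathbf{D}f(X_n)E^n_{i:n}\bar M^n_{i:n}]$ in $x$, you must differentiate the indicators $1_{(X_j^1>L)}$ hidden in $\bar M^n$ (and in $m_j$ inside $f_j$), which produces Dirac deltas $\delta_L(X_j^1)$. In the push-forward formula of Lemma~\ref{lem:2.1H} these are harmless because $f_j|_{\partial H^d_L}=0$; but here the integrand contains $\mathbf{D}_jf_j$, which does \emph{not} vanish at the boundary, so the Dirac contribution is a genuine, nonvanishing term. You never address this. The paper handles it by introducing $\mathsf{g}_{i-1}=\tilde{\mathbb E}_{i-1}[\mathbf{D}_if_i\lambda_i\bar m_i]-\tilde{\mathbb E}_{i-1}[\mathbf{D}_if_i(\widehat X_i)\cdot\mathbf{e}^1(\mathbf e^1)^\top]$ (eq.~\eqref{eq:56}), chosen precisely so that the quantity inside the expectation vanishes in the normal direction at $X_i^1=L$, which is what makes Cases~A.1 and~A.2 in the proof of Lemma~\ref{lem:43} close. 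Without this subtraction your class (ii) terms are not a ``local-time type contribution''; they contain an uncompensated Dirac mass.

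Second, the Frobenius splitting $\|E^n_{\ell:n}\|_F\,\|\mathbf D_x(\text{jump}_\ell)\|_F\,\|E^n_{i:\ell-1}\|_F$ is too coarse. Among the pieces of $\mathbf D_x(\pi_{\ell-1}\mathtt X^1_{\ell-1}\mathsf h_\ell)$ you correctly identify $\pi_{\ell-1}(\mathbf e^1)^\top\mathsf h_\ell$, which carries $\mathsf h_\ell$ but no $\mathtt X^1_{\ell-1}$ prefactor; the conclusion ``so that every such term is of the form $\mathtt X^1_{\ell-1}\mathsf h_\ell\times(\text{bounded})$'' therefore does not follow. Summed over $\ell$, a coefficient of order $\mathsf h_\ell$ is of order $\Delta^{-1/2}$ under $\bar M^n$ (Lemma~\ref{lem:essb}), so the crude norm bound diverges. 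The paper avoids this precisely because it does \emph{not} factor the differentiated product; it instead iterates the one-step second-derivative operator $\bar A^1_k(\lambda_k)$ backwards to form $F_{k,\ell}$ (eq.~\eqref{eq:iter10}), and then exploits the coordinate identity $\xi_i(\mathbf e^\ell(\mathbf e^m)^\top)=\mathbf e^\ell(\mathbf e^m)^\top\bigl(1-1_{(\ell=m=1\text{ or }\ell\neq1,m\neq1)}\mathsf h_i\bigr)$ (property {\large\textcircled{\small 1}}) to see that depending on the matrix direction the iteration produces either a $\prod(1-\mathsf h_i)$ killing factor or a constant factor. This is the structural cancellation that makes the Gronwall argument with the trace norm $\|F_{0,j}\|_*$ converge; it is invisible once you pass to Frobenius norms. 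Finally, the reduction to lower-triangular $\sigma$ via Cholesky (Lemma~\ref{lem:38}), so that only $Z^1$ drives $X^1$ and $\bar m_i$ depends on a single Gaussian component, is also used essentially and is missing from your plan.
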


To prove Proposition \ref{prop:30}, we will need some careful calculations with matrices and with this in mind, we introduce the following notation: For $f\in C^1(\mathbb{R}^d,\mathbb{R}^d)$, $ \lambda_i=\lambda^0(X_{i-1},Z_i)+\mathsf{h}_i\lambda^1(X_{i-1},Z_i)$, with $\lambda^0,\lambda^1\in C^1(\mathbb{R}^d\times \mathbb{R}^d ,\mathbb{R}^{d\times d}) $  
\begin{align*}
        {\mathbf{D}}^{\dagger}_{i-1}(f,\lambda_i)_{jk}:=&f^r\left(\partial_{x_j} (\lambda^0_i)^{rk}+\mathsf{h}_i\partial_{x_j}(\lambda^1_i)^{rk}\right)\\
        D^k[f,\Delta_u\lambda]_{j}:=&f^rD^k (\Delta_u\lambda^{rj})=f^r\left(\partial_{z_k} (\lambda^0_i)^{rj}+2\partial_{z_k}(\lambda^1_i)^{rj}\right)\\
        \Delta_u\lambda:=&\lambda^0+2\lambda^1
\end{align*}
In the second formula, we have extended the definition of $ D^k $ to be applied to pairs $ [f,\Delta_u\lambda] $. Recall that $ D^k$ denotes the derivative with respect to the $ k $-th component of the corresponding Gaussian noise in $ \lambda^0 $ and $ \lambda^1 $ and we stress that $ \partial_{x_j}(\lambda^1_i)^{rk} $ is the partial derivative of the function $ (\lambda^1_i)^{rk}(x,z) $ with respect to $ x_j $.

For the next result, we need the following definitions:
  For a square matrix $ H $,
\begin{align*}
        \bar{A}^1_i(\lambda_i)(H):=&(\mathbf{D}_{i-1}X_i)^{\top}  H ^\top\lambda_i-\left( {\mathbf{D}}_{i-1}^{\top}  \left (\frac{        {\sigma}^{1k} _{i-1}
                \mathtt{X}_{i-1}^1
        }{{a^{11}_{i-1}}}\right )({\sigma}^k_{i-1})^{\top}  \right )H^\top \Delta_u\lambda_i\mathsf{h}_i\\
        \tilde{A}^0_i(\lambda_i,\mathbf{D}_if_i):=&{\mathbf{D}}^{\dagger}_{i-1}({\mathbf{D}}_if_i,{\lambda}_{i})
- D^k[{\mathbf{D}}_if_i,\Delta_u{\lambda}_{i}]{\mathbf{D}}_{i-1} \left (\frac{      {\sigma} ^{1k}_{i-1}
        \mathtt{X}_{i-1}^1
}{{a^{11}_{i-1}}}\right )\mathsf{h}_i\notag\\
        \bar{A}^0_i(\lambda_i,{\mathbf{D}}_if_i):=&\tilde{A}^0_i(\lambda_i,\mathbf{D}_if_i)
-
        {\mathbf{D}}_{i-1}^\top  \left(\frac{{b}^1_{i-1}
                \mathtt{X}_{i-1}^1}{{a^{11}_{i-1}}}\right ){\mathbf{D}}_if_i\Delta_u{\lambda}_{i}
        \mathsf{h}_i.
\end{align*}
In the case that $ \lambda_i $ is real valued and $ H $ is a vector, we extend the above definition of $       \bar{A}^1_i(\lambda_i)(H) $ using the same formula. A similar remark applies to $ \bar{A}^0_i(\lambda_i,{\mathbf{D}}_if_i) $.

We also define the space for $ x\in \mathbb{R}^d $, $ y,z\in\mathbb{R} $ 
\begin{align*}
        P^k_{p,i}(x,y,z):=\left\{
        \sum_{r_j\geq 0,r_1+...+r_{d+2}\leq p}\!\!\!\!\!\!\!\!\!\!\!\!\!\!\!\!\zeta_r(X_{i-1})
        z^{r_{d+2}}y^{r_{d+1}}\prod_{j=1}^d(x^j)^{r_j},\zeta_r\in C^k_b(\mathbb{R}^d,\mathbb{R}^{d\times d}), r=(r_1,...,r_{d+2})\right\}.
\end{align*}
When we further impose the condition that $ \zeta_0\equiv 0 $, we denote the corresponding space by $ \bar{P}^k_{p,i}(x,y,z) $. We will mainly use these spaces with $ p=1,2 $.

These spaces will be used to characterize the qualitative properties of coefficients that appear in different derivative formulas without writing the coefficients explicitly. We emphasize that the elements of this space are polynomials in the variables $ (x,y,z) $ with coefficients that depend on $ X_{i-1} $. We will also need this definition with less number of variables and in that case we will use for example,   $ \bar{P}^k_{p,i}(x,y)$ when the polynomials are only products of the coordinates in $ (x,y) $. As with constants we may use the notation $ \mu_i $ for elements of the spaces $ {P}^k_{p,i}(x,y,z) $ or $ \bar{P}^k_{p,i}(x,y) $ as in the result below. Their exact formulas may change from one line to the next.

An important point in order to reduce the amount of calculations is to change the probabilistic representations in order to prove \eqref{eq:87a}. With this in mind, we will temporarily assume that 

{\bf (H)} $ \sigma_{i-1}  $, $ i=1,...,n $ is a lower triangular matrix. 

Lemma \ref{lem:38} shows that this assumption can be removed. 
Using this hypothesis, we will use expressions for the conditional laws as follows: $ \widehat{X}_i$ denotes the expression of the random variables $ X_i^\ell $, $ \ell\neq 1 $ when they are conditioned to $ X_i^1=L $. More explicitly, define for any $ x\in\mathbb{R}^d $, $ {x}^\wedge_i:=(x^2_i,...,x^d_i)$ and $ \hat{X}^1_i=L $
\begin{align}
        \widehat{X}^j_i=&\widehat{X}^j_{i-1}+\sum_{k=2}^d({\sigma}_{i-1}^{k})^{j}{Z}^k_i +(\sigma^1_{i-1})^{j}\frac{L-X_{i-1}^1}{\sqrt{a^{11}_{i-1}}},\ j=2,...,d-1\label{eq:ort1}
\end{align}
In some cases, we may need to use the above decomposition for $ X_i $
With these definitions, we obtain the following push forward formula for second derivatives:
\begin{lemma}[{\bf Modified push forward formula for second derivatives}]
        \label{lem:43}Assume that $ \sigma(x)  $ is lower triangular matrix for all $ x\in H^d_L $.
        Suppose that $ f_i\in C^2_b(\mathbb{R}^d, \mathbb{R}) $ and $ \lambda_i=e_i(1+\kappa_i+\frac{1}{2}\kappa_i^2) $. Then we have 
        \begin{align}
                {\mathbf{D}}^\top_{i-1}{\tilde{\mathbb{E}}}_{{i-1}}\left[{\mathbf{D}}_i{f}_{i}\lambda_i\bar{m}_{i}\right]={\tilde{\mathbb{E}}}_{{i-1}}\left[\left(\bar{A}^1_i(\lambda_i)(\mathbf{D}_i^2f_i)+\tilde{A}^0_i(\lambda_i,\mathbf{D}_if_i)\right)\bar{m}_i\right]+{\tilde{\mathbb{E}}}_{{i-1}}\left[{\mathbf{D}}_i{f}_{i}(\widehat{X}_i)\cdot\mathbf{e}^1\mu^1_i\right].
                \label{eq:exp1}
        \end{align}
        
        Furthermore, $ \mu_i^{1}=g^1_i(\mathtt{X}_{i-1}^1)\nu_i^1$ with $\nu_i^1\in \bar{P}^1_{2,i}(Z_{i}^{\wedge} ,\Delta,\mathtt{X}_{i-1}^1 )
        $ a matrix valued random variable which gives that  $ \tilde{\mathbb{E}}_{i-1}\left[\left|\mu_i^{1}\right|\bar{m}_i\right]\leq C$. 
        
        Also, we have for a matrix valued random variable $ \mu^2_i\in  \bar{P}^0_{2,i}(Z_i,\Delta,\mathtt{X}^1_{i-1}\mathsf{h}_i) $ that
        \begin{align}
                \tilde{A}^0_{i} (\lambda_{i}, \mathbf{D}_if_{i}\mathbf{e}^1(\mathbf{e}^1)^\top)=& -({\mathbf{D}}_{i} f_{i} \cdot \mathbf{e}^1) 
                \left(\frac{{b}^1_{{i} -1}
                }{{a^{11}_{{i} -1}}}\right )\mathbf{e}^1(\mathbf{e}^1)^\top\mathsf{h} _{i}+{\mathbf{D}}_{i} f_{i}\cdot \mathbf{e}^1\mu^2_i.
                \label{eq:exp2}
        \end{align}
\end{lemma}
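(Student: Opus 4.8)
The plan is to run the differentiation machinery of Lemmas~\ref{lem:2H} and~\ref{cor:23H} once more, applied componentwise to the vector-valued integrand ${\mathbf{D}}_i f_i\lambda_i$ in the role previously played by $f_i$, the crucial new feature being that, unlike $f_i$, the normal derivative $\partial_1 f$ does \emph{not} vanish on $\partial H^d_L$, so the Dirac mass produced by differentiating the indicator $1_{(X^1_i>L)}$ no longer drops out. First I would reduce the $\bar m_i$-expectation to a sum of two ordinary Gaussian integrals: writing $\lambda_i=\lambda^0_i+\mathsf{h}_i\lambda^1_i$ for the decomposition induced by $e_i$ and $\kappa_i$ (with $\lambda^0_i,\lambda^1_i$ free of $\mathsf{h}_i$), and using $\mathsf{h}_i^2=\mathsf{h}_i$ together with $\bar m_i=1_{(X^1_i>L)}(1+\mathsf{h}_i)$, one gets $\lambda_i\bar m_i=1_{(X^1_i>L)}(\lambda^0_i+\Delta_u\lambda_i\,\mathsf{h}_i)$ with $\Delta_u\lambda_i=\lambda^0_i+2\lambda^1_i$; integrating out the uniform $U_i$ replaces $\mathsf{h}_i$ by $p_i$, giving
\[
\widetilde{\mathbb{E}}_{i-1}\bigl[{\mathbf{D}}_i f_i\lambda_i\bar m_i\bigr]=\widetilde{\mathbb{E}}_{i-1}\bigl[{\mathbf{D}}_i f_i\lambda^0_i\,1_{(X^1_i>L)}\bigr]+\widetilde{\mathbb{E}}_{i-1}\bigl[{\mathbf{D}}_i f_i\,\Delta_u\lambda_i\,p_i\,1_{(X^1_i>L)}\bigr],
\]
both integrands now $\mathsf{h}_i$-free.

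Next I would apply ${\mathbf{D}}^\top_{i-1}$ to each integral, working in the $x_i$-parametrization so that $1_{(x^1_i>L)}$ is the only factor whose $x_i$-derivative is singular, and sort the output into three groups. Moving a $\partial_{x^j_i}$ off the Gaussian density onto the integrand (the integration by parts of Lemma~\ref{lem:2H}, which also disposes of ${\mathbf{D}}_{i-1}p_i$ in the second integral) yields, when the derivative reaches ${\mathbf{D}}_i f_i$, the Hessian ${\mathbf{D}}_i^2 f_i$ contracted with the Jacobian ${\mathbf{D}}_{i-1}X_i$; after transposition these assemble into $\bar A^1_i(\lambda_i)({\mathbf{D}}_i^2 f_i)$. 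Derivatives reaching the coefficient $\lambda_i$, either through its explicit $X_{i-1}$-dependence or through the noise variable, assemble into $\tilde A^0_i(\lambda_i,{\mathbf{D}}_i f_i)$ via the operators ${\mathbf{D}}^{\dagger}_{i-1}$ and $D^k[\cdot,\cdot]$. Differentiating $1_{(x^1_i>L)}$ produces a Dirac mass $\delta_L(x^1_i)$ \emph{only} in the $j=1$ slot of ${\mathbf{D}}^\top_{i-1}$; integrating it picks up the normal-direction marginal density at the boundary, $g^1_i(\mathtt{X}^1_{i-1})$, and evaluates $X_i$ on $\partial H^d_L$. Since $f_i|_{\partial H^d_L}=0$, the tangential derivatives of $f$ vanish there, so ${\mathbf{D}}f(\widehat X_i)=({\mathbf{D}}f(\widehat X_i)\cdot\mathbf{e}^1)(\mathbf{e}^1)^\top$; under \textbf{(H)} the conditional law of the remaining coordinates is the explicit one in~\eqref{eq:ort1} with $Z^1_i$ pinned to $-\mathtt{X}^1_{i-1}/\sigma^{11}_{i-1}$, and collecting the surviving coefficient factors (derivatives of $\sigma^{1k}_{i-1}$, $a^{11}_{i-1}$, $b^1_{i-1}$, the entries of ${\mathbf{D}}_{i-1}X_i$, and the powers of $Z_i^{\wedge}$, $\Delta$, $\mathtt{X}^1_{i-1}$ brought in by $1+p_i$ and $\Delta_u\lambda_i$) into a matrix $\nu^1_i\in\bar{P}^1_{2,i}(Z_i^{\wedge},\Delta,\mathtt{X}^1_{i-1})$ gives exactly $\widetilde{\mathbb{E}}_{i-1}[{\mathbf{D}}_i f_i(\widehat X_i)\cdot\mathbf{e}^1\,\mu^1_i]$ with $\mu^1_i=g^1_i(\mathtt{X}^1_{i-1})\nu^1_i$. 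Finally I would re-express the two plain Gaussian integrals in $\bar m_i$-form by the identity $\mathbb{E}_{i-1}[A_i m_i+B_i1_{(X^1_i>L)}\mathsf{h}_i]=\mathbb{E}_{i-1}[(A_i(1-\mathsf{h}_i)+\frac{1}{2}B_i\mathsf{h}_i)\bar m_i]$ of Lemma~\ref{cor:23H}, whose output reproduces precisely the $\mathsf{h}_i$-pattern hard-coded into $\bar A^1_i$ and $\tilde A^0_i$, yielding~\eqref{eq:exp1}; the bound $\widetilde{\mathbb{E}}_{i-1}[|\mu^1_i|\bar m_i]\le C$ then holds because every monomial of $\nu^1_i$ carries at least one of $\mathtt{X}^1_{i-1}$, $Z_i^{\wedge}$, $\Delta$ (the condition $\zeta_0\equiv0$), which cancels the $\Delta^{-1/2}$ of $g^1_i$ against its Gaussian decay and the conditional moment estimates of Lemma~\ref{lemma:4}.

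For~\eqref{eq:exp2} I would substitute the vector $({\mathbf{D}}_i f_i\cdot\mathbf{e}^1)(\mathbf{e}^1)^\top$ into the definition of $\tilde A^0_i(\lambda_i,\cdot)$ and expand ${\mathbf{D}}^{\dagger}_{i-1}$ and $D^k[\cdot,\cdot]$ by the product rule. By Hypothesis~\ref{hyp:12} ($\sigma^{1\ell}_{i-1}=0$ on $\partial H^d_L$, $a^{1\ell}=\hat a^\ell(\cdot)(\cdot^1-L)$) every resulting cross term carries a factor $\mathtt{X}^1_{i-1}\mathsf{h}_i$ except the contribution of the differentiated killing factor $b^1_{i-1}\mathtt{X}^1_{i-1}/a^{11}_{i-1}$, which is $-({\mathbf{D}}_i f_i\cdot\mathbf{e}^1)(b^1_{i-1}/a^{11}_{i-1})\mathbf{e}^1(\mathbf{e}^1)^\top\mathsf{h}_i$ (precisely the singular term of~\eqref{eq:diff}); the remainder, a polynomial in $(Z_i,\Delta,\mathtt{X}^1_{i-1}\mathsf{h}_i)$ with no constant term and scalar prefactor ${\mathbf{D}}_i f_i\cdot\mathbf{e}^1$, is written ${\mathbf{D}}_i f_i\cdot\mathbf{e}^1\,\mu^2_i$ with $\mu^2_i\in\bar{P}^0_{2,i}(Z_i,\Delta,\mathtt{X}^1_{i-1}\mathsf{h}_i)$.

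The hard part will be the bookkeeping in the third group: confirming that no spurious Dirac terms survive in the tangential slots of ${\mathbf{D}}^\top_{i-1}$, that the boundary value of the gradient genuinely collapses onto $\mathbf{e}^1$, and that all the $\mathsf{h}_i$-singular cross terms generated along the way either cancel or land in the polynomial classes claimed for $\nu^1_i$ and $\mu^2_i$. A second, pervasive difficulty absent in one dimension is the non-commutativity of $\lambda_i$, ${\mathbf{D}}_{i-1}X_i$ and $\Delta_u\lambda_i$, which forces careful attention to left/right placement and transposes throughout (this is what dictates the precise form of $\bar A^1_i$ and ${\mathbf{D}}^{\dagger}_{i-1}$) and is also why the adapted conditional-law representation~\eqref{eq:ort1} has to be set up under \textbf{(H)} before any differentiation is carried out; the removal of~\textbf{(H)} is then deferred to Lemma~\ref{lem:38}.
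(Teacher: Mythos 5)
Your plan is a genuinely different route from the paper's. You differentiate the uncompensated integrand $\mathbf{D}_if_i\,\lambda_i\,\bar m_i$ directly after the $\mathsf{h}_i$-reduction, collect the Dirac terms generated by $\partial_{x_{i-1}^1}1_{(X_i^1>L)}$ and by the $D_i^1 1_{(X_i^1>L)}$ that Lemma~\ref{lem:2H} brings in when integrating $\mathbf{D}_{i-1}p_i$ by parts, and then \emph{assert} that they assemble into $\mathbf{D}_if_i(\widehat X_i)\cdot\mathbf{e}^1\,g^1_i(\mathtt{X}^1_{i-1})\nu^1_i$ with $\nu^1_i\in\bar P^1_{2,i}$, i.e.\ with no constant term. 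The paper instead introduces the compensated quantity $\mathsf{g}_{i-1}$ in~\eqref{eq:56} and splits the left-hand side into $B^1_{i-1}+B^2_{i-1}$; the whole point of subtracting $\mathbf{D}_if_i(\widehat X_i)\cdot\mathbf{e}^1(\mathbf{e}^1)^\top$ (which costs nothing because $\widetilde{\mathbb{E}}_{i-1}[\bar m_i]=1$ and $\widehat X_i\perp X_i^1$ under \textbf{(H)}) is that the compensated integrand evaluated at $X_i^1=L$, namely $(\mathbf{D}_if_i(\widehat X_i)\cdot\mathbf{e}^1)\bigl((\mathbf{e}^1)^\top\lambda_i-(\mathbf{e}^1)^\top\bigr)$, is \emph{structurally} free of a constant term, so the $\bar P^1$ membership is automatic.

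That structural device is exactly the thing your proposal does not replace. In your uncompensated route there are three distinct constant-order Dirac contributions in the $j=1$ slot: from $\partial_{x_{i-1}^1}1_{(X_i^1>L)}$ against $\lambda^0_i$ (constant part $I$), from $\partial_{x_{i-1}^1}1_{(X_i^1>L)}$ against $\Delta_u\lambda_i\,p_i$ (constant part $2\mathbf{e}^1(\mathbf{e}^1)^\top-I$, with $p_i=1$ on $\{X_i^1=L\}$), and the $-2\,D_i^1$ term from Lemma~\ref{lem:2H} (again $2\mathbf{e}^1(\mathbf{e}^1)^\top-I$, with the $\sigma^{11}_{i-1}$ Jacobian cancelling the $\sigma^{11}_{i-1}/a^{11}_{i-1}$ coming from $\psi^1_{i-1}$). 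Their sum is $2\,\mathbf{D}_if_i(\widehat X_i)\bigl(I-\mathbf{e}^1(\mathbf{e}^1)^\top\bigr)\delta_L(X_i^1)$, which vanishes \emph{only} because $\mathbf{D}_if_i(\widehat X_i)=(\mathbf{D}_if_i(\widehat X_i)\cdot\mathbf{e}^1)(\mathbf{e}^1)^\top$ and $(\mathbf{e}^1)^\top(I-\mathbf{e}^1(\mathbf{e}^1)^\top)=0$. This cancellation is the entire content of the membership $\nu^1_i\in\bar P^1_{2,i}$; without it $\mu^1_i$ would be of order $g^1_i(\mathtt{X}^1_{i-1})\sim\Delta^{-1/2}$ and the bound $\widetilde{\mathbb{E}}_{i-1}[|\mu^1_i|\bar m_i]\le C$ would fail. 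Your write-up lists the factors to be ``collected'' and later flags generically that the ``hard part'' is that the cross terms ``land in the polynomial classes,'' but it neither isolates the three competing constant-order Dirac terms nor supplies the algebra that kills them. So the proposal is a plausible alternative route, but the step that actually carries the lemma is missing and needs to be spelled out (or you should fall back on the paper's compensation $\mathsf{g}_{i-1}$, which builds this cancellation in). One further small inaccuracy: the Dirac mass does not appear only in the $j=1$ slot of $\mathbf{D}^\top_{i-1}$; for $j\neq 1$ it carries a factor $\partial_j\sigma^{11}_{i-1}Z_i^1$, which is harmless (it lands in $\bar P^1$) but should still be accounted for rather than dismissed.
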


This formula is called `modified' because the boundary condition $ \mathbf{D}_if_i(x)=0 $ for $ x\in H^d_L $ is not satisfied as in Lemma \ref{lem:2.1H}. We will see later in Lemma \ref{lem:38} that the hypothesis that $ \sigma (x) $ is lower triangular in the above result is not binding.  The proof of this technical lemma is done later. 
 
{\it The iteration of the push-forward derivative formulas for second derivatives:}
We discuss now the case of the iteration of the above formula when one applies repeatedly the linear operator $  \bar{A}^1_i(\lambda_i)(H) $, for $ i=1,...,n $. 
As we already stated, the iteration of these operators can not be written explicitly as a matrix product. Instead, we will write them inductively using the following sequence of backward linear matrix system for $k=\ell-1,...,1$ and with final condition $ A\in\mathbb{R}^{d\times d} $
\begin{align}
        F_{k-1,\ell}=&\bar{A}^1_k(\lambda_k)(F_{k,\ell})\label{eq:iter10}\\
        F_{\ell,\ell}:=&A.\notag
\end{align}
When we want to stress the final condition, we may write $      F_{k-1,\ell}\equiv         F_{k-1,\ell}(A)  $.
\begin{cor}
        \label{cor:43} With the above notation, the iteration of the push-forward derivative formula in  Lemma \ref{lem:43} gives        \begin{align}
                \!\!\!\! {\mathbf{D}}^\top_{0}{\tilde{\mathbb{E}}}\left[{\mathbf{D}}_n{f}_{n}
                E^n_n\bar{M}^n_{n}\right]=      \tilde{\mathbb{E}}\left[F_{0,n}
                (\mathbf{D}^2_nf_n)\bar{M}^n_{n}\right]+
                \tilde{\mathbb{E}}\left[
                \sum_{i=1}^nF_{0,i-1}(\tilde{A}^0_i(\lambda_i,
                \mathbf{D}_if_i)+
                {\mathbf{D}}_i{f}_{i}(\widehat{X}_i)\mathbf{e}^1(\mathbf{e}^1)^\top\mu^1_i)
                \bar{M}^n_{n}\right].
                \label{eq:2der}
        \end{align}
        Here, $ \mu_i^{1}=g^1_i(\mathtt{X}_{i-1}^1)\nu_i^1$ with $\nu_i^1\in \bar{P}^1_{2,i}(Z_{i}^{\wedge} ,\Delta,\mathtt{X}_{i-1}^1 )
        $ and $ \tilde{\mathbb{E}}_{i-1}\left[\left|\mu_i^{1}\right|\bar{m}_i\right]\leq C$.
\end{cor}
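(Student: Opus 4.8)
The plan is to obtain Corollary \ref{cor:43} by iterating the single-step identity \eqref{eq:exp1} of Lemma \ref{lem:43} backwards from time $n$ to time $0$, keeping careful track of the non-commutative operator structure. First I would set up the bookkeeping: apply \eqref{eq:exp1} with $i=n$ and $\lambda_n=e_n(1+\kappa_n+\tfrac12\kappa_n^2)$, which already appears implicitly because the factor $\mathcal{K}_n=\prod e^{\kappa_j}$ contributes a factor $e^{\kappa_n}\approx 1+\kappa_n+\tfrac12\kappa_n^2$ at step $n$ (the error being of higher order and absorbed into the $\mu$-terms). This produces the term $\tilde{\mathbb{E}}[\bar A^1_n(\lambda_n)(\mathbf{D}^2_nf_n)\bar M^n_n]$ plus $\tilde{\mathbb{E}}[(\tilde A^0_n(\lambda_n,\mathbf{D}_nf_n)+\mathbf{D}_nf_n(\widehat X_n)\mathbf{e}^1(\mathbf{e}^1)^\top\mu^1_n)\bar M^n_n]$. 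The key structural observation — which is exactly why the operators $F_{k-1,\ell}$ are defined via the backward recursion \eqref{eq:iter10} rather than as a matrix product — is that $\bar A^1_i(\lambda_i)(H)$ is linear in its matrix argument $H$, so that applying Lemma \ref{lem:43} once more (now with the ``test function'' being the random matrix produced at the previous step) commutes with the outer $\bar A^1$ operator in the sense that $\bar A^1_{i}(\lambda_i)(\bar A^1_{i+1}(\lambda_{i+1})(H)) = F_{i-1,i+1}(H)$ after relabelling. I would prove by downward induction on $i$ that
\begin{align*}
{\mathbf{D}}^\top_{0}{\tilde{\mathbb{E}}}\left[{\mathbf{D}}_n{f}_{n} E^n_n\bar{M}^n_{n}\right]=&\tilde{\mathbb{E}}\left[F_{0,i}(\text{(derivative data at time $i$)})\bar{M}^n_{n}\right]\\
&+\tilde{\mathbb{E}}\left[\sum_{j=i+1}^nF_{0,j-1}(\tilde A^0_j(\lambda_j,\mathbf{D}_jf_j)+{\mathbf{D}}_j{f}_j(\widehat X_j)\mathbf{e}^1(\mathbf{e}^1)^\top\mu^1_j)\bar{M}^n_{n}\right],
\end{align*}
the inductive step being one application of \eqref{eq:exp1} at time $i$ followed by the tower property of conditional expectation, together with the fact that the ``extra'' boundary terms generated at time $i$ (the $\tilde A^0$ and $\mu^1$ pieces) are themselves valid test data to which the already-assembled operator $F_{0,i-1}$ may be applied — here I use that those pieces are again built from $\mathbf{D}_if_i$ and its first derivatives, and that $\mathbf{D}^\top_{i-1}$ acting on a conditional expectation of a product reduces, by Lemma \ref{lem:43} applied with $f_i$ replaced by the relevant lower-order data, to the same type of expression. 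The base case $i=n$ is the single application of Lemma \ref{lem:43} described above.

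The one subtlety is the chain-rule interaction: when I differentiate ${\tilde{\mathbb{E}}}_{i-1}[\cdots e_i\cdots]$ with respect to $X_{i-1}$, the derivative hits both the dependence of $f_{i+1},\dots,f_n$ through $X_i=X_{i-1}+\sigma_{i-1}Z_i$ and the explicit $X_{i-1}$-dependence of $e_i$, $\lambda_i$, $\kappa_i$ and the Gaussian density. The operator $\bar A^1_i(\lambda_i)(H)$ is precisely designed to package the first kind of term — note the factor $(\mathbf{D}_{i-1}X_i)^\top H^\top\lambda_i$ and the compensating density-differentiation term $-({\mathbf{D}}_{i-1}^\top(\sigma^{1k}_{i-1}\mathtt X^1_{i-1}/a^{11}_{i-1})(\sigma^k_{i-1})^\top)H^\top\Delta_u\lambda_i\mathsf h_i$ coming from the integration by parts against $p_i$ as in Lemma \ref{lem:2H} — while $\tilde A^0_i$ and the $\mu^1_i$ term collect the explicit-$X_{i-1}$ derivatives of the matrix coefficients and the boundary density, via exactly the differentiation rules of Section \ref{diffrules}. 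So the inductive step is really: apply the \emph{modified} push-forward formula \eqref{eq:exp1} (which is Lemma \ref{lem:43}, already granted), read off which piece is the ``propagating'' $\bar A^1$-part and which is the ``source'' part, and invoke the induction hypothesis on the propagating part.

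I expect the main obstacle to be purely organisational rather than analytic: one must verify that the source terms generated at each step $i$ — in particular the conditional-boundary term $\mathbf{D}_if_i(\widehat X_i)\mathbf{e}^1(\mathbf{e}^1)^\top\mu^1_i$, where $\widehat X_i$ is the process conditioned to $X^1_i=L$ as in \eqref{eq:ort1} — are of the correct form to be legitimately fed into $F_{0,i-1}$ and, crucially, that the resulting double sum telescopes into the single sum $\sum_{i=1}^n F_{0,i-1}(\cdots)$ without cross terms. This uses the linearity of each $\bar A^1_k(\lambda_k)(\cdot)$ and the semigroup-like composition $F_{0,i-1}=F_{0,k}\circ F_{k,i-1}$ for $0\le k\le i-1$, which follows immediately from the definition \eqref{eq:iter10}. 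The moment bound $\tilde{\mathbb{E}}_{i-1}[|\mu^1_i|\bar m_i]\le C$ and the polynomial-space membership $\nu^1_i\in\bar P^1_{2,i}(Z^\wedge_i,\Delta,\mathtt X^1_{i-1})$ are inherited verbatim from Lemma \ref{lem:43}, so nothing new has to be proved there; one only records them in the statement so that they can be used later (in the proof of Proposition \ref{prop:30}) to close the estimates via Lemma \ref{lem:35} and Lemma \ref{lem:essb}. Finally, the higher-order remainders in approximating $e^{\kappa_i}$ by $1+\kappa_i+\tfrac12\kappa_i^2$ and in the product $E^n_n=\prod e_i$ are absorbed, as in the one-dimensional case, into the $O^E_i$-type error terms and do not affect the limiting identity.
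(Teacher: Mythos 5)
The paper does not record a proof of this corollary (it is stated as "the iteration of" Lemma \ref{lem:43}), so I can only assess your proposal on its own merits. You do correctly identify the two mechanisms that make the iteration possible: (i) the operator $\bar A^1_i(\lambda_i)(\cdot)$ is linear in its matrix argument and has $\mathcal{F}_i$-measurable coefficients, so it commutes with the conditional expectation $\tilde{\mathbb{E}}_i$ and composes to give $F_{0,i-1}=F_{0,k}\circ F_{k,i-1}$; and (ii) the source term generated at time $i$ (the $\tilde A^0_i$ and $\mu^1_i$ pieces) is $\mathcal{F}_i$-measurable and therefore a legitimate argument for the previously accumulated operator. You are also right that the Girsanov factor $\mathcal{K}_n$ is absorbed into the $\lambda_i$'s with a higher-order error, and that the membership $\nu^1_i\in\bar{P}^1_{2,i}$ and the bound $\tilde{\mathbb{E}}_{i-1}[|\mu^1_i|\bar m_i]\le C$ are inherited verbatim from Lemma \ref{lem:43} rather than reproved.

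However, your inductive bookkeeping as written is not well formed. You propose downward induction on $i$ with displayed identity always having $\mathbf{D}^\top_0\tilde{\mathbb{E}}[\cdot]$ on the left and $F_{0,i}$ on the right, and you declare the base case $i=n$ to be ``the single application of Lemma \ref{lem:43} described above.'' A single application of Lemma \ref{lem:43} at $i=n$ gives a relation for $\mathbf{D}^\top_{n-1}\tilde{\mathbb{E}}_{n-1}[\mathbf{D}_nf_n\lambda_n\bar m_n]$, i.e., a derivative with respect to $X_{n-1}$, living inside a single conditional expectation and producing only $\bar A^1_n$, not $F_{0,n}$. It therefore cannot establish your base-case identity, which asserts $\mathbf{D}^\top_0\tilde{\mathbb{E}}[\cdots]=\tilde{\mathbb{E}}[F_{0,n}(\cdots)\bar M^n_n]$ with an empty sum. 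The inductive statement you should use is the time-$(i-1)$ version: prove for $i=n,n-1,\ldots,1$ that
\begin{align*}
\mathbf{D}^\top_{i-1}\tilde{\mathbb{E}}_{i-1}\Bigl[\mathbf{D}_nf_n\,\lambda_n\cdots\lambda_i\,\bar M^n_{i-1:n}\Bigr]
=\tilde{\mathbb{E}}_{i-1}\Bigl[F_{i-1,n}(\mathbf{D}^2_nf_n)\bar M^n_{i-1:n}\Bigr]
+\sum_{j=i}^n\tilde{\mathbb{E}}_{i-1}\Bigl[F_{i-1,j-1}\bigl(\tilde A^0_j+\mathbf{D}_jf_j(\widehat X_j)\mathbf{e}^1(\mathbf{e}^1)^\top\mu^1_j\bigr)\bar M^n_{i-1:n}\Bigr],
\end{align*}
with base case $i=n$ being precisely Lemma \ref{lem:43}, and the inductive step from $i$ to $i-1$ consisting of a single application of Lemma \ref{lem:43} at time $i-1$, followed by linearity of $\bar A^1_{i-1}(\lambda_{i-1})(\cdot)$ and the tower property to fold the newly produced $\bar A^1_{i-1}$ onto each $F_{i-1,\cdot}$, turning it into $F_{i-2,\cdot}$. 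Setting $i=1$ then yields the corollary. Moreover, you should make explicit (rather than hide in the phrase ``the relevant lower-order data'') that Lemma \ref{lem:43} is being applied with the \emph{actual} gradient $\mathbf{D}_{i-1}f_{i-1}$: at each step you must invoke the tower property $f_{i-1}(x)=\tilde{\mathbb{E}}_{i-1,x}[f_i\,e^{\kappa_i}m_i]$ and the first-order push-forward formula \eqref{eq:tdH}, and one must check that the discrepancy between $\mathbf{D}_{i-1}f_{i-1}$ and the purely propagating term $\tilde{\mathbb{E}}_{i-1}[\mathbf{D}_if_i\,e_i e^{\kappa_i}\bar m_i]$ (the $\varrho$-terms in \eqref{eq:tdH}) does not enter here because the corollary differentiates only the first, $E^n$-driven, term of $\phi_n$ in \eqref{eq:DD}. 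As stated, your argument treats this identification as automatic, and that is the genuine gap.
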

We remark here that as $ {\mathbf{D}}_i{f}_{i}(\widehat{X}_i)\mathbf{e}^1(\mathbf{e}^1)^\top\mu^1_i $ is independent of $ X_i^1 $, one can consider it multiplied by $ \bar{m}_i $ or not as $ \tilde{\mathbb{E}}_{i-1}[\bar{m}_i]=1$ and therefore both conditional expectations are the same. In the above expression we have preferred the latter as it gives a compact formula.
Now that we have obtained the one step push forward second derivative formula in Lemma \ref{lem:43}, we have to iterate it for the values of $ i=1,...,n $ and show that their iteration is uniformly bounded. From now on, we will always assume that $ \lambda_i=e_i(1+\kappa_i+\frac{1}{2}\kappa_i^2) $.

As we have previously mentioned a large part of the order analysis shares common arguments with the 1D case. With this in mind, we will not give the explicit calculations, instead we explain the arguments that are required.

{\it Proof of Proposition \ref{prop:30}, Part I: Boundedness of the first term in \eqref{eq:87a}.}

Given the result in Corollary \ref{cor:43}, we have that in order to obtain the boundedness of the first term in \eqref{eq:87a} is enough to  prove that the right hand side of \eqref{eq:2der} is uniformly bounded by the explicit bound stated in \eqref{eq:87a}. In order to do this, we use (often) the results in Lemma \ref{lem:35} as we did in the 1D case. We do this without further mentioning of it. 

In order to understand some of the definitions to follow and to gain some insight on the proof, we start by writing the constant term $ \xi_i(H) $ (in other words, terms that do not depend on $ Z $, $ \Delta $ or $ \mathtt{X}^1\mathsf{h} $) in the explicit expression for $ \bar{A}^1_i(\lambda_i)(H) $ where $ H $ is an arbitrary square matrix. This is given by
\begin{align}
        \label{eq:H}
        \xi_i(H):=&
        H(I+(\mathbf{e}^1(\mathbf{e}^1)^{\top}-I) \mathsf{h}_i)-\mathbf{e}^1(\mathbf{e}^1)^{\top} H(2\mathbf{e}^1(\mathbf{e}^1)^{\top}-I)\mathsf{h}_i\\=&    IHI-(I-\mathbf{e}^1(\mathbf{e}^1)^{\top})H(I-\mathbf{e}^1(\mathbf{e}^1)^{\top})\mathsf{h}_i-\mathbf{e}^1(\mathbf{e}^1)^{\top} H\mathbf{e}^1(\mathbf{e}^1)^{\top}\mathsf{h}_i.\nonumber
\end{align}
The following property will be important in what follows: $ \xi_i(\mathbf{e}^\ell(\mathbf{e}^m)^{\top})=\mathbf{e}^\ell(\mathbf{e}^m)^{\top}(1-1_{(\ell=m=1\text{ or }\ell\neq 1,m\neq 1)}\mathsf{h}_i) $.   This property expresses the fact that once the matrix $ H $ is parallel to one of the basis matrices $ \mathbf{e}^\ell(\mathbf{e}^m)^{\top} $, the constant order terms  remain in every step of the backward iteration when applying the matrix operator $ \xi(H) $. This will lead to the product of the  random variables $ m_i$ when $ \ell=m=1 $ or $ \ell\neq 1 $, $ m\neq 1 $ and $ \bar{m}_i $ otherwise. Heuristically speaking in the first case ``reflection'' is transformed into ``killing'' while in the second ``reflection'' remains unchanged.

Therefore the next step in the argument is to use the iteration of matrix operators $ F $ over matrix decompositions using the following basis 
\begin{align*}
        \{\mathbf{e}^\ell;\ell=1,...,d\}\times\{\mathbf{e}^\ell;\ell=1,...,d\}.
\end{align*}

We divide the proof in two parts.

{\it Step I: Uniform boundedness of the first term in \eqref{eq:2der}:}
The first goal is to prove 
\begin{align}
        \label{eq:goal}
        \sup_n\|{\tilde{\mathbb{E}}}[F_{0,n}(\mathbf{D}^2_nf_n)\bar{M}^n_n]\|_M<\infty .
\end{align}
Here, recall that $ F $ is defined in \eqref{eq:iter10} and we let $ \|A\|_M=\max_{ij}|A_{ij}| $.  In order to prove the above statement, we will use a Gronwall type argument using the ``coordinate'' processes 
\begin{align*}
        \mathbf{e}^l(\mathbf{e}^l)^{\top} \mathbf{D}^2_\ell f_\ell \mathbf{e}^m(\mathbf{e}^m)^{\top}=
        \left(\mathbf{D}^2_\ell f_\ell\right)^{l m}\mathbf{e}^l(\mathbf{e}^m)^{\top},\quad  l,m=1,...,d,
\end{align*} and the linearity of the values of the sequence $ F_{k-1,\ell} $ with respect to its final condition.
With this in mind, we define the following auxiliary backward inductive system for $ k=\ell-1,...,1 $ related to \eqref{eq:H} for a $d\times d $ matrix $ A $:
\begin{align*}
        G_{k-1,\ell}=&G_{k,\ell}(I+(\mathbf{e}^1(\mathbf{e}^1)^{\top}-I) \mathsf{h}_k)-\mathbf{e}^1(\mathbf{e}^1)^{\top} G_{k,\ell}(2\mathbf{e}^1(\mathbf{e}^1)^{\top}-I)\mathsf{h}_k,\\
        G_{\ell,\ell}:=&A.
\end{align*}
When we want to stress the final condition, we denote the solution of this system by $ G_{k,\ell}\equiv G_{k,\ell}(A) $.

The norm estimates will be based on the following telescopic formula:
\begin{align}
        F_{0,n}(\mathbf{e}^l(\mathbf{e}^m)^{\top})^{*}   =&G_{0,n}(\mathbf{e}^l(\mathbf{e}^m)^{\top})^{*}+\sum_{k=0}^{n-1}\left(F_{0,k+1}(G_{k+1,n}(\mathbf{e}^l(\mathbf{e}^m)^{\top}))^{*}-
        F_{0,k}(G_{k,n}(\mathbf{e}^l(\mathbf{e}^m)^{\top}))^{*}
        \right).
        \label{eq:tel}
\end{align}
Here, in order to simplify the notation we have used $ A^{*}:=AA^{\top} $.
Next, straightforward algebraic calculations lead to 
the following properties

\begin{enumerate}[label=\large\protect\textcircled{\small\arabic*}]
        \item\begin{align}
                G_{k-1,n}(\mathbf{e}^l(\mathbf{e}^m)^{\top})=\mathbf{e}^l(\mathbf{e}^m)^{\top}\begin{cases}
                        \prod_{i=k}^n(1-\mathsf{h}_i)&\text{ if either }(l=m=1)\text{ or }(l\neq 1\text{ and }m\neq 1)\\
                        1&\text{ otherwise.}
                \end{cases}
                \label{eq:cases}
        \end{align}
        \item\begin{align*}
                \tilde{\mathbb{E}}_{k}\left[\left(F_{k,k+1}(G_{k+1,n}(\mathbf{e}^l(\mathbf{e}^m)^{\top}))^{*}-
                G_{k,k+1}(G_{k+1,n}(\mathbf{e}^l(\mathbf{e}^m)^{\top}))^{*}\right)\bar{M}_{k:n}\right] =O_{k}^E(1).
        \end{align*}
\end{enumerate}
In fact,
\begin{align*}
        F_{k,k+1}(G_{k+1,n}(\mathbf{e}^l(\mathbf{e}^m)^{\top}))^{*}-
        G_{k,k+1}(G_{k+1,n}(\mathbf{e}^l(\mathbf{e}^m)^{\top}))^{*}\in \bar{P}^0_{2,i}(Z_i,\Delta,\mathtt{X}_{i-1}\mathsf{h}_i).
\end{align*}
That is, the above difference can be expressed as quadratic polynomials with no constants terms and whose coefficients are $ \mathcal{F}_{k} $ adapted and uniformly bounded. These polynomials when multiplied by $ \bar{M}_{k:n} $ depend on increments of the type $ Z^{-}_{k+1}{M}_{k:n}$, $ \Delta {M}_{k:n}$, $ \mathtt{X}^1_k\mathsf{h}_{k+1}{M}_{k+1:n} $ in the cases that $ (l=m=1)\text{ or }(l\neq 1\text{ and }m\neq 1) $ and $ Z^{-}_{k+1}\bar{M}_{k+1}$, $ \Delta {m}_{k+1}\bar{M}_{k+1}$, $ \mathtt{X}^1_k\mathsf{h}_{k+1}\bar{M}_{k}$ in the remaining cases. The estimates for the latter case have been studied in Sections \ref{sec:4} and \ref{sec:5}. The estimates for the former case can be obtained as a small variation of the argument in one dimension which appears in the proof of Lemma 35 in \cite{CK}. In fact, while in Lemma 35 the estimated probabilities are for hitting the boundary in some partition time interval $ [t_k,t_{k+1}] $ starting from $ t_i<t_k $, in the present case we need to estimate the conditonal probability that one hits the boundary after time $ T $ starting from $ t_k $. Changing the time intervals in the arguments of Lemma 35 in \cite{CK} gives {\large \textcircled{\small 2}}.

The proof of \eqref{eq:goal} is obtained through a Gronwall type argument. For this, we use the norm
\begin{align*}
        \|F_{0,j}\|_{*}^2:=\sup_{\|A\|_M\leq 1}\Tr\left(
        {\tilde{\mathbb{E}}}\left[\frac{F_{0,j}(A)^{*}}{\|A\|_M^2}\bar{M}_n^n\right]\right).
\end{align*}
In order to obtain the Gronwall type inequality, one takes expecations in the telescopic sum \eqref{eq:tel}. Next, one uses the tower property of expectations and 
the linearity of $ F_{0,k} $, 
with the estimates obtained in the property {\large \textcircled{\small 2}},
to obtain the following inequality for a sufficiently large constant $ C $
\begin{align*}
        \|e^{-CJ_j}F_{0,j}\|_{*}^2\leq C\left(1+\sum_{k=0}^{j-1}\|e^{-CJ_k}F_{0,k}\|_{*}^2\Delta\right).
\end{align*}
Therefore one obtains from Gronwall's lemma that there exists $ C>0 $ such that
\begin{align}
        \sup_n\max_j    \|e^{-C(t_j+J_j)}F_{0,j}\|_{*}^2\leq C.
        \label{eq:G}
\end{align}
From \eqref{eq:G}, one readily obtains that $  \sup_n\|{\tilde{\mathbb{E}}}[F_{0,n}(\mathbf{D}^2_nf_n)\bar{M}^n_n]\|_M\leq |\mathbf{D}^2f|_\infty\|F_{0,n}\|_{*}$. From here, the uniform boundedness of the first term follows using Corollary \ref{cor:38}. 

{\it Step II: Uniform boundedness of the second and third term in \eqref{eq:2der}:}
In the following proof besides using the arguments in Step I, we will also use the path decomposition method in Lemma 35 of \cite{CK}.
We will use the estimates obtained for $ F_{0,j} $ and the goal is to give an upper bound for the norm of the additional sum with 
index $ i $. 

First, we decompose $  {\mathbf{D}}_{i} f_{i}$ in components and study two cases as before. We start considering terms within
$ {\mathbf{D}}_{i} f_{i}\cdot \mathbf{e}^\ell $ for $ \ell\neq 1$ which have  $\mathcal{F}_{{i}-1} $-conditional expectation of order $ O^E_{{i}-1}(1) $.
In fact\footnote{See also the proof of Step A.1.},  one readily has that $ {\mathbf{D}}_{i} f_{i}\cdot \mathbf{e}^\ell\big |_{\partial H^d_L}=0$ as it is similar to an expression which is multiplied by $ m_i $, because 
$\left(I+(\mathbf{e}^1(\mathbf{e}^1)^{\top}-I)\mathsf{h}_{{i}+1}\right ) \mathbf{e}^\ell=\mathbf{e}^\ell(1-\mathsf{h}_{{i}+1}) $. Therefore, we have using the mean value theorem
\begin{align*}
        {\mathbf{D}}_{i} f_{i}\cdot \mathbf{e}^\ell\mathsf{h}_i =\left({\mathbf{D}}_{i} f_{i}\cdot \mathbf{e}^\ell -{\mathbf{D}}_{i} f_{i}(\hat{X}_i)\cdot \mathbf{e}^\ell\right) \mathsf{h}_i=O_{i-1}^E(1).
\end{align*}
With the above results, we consider the uniform boundedness of $  \tilde{\mathbb{E}}\left[
\sum_{i=1}^nF_{0,i-1}(\tilde{A}^0_i(\lambda_i,
\mathbf{D}_if_i\cdot \mathbf{e}^\ell(\mathbf{e}^\ell)^\top))
\bar{M}^n_{n}\right] $ in \eqref{eq:2der} for $ \ell\neq 1 $.
In fact, one obtains the uniform boundedness easily by adding and subtracting the value of $ {\mathbf{D}}_{i} f_{i}(\hat{X}_{i})\cdot \mathbf{e}^\ell\mathsf{h}_i=0$ when needed. That is, for some $ \mu^0_i\in P^0_{2,i} (Z_i,\Delta,\mathtt{X}_{i-1}\mathsf{h}_i)$ and $ \mu^1_i\in \bar{P}^0_{2,i}(Z_i,\Delta,\mathtt{X}_{i-1}\mathsf{h}_i) $, we have applying the definition of $   \tilde{A}^0_i $ that
\begin{align*}
      \tilde{A}^0_i(\lambda_i,
      \mathbf{D}_if_i\cdot \mathbf{e}^\ell(\mathbf{e}^\ell)^\top)=\left({\mathbf{D}}_{i} f_{i}\cdot \mathbf{e}^\ell -{\mathbf{D}}_{i} f_{i}(\hat{X}_i)\cdot \mathbf{e}^\ell\right) \mathsf{h}_i\mu^0_i+{\mathbf{D}}_{i} f_{i}\cdot \mathbf{e}^\ell\mu^1_i.
\end{align*}

Now, we analyze the term $ \tilde{A}_i^0(\lambda_i,\mathbf{D}_if_i\mathbf{e}^1(\mathbf{e}^1)^\top)  $. To do this, we discard from further consideration all terms whose $ \mathcal{F}_{{i}-1} $-conditional expectation are of order $ O^E_{{i}-1}(1) $ from $ \tilde{A}^0_{i} (\lambda_{i}, \mathbf{D}_if_{i})$ as stated in \eqref{eq:exp2} and then we need to find the order of the remaining term. Explicitly, we have for $ \mu^2_i\in  \bar{P}^0_{2,i}(Z_i,\Delta,\mathtt{X}_{i-1}\mathsf{h}_i) $:
\begin{align*}
        \tilde{A}^0_{i} (\lambda_{i}, \mathbf{D}_if_{i}\mathbf{e}^1(\mathbf{e}^1)^\top)=&B_i+{\mathbf{D}}_{i} f_{i}\cdot \mathbf{e}^1\mu^2_i\\
         B_{i}:=& \nu({\mathbf{D}}_{i} f_{i}, X_{{i}-1})\mathbf{e}^1(\mathbf{e}^1)^\top\mathsf{h} _{i}:= -({\mathbf{D}}_{i} f_{i} \cdot \mathbf{e}^1) 
        \left(\frac{{b}^1_{{i} -1}
        }{{a^{11}_{{i} -1}}}\right )\mathbf{e}^1(\mathbf{e}^1)^\top\mathsf{h} _{i}.
\end{align*}
In order to analyze the term $ B_i $, we need to analyze the coordinate processes as we did in the argument to obtain \eqref{eq:G} using $ \nu({\mathbf{D}}_{i} f_{i}, X_{{i}-1})F_{k,{i-1}}(\mathbf{e}^1(\mathbf{e}^1)^{\top}) $. 
Here, we remark that $ \nu(x,y) \in\mathbb{R}$, $ x,y\in\mathbb{R}^d $ is linear in $ x $.


Recall that 
the value $ F_{\ell,i-1} $ is the solution of the system \eqref{eq:iter10} through the backward iteration on the formula in Lemma \ref{lem:43} from $ i-1,...,\ell $. 

For the rest of the proof we need to use 
an auxiliary backward inductive system of a similar type as in Step I which is defined as follows for $ k=r+1,..,i-1,$ 
\begin{align*}
        G_{k-1,{i}}=&G_{k,{i}}(I+(\mathbf{e}^1(\mathbf{e}^1)^{\top}-I) \mathsf{h}_k)-\mathbf{e}^1(\mathbf{e}^1)^{\top} G_{k,{i}}(2\mathbf{e}^1(\mathbf{e}^1)^{\top}-I)\mathsf{h}_k\\
        G_{{i},{i}}=&C_{r,{i}}:=\mathsf{h}_{i}\nu({\mathbf{D}}_{i} f_{i}(\hat{X}_r), X_{r})\mathbf{e}^1(\mathbf{e}^1)^{\top}.
\end{align*}
The rest of the argument is similar to the 1D case and it  follows the path decomposition method of Lemma 35 in \cite{CK}. In fact, according to \eqref{eq:cases}, and as the matrix under analysis is a multiple of $ \mathbf{e}^1(\mathbf{e}^1)^\top $ we deduce that each time one iterates using the above formula for $ G $, the multiplying factor $ 1-\mathsf{h}_k $ will appear.

Using the same arguments as in Lemma 35 in \cite{CK}, we may apply the path decomposition method with    
\begin{align*}
        \mathsf{h}_{r,k} :=&1_{(U_r\leq p_r,U_{r+1}>p_{r+1},...,U_{k-1}>
                p_{k-1},U_k\leq p_k)} \\
        \widehat{\mathsf{h}}_{r,k}:=&1_{(U_r> p_r,...,U_{k-1}>p_{k-1},U_k\leq p_k)}.
\end{align*}
Note that $  \mathsf{h}_{r,k}+\widehat{\mathsf{h}}_{r,k}=\widehat{\mathsf{h}}_{r+1,k}$.

With this property one obtains the following decomposition of $ \hat{P}_{\ell,r}:= F_{\ell,r}(G_{r+1,i}(C_{r,i}))$:
\begin{align*}
        F_{\ell,i-1}(B_i)\mathsf{h}_i
        =&\left(F_{\ell,i-1}(B_i)-F_{\ell,i-1}(G_{i,i}(C_{i-1,i}))\right)
        \mathsf{h}_i+
        \hat{P}_{\ell,i-1}\mathsf{h}_{i-1,i}+
        \Delta_{i-1}
        \hat{P}_{\ell,\cdot}\widehat{\mathsf{h}}_{i-1,i}+\hat{P}_{\ell:i-2}^{n}\widehat{\mathsf{h}}_{i-1,i}\notag\\
        =&\left(F_{\ell,i-1}(B_i)-F_{\ell,i-1}(G_{i,i}(C_{i-1,i}))\right)
        \mathsf{h}_i+\sum_{r=\ell+1}^{i-1}\left(\hat{P}_{\ell:r}{%
                \mathsf{h}}_{r,i}+\Delta_r%
        \hat{P}_{\ell,\cdot}\widehat{\mathsf{h}}_{r,i}\right)+\hat{P}_{\ell:\ell}^{n}\widehat{\mathsf{h}}_{\ell+1,i}.
\end{align*}
Note that each of the terms in the above sum can now be analyzed for their order. In fact, as in the one dimensional case, one has that 
either $ \left(F_{\ell,i-1}(B_i)-F_{\ell,i-1}(G_{i,i}(C_{i-1,i}))\right)
\mathsf{h}_i $, $        \Delta_r\hat{P}_{\ell,\cdot}\widehat{\mathsf{h}}_{r,i}  $ or $\hat{P}_{\ell:r}{\mathsf{h}}_{r,i}  $ have  $\mathcal{F}_{r-1} $-conditional expectations of order $O^E_{r-1}(1).  $ 

In these estimates, it is important the fact that $ C_{i-1,i} $ in $ G_{r+1,i}(C_{i-1,i}) $ is a multiple of $ \mathbf{e}^1 (\mathbf{e}^1)^{\top} $ as this means that the constant terms will vanish when calculating either $   \Delta_r\hat{P}_{\ell,\cdot}\widehat{\mathsf{h}}_{r,i}  $ or $\hat{P}_{\ell:r}{\mathsf{h}}_{r,i}  $. In fact, due to {\large \textcircled{\small 1}}
in Step I, we have using that $ \xi_r(\mathbf{e}^1(\mathbf{e}^1)^\top\mathsf{h}_{r,i})=0$:
\begin{align*}
        \hat{P}_{\ell:r}{\mathsf{h}}_{r,i} =&\nu(\mathbf{D}_if_i(\hat{X}_r),X_r)F_{\ell,r}(
        G_{r+1,i}(\mathbf{e}^1(\mathbf{e}^1)^\top))\mathsf{h}_{r,i}\\
        F_{\ell,r}(
        \mathbf{e}^1(\mathbf{e}^1)^\top)\mathsf{h}_{r,i}=&   F_{\ell,r-1}\left(\bar{A}^1_{r}(\lambda_r
        )(\mathbf{e}^1(\mathbf{e}^1)^\top)\right)
        \mathsf{h}_{r,i}=F_{\ell,r-1}(\mu^3_{r-1})\mathsf{h}_{r,i}.
\end{align*}
Here, $ \mu^3_{r-1}\in \bar{C}^0_{1,r-1}(\mathtt{X}^1_{r-1}\mathsf{h}_r) $. Therefore we conclude that
\begin{align*}
        \tilde{\mathbb{E}}_{r-1}\left[\bar{A}^1_{r}(\lambda_r
        )(G_{r,i}(C_{r-1,i})){\mathsf{h}}_{r,i}\bar{M}_{r-1:i}\right]        =O^E_{r-1}(1).
\end{align*}
From here, one obtains that $ \tilde{\mathbb{E}}_{r-1}\left[\hat{P}_{\ell:r}{\mathsf{h}}_{r,i}\right ]= { O^E_{r-1}(1)  }$. The other term, $        \Delta_r\hat{P}_{\ell,\cdot}\widehat{\mathsf{h}}_{r,k}  $ is easier to deal with as the constant terms vanish immediately due to $ \widehat{\mathsf{h}}_{r,k}  $ and the fact that we are considering the difference $ \Delta_r\hat{P}_{\ell,\cdot} $.

As in the one dimensional case, the sum in $ i $ is dealt with using the distribution of hitting the boundary for the first time after $ t_r $ at a time between $ t_{i-1}$ and $t_i$.

Finally the last term in \eqref{eq:2der}, $ {\tilde{\mathbb{E}}}_{{i-1}}\left[{\mathbf{D}}_i{f}_{i}(\widehat{X}_i)\mathbf{e}^1(\mathbf{e}^1)^\top\mu^1_i\right] $ is analyzed with similar but simpler arguments.

First, note that $ g^1_i(\mathtt{X}_{i-1}^1)^{-1}\mu_i^{1}=\nu_i^1\in \bar{P}^1_{2,i}(Z_{i}^{\wedge} ,\Delta,\mathtt{X}_{i-1}^1 )
$ and $ \tilde{\mathbb{E}}_{i-1}\left[\left|\mu_i^{1}\right|\bar{m}_i\right]\leq C$. Therefore the only term that needs to be discussed is the term which is proportional to $  \mathtt{X}_{i-1}^1 g^1_i(\mathtt{X}_{i-1}^1)$.
This term, is analyzed as in the above path decomposition method.

{\it Proof of Proposition \ref{prop:30}, Part II: The boundedness of the second term in \eqref{eq:87a}.}

As in the one dimensional case, we also have to discuss the derivative of the second term in \eqref{fprimemH} and which is given also in \eqref{eq:DD}.

Most estimates follow due to the fact that $ f_i $ still represents the conditional expectation of a killed process and the previous arguments presented so far. The only point we want to stress  is the treatment of the constant term in the expression 
\begin{align*}
        \mathbf{e}^1    \left(\left({\mathbf{D}}_if_i {\mathbf{D}}_{i-1}X_i-(\mathbf{e}^1)^\top
        {\mathbf{D}}_if_i\cdot \mathbf{e}^1\right)    \frac{b^1_{i-1}
        }{a^{11}_{i-1}} +\mu_i^1   \right)
        \mathsf{h}_i.
\end{align*}
Here $ \mu_i^1\in \bar{P}^1_{1,i}(\Delta,Z_i,\mathtt{X}^1_{i-1}) $.
As in previous arguments we decompose $ {\mathbf{D}}_if_i=\sum_{\ell=1}^d {\mathbf{D}}_if_i\cdot\mathbf{e}^\ell(\mathbf{e}^\ell)^\top$. In the case $ \ell\neq 1 $, one can repeat arguments as in Step II of the previous proof. Therefore it is enough to consider:
\begin{align*}
        \mathbf{e}^1({\mathbf{D}}_if_i\cdot \mathbf{e}^1)       (\mathbf{e}^1)^\top        \left({\mathbf{D}}_{i-1}X_i -I\right) \frac{b^1_{i-1}
        }{a^{11}_{i-1}}       
        \mathsf{h}_i.
\end{align*}
As $ \mathbf{D}_{i-1}X_i=I+\mathbf{D}_{i-1}\sigma^k_{i-1}Z_i^k$, the above is a linear function of $ \Delta $ and $ Z_i $.
This finishes the proof of Proposition \ref{prop:30}.\hfill$ \Box $

We complete the section with the proofs of Lemma \ref{lem:43} and Corollary \ref{cor:43}. 
In order to give the proof of Lemma \ref{lem:43}, we consider first the case when a boundary condition similar to the one assumed in Lemma \ref{lem:2.1H} is satisfied.
\begin{lemma}[{\bf  The push forward formula for second derivatives}]
        \label{lem:22aH}
        Suppose that $ f_i\in C^2_b(\mathbb{R}^d, \mathbb{R}) $ satisfies $ {\mathbf{D}}_i f_i\Big|_{X_i^1=L}=0 $ for all $ x\in\mathbb{R}^d $. Then we have 
        \begin{align}
                {\mathbf{D}}^\top_{i-1}{\mathbb{E}}_{{i-1}}\left[{\mathbf{D}}_i{f}_{i}\lambda_i\bar{m}_{i}\right]={\mathbb{E}}_{{i-1}}\left[\left(\bar{A}^1_i(\lambda_i)(\mathbf{D}^2_if_i)+\bar{A}^0_i(\lambda_i,{\mathbf{D}}_if_i)\right)\bar{m}_i\right].
                \label{eq:2der0}
        \end{align}
  
        
        
         In the special case that $ \lambda _i=\lambda (X_{i-1}) $ with $ \lambda , f_i\in C^1_b(\mathbb{R}^d, \mathbb{R}) $ satisfying that $ f_i\Big|_{X_i^1=L}=0 $ the following derivative formula (with a natural extension of the notation for $ \bar{A}^1_i $ as the argument ${\mathbf{D}}_if_i  $ is a row vector and not a square matrix): 
        \begin{align}
                2{\mathbf{D}}_{i-1}{\mathbb{E}}_{{i-1}}\left[{f}_{i}\lambda_i\mathsf{h}_{i}1_{({X}^1_i>L)}\right]={\mathbb{E}}_{{i-1}}\left[\left(\bar{A}^1_i(\lambda_i\mathsf{h}_{i})({\mathbf{D}}_if_i ) +\bar{A}^0_i(\lambda_i\mathsf{h}_{i},f_i)^\top\right)\bar{m}_i\right].
                \label{eq:kt}
        \end{align}
\end{lemma}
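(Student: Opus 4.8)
\textbf{Proof plan for Lemma \ref{lem:22aH}.}
The plan is to obtain \eqref{eq:2der0} by differentiating the push-forward identity \eqref{eq:tdH} of Lemma \ref{lem:2.1H} one more time, i.e.\ to differentiate the right-hand side of the first-order formula with respect to the conditioning variable $X_{i-1}$. Concretely, starting from $\mathbb{E}_{i-1}[{\mathbf{D}}_i f_i \lambda_i \bar{m}_i]$ I would first use the representation of $\bar{m}_i = 1_{(X_i^1>L)}(1-\mathsf{h}_i) + 1_{(X_i^1>L)}(1+\mathsf{h}_i) \cdot(\text{bookkeeping})$ — more precisely the identity from Lemma \ref{cor:23H}, $\mathbb{E}_{i-1}[A_i m_i + B_i 1_{(X_i^1>L)}\mathsf{h}_i] = \mathbb{E}_{i-1}[(A_i(1-\mathsf{h}_i) + \frac12 B_i\mathsf{h}_i)\bar{m}_i]$ — to reduce the computation of ${\mathbf{D}}^\top_{i-1}\mathbb{E}_{i-1}[{\mathbf{D}}_i f_i \lambda_i m_i]$ plus a boundary term. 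The differentiation of $m_i = 1_{(X_i^1>L)}(1-p_i)$ in the variable $X_{i-1}$ splits into (a) differentiation of the Gaussian density of $X_i$ in the $X_{i-1}$ argument, handled by the Gaussian integration-by-parts Lemma \ref{lem:2H} applied with $G_i = {\mathbf{D}}_i f_i \lambda_i$ (this converts a derivative in $X_{i-1}$ into a derivative $D_i^k$ in the noise plus lower-order terms, producing the ${\mathbf{D}}_{i-1}\left(\sigma^{1k}_{i-1}\mathtt{X}^1_{i-1}/a^{11}_{i-1}\right)$ and ${\mathbf{D}}_{i-1}\left(b^1_{i-1}\mathtt{X}^1_{i-1}/a^{11}_{i-1}\right)$ factors appearing in $\bar{A}^1_i$ and $\bar{A}^0_i$); (b) differentiation of the composite function $x\mapsto {\mathbf{D}}_i f_i(x)\lambda_i(x,\cdot)$ through $X_i = X_{i-1} + b_{i-1}\Delta + \sigma^k_{i-1}\Delta_i W^k$, which by the chain rule contributes the term $(\mathbf{D}_{i-1}X_i)^\top (\mathbf{D}_i^2 f_i)^\top \lambda_i$ that is the leading term of $\bar{A}^1_i(\lambda_i)(\mathbf{D}_i^2 f_i)$, together with the terms where the derivative falls on $\lambda_i$, which assemble into ${\mathbf{D}}^{\dagger}_{i-1}({\mathbf{D}}_i f_i, \lambda_i)$ inside $\tilde{A}^0_i$. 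The hypothesis ${\mathbf{D}}_i f_i\big|_{X_i^1=L} = 0$ is exactly what kills the boundary term that would otherwise come from differentiating $1_{(X_i^1>L)}$ (a Dirac mass at $X_i^1 = L$, against which ${\mathbf{D}}_i f_i$ vanishes), so under this hypothesis the $\mu^1_i$-term of \eqref{eq:exp1} is absent and we get the clean formula \eqref{eq:2der0}; the extra $-{\mathbf{D}}_{i-1}^\top(b^1_{i-1}\mathtt{X}^1_{i-1}/a^{11}_{i-1}){\mathbf{D}}_i f_i \Delta_u\lambda_i \mathsf{h}_i$ piece of $\bar{A}^0_i$ comes from part (a) applied to the first term on the right of \eqref{eq:2f}.

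For the second assertion \eqref{eq:kt}, the plan is entirely parallel: here the object is $2{\mathbf{D}}_{i-1}\mathbb{E}_{i-1}[f_i \lambda_i \mathsf{h}_i 1_{(X_i^1>L)}]$, and since $\mathsf{h}_i 1_{(X_i^1>L)}$ is (after taking conditional expectation in $U_i$) the same as $p_i 1_{(X_i^1>L)}$, one again differentiates $p_i$ in $X_{i-1}$ using Lemma \ref{lem:2H} with $G_i = f_i\lambda_i$, and differentiates $f_i\lambda_i$ along the $X_i$-dependence. The role previously played by ${\mathbf{D}}_i f_i$ is now played by $f_i$, which vanishes on $\{X_i^1 = L\}$ by hypothesis, so again no boundary Dirac term survives; the factor $2$ and the identity from Lemma \ref{cor:23H} account for the passage between $p_i 1_{(X_i^1>L)}$ and the weighting by $\bar{m}_i$. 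The output is organized using the same operators $\bar{A}^1_i$ and $\bar{A}^0_i$ but with the natural reinterpretation noted in the statement (the "$\mathbf{D}^2 f$ slot" receives the row vector ${\mathbf{D}}_i f_i$ and the "${\mathbf{D}}f$ slot" receives $f_i$, and $\lambda_i$ is replaced by $\lambda_i\mathsf{h}_i$).

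The main obstacle I anticipate is bookkeeping rather than conceptual: keeping track of matrix versus row-vector versus scalar shapes throughout, because $\lambda_i$ is matrix-valued, ${\mathbf{D}}_i f_i$ is a row vector, $\mathbf{D}_i^2 f_i$ is a symmetric matrix, and the operators $\bar{A}^1_i$, $\tilde{A}^0_i$, $\bar{A}^0_i$ are defined with specific transpose placements; the lack of commutativity of matrix products (flagged in the introduction) means one cannot freely rearrange factors, so the telescoping of the derivative of the product $({\mathbf{D}}_i f_i)(X_i)\cdot\lambda_i(X_{i-1},Z_i)$ and the identification of each resulting term with the corresponding summand of $\bar{A}^1_i + \bar{A}^0_i$ must be done carefully. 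A secondary technical point is the precise justification that differentiating the indicator $1_{(X_i^1>L)}$ contributes nothing: one should argue, as in the second remark of the proof of Lemma \ref{cor:23H}, that this derivative is a surface measure on $\{X_i^1 = L\}$ against which the relevant integrand (${\mathbf{D}}_i f_i$, respectively $f_i$) vanishes by the standing boundary hypothesis, so the term is genuinely zero and not merely lower order. Once the shapes and the boundary term are under control, the rest is the same Gaussian integration-by-parts and chain-rule computation already carried out in the one-dimensional case and in Lemma \ref{lem:2.1H}, so those routine steps I would reference rather than repeat.
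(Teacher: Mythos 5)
Your proposal is correct and is essentially the paper's own proof spelled out: the paper dismisses this lemma in one line as a straightforward application of differentiation rules and a consequence of Lemma \ref{lem:2H}, which is precisely the combination of product/chain rule on $\mathbf{D}_i f_i(X_i)\lambda_i$ with the Gaussian integration-by-parts identity \eqref{eq:2f} for the $p_i$-derivative that you describe. You correctly identify that the standing boundary hypothesis $\mathbf{D}_i f_i\big|_{X_i^1=L}=0$ (respectively $f_i\big|_{X_i^1=L}=0$ for \eqref{eq:kt}) annihilates the Dirac-mass contributions from differentiating $1_{(X_i^1>L)}$, and that each summand of $\bar{A}^1_i$, $\tilde{A}^0_i$, $\bar{A}^0_i$ arises from a specific step of this calculation, with the remaining work being matrix-shape bookkeeping rather than any new idea.
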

The proof is a straightforward application of derivation rules and a consequence of Lemma \ref{lem:2H}. 

Next, we will prove that the assumption that $ \sigma  (x)$ is lower triangular can be assumed without loss of generality. 

\begin{lemma}
        \label{lem:38}
        Suppose that $ \sigma $ and $ \check{\sigma} $ are two diffusion coefficients satisfying Hypothesis \ref{hyp:12} such that $ a=\sigma \sigma^\top=\check{\sigma}\check{\sigma}^\top$. Then for $ j=0,1,2 $, we have for any test function $ f\in C^2(\mathbb{R}^d\times\mathbb{R}^d,\mathbb{R}) $ such that the expectations below are finite 
        \begin{align}
        	\label{eq:both}
                \mathbf{D}_{i-1}^j\tilde{\mathbb{E}}_{i-1}\left[f(X_{i-1}X_i)\bar{m}_i\right]=\mathbf{D}_{i-1}^j\tilde{\mathbb{E}}_{i-1}\left[f(X_{i-1},\check{X}_i)\check{m}_i\right].
        \end{align}
        Here, $ \check{X}_i=X_{i-1}+\check{\sigma}_{i-1}Z_i $ and $ \check{m}_i=1_{(\check{X}_i^1>L)} (1+\check{\mathsf{h}}_i) $ with $ \check{\mathsf{h}}_i=1_{(U_i\leq \check{p}_i)} $ and $ \check{p}_i=\exp\left(-2\frac{\check{X}^1_iX^1_{i-1}}{a^{11}_{i-1}\Delta}\right) $.
\end{lemma}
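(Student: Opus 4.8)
\textbf{Proof plan for Lemma \ref{lem:38}.}
The core observation is that $\sigma_{i-1}$ and $\check{\sigma}_{i-1}$, both satisfying Hypothesis \ref{hyp:12} with the same $a_{i-1}=\sigma_{i-1}\sigma_{i-1}^\top=\check\sigma_{i-1}\check\sigma_{i-1}^\top$, differ by an orthogonal transformation: there exists $O_{i-1}$ (an $\mathcal{F}_{i-1}$-measurable orthogonal matrix) with $\check\sigma_{i-1}=\sigma_{i-1}O_{i-1}$. Moreover, since Hypothesis \ref{hyp:12} forces $\sigma^{1\ell}_{i-1}=\sigma^{\ell1}_{i-1}=0$ on $\partial H^d_L$ for both coefficients, and in fact $a^{1\ell}_{i-1}=\hat a^\ell_{i-1}(X^1_{i-1}-L)$ is the same bilinear-form data, the first rows $\sigma^{1\cdot}_{i-1}$ and $\check\sigma^{1\cdot}_{i-1}$ represent the same linear functional up to the orthogonal change of the noise basis; in particular $a^{11}_{i-1}=\|\sigma^{1\cdot}_{i-1}\|^2=\|\check\sigma^{1\cdot}_{i-1}\|^2$ is genuinely common, which is why $\check p_i$ is written with $a^{11}_{i-1}$ in its denominator exactly as $p_i$ is. The first plan is therefore: set $\check Z_i:=O_{i-1}^\top Z_i$, note that under $\widetilde{\mathbb{P}}^n$ the vector $\check Z_i$ has the same $N(0,\Delta I_d)$ law as $Z_i$ conditionally on $\mathcal{F}_{i-1}$ (orthogonal invariance of the isotropic Gaussian), and observe $\check X_i=X_{i-1}+\check\sigma_{i-1}\check Z_i=X_{i-1}+\sigma_{i-1}O_{i-1}O_{i-1}^\top Z_i=X_{i-1}+\sigma_{i-1}Z_i=X_i$ once we change variables. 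This already handles the case $j=0$: both sides integrate the same function against the same conditional Gaussian law because the killing/reflection weights $\bar m_i$ and $\check m_i$ are built only from $X^1_i$ (equivalently $\check X^1_i$) and from $p_i$, $\check p_i$, which depend only on the first components and on $a^{11}_{i-1}$; after the substitution they coincide pathwise.

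For $j=1,2$ the plan is to note that the change of variables above is \emph{linear} in the Gaussian variable with an $\mathcal{F}_{i-1}$-measurable (hence, under $\mathbb{E}_{i-1}$, constant) coefficient $O_{i-1}$, so differentiation in the $X_{i-1}$ variable commutes cleanly with it. More precisely, I would write out $\tilde{\mathbb{E}}_{i-1}[f(X_{i-1},X_i)\bar m_i]$ as a Gaussian integral $\int f(X_{i-1}, X_{i-1}+\sigma_{i-1}z)\,w(z,X_{i-1})\,g_{i-1}(z)\,dz$ with $w$ the combined weight $(1+\mathsf h_i)1_{(X^1_i>L)}$ expressed through $p_i$, and similarly for the checked version with $\check\sigma_{i-1}$ and weight $\check w$. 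Apply ${\mathbf{D}}_{i-1}$ (resp.\ ${\mathbf{D}}^2_{i-1}$), using the differentiation rules of Section \ref{diffrules} (Lemmas \ref{lem:2H} and \ref{cor:23H}) to handle the derivative hitting $p_i$: the only nontrivial piece is the derivative of $p_i$, which by \eqref{def:aim0} produces the term ${\mathbf{D}}_{i-1}\big(\sigma^{1k}_{i-1}\mathtt X^1_{i-1}/a^{11}_{i-1}\big)$ and ${\mathbf{D}}_{i-1}\big(b^1_{i-1}\mathtt X^1_{i-1}/a^{11}_{i-1}\big)$. The point is that after the substitution $z\mapsto O_{i-1}^\top z$ these surviving quantities are expressed through $a^{11}_{i-1}$, $b^1_{i-1}$ and $\sigma^{1\cdot}_{i-1}O_{i-1}=\check\sigma^{1\cdot}_{i-1}$, i.e.\ precisely the same objects that appear when one differentiates the checked integral directly. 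All remaining terms depend on $\sigma_{i-1}$ only through $a_{i-1}=\sigma_{i-1}\sigma_{i-1}^\top$ (the Gaussian density $g_{i-1}$, its derivatives in $X_{i-1}$, and the indicator $1_{(X^1_i>L)}$ whose differentiation is ignored by the Lipschitz-and-vanishing argument used in the proof of Lemma \ref{cor:23H}), hence are manifestly invariant under replacing $\sigma_{i-1}$ by $\check\sigma_{i-1}$. Collecting terms gives \eqref{eq:both} for $j=1,2$.

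The main obstacle I expect is \emph{bookkeeping of which quantities are genuinely $\sigma$-dependent versus $a$-dependent} once a derivative lands on $p_i$: terms like $\sigma^{1k}_{i-1}{\mathbf{D}}_{i-1}(\sigma^{1k}_{i-1}\mathtt X^1_{i-1}/a^{11}_{i-1})$ and the second-derivative analogues involve $\sigma^{1\cdot}_{i-1}$ \emph{and} its gradient, and one must verify that these combine into expressions in $\sigma^{1\cdot}_{i-1}(\sigma^{1\cdot}_{i-1})^\top=a^{11}_{i-1}$ and in $\sum_\ell a^{1\ell}_{i-1}\mathbf e^\ell(\mathbf e^1)^\top/a^{11}_{i-1}=\pi_{i-1}+\mathbf e^1(\mathbf e^1)^\top$ — i.e.\ in $a$ alone — once Hypothesis \ref{hyp:12} is invoked (the key identity being $[\sigma^{-1}_{i-1}\mathbf e^k]^\ell a^{k1}_{i-1}=\sigma^{1\ell}_{i-1}$, already used in the proof of Lemma \ref{lemma:4}). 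A clean way to organize this is to first establish the substitution identity $\int F(z)g_{i-1}(z)dz=\int F(O_{i-1}z)g_{i-1}(z)dz$ at the level of the raw integrands and only then differentiate, so that the orthogonal substitution is never differentiated; one then appeals to the fact that $\check\sigma_{i-1}=\sigma_{i-1}O_{i-1}$ is an $\mathcal{F}_{i-1}$-measurable identity to move the derivative past $O_{i-1}$. Once this structural point is set up, the three cases $j=0,1,2$ follow uniformly and the remaining verifications are the routine derivative computations already carried out in Section \ref{diffrules}.
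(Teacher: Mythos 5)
The paper states Lemma \ref{lem:38} without providing a proof, so there is no in-text argument to compare against. Your proof via the orthogonal change of basis $\check\sigma_{i-1}=\sigma_{i-1}O_{i-1}$ (with $O_{i-1}=\sigma_{i-1}^{-1}\check\sigma_{i-1}$ orthogonal by $\sigma\sigma^\top=\check\sigma\check\sigma^\top$ and invertibility) is correct: the substitution $z\mapsto O_{i-1}z$ preserves the isotropic Gaussian density, carries $\sigma_{i-1}z$ to $\check\sigma_{i-1}z$, and carries the weight built from $\sigma^{1\cdot}_{i-1}$ to the one built from $\check\sigma^{1\cdot}_{i-1}$; and you correctly sidestep the fact that $O_{i-1}$ depends on $X_{i-1}$ by performing the substitution at the level of the integrand before differentiating.

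That said, the argument is heavier than necessary, and the worry you raise in your ``main obstacle'' paragraph (tracking which quantities are $\sigma$-dependent versus $a$-dependent after a derivative lands on $p_i$) dissolves entirely if one avoids the orthogonal matrix and notes the following more elementary fact. After integrating out $U_i$, the conditional expectation is
\begin{align*}
\tilde{\mathbb{E}}_{i-1}\left[f(X_{i-1},X_i)\bar m_i\right]
=\int_{\mathbb{R}^d} f(x,y)\,1_{(y^1>L)}\left(1+\exp\left(-\tfrac{2(y^1-L)(x^1-L)}{a^{11}(x)\Delta}\right)\right)\frac{\exp\left(-\tfrac{(y-x)^\top a(x)^{-1}(y-x)}{2\Delta}\right)}{\sqrt{(2\pi\Delta)^d\det a(x)}}\,dy\Bigg|_{x=X_{i-1}},
\end{align*}
because the conditional law of $X_i=X_{i-1}+\sigma_{i-1}Z_i$ given $\mathcal{F}_{i-1}$ under $\tilde{\mathbb{P}}^n$ is $N(X_{i-1},a_{i-1}\Delta)$, which depends on $\sigma$ only through $a=\sigma\sigma^\top$, and the weight $(1+p_i)1_{(X^1_i>L)}$ is manifestly a function of $(X_{i-1},X_i)$ and $a^{11}_{i-1}$ only. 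The same display with $\check\sigma$ in place of $\sigma$ produces the identical right-hand side since $\check a=a$. Hence the two sides of \eqref{eq:both} are the \emph{same} function of $X_{i-1}$ for $j=0$, and differentiating once or twice a single common function trivially gives $j=1,2$. No change of variables, no term-by-term bookkeeping in the push-forward formulas, and indeed no use of Hypothesis \ref{hyp:12} is required for the equality itself (the hypothesis on both coefficients matters only so that the lemma can be invoked in the surrounding context). Your proof buys nothing that this shorter route does not, but it is sound; I would just reorganize it so the ``integrate, observe $a$-dependence, then differentiate'' order is front and center rather than the orthogonal substitution.
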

We will use this result by applying 
the Cholesky decomposition to the covariance matrix $ a$. We also remark that due to the expliciteness of the proof of the Cholesky decomposition we have that the matrix $ \check{\sigma} $ has elements which can be written using products of elements of $ \sigma  $ divided by the elements of the diagonal of $ a $. From here, one has that $ \check{\sigma}$ is lower triangular and that it satisfies Hypothesis \ref{hyp:12}. Note that according to the above Lemma, although the elements of the push forward formulas on both sides of \eqref{eq:both} may be different the final result is the same and therefore it is enough to prove that second derivatives will be bounded under this additional hypothesis. 

In what follows in order not to burden the notation, we use the same notation as before (i.e. $ \sigma  $, $ \bar{m}_i $ and $ p_i $ instead of $ \check{\sigma} $, $ \check{m}_i $ and $ \check{p}_i $) and assume that $ \sigma $ is lower triangular.

In this way, assuming without loss of generality that $ \sigma  $ is lower triangular, we have that the noise $ Z^1 $ is the only noise driving $ X^1 $ which is the component where reflection is applied.

With the above reduction, we see that the first component $ X^1 $ is a reflected process which is driven by a single noise $ Z^1 $ and therefore we can apply to it the argument used in the 1D case as stated in \cite{CK}, Section 6.5, Lemma 31. Before doing this, we first expand the Girsanov change of measure.

\begin{proof}[Proof of Lemma \ref{lem:43}]

In the 1D case in order to deal with the differentiation of the change of measure, we appeal to the Taylor expansion of the Girsanov change of measure. This is also the case here. That is,
\begin{align*}
        \tilde{\mathbb{E}}_{i-1}
        \left[e^{\kappa_i}-1-\kappa_i-\frac{\kappa_i^2}{2}\bar{m}_{i-1}\right]=O_{i-1}^E(\sqrt{\Delta}).
\end{align*}
From the above, one sees that in the formula for $ \lambda_i $ , we have already taken into account the above expansion. 

Now, the proof uses the following elements: 1. The differentiation formula in Lemma \ref{lem:22aH}. 2. The reduction using the Cholesky decomposition in Lemma \ref{lem:38}. 

 With these ingredients, the application of the formulas in Lemma \ref{lem:22aH} have to be done using  $ \lambda_i=e_i(1+\kappa_i+\frac{\kappa_i^2}{2}) $.

As we have remarked previously, only the first component $ X^1 $ will be subject to reflection. 
With this in mind, we will consider the analysis of the second derivative  in cases. First, we will consider the derivatives of ${\mathbf{D}}_\ell f_\ell\cdot \mathbf{e}^1 $ which will require some ``compensation'' argument  with the value of the derivative of the test function at the boundary as the one used in Lemma 31 in \cite{CK}.  Now, we define this compensation.

In fact, let for $ \lambda_i=e_i(1+\kappa_i+\frac{1}{2}\kappa_i^2)$:
\begin{align}
        \mathsf{g}_{i-1}\equiv \mathsf{g}(X_{i-1}):=&{\tilde{\mathbb{E}}}_{i-1}\left[{\mathbf{D}}_i{f}_{i}\lambda_i\bar{m}_{i}\right]-{\tilde{\mathbb{E}}}_{i-1}\left[{\mathbf{D}}_i{f}_{i}(\widehat{X}_i)\cdot\mathbf{e}^1(\mathbf{e}^1)^\top
        \bar{m}_{i}\right]\notag\\
        =&{\tilde{\mathbb{E}}}_{i-1}\left[{\mathbf{D}}_i{f}_{i}\lambda_i\bar{m}_{i}\right]-{\tilde{\mathbb{E}}}_{i-1}\left[{\mathbf{D}}_i{f}_{i}(\widehat{X}_i)\cdot\mathbf{e}^1
        (\mathbf{e}^1)^\top
        \right].\label{eq:56}
\end{align}

Remark that  as announced after Lemma \ref{lem:38}, $ \bar{m}_i $ depends only on the noise embedded in the increment $ X_i^1-X_{i-1}^1 $ which is independent from the noise in $ {\mathbf{D}}_i{f}_{i}(\widehat{X}_i)\cdot\mathbf{e}^1(\mathbf{e}^1)^\top $ from which \eqref{eq:56} follows using  that $ {\tilde{\mathbb{E}}}_{i-1,x}\left[\bar{m}_{i}\right]=1 $, $ x^1\geq L $.
The above choice of $ \mathtt{g}_{i-1} $ will give us an alternative way to obtain an expression for second derivatives without the main hypothesis of Lemma \eqref{lem:22aH} which is suitable for the proof of boundedness.


        First, note that as with Lemma \ref{lem:22aH} most of the proof is a straightforward application of derivation rules and a consequence of Lemma \ref{lem:2H}.
        In that sense, many of the terms that appear in $ \bar{A}^0_i $ of Lemma \ref{lem:22aH} are repeated in the present result in $ \tilde{A}^0_i $. The first  difference is that we consider the measure $ \tilde{\mathbb{P}} $ and therefore the underlying process $ X $ has no drift.
        The second and most important is that the hypothesis $ \mathbf{D}_if_i\Big|_{X_i^1=L}=0$ is not assumed here and therefore the derivative of $1_{(X_i^1>L)}  $ will contribute to the final result.

        With this in mind, instead of applying derivative rules directly, we will use the decomposition:
        \begin{align*}
                {\mathbf{D}}^\top_{i-1}{\tilde{\mathbb{E}}}_{{i-1}}\left[{\mathbf{D}}_i{f}_{i}\lambda_i\bar{m}_{i}\right]=&{\mathbf{D}}_{i-1}^\top\left({\tilde{\mathbb{E}}}_{i-1}\left[\left({\mathbf{D}}_i{f}_{i}\lambda_i-{\mathbf{D}}_i{f}_{i}(\widehat{X}_i)\cdot\mathbf{e}^1
                (\mathbf{e}^1)^\top\right)\bar{m}_i
                \right]\right)\\
                &
                +{\mathbf{D}}^\top_{i-1}{\tilde{\mathbb{E}}}_{i-1}\left[{\mathbf{D}}_i{f}_{i}(\widehat{X}_i)\cdot\mathbf{e}^1
                (\mathbf{e}^1)^\top
                \right]=:B^1_{i-1}+B^2_{i-1}.
        \end{align*}

        The next goal in the argument is to prove that $ {\mathbf{D}}_i{f}_{i}\lambda_i-{\mathbf{D}}_i{f}_{i}(\widehat{X}_i)\cdot\mathbf{e}^1
        (\mathbf{e}^1)^\top$ appearing in the first term, $ B^1_{i-1} $
        satisfies a condition equivalent to $ \mathbf{D}_if_i\Big|_{X_i^1=L}=0$ as assumed in Lemma \ref{lem:22aH}.
        In fact, this condition is used when the derivative of $ 1_{(X_i^1>L)} $ appears in the computations. Therefore, our goal is to prove that when the calculation of $ B^1_{i-1}$ involves the derivative of $ 1_{(X_i^1>L)} $ then these terms either become zero or they become terms which are gathered in $ \mu_i^1 $ and which satisfy the properties in the statement.

        
        Most of the calculations are direct applications of Lemma \ref{lem:2H}, so we will concentrate only on all the terms that are multiples of  the derivative of $ 1_{(X_i^1>L)}$ which will be collected in $ \mu_i^1 $. 
        
        The derivative of $ 1_{(X_i^1>L)}$ which will be a multiple of the Dirac delta distribution $ \delta_L(X_i^1) $ can only be obtained from the derivatives $ \mathbf{D}_{i-1}1_{(X_i^1> L)} $  and $ {D}^{1}_i1_{(X_i^1> L)} $ (which appears on the second term on the right of \eqref{eq:2f} for $ k=1 $). Here, recall that $ {D}^{1}_i$,  denotes the derivatives with respect to the noise $ Z^1_i $. 
        
        The first derivative,  $ \mathbf{D}_{i-1}1_{(X_i^1> L)} $, appears from applying directly the differentiation operator $ \mathbf{D}_{i-1}$ and the second is due to the differentiation formula in Lemma \ref{lem:2H}. Either way, this leads to a Dirac delta distribution. In Lemma \ref{lem:22aH}, the fact that the boundary condition $ {\mathbf{D}}_i f_i\Big|_{X_i^1=L}=0$ is assumed implies that these terms are zero. With this in mind, we divide the argument in cases. For the moment, we assume that $ \kappa_i=0 $. The cases are:
        \begin{enumerate}
                \item[Case A.1:] When we exchange the derivative $ \mathbf{D}_{i-1} $ and the expectation $ \tilde{\mathbb{E}}_{i-1} $ in $ B^1_{i-1} $, we will have to differentiate all the terms within the expectation. One of them is $ 1_{(X_i^1> L)}  $
which appears in $ \bar{m}_i $.
\item[Case A.2:]    Similarly, when we consider the derivative $ \mathbf{D}_{i-1} p_i $ in $ B^1_{i-1} $, we have to use formula \eqref{eq:2f} which will  require the derivative $ D^11_{(X_i^1> L)}  $  .

\item[Case B:] When considering the derivative of  $ B_{i-1}^2 $ the terms within the expectation do not involve $ 1_{(X_i^1> L)}  $.
 \end{enumerate}

        {\it Case A.1: When $ \delta_L(X_i^1) $ appears due to direct differentiation $ \mathbf{D}_{i-1}1_{(X_i^1> L)} $ in $ B^1_{i-1} $.} We will prove that 
        \begin{align*}
               {\mathbf{D}}_{i-1}^{\top}1_{(X_i^1>L)} \left({\mathbf{D}}_i{f}_{i}e_i-{\mathbf{D}}_i{f}_{i}(\widehat{X}_i)\cdot\mathbf{e}^1
        (\mathbf{e}^1)^\top\right)(1+\mathsf{h}_i)={\mathbf{D}}_i{f}_{i}(\widehat{X}_i)\cdot \mathbf{e}^1\mathtt{X}^1_{i-1}\delta_{L}(X_i^1)\mu^2_{i-1}.
        \end{align*}
Here, $ \mu^2_{i-1}\in P^1_{0,i-1} $ is matrix valued and of constant order.
        The analysis will be carried out in two parts using the following decomposition $I=\mathbf{e}^1(\mathbf{e}^1)^\top+\sum_{\ell=2}^d\mathbf{e}^\ell(\mathbf{e}^\ell)^\top$. First, using the fact that  $ X_i^1= L$, we have        
        \begin{align}
                \left({\mathbf{D}}_i{f}_{i}\cdot \mathbf{e}^1(\mathbf{e}^1)^\top e_i-{\mathbf{D}}_i{f}_{i}(\widehat{X}_i)\cdot \mathbf{e}^1(\mathbf{e}^1)^\top \right)        
                \delta_L(X^1_i)&={\mathbf{D}}_i{f}_{i}(\widehat{X}_i)\cdot \mathbf{e}^1\left((\mathbf{e}^1)^\top e_i-(\mathbf{e}^1)^\top\right)\delta_L(X^1_i).
                \label{eq:59}
        \end{align} 
Furthermore, using that $ \mathsf{h}_i=1 $ and $ (\mathbf{e}^1)^\top\pi_{i-1}=0 $, one obtains 
\begin{align*}
        \left((\mathbf{e}^1)^\top e_i-(\mathbf{e}^1)^\top\right)\delta_L(X^1_i) =&(\mathbf{e}^1)^\top \sigma^{k}_{i-1}{\mathbf{D}}_{i-1}\left(\frac{\sigma^{1k}_{i-1}}{a_{i-1}^{11}}\right )\mathtt{X}^1_{i-1}\delta_L(X^1_i).
\end{align*}
The conditional expectation $ \mathbb{E}\left[(\mathbf{e}^1)^\top \sigma^{k}_{i-1}{\mathbf{D}}_{i-1}\left(\frac{\sigma^{1k}_{i-1}}{a_{i-1}^{11}}\right )\mathtt{X}^1_{i-1}\delta_L(X^1_i)\big /\mathcal{F}_{i-1},\widehat{X}_i\right] $ is one of the terms in $ \mu_i^1 $.

        So far, we have considered only $ {\mathbf{D}}_i{f}_{i}\cdot \mathbf{e}^1(\mathbf{e}^1)^\top e_i $. Therefore, we still need to consider $ {\mathbf{D}}_i{f}_{i}\cdot \mathbf{e}^\ell(\mathbf{e}^\ell)^\top e_i $, $ \ell=2,...,d $.
        Next, we consider the remaining terms 
        \begin{align*}
                \sum_{\ell=2}^d{\mathbf{D}}_i{f}_{i}\cdot \mathbf{e}^\ell(\mathbf{e}^\ell)^\top e_i \delta_L(X^1_i)=
                \sum_{\ell=2}^d{\mathbf{D}}_i{f}_{i}(L,\hat{X}_i)\cdot \mathbf{e}^\ell(\mathbf{e}^\ell)^\top e_i\delta_L(X^1_i)=0.
        \end{align*}
        The above becomes zero because $ \mathtt{X}^1_i={X}^1_{i}-L=0 $ implies that $ \mathsf{h}_{i+1}=1 $ and therefore $ e_{i+1}\cdot\mathbf{e}^\ell=0 $ for $ \ell\neq 1 $. These properties are used to obtain 
        \begin{align*}
                {\mathbf{D}}_i{f}_{i}(L,\hat{X}_i)\cdot \mathbf{e}^\ell \delta_L(X^1_i)=\tilde{\mathbb{E}}_{i}\left[f_{i+1}e_{i+1}\cdot \mathbf{e}^\ell\bar{m}_{i+1}\right]\delta_L(X^1_i)=0.
        \end{align*}
        
        {\it Case A.2: When $ \delta_L(X_i^1) $ appears due to the use of formula in Lemma \ref{lem:2H} for $ B^1_{i-1} $:}
        We have to differentiate the random variable $ \mathsf{h}_i $ and for that we need to use the formula in Lemma \ref{lem:2H} which will include $  {D}^{1}1_{(X_i^1> L)}  $.  
        First, we have that for some explicit random variables $ \eta_i^j\in P_{1,i}^1(Z_i,\Delta) $, $ j=1,...,4 $
        \begin{align*}
                \left({\mathbf{D}}_i{f}_{i}e_i-{\mathbf{D}}_i{f}_{i}(\widehat{X}_i)\cdot\mathbf{e}^1
                (\mathbf{e}^1)^\top\right)1_{(X_i^1>L)}(1+\mathsf{h}_i)=&{\mathbf{D}}_i{f}_{i}\left(\eta_i^11_{(X_i^1>L)}+\eta_i^2\mathsf{h}_i1_{(X_i^1>L)}\right)\\&+
                {\mathbf{D}}_i{f}_{i}(\widehat{X}_i)\cdot\mathbf{e}^1\left(\eta_i^31_{(X_i^1>L)}+\eta_i^4\mathsf{h}_i1_{(X_i^1>L)}\right).
        \end{align*}
       For the first part of the above expression, using that $ \mathbf{e}^1\pi_{i-1}=0 $,  one gets 
        \begin{align}
                \notag
                &{\mathbf{D}}_i{f}_{i}e_i-{\mathbf{D}}_i{f}_{i}(\widehat{X}_i)\cdot\mathbf{e}^1(\mathbf{e}^1)^\top\\=    &\left(\mathbf{D}_if_i-{\mathbf{D}}_i{f}_{i}(\widehat{X}_i)\right)       e_i+
                {\mathbf{D}}_i{f}_{i}(\widehat{X}_i)\cdot\mathbf{e}^1(\mathbf{e}^1)^\top
                \left(\bar{b}_{i-1}\Delta +\mathbf{D}\sigma_{i-1}^kZ_i^k\right )(1-\mathsf{h}_i)
                \label{eq:60}\\
                &+{\mathbf{D}}_i{f}_{i}(\widehat{X}_i)\cdot \mathbf{e}^1
                \sigma^{1k}_{i-1}{\mathbf{D}}_{i-1}\left(\frac{\sigma^{1k}_{i-1}}{a_{i-1}^{11}}\right )\mathtt{X}^1_{i-1}\mathsf{h}_i\label{eq:61}+\sum_{\ell=2}^d{\mathbf{D}}_i{f}_{i}(\widehat{X}_i)\cdot \mathbf{e}^\ell(\mathbf{e}^\ell)^\top e_i.
        \end{align}
        
        Now we multiply the above expression by $ \bar{m}_i=1_{(\mathtt{X}_i^1>0)}(1+\mathsf{h}_i) $ and differentiate the terms containing $ \mathsf{h}_i $, according to Lemma \ref{lem:2H}. This will lead to the derivative $ {D}^{1}1_{(\mathtt{X}_i^1> 0)}   $ besides other terms which already appear in $ \bar{A}^1_i(\lambda_i)(\mathbf{D}_if_i) $, $ \tilde{A}^0_i(\lambda_i,\mathbf{D}_if_i) $ and  in the previous step which became part of $ \mu_i^1 $ appearing in the statement of the result and which are bounded as we will see below.

         We will study each term in  \eqref{eq:60}-\eqref{eq:61} which contains the term $ {D}^{1}_i1_{(X_i^1> L)}   $ when differentiation of $ \mathsf{h}_i $ is carried out.
        
        In fact, $ {D}^{1}_i1_{(X_i^1> L)} =\delta_L(X^1_i)D^{1}_iX_i^1 = \delta_L(X^1_i){\sigma^{11}_{i-1}}$ implies immediately that the first term of \eqref{eq:60} is  zero: That is, $ \left(\mathbf{D}_if_i-{\mathbf{D}}_i{f}_{i}(\widehat{X}_i)\right)\delta_L(X^1_i) =0$. Furthermore, we also have that
        $ \mathtt{X}^1_i=X^1_i-L=0 $ and $ \mathsf{h}_{i+1}=\mathsf{h}_i=1 $ which will be used in what follows.
        
        For the second term in \eqref{eq:60} and \eqref{eq:61}, note that we multiply it by $ \bar{m}_i =1_{(\mathtt{X}_i^1>0)}(1+\mathsf{h}_i)$ and then we only consider the terms which are multiplied by $ \mathsf{h}_i $ because these are the terms that will require the use of Lemma \ref{lem:2H} and therefore the derivative $ {D}^{1}_i1_{(\mathtt{X}_i^1> L)}$ will appear.

        Using $ \mathtt{X}^1_i=X^1_i-L=0 $, and \eqref{eq:ort1}, the coefficient of $ \mathsf{h}_i $ in the second term in \eqref{eq:60} added to the respective ones in \eqref{eq:61} give:
        \begin{align*}
                &-      {\mathbf{D}}_i{f}_{i}(\widehat{X}_i)\cdot\mathbf{e}^1(\mathbf{e}^1)^\top
                \left(\bar{b}_{i-1}\Delta +\mathbf{D}\sigma_{i-1}^kZ_i^k\right )+{\mathbf{D}}_i{f}_{i}(\widehat{X}_i)\cdot \mathbf{e}^1
                \sigma^{1k}_{i-1}{\mathbf{D}}_{i-1}\left(\frac{\sigma^{1k}_{i-1}}{a_{i-1}^{11}}\right )\mathtt{X}^1_{i-1}\\
                =&-     {\mathbf{D}}_i{f}_{i}(\widehat{X}_i)\cdot\mathbf{e}^1(\mathbf{e}^1)^\top\left(\bar{b}_{i-1}\Delta+\sum_{k=1}^d\sum_{\ell=2}^d\mathbf{D}\sigma_{i-1}^k(\sigma_{i-1}^{-1})^{k\ell}\left(\sigma^\wedge_{i-1}Z_i^\wedge\right)^\ell
                \right )
        \end{align*}
        Here, we have used the fact that $ \sigma $ is lower triangular and the natural extension of the wedge notation to matrices so that $ \sigma_{i-1}^\wedge $ is a $ (d-1)\times (d-1) $ matrix after deleting the first row and the first column.
        
        Then using the formula for the derivative in Lemma \ref{lem:2H} gives  as a coefficient of $ \delta_L(X^1_i)  $
        \begin{align}
                {\mathbf{D}}_i{f}_{i}(\widehat{X}_i)\mathbf{e}^1(\mathbf{e}^1)^\top
                \left(\bar{b}_{i-1}\Delta +
                \sum_{k=1}^d\sum_{\ell=2}^d\mathbf{D}\sigma_{i-1}^k(\sigma_{i-1}^{-1})^{k\ell}\left(\sigma^\wedge_{i-1}Z_i^\wedge\right)^\ell
                \right )\delta_L(X^1_i) D^k_iX_i^1
                {\mathbf{D}}_{i-1}\left (\frac{      {\sigma}^{1k} _{i-1}
                        \mathtt{X}_{i-1}^1
                }{a^{11}_{i-1}}\right )\mathsf{h}_i.
                \label{eq:del}
        \end{align} 
        
        Taking the conditional expectation $ \mathbb{E}_{i-1} $ in the first term which contains $ \bar{b}_{i-1}  $ gives
        \begin{align*}
                {\mathbf{D}}_i{f}_{i}(\widehat{X}_i)\mathbf{e}^1(\mathbf{e}^1)^\top
                \bar{b}_{i-1}\Delta g^1_i(\mathtt{X}^1_{i-1})=O_{i-1}^E(1).
        \end{align*}
        The conditional expectation with respect to $ \mathcal{F}_{i-1}\vee \sigma(Z^\wedge_i) $ of the terms containing $ Z_i^{\ell} \delta_L(X^1_i)$, $ \ell\neq 1 $ in \eqref{eq:del} are increments of discrete time martingales with the form $Z_i^{\ell} g_i^1(X^1_{i-1}) $ because $ Z_i^{\ell} $ and $ X_i^1 $ are independent. These terms are collected in the formula for $ \mu_i^1 $. Therefore the boundedness properties of $ \mu^1_i $ stated in the Lemma are satisfied.

        As for the last term in \eqref{eq:61}, we obtain as in the first step using  $ \mathtt{X}^1_i=0 $ and $ \mathsf{h}_{i+1}=1$  that 
        $  {\mathbf{D}}_i{f}_{i}(\widehat{X}_i)\cdot\mathbf{e}^\ell\delta_L(X^1_i)=0$.  
        
        Therefore, from the above arguments, we obtain that for $ \nu^5_{i-1}\in \bar{P}^1_{1,i-1}({Z}_i^{\wedge},\Delta,\mathtt{X}^1_{i-1}) $
        \begin{align*}
                \tilde{\mathbb{E}}_{i-1}\left[\left({\mathbf{D}}_i{f}_{i}e_i-{\mathbf{D}}_i{f}_{i}(\widehat{X}_i)\cdot\mathbf{e}^1
                (\mathbf{e}^1)^\top\right)D_i^{1}1_{(X_i^1>L)}\mathsf{h}_i\right] =
                {\mathbf{D}}_i{f}_{i}(\widehat{X}_i)\cdot\mathbf{e}^1\nu^5_{i-1}g_i^1(\mathtt{X}^1_{i-1}).
        \end{align*}
        
        
        The derivatives of all other terms are straightforward and carried out as in the proof of Lemma \ref{lem:22aH}.
        In conclusion, the derivative of $ \mathsf{g}_{i-1} $ will be uniformly bounded. In any case, the above proof shows how to estimate and handle the terms which are multiples of $ \delta_L(X^1_i) $ and which are finally collected into $ \mu_i^1 $
        
        The case when $ \kappa_i\neq 0 $ is carried out using the same arguments as above, Note that the new terms when $ \kappa_i\neq 0 $ do not contain constant terms and only linear and quadratic terms in $ (Z_i^\wedge,\Delta) $. The lowest order term  which appears due to the differentiation of $ 1_{(X_i^1>L)} $ is  $- (\mathbf{e}^1)^\top(\sigma^{-1})_{i-1}^{1}\cdot b_{i-1}\mathtt{X}_{i-1}^1g^1_i(\mathtt{X}_{i-1}^1) $.

        %
        %
        
        {\it Case B: The estimates for $ B^2_{i-1} $.}
        The second term in the above decomposition, $ B^2_{i-1} $, can be differentiated directly and the properties stated are easily obtained as $ \bar{m}_i $ does not appear inside the conditional expectation.
        In fact, note that the derivative of the last term in \eqref{eq:56}satisfies:
        \begin{align*}
                {\mathbf{D}}^\top_{i-1} {\tilde{\mathbb{E}}}_{i-1}\left[{\mathbf{D}}_i{f}_{i}(L,\hat{X}_i)
                (\mathbf{e}^1)^\top\right]=
                {\tilde{\mathbb{E}}}_{i-1}\left[{\mathbf{D}}^\top _{i-1}(L,\hat{X}_i)\mathbf{D}^2_i{f}_{i}(L,\hat{X}_i)
                (\mathbf{e}^1)^\top\right].
        \end{align*}
        Terms like the one above also appear when we repeat the arguments in the proof of Lemma \ref{lem:22aH} for the present case and therefore they cancel. This is the reason why they do not appear in the first term of \eqref{eq:exp1}.

        As for the last statement \eqref{eq:exp2}, it follows from the explicit calculations explained above.
\end{proof}

\subsection{Convergence of first and last hitting time}
\label{sec:8}
This section explains how to extend the results regarding the first and last hitting times from the one dimensional case (section 6.6 in \cite{CK}) to the first component of the present multidimensional situation. The notation is also borrowed from \cite{CK}. The convergence of the first and the last hitting times are required in the proof of the Theorem \ref{bigT}. 

        Recall that $X^{n}$ is the sequence of Euler approximation processes of $X$ which start 
        from $x\in  H^d_L $. The sequence $ (X^{n},\bold{B}^n) $ satisfies the evolution equations (\ref{eq:defbXunderPtilde''})+(\ref{fn2})  under $ {\mathbb{Q}}^n$ and converge in distribution to $(Y,\bold{B})$ defined in (\ref{reflected}). The same convergence holds for the continuous paths Euler approximation $ (X^{c,n},\bold{B}^n)$. As already stated,
under $ \mathbb{Q}^n $ the law of  $ X^{c,n}$, on the interval $[t_{i-1},t_i] $ is that of a process $\mathcal{X}^{n}$ reflected when it reaches the boundary
$ H_L^d $ . More precisely, the law of the first coordinate of $ X^{c,n}$, on the interval $[t_{i-1},t_i] $, is that of the one dimensional process $\mathcal{X}^{n,1}$ introduced in (\ref{calxn}). Therefore also 
the sequence $(\mathcal{X}^{n},\Lambda^{L})$ converges in
        distribution to $(Y,\bold{B})$ on any bounded interval $[0,T]$. By using the Skorohod representation
                theorem, we can assume, without loss of generality, that  $(\mathcal{X}^{n},\Lambda
                ^{L}) $ converges almost surely to $(Y,B)$ uniformly on the path space on
                any bounded interval $[0,T]$. In particular, we can look at  the Skorohod representation used in the proof of Theorem \ref{bigT}.
Consequently, we can assume that $(\mathcal{X}^{n},\Lambda^{L})$ converges almost surely to $(Y,B)$. The following definition and subsequent theorem are given in terms of the representation of the processes defined using the Skorohod representation introduced in Theorem \ref{bigT} for which the almost sure convergence holds.

Define $\tau_t$, $\tau _{t,n}$ to be the first times in $ [0,t] $ that the processes $Y$,
        respectively $\mathcal{X}^{n}$ hit the boundary $\partial H^d_L$. That is, 
        \begin{eqnarray*}
        \tau_t&:=&t\wedge\inf \left\{ s\geq 0,\ \ Y_{s}^{1}=L\right\},\ \ \ \ \tau
        _{t,n}:=t\wedge\inf \left\{ s\geq 0,~(\mathcal{X}^{n}_s)^1=L\right\},\\
        \bar{\tau}_t^n&:=& t\wedge\inf \{t_{i}\leq t;\text{ there exists }s\in[t_{i-1},t_i],\ \mathcal{X}^{n,1}_s=L \}.
        \end{eqnarray*}      
   Similarly define $\rho_t$, $\rho_{t,n}$ to be the last times before $t$ the
        processes $Y$, respectively $\mathcal{X}^{n}$ hit the boundary $\partial H^d_L$. i.e., 
        \begin{eqnarray*}
        \rho_t&:=&\sup \left\{ s\le t,\ \ Y_{s}^{1}=L \right\}, 
\\
        \rho_{t,n}&:=&\sup \left\{ s\le t,\ \ \mathcal{X}^{n,x}_{s}=L \right\},\\ 
         \bar{\rho}_t^{n}&:=&\sup\{t_{i}\leq t;\text{ there exists }s\in[t_{i-1},t_i],\ \mathcal{X}^{n,1}_s=L \} .
         \end{eqnarray*} 
        
We have the following: 

   \begin{proposition}
                \label{rdthe} For any fixed $ t\in [0,T] $,  the random times $(\bar{\tau}_t^n,\bar{\rho}_t^{n})$ converge, almost surely, to  $(\tau_{t},\rho_t)$.
        \end{proposition}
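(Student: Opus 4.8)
The plan is to reduce the proposition to a one-dimensional statement about the first component $\mathcal{X}^{n,1}$ and its limit $Y^1$, and then invoke the hitting-time convergence results already available in the one-dimensional case (Section 6.6 of \cite{CK}). First I would fix a realization on the Skorohod space constructed in the proof of Theorem \ref{bigT}, so that $\mathcal{X}^{n}\to Y$ uniformly on $[0,T]$ almost surely, and in particular $\mathcal{X}^{n,1}\to Y^1$ uniformly. Observe that $Y^1$ is a one-dimensional reflected process with non-degenerate diffusion coefficient $a^{11}(Y_s)>0$ driven by a single Brownian motion together with its local time at $L$; consequently $Y^1$ is (almost surely) a point-recurrent semimartingale near $L$ whose zero set $\{s : Y^1_s=L\}$ has empty interior but accumulates from the right at every point of itself, and $Y^1$ does not have a one-sided local extremum equal to $L$ without changing sign. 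These regularity properties of $Y^1$ are exactly the ingredients used in the one-dimensional argument of \cite{CK}, and they transfer verbatim because $Y^1$ solves a one-dimensional reflected SDE of the same type.

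Next I would treat the first hitting time. By definition $\bar{\tau}_t^n = t\wedge\inf\{t_i\le t : \exists s\in[t_{i-1},t_i],\ \mathcal{X}^{n,1}_s=L\}$, i.e. it is the left endpoint of the partition cell in which $\mathcal{X}^{n,1}$ first touches $L$. Write $\tau_{t,n}:=t\wedge\inf\{s\ge 0:\mathcal{X}^{n,1}_s=L\}$ for the exact first hitting time of the approximation; then $0\le \tau_{t,n}-\bar{\tau}_t^n\le\Delta\to 0$, so it suffices to show $\tau_{t,n}\to\tau_t$ a.s. The inequality $\liminf_n \tau_{t,n}\ge\tau_t$ follows from uniform convergence: if $\tau_{t,n}\to s<\tau_t$ along a subsequence, then $\mathcal{X}^{n,1}_{\tau_{t,n}}=L$ (or $\tau_{t,n}=t$, excluded since $s<\tau_t\le t$) forces $Y^1_s=L$ by uniform convergence and continuity, contradicting $s<\tau_t$. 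For the reverse inequality $\limsup_n\tau_{t,n}\le\tau_t$, I would use that $Y^1$ actually crosses or is reflected at $L$ at time $\tau_t$: for any $\varepsilon>0$ there is $s\in(\tau_t,\tau_t+\varepsilon)$ with $Y^1_s<L+\eta$ for arbitrarily small $\eta$ together with the fact that the reflected limit satisfies $Y^1\ge L$, so that on $(\tau_t,\tau_t+\varepsilon)$ the process $Y^1$ must genuinely hit $L$ (it cannot stay strictly above and also be reflected); uniform convergence then forces $\mathcal{X}^{n,1}$ to enter $[L,L+2\eta]$, and a barrier/oscillation estimate exactly as in \cite{CK} (using the strong Markov property and the non-degeneracy of $a^{11}$) shows $\mathcal{X}^{n,1}$ hits $L$ itself with probability tending to one; the a.s. statement follows along the Skorohod representation. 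The key point is that hitting $L$ from above is ``non-exceptional'' for $Y^1$, so the first-touching time is stable under perturbation.

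For the last hitting time $\rho_t$ the argument is the classical one for last exit times, which is genuinely more delicate and is the main obstacle. Upper semicontinuity, $\limsup_n \bar\rho_t^n\le\rho_t$, is again immediate from uniform convergence (a partition cell of $\mathcal{X}^{n,1}$ touching $L$ near time $s$ forces $Y^1$ to be within $o(1)$ of $L$ near $s$, hence $s\le\rho_t$ in the limit, modulo the $\Delta$-error). The hard direction is $\liminf_n\bar\rho_t^n\ge\rho_t$: a priori the approximations could make their last visit to $L$ much earlier than $Y^1$ does. Here one uses that $\rho_t$ is a.s. a point of right-accumulation of the zero set of $Y^1$ (because $Y^1-L$ behaves locally like a reflected Brownian motion near $L$, whose zero set has no isolated points from the right), so for every $\varepsilon>0$ there exists $s\in(\rho_t-\varepsilon,\rho_t]$ at which $Y^1$ genuinely hits $L$; since hitting $L$ is non-exceptional (as in the first-hitting argument), $\mathcal{X}^{n,1}$ must also touch $L$ in a cell with left endpoint in $(\rho_t-2\varepsilon,\rho_t]$ for all large $n$, whence $\bar\rho_t^n>\rho_t-2\varepsilon$. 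Combining the two bounds and letting $\varepsilon\downarrow 0$ gives $\bar\rho_t^n\to\rho_t$ a.s.; the joint convergence of the pair $(\bar\tau_t^n,\bar\rho_t^n)$ is then automatic since both coordinates converge a.s. on the same probability space. Throughout, the only multidimensional input is Hypothesis \ref{hyp:12}, which guarantees that $Y^1$ itself is a \emph{one-dimensional} reflected SDE with smooth non-degenerate coefficient $a^{11}$; once this reduction is in place the proof is identical, line by line, to Section 6.6 of \cite{CK}, and I would state it as such rather than reproduce the barrier estimates.
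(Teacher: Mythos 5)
Your plan matches the paper's: reduce to the first component $\mathcal{X}^{n,1}$ and $Y^1$, fix a Skorohod realization, observe $|\bar\tau^n_t-\tau_{t,n}|\le\Delta$ and $|\bar\rho^n_t-\rho_{t,n}|\le\Delta$, and then invoke the 1D arguments. However, you gloss over the one genuinely multidimensional wrinkle that the paper treats explicitly. The martingale part of $\mathcal{X}^{n,1}$ is the \emph{sum} $\sum_k\int\sigma^{1k}(\mathcal{X}^n_s)dW^k_s$, not a single stochastic integral, and the coefficients depend on all coordinates of $\mathcal{X}^n$, not just the first; so $Y^1$ is not literally a closed one-dimensional reflected SDE. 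Your phrase ``driven by a single Brownian motion'' is only correct after a Dambis--Dubins--Schwarz time change $\alpha(t)=\int_0^t a^{11}(\mathcal{X}^n_s)\,ds$. The paper makes this change of clock explicit, defining $\widehat W^n$ and writing the first component as $x^1+\widehat W^n_{\alpha(t)}$, and it notes that the time changes $\alpha^n$ converge uniformly thanks to the uniform ellipticity assumption; this uniform convergence is exactly what lets the 1D barrier and regular-point arguments be transferred. You assert that after the reduction the proof is ``identical, line by line'' to the 1D case — the paper is careful to flag that it is \emph{not} quite, precisely because of the sum of stochastic integrals, and DDS is the bridge. Beyond that omission the structure is sound; the split into upper and lower semicontinuity for each of $\tau$ and $\rho$, and the appeal to regularity of the point $L$ for the driving (time-changed) Brownian motion, are the paper's arguments. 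One small infelicity: your remark that $Y^1$ ``does not have a one-sided local extremum equal to $L$ without changing sign'' is confused, since $Y^1\ge L$ always and so cannot change sign around $L$; the property actually used is that $L$ is a regular boundary point for the time-changed driving Brownian motion.
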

     
 \begin{proof}
As in the one dimensional case, we may assume that $ \rho_t\geq 0 $ and that     $ \inf \left\{ s\geq 0,\ \ Y_{s}^{1}=L\right\}<\infty  $.   
           
Observe that $|\bar{\tau}_t^n-\tau_{t,n}|\le 1/n$ and $|\bar{\rho}_t^{n}-\rho_{t,n}|\le 1/n$
so it is enough to prove that $(\tau_{t,n},\rho_{t,n})$ converges almost surely to  $(\tau_{t},\rho_t)$. The proof of the convergence of $ \tau_{t,n} $ is the same as in the one dimensional case as the dimension is not used in the proof. 
In particular, define $u $ and $v^{n} $ as the martingale parts of the processes $Y $ and 
        $\mathcal{X}^{n} $ respectively. Note that we have 
        \begin{equation*}
        \tau_t=t\wedge\inf \left\{ s\geq 0,\ \ (u_{s})^{1}=L\right\},\ \ \ \ \tau
        _{t,n}=t\wedge\inf \left\{ s\geq 0, (v^{n}_s)^1=L\right\},
        \end{equation*}
        as the processes $B$ and $\Lambda^L$ are null before the processes hit the
        boundary. As in the 1D case, $(u)^1$ and $(\upsilon^{n})^1$ are time-changed Brownian motions
        (starting from $x^{1}$) and they ``leave'' the domain $[L,\infty)$ as soon
        as they hit the boundary. That is, $ L $
is a regular boundary point. The proof of the convergence of $ \tau_{t,n} $ is now identical with the corresponding result in 1D.

For the proof of $ \rho_{t,n} $ a similar argument with the 1D is required.  First, using the Dambis-Dubins-Schwarz theorem (see, e.g.  Theorem 4.6, Chapter 3 in \cite{KS})we have that the sum of stochastic integrals associated to $ (\mathcal{X}^{n})^1$ (as opposed to just having a single integral as in the 1D case) can be understood as a time changed Wiener process (starting from $x^{1}$). One uses this Wiener process to redefine the approximation process with the same formulas but using this newly defined Wiener process. As in the one dimensional case, the sequence of approximative time changes converge uniformly due to the uniform ellipticity hypothesis. 
That is, define
\begin{align*}
        \alpha(t):=&\int_0^ta^{11}(\mathcal{X}_s^{n})ds\\
        \widehat{W}_s:=&\int_0^{\alpha^{-1}(t)}\sigma^{1k}(\mathcal{X}_s^{n})dW^k_s.
\end{align*}
Then $ \widehat{W}$ is a Wiener process under its own filtration. Then
$ \mathcal{X}_t^{n} =x+\widehat{W}^{n}_{\alpha(t)} $.  The rest of the proof follows as in the 1D case.

 \end{proof}

\end{document}